\numberwithin{equation}{section}
\newtheorem{proposition}{Proposition}[section]
\newtheorem{theorem}[proposition]{Theorem}
\newtheorem{assumption}[proposition]{Assumption}
\newtheorem{lemma}[proposition]{Lemma}
\newtheorem{corollary}[proposition]{Corollary}
\newtheorem{conjecture}[proposition]{Conjecture}
\newtheorem{question}[proposition]{Question}
\newtheorem{claim}[proposition]{Claim}
\newtheorem*{problem*}{Inverse Sieve Problem}
\theoremstyle{definition}
\newtheorem{definition}[proposition]{Definition}
\newtheorem{notation}[proposition]{Notation}
\theoremstyle{remark}
\newtheorem{remark}[proposition]{Remark}
\newtheorem*{remark*}{Remark}
\newtheorem*{remarks*}{Remarks}
\def\house#1{{%
    \setbox0=\hbox{$#1$}
    \vrule height \dimexpr\ht0+1.4pt width .5pt depth \dp0\relax
    \vrule height \dimexpr\ht0+1.4pt width \dimexpr\wd0+2pt depth \dimexpr-\ht0-1pt\relax
    \llap{$#1$\kern1pt}
    \vrule height \dimexpr\ht0+1.4pt width .5pt depth \dp0\relax}}
\begin{document}

\title[]{Uniform bounds for the number of rational points on varieties over global fields}

\author[M. Paredes, R. Sasyk]{Marcelo Paredes $^{2}$ \MakeLowercase{and} Rom\'an Sasyk $^{1,2}$}

\address{$^{1}$Instituto Argentino de Matem\'aticas Alberto P. Calder\'on-CONICET,
Saavedra 15, Piso 3 (1083), Buenos Aires, Argentina;}

\address{$^{2}$Departamento de Matem\'atica, Facultad de Ciencias Exactas y Naturales, Universidad de Buenos Aires, Argentina.}

\email{\textcolor[rgb]{0.00,0.00,0.84}{mparedes@dm.uba.ar}}
\email{\textcolor[rgb]{0.00,0.00,0.84}{rsasyk@dm.uba.ar}}

\subjclass[2010]{11D45, 14G05, 11G35, 11G50}

\keywords{Varieties over global fields, heights in global fields, number of rational solutions of diophantine equations, determinant method}

\begin{abstract}	
We extend the work of Salberger; Walsh; Castryck, Cluckers, Dittmann and Nguyen; and Vermeulen to prove the uniform dimension growth conjecture of Heath-Brown and Serre for varieties of degree at least $4$ over global fields. As an intermediate step, we generalize the bounds of Bombieri and Pila to curves over global fields and in doing so we improve the $B^{\varepsilon}$ factor by a $\log(B)$ factor. 

\end{abstract}

\maketitle

\section{Introduction}

Let $X$ be a projective variety defined over a global field $K$. A central problem in diophantine geometry is to find bounds for the number of $K$-rational points in $X$ of bounded height, for some adequate height function. When $K=\mathbb{Q}$ and $X$ is a hypersurface, perhaps the first account of such results with great generality is due to Cohen. Specifically, as a consequence of the results in \cite{Cohen} concerning Hilbert's irreducibility theorem, in the appendix of Heath-Brown's article \cite{Heath-Brown83} it is proved that for an absolutely irreducible form $G\in \mathbb{Z}[X_{1},\ldots ,X_{n}]$ of degree $d\geq 2$, it holds
\begin{equation*}
|\{\boldsymbol x=(x_{1},\ldots ,x_{n})\in \mathbb{Z}^{n}:G(\boldsymbol x)=0,\;\text{h.c.f.}(x_{1},\ldots ,x_{n})=1,\;\max_{i}|x_{i}|\leq B\}|\lesssim_{\varepsilon,G}B^{n-\frac{3}{2}+\varepsilon},
\end{equation*}
for any $\varepsilon>0$. Furthermore, in \cite[Page 227]{Heath-Brown83} Heath-Brown posed the question:    
\begin{question}
Let $G$ be an absolutely irreducible form with coefficients in $\mathbb{Z}$ of degree $d\geq 2$ in $n$ variables. Is it true that for every $\varepsilon>0$ it holds
$$|\{\boldsymbol x=(x_{1},\ldots ,x_{n})\in \mathbb{Z}^{n}:G(\boldsymbol x)=0,\max_{i}|x_{i}|\leq B\}|\lesssim_{\varepsilon,G} B^{n-2+\varepsilon}?$$ 
\label{question1}
\end{question}

The results in \cite{Cohen} where later generalized by Serre in \cite{Serre1983} to the context of projective varieties over number fields. Morever, in \cite{Serre1983}, Serre proposed the following variation of Question \ref{question1} (see \cite[Page 178]{Serre}), 
\begin{question}
Let $K$ be a number field of degree $d_{K}$, and let  $X\subseteq \mathbb{P}_{K}^{n}$ be  an integral projective variety which is not a linear variety. Let $H$ be the absolute projective multiplicative height. Is there a constant $c$ such that
$$|\{\boldsymbol x\in X(K):H(\boldsymbol x)\leq B\}|\lesssim_{X} B^{d_{K}\dim(X)}(\log(B))^{c}?$$ 
\label{question2}
\end{question}

By considering the quadric $xy=zw$ in $\mathbb{P}_{K}^{3}$, Serre remarked in \cite[Page 178]{Serre} that the logarithmic factor in Question \ref{question2} can not be dispensed. Moreover, in \cite[Page 27]{Serre2}, he formulated a variant to Question \ref{question2} where the logarithmic factor is replaced by a factor $B^{\varepsilon}$ for all $\varepsilon$. 

The first breakthrough concerning bounds in terms of the degree and dimension of $X$ is due to Bombieri and Pila in \cite{Bombieri0}. In that article they developed the now called determinant method of Bombieri-Pila and proved that the number of integral zeroes of size up to $B$ of an absolutely irreducible polynomial $F(X,Y)\in \mathbb{Z}[X,Y]$ of degree $d\geq 2$ is $\lesssim_{\varepsilon,d}B^{\frac{1}{d}+\varepsilon}$ for all $\varepsilon>0$. This result was used in \cite{Pilaafin} by Pila to prove uniform bounds for projective and affine varieties, for instance, he proved that the number of rational zeroes of height up to $B$ of an absolutely irreducible form $G\in \mathbb{Z}[X_{1},\ldots ,X_{n}]$ of degree $d$ is $\lesssim_{\varepsilon}B^{n-2+\frac{1}{d}+\varepsilon}$.

The determinant method of Bombieri-Pila is of an affine nature. Subsequently, in \cite{Heath-Brown} Heath-Brown developed a $p$-adic determinant method which allowed him to prove general uniform bounds for projective hypersurfaces. In particular, he gave a positive answer to Question \ref{question1} in the case $d=2$ and also he proved the following projective version of the main result in \cite{Bombieri0}. 

\begin{theorem}[{\cite{Heath-Brown}}]
Let $F\in \mathbb{Z}[X_{1},X_{2},X_{3}]$ be an absolutely irreducible form of degree $d$. Then for all $\varepsilon>0$ the number of rational zeroes of height up to $B$ of $F$ is at most $\lesssim_{\varepsilon,d}B^{\frac{2}{d}+\varepsilon}$.
\label{curves Heath-Brown}
\end{theorem}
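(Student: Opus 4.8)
The plan is to prove Theorem \ref{curves Heath-Brown} via the $p$-adic determinant method. Let $C\subseteq\mathbb{P}^2$ be the curve $F=0$; one must bound $N(B):=\#\{\boldsymbol x\in C(\mathbb{Q}):H(\boldsymbol x)\le B\}$, which is the number of primitive integral triples $(a_0,a_1,a_2)$ with $\max_i|a_i|\le B$ and $F(\boldsymbol a)=0$, up to sign. Fix a large integer $D=D(d,\varepsilon)$ and, using Bertrand's postulate together with the fact that $F$ has only $O_F(1)$ primes of bad reduction, choose for $B\gg_{F,\varepsilon}1$ a prime $p$ of good reduction with $B^{2/d+\varepsilon}<p\le 2B^{2/d+\varepsilon}$ (for bounded $B$ the estimate $N(B)=O_F(1)$ is trivial). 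Every point counted by $N(B)$ reduces to a point of $C(\mathbb{F}_p)$, and $\#C(\mathbb{F}_p)=O_d(p)$; so it suffices to show that, apart from the $O_d(1)$ residue discs above the singular points of the reduction, each residue disc contains $O_{d,\varepsilon}(1)$ of the counted points, which then yields $N(B)\lesssim_{d,\varepsilon}p\lesssim_{d,\varepsilon}B^{2/d+\varepsilon}$.

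Fix a nonsingular $P_0\in C(\mathbb{F}_p)$ and let $T_0$ be the set of counted points reducing to $P_0$. Let $M_1,\dots,M_s$ be monomials of degree $D$ whose restrictions to $C$ form a basis of the degree-$D$ part of the homogeneous coordinate ring of $C$; from the exact sequence $0\to\mathcal{O}_{\mathbb{P}^2}(D-d)\xrightarrow{F}\mathcal{O}_{\mathbb{P}^2}(D)\to\mathcal{O}_C(D)\to 0$ and Riemann--Roch, $s=dD+1-g$ with $g=\binom{d-1}{2}$, so $s=dD(1+O_d(1/D))$. For any $\boldsymbol P_1,\dots,\boldsymbol P_s\in T_0$, with fixed primitive integral representatives, set $\Delta=\det\bigl(M_i(\boldsymbol P_j)\bigr)_{1\le i,j\le s}\in\mathbb{Z}$. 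If $\Delta=0$ for every such choice, then the matrix $\bigl(M_i(\boldsymbol P)\bigr)_{i,\,\boldsymbol P\in T_0}$ has rank $<s$, so some nonzero $(c_i)$ gives a form $G=\sum_i c_iM_i$ of degree $D$ vanishing on all of $T_0$; since the $M_i|_C$ are linearly independent, $F\nmid G$, and B\'ezout gives $\#T_0\le \#\bigl(V(F)\cap V(G)\bigr)\le dD=O_{d,\varepsilon}(1)$, as wanted.

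In the remaining case some $\Delta\neq 0$. Working in the local ring of $C$ at $P_0$ with uniformizer $t$, after a base change by a matrix in $\mathrm{GL}_s(\mathbb{Z}_p)$ (possible for all but $O_F(1)$ primes $p$) one may assume the $M_i|_C$ have pairwise distinct orders $\mu_1<\cdots<\mu_s$ at $P_0$; since each $\boldsymbol P_j$ reduces to the smooth point $P_0$, one checks $p^{\mu_i}\mid M_i(\boldsymbol P_j)$ for all $j$, whence $v_p(\Delta)\ge E:=\mu_1+\cdots+\mu_s$. As the $\mu_i$ are distinct elements of $\{0,1,\dots,dD\}$ ($dD$ being the degree of $\mathcal{O}_C(D)$) omitting only $g$ values, $E\ge\binom{dD+1}{2}-g\cdot dD=\tfrac12(dD)^2\bigl(1+O_d(1/D)\bigr)$, so for $D$ large in terms of $d,\varepsilon$ we get $E\ge\tfrac12 s^2(1-\varepsilon)$. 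On the other hand, since the representatives are primitive of height $\le B$, each $|M_i(\boldsymbol P_j)|\le B^D$, so Hadamard's inequality gives $|\Delta|\le s^{s/2}B^{Ds}$. Combining, $\tfrac12 s^2(1-\varepsilon)\log p\le E\log p\le\log|\Delta|\le\tfrac{s}{2}\log s+Ds\log B$; dividing by $s$ and using $s\sim dD$ forces $\log p<(\tfrac{2}{d}+\tfrac{\varepsilon}{2})\log B$ for $B$ large (the internal slack parameters being chosen small), contradicting $p>B^{2/d+\varepsilon}$. Hence this case cannot occur, every smooth residue disc contributes $O_{d,\varepsilon}(1)$, and the count goes through.

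It remains to treat the points reducing to the $O_d(1)$ singular points of $C$ over $\mathbb{F}_p$; here one runs the same determinant argument branch by branch inside the residue disc, or passes to the normalization of $C$ and argues there, at the cost of only a lower-order term. The step I expect to be the main obstacle is precisely this local analysis at $P_0$: establishing the divisibility $v_p(\Delta)\ge E$ while simultaneously obtaining the sharp lower bound $E\ge\tfrac12 s^2(1-\varepsilon)$, which requires balancing the Hilbert function of the curve, Riemann--Roch, and the vanishing orders at $P_0$ against the archimedean size of $\Delta$, and keeping careful track of which $O_F(1)$ primes the good-reduction and base-change hypotheses force one to discard. Finally, one prime already yields the stated $B^{2/d+\varepsilon}$; the sharper $\log B$-type savings announced in the abstract come instead from Salberger's global determinant method, which runs the divisibility argument over all primes at once and will be developed in the sections that follow.
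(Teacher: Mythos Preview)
This theorem is not proved in the paper at all; it is quoted from \cite{Heath-Brown} as background. The paper's own contribution is the stronger Corollary~\ref{Walsh corollary} (and Theorem~\ref{Walsh general curves}), which removes the $\varepsilon$ via the \emph{global} determinant method of Salberger rather than Heath-Brown's single-prime method. So there is no ``paper's own proof'' to compare against, and your closing paragraph correctly identifies that the $\varepsilon$-free statement needs the global method developed in Section~\ref{section3} and Section~\ref{Section 5}.

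Your sketch is the correct outline of Heath-Brown's $p$-adic determinant method, and the determinant/Riemann--Roch computation of $E\ge \tfrac12 s^2(1-\varepsilon)$ is right. However, there is a genuine gap for the \emph{uniformity} claimed in the theorem (the implied constant depends only on $d,\varepsilon$, not on $F$). You invoke ``$O_F(1)$ primes of bad reduction'', ``$B\gg_{F,\varepsilon}1$'', and a base-change step ``possible for all but $O_F(1)$ primes'', and you handle small $B$ by $N(B)=O_F(1)$. All of these introduce dependence on the height of $F$, so as written your argument only gives $N(B)\lesssim_{F,\varepsilon}B^{2/d+\varepsilon}$. Heath-Brown removes this dependence by a separate interpolation argument (his Theorem~4; the analogue here is Lemma~\ref{interpolation argument}): either $H(F)$ is polynomially bounded in $B$, in which case Noether forms bound the product of bad primes by $B^{O_d(1)}$ and a good prime exists in the required range, or $H(F)$ is large, in which case Siegel's lemma produces a second degree-$d$ form vanishing on all the counted points and B\'ezout finishes. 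Without this dichotomy your proof does not deliver the stated uniform bound.
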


Moreover, in this article Heath-Brown stated a uniform version of Question \ref{question1} (\cite[Conjecture 1.2]{Heath-Brown}). A more general version of this conjeture for projective varieties over $\mathbb{Q}$ appeared first in the literature in \cite[Conjecture 3.3]{Browningbook} where it is called the ``dimension growth conjeture''.

\begin{conjecture}[Dimension growth conjecture]
Let $X\subseteq \mathbb{P}_{\mathbb{Q}}^{n}$ be an integral projective variety of degree $d\geq 2$. Let $H$ be the absolute projective multiplicative height. Then for any $\varepsilon>0$ it holds:
$$|\{\boldsymbol x\in X(\mathbb{Q}):H(\boldsymbol x)\leq B\}|\lesssim_{\dim(X),d,\varepsilon}B^{\dim(X)+\varepsilon}.$$ 
\label{dim growth}
\end{conjecture}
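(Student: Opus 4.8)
The plan is to prove the following statement, which contains Conjecture~\ref{dim growth} as the case $K=\mathbb Q$ and is the form of Serre's Question~\ref{question2} with $B^{\varepsilon}$ in place of the logarithm: for a global field $K$ and an integral projective variety $X\subseteq\mathbb P^{n}_{K}$ of degree $d\ge 4$, writing $X(K)_{\le B}=\{\boldsymbol x\in X(K):H(\boldsymbol x)\le B\}$, one has $|X(K)_{\le B}|\lesssim_{X,\varepsilon}B^{c_{K}\dim X+\varepsilon}$ for every $\varepsilon>0$, where $c_{K}=[K:\mathbb Q]$ if $K$ is a number field, with the analogous exponent over function fields. I treat only $d\ge 4$; the cases $d=2,3$ are known (Heath-Brown; Browning and Heath-Brown and others) and are imported as needed. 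The argument is an induction on $m=\dim X$. First I would reduce to a geometrically integral hypersurface: if $X$ is integral but not geometrically integral then $X(K)\subseteq\operatorname{Sing}(X)$ --- a smooth $K$-point would lie on one geometric component, which Galois invariance forces to be defined over $K$ --- a variety of dimension $<m$ and degree $\lesssim_{n,d}1$, disposed of by induction; and if $X$ is geometrically integral and non-degenerate then projection from a generic linear subspace of dimension $n-m-2$ disjoint from $X$ realizes it birationally as a hypersurface $Y\subseteq\mathbb P^{m+1}$ of degree $d$, the centre being chosen separable in positive characteristic (where this step needs care). Since a linear projection with $\mathcal O_{K}$-coefficients raises heights by at most a bounded factor, it then remains to bound $Y(K)_{\le B}$ together with the $K$-points of bounded height on the lower-dimensional non-isomorphism locus, the latter by induction; this reduction also keeps the dependence on $n$ under control.

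The base case $m=0$ is trivial, and $m=1$ is a plane curve, for which the global-field generalization of Heath-Brown's Theorem~\ref{curves Heath-Brown} (and of the bounds of Bombieri and Pila) gives $|Y(K)_{\le B}|\lesssim_{d,\varepsilon}B^{2c_{K}/d+\varepsilon}\le B^{c_{K}+\varepsilon}$; this is the intermediate result advertised in the abstract, and a careful treatment yields a $\log B$ in place of $B^{\varepsilon}$. For $m\ge 2$ I would first strip off from $Y$ the union $Y^{\mathrm{lin}}$ of all its positive-dimensional linear subvarieties. If $Y^{\mathrm{lin}}=Y$, then $Y$ is covered by positive-dimensional linear subvarieties (e.g.\ a cone or a scroll), and a direct computation --- summing the point counts on these subvarieties against the number of subvarieties of given height, via the curve bound and the inductive hypothesis --- bounds $Y(K)_{\le B}$. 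Otherwise $Y^{\mathrm{lin}}$ is a proper closed subset of degree $\lesssim_{m,d}1$: its boundedly many linear components, of dimension $\le m-1$, contribute $\lesssim_{m,d}B^{mc_{K}+\varepsilon}$ in total, and its remaining components, of degree $\ge 2$, are handled by the inductive hypothesis (and the known cases $d\le 3$). It thus remains to bound $(Y\setminus Y^{\mathrm{lin}})(K)_{\le B}$.

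For this I would run the global determinant method, as follows. Fix a large $e=e(m,d,\varepsilon)$; for each non-archimedean place $\mathfrak p$ of good reduction and each smooth residue point $\bar{\boldsymbol x}$, the points of $Y(K)$ of height $\le B$ reducing to $\bar{\boldsymbol x}$ modulo a suitable power of $\mathfrak p$ cluster $\mathfrak p$-adically; expanding the equation of $Y$ at $\bar{\boldsymbol x}$ and evaluating the monomials of degree $\le e$ at a suitable family of these points yields a square matrix whose determinant is divisible by a high power of $\mathfrak p$, while by Hadamard's inequality and the product formula its absolute values at the remaining places are small. Running this over all good primes up to a well-chosen bound simultaneously --- Salberger's global refinement --- forces the determinant to vanish once $e$ is large, so the points in question lie on a hypersurface of degree $e$ not containing $Y$ (a dimension count among degree-$e$ forms ensures this, and one may take the hypersurface to avoid $Y^{\mathrm{lin}}$). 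A covering-and-counting argument --- Salberger's, sharpened by Walsh and by Castryck, Cluckers, Dittmann and Nguyen, and by Vermeulen so that it works for all $d\ge 4$ and not merely for $d$ large relative to $m$ --- then places $(Y\setminus Y^{\mathrm{lin}})(K)_{\le B}$ on $\lesssim_{m,d,\varepsilon}B^{c_{K}+\varepsilon}$ hypersurfaces of degree $\lesssim_{m,d,\varepsilon}1$, none containing $Y$ or meeting it along a linear subvariety. Intersecting $Y$ with each and decomposing into $K$-components gives, by B\'ezout, $\lesssim_{m,d,\varepsilon}1$ varieties of dimension $m-1$, none linear, of degree $\lesssim_{m,d,\varepsilon}1$, each carrying $\lesssim B^{(m-1)c_{K}+\varepsilon}$ rational points of height $\le B$ by the inductive hypothesis (and the known low-degree cases); summing over the hypersurfaces yields $\lesssim_{m,d,\varepsilon}B^{mc_{K}+\varepsilon}$, which closes the induction.

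The hard part will be the determinant estimate when $K\ne\mathbb Q$: over a function field the geometry of numbers --- used over $\mathbb Q$ both to supply enough $\mathfrak p$-adically clustered points and to bound the determinant at the archimedean places --- must be replaced throughout by the Riemann--Roch theorem on the curve underlying $K$, so that its genus and the finitely many places of bad reduction of $Y$ must be tracked through every estimate, and in positive characteristic one must in addition control inseparability, both in the generic projection of the first step and in the $\mathfrak p$-adic expansions above. A second, more combinatorial, obstacle is the optimization turning ``the points lie on few bounded-degree hypersurfaces'' into the clean exponent $c_{K}m+\varepsilon$ uniformly in $m$ and in $B$, along with the book-keeping for the linear, ruled and low-degree loci; executing this carefully, in the spirit of Walsh and the later refinements, is what holds the loss to $B^{\varepsilon}$ and, for curves, sharpens it to a single $\log B$.
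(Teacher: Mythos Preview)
Your induction scheme differs from the paper's, and the difference is where the obstruction lies. You induct on $m=\dim X$: at each step you propose to cover $(Y\setminus Y^{\mathrm{lin}})(K)_{\le B}$ by $\lesssim_{m,d,\varepsilon} B^{c_K+\varepsilon}$ auxiliary hypersurfaces of bounded degree, intersect, and descend. The paper instead inducts on the ambient dimension $n$: it passes to the affine cone and, for $n>3$, slices by a generic hyperplane direction chosen so that the top-degree part of the defining form remains absolutely irreducible (Lemma~\ref{Adequate form}), then sums over its $O(B)$ parallel translates; this reduces everything to affine surfaces in $\mathbb A^3$ (Proposition~\ref{induction step n=2}). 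The essential point is that hyperplane slicing \emph{preserves the degree $d$}, so one never has to invoke the conjecture for smaller degree, and all the delicate work --- the global determinant method, Salberger's partition trick (Section~\ref{Section 6}), the separate treatment of lines (Proposition~\ref{contribution1}), and the split into large and small $d$ --- is confined to the surface case.

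Your scheme runs into two concrete problems. First, the claim that the refined determinant method yields $\lesssim B^{c_K+\varepsilon}$ bounded-degree hypersurfaces for every $m\ge 2$ and every $d\ge 4$ is not what the cited papers prove: the single-prime method gives roughly $B^{(m+1)/d^{1/m}}$ such hypersurfaces, which is $\le B$ only when $d\ge (m+1)^m$, and the refinements you invoke (Salberger's partition, Walsh's sharpening) are surface-specific devices, not a general covering lemma in dimension $m$. Second, even granting your covering, the components of $Y\cap Z$ can have any bounded degree, in particular degree $3$; but contrary to your assertion, the case $d=3$ of the dimension growth conjecture is \emph{not} known in general --- only Salberger's weaker bound $\lesssim B^{\dim X-1+2/\sqrt{3}+\varepsilon}$ of Theorem~\ref{dim growth th} is available --- so with $\sim B^{c_K}$ hypersurfaces the degree-$3$ components contribute $\lesssim B^{c_K m+c_K(2/\sqrt{3}-1)+\varepsilon}$, which exceeds the target. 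The paper's hyperplane-slicing induction sidesteps both issues precisely because the degree of the hypersurface never changes along the way.
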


Conjecture \ref{dim growth} was established in the case $d=2$, and the cases $n=3$ and $n=4$ for any degree, by Heath-Brown in \cite[Theorem 2]{Heath-Brown} and \cite[Theorems 3 and 9]{Heath-Brown}, respectively. For $n=5$ and $d\geq 4$ the conjecture was proved by Broberg and Salberger in \cite[Theorem 1]{BrobSal}, and for $n=5$ and  $d=3$ it was proved by Browning and Heath-Brown in \cite[Theorem 3]{BrowningHeathBrown}. Moreover, Conjecture \ref{dim growth} was proved for varieties of degree $d\geq 6$ for all $n$ by Browning, Heath-Brown and Salberger in \cite[Corollary 2]{Salberger2}.

In order to tackle Conjecture \ref{dim growth} in the cases of lower degrees, Salberger further developed the determinant method. In \cite{Salberger1} he extended the $p$-adic determinant method devised in \cite{Heath-Brown} to prove Conjecture \ref{dim growth} for $d\geq 4$ whenever $X$ contains finitely many linear varieties defined over $\overline{\mathbb{Q}}$, of dimension $r-1$. This result was superseded by those of the article \cite{Salberger}, where Salberger introduced a global version of the method of Heath-Brown and proved the following theorem.

\begin{theorem}[{\cite{Salberger2, Salberger}}]
The dimension growth conjecture holds for every integral projective variety $X\subseteq \mathbb{P}_{\mathbb{Q}}^{n}$ of degree $d\geq 4$. In the case that $X$ has degree $d=3$, it holds
$$|\{\boldsymbol x\in X(\mathbb{Q}):H(\boldsymbol x)\leq B\}|\lesssim_{\dim(X),\varepsilon}B^{\dim(X)-1+\frac{2}{\sqrt{3}}+\varepsilon}.$$ 
\label{dim growth th}
\end{theorem}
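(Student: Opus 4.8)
\emph{Plan.} I would run the determinant method, in the global form of Salberger \cite{Salberger}, which refines the $p$-adic method of Heath-Brown \cite{Heath-Brown} and the affine method of Bombieri--Pila \cite{Bombieri0}; for $d\ge6$ the cruder input behind \cite{Salberger2} already suffices, so the genuine cases are $d\in\{3,4,5\}$. Two reductions come first. One may assume $X$ geometrically integral, since an integral but not geometrically integral variety has its rational points on a proper closed subset, handled by induction on $\dim X$. And one may assume $X$ is a hypersurface: if $X\subseteq\mathbb{P}^n_{\mathbb{Q}}$ has dimension $m$ and is not a hypersurface, a sufficiently generic $\mathbb{Q}$-rational linear projection onto $\mathbb{P}^{m+1}_{\mathbb{Q}}$ with coefficients of height $O_{n,d}(1)$ has centre disjoint from $X$, restricts to a finite birational morphism onto a geometrically integral hypersurface $Y\subseteq\mathbb{P}^{m+1}_{\mathbb{Q}}$ of the same degree $d$, and changes heights by at most $O_{n,d}(1)$; the points of $X$ outside the locus where this morphism is an isomorphism form a closed set of dimension $\le m-1$ and bounded degree, absorbed by the induction on dimension. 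So it is enough to bound $N(Y,B)$ for a geometrically integral hypersurface $Y$ of dimension $m$ and degree $d\ge3$; the case $m=1$ is Theorem \ref{curves Heath-Brown} (after projecting generically to $\mathbb{P}^2$), so I take $m\ge2$.

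\emph{The engine.} On $Y$ I would first excise the linear subvarieties it contains---being irreducible of degree $\ge2$, $Y$ is not linear, so these have dimension $\le m-1$; after bounding uniformly the number of them of low height, a fibration/height-balancing argument shows the locus they sweep out contributes $O_{n,d}(B^m)$, which is within budget. On the rest I would run the determinant method. Fix a prime $p$ of good reduction; for $\bar y\in Y(\mathbb{F}_p)$ look at the rational points of height $\le B$ reducing to $\bar y$. If there were more than $M$ of them, $M=O_{n,d}(e^m)$ being the dimension of the space of degree-$e$ forms on $Y$, then an $M\times M$ minor of the matrix of degree-$e$ monomials evaluated at those points would be a nonzero rational integer of archimedean size $\ll B^{eM}$ but---by a Taylor-expansion and Vandermonde estimate---divisible by $p$ to a power $\gg M^{1+1/m}$, which is impossible once $p$ exceeds a suitable power of $B$; so those points lie on a common auxiliary hypersurface $Z_{\bar y}$ of degree $e=O_{n,d,\varepsilon}(1)$ not containing $Y$. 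Salberger's global refinement performs this over all primes $p\le P$ simultaneously, using that the minor is one fixed integer so its $p$-adic valuations add; the gain is that $P$ may be taken essentially as small as $B^{1/d^{1/m}}$, whereas a single prime forces a larger range (enough only for $d\ge6$). As there are $O_{n,d}(P^m)$ classes $\bar y$ and, by B\'ezout, $Y\cap Z_{\bar y}$ has dimension $m-1$ and degree $O_{n,d,\varepsilon}(1)$, I would peel off a dimension and iterate down to curves, where Theorem \ref{curves Heath-Brown} applies once more.

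\emph{The exponents.} The dominant loss is the first slicing of $Y$, producing $O(P^m)=O(B^{m/d^{1/m}+\varepsilon})$ sections of dimension $m-1$; every later slicing, the degrees having grown, costs only $B^\varepsilon$, and the final curve sections---their line components removed---carry $O(B^{1+\varepsilon})$ points each. Hence $N(Y,B)\ll B^{m/d^{1/m}+1+\varepsilon}$, which is $O(B^{m+\varepsilon})$ precisely when $d^{1/m}\ge m/(m-1)$, hence for all $d\ge4$ (the case $m=2$, $d=4$ being the borderline), while for $m=2$, $d=3$ it is $O(B^{1+2/\sqrt{3}+\varepsilon})$---this is where the $2/\sqrt{3}$ comes from. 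This outline ignores the low-degree components that can occur in the successive sections: the linear ones are stripped off at the start, but a section may have components of degree $2$ or $3$, carrying up to $\asymp B^{\dim+\varepsilon}$, respectively $\asymp B^{\dim-1+2/\sqrt{3}+\varepsilon}$, points, and I would have to remove these and handle them separately---through the known $d=2$ case, through finiteness of the relevant families, or through a fibration argument---before the count above is legitimate.

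\emph{The hard part.} The crux is the global determinant estimate for $d\in\{3,4,5\}$: only by running the congruence argument over a whole, carefully chosen range of primes, with the auxiliary degree $e$ balanced against the prime bound $P$, can one push the number of auxiliary hypersurfaces below the threshold at which the dimensional induction closes---and for $d=3$ even the optimal balance yields only the stated weaker exponent. The second and arguably more delicate difficulty is keeping control of the degenerate loci throughout the recursion: uniformly bounding, at every stage, the low-height linear subvarieties (which on their own carry $\asymp B^{\dim X}$ rational points) and the subvarieties they sweep out, keeping every auxiliary variety of bounded degree, and verifying that the low-degree components of the successive sections are genuinely absorbed by the inductive hypothesis together with the exceptional-locus analysis.
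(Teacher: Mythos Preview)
This theorem is attributed to Salberger \cite{Salberger2,Salberger} and is not proved in the present paper; the paper instead establishes the more general Theorem~\ref{teorema4} over arbitrary global fields. Your outline---reduction to geometrically integral hypersurfaces by projection, the global determinant method, and the exponent $2/\sqrt{3}$ emerging from the surface case $m=2$, $d=3$---matches the overall architecture both of Salberger's argument and of the paper's proof of its generalisation.

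One structural point deserves correction. The dimensional descent from a hypersurface of dimension $m$ down to surfaces is \emph{not} carried out by iterating the determinant method as you describe, but by plain hyperplane fibration: one selects (via an effective Bertini step, Lemma~\ref{Adequate form}) a linear form $A$ so that $\{f_d=0\}\cap\{A=0\}$ stays geometrically integral of degree $d$, and then bounds $N_{\mathrm{aff}}(\mathcal Z(f),B)\le\sum_{k}N_{\mathrm{aff}}(\mathcal Z(f)\cap\{A=k\},B)$ over $O(B)$ values of $k$, gaining exactly one factor of $B$ per dimension while preserving the hypothesis on the top-degree part. The determinant machinery is invoked only at the base case $n=3$ (Proposition~\ref{induction step n=2}). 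Your ``peel off a dimension and iterate'' scheme would immediately face the difficulty that the components of $X\cap Z$ after one determinant step have degrees you do not control, so the assertion that ``every later slicing, the degrees having grown, costs only $B^\varepsilon$'' is not justified as stated---low-degree components recur at every stage and would have to be stripped off each time, which is exactly the bookkeeping the fibration approach avoids.

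For the surface case with small $d$ your summary is right in spirit but understates what actually produces the $B^{2/\sqrt d+\varepsilon}$ bound. A single auxiliary hypersurface does not suffice; the paper's Proposition~\ref{construction of a family of cycles} and Theorem~\ref{bound for the non-singular locus outside a subset} build, for each squarefree ideal $\mathfrak q$ assembled from primes in a range $[c\log B,2c\log B]$, auxiliary hypersurfaces adapted to points with prescribed reduction at every prime dividing $\mathfrak q$, and show that the nonsingular points not caught by a family of $\lesssim B^{1/\sqrt d}\log B$ curves of bounded degree lie on codimension-$2$ cycles of total degree $\lesssim B^{2/\sqrt d}\exp(c\log B/\log\log B)$. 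It is this cycle count, not the curve count, that carries the $2/\sqrt 3$; your closing paragraph correctly flags this as the crux but does not describe the mechanism.
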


Theorem \ref{curves Heath-Brown} and Theorem \ref{dim growth th} (the latter when $d\geq 4$) have exponents that are essentially optimal, up to the $\varepsilon$-factor. Then, it remained an open question if in general one could remove the factor $B^{\varepsilon}$. Building on the global determinant method of Salberger, and using the polynomial method in a clever way, in \cite{Walsh3} Walsh proved that this is indeed the case for curves.

\begin{theorem}[{\cite{Walsh3}}]
Let $F\in \mathbb{Z}[X_{1},X_{2},X_{3}]$ be an absolutely irreducible form of degree $d$. Then the number of rational zeroes of height up to $B$ of $F$ is at most $\lesssim_{d}B^{\frac{2}{d}}$.
\label{Walsh curves}
\end{theorem}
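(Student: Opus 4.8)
The plan is to upgrade the determinant-method proof of Theorem \ref{curves Heath-Brown}, in which the loss of $B^{\varepsilon}$ comes from a dyadic decomposition of the point set, by replacing that dyadic step with a polynomial-method argument that produces a \emph{single} auxiliary form of optimal degree. Throughout we may assume $F$ is geometrically irreducible and $d\ge 3$; the case $d=2$ is classical, since a geometrically integral conic in $\mathbb{P}^{2}_{\mathbb{Q}}$ has $\lesssim B$ rational points of height $\le B$ and $B^{2/d}=B$ there. Set $C=Z(F)\subseteq\mathbb{P}^{2}_{\mathbb{Q}}$. Identifying a projective point of height $\le B$ with its primitive integer representative $\boldsymbol x=(x_{0},x_{1},x_{2})$, $\max_{i}|x_{i}|\le B$, and using the symmetry of the coordinates, it suffices to bound the number of such $\boldsymbol x$ in a fixed coordinate patch; after dividing out the largest coordinate this becomes a problem about rational points of bounded height on an affine plane curve, to which the determinant method applies directly.

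\textbf{Determinant estimate.} Fix a degree parameter $D$, to be taken of order $B^{2/d}$. To the $\binom{D+2}{2}$ monomials of degree $\le D$ and any list of that many of our points one associates the evaluation matrix; the global version of the Bombieri--Pila/Heath-Brown determinant bound — combining the archimedean estimate with the divisibility coming from all finite places, via the product formula — shows that this matrix is singular as soon as the points lie in a sufficiently small ``box'', precisely when the product over places of the local diameters of the box falls below the threshold dictated by $D$. Consequently the full point set $S$ is \emph{ill-distributed with respect to polynomials of degree $D$}: it admits a covering by boxes on each of which $S$ is contained in the zero locus of a nonzero polynomial of degree $\lesssim_{d}D$ not divisible by $F$, and the sharp relation tying the number of boxes, $D$, and $B$ is $D^{d}\asymp_{d}B^{2}$. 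This is exactly the input that in Heath-Brown's argument is summed dyadically at the cost of $B^{\varepsilon}$.

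\textbf{Polynomial method.} Here is the new ingredient. Rather than summing the per-box auxiliary polynomials, invoke Walsh's algebraicity theorem: a finite set of points in a box that is ill-distributed with respect to polynomials of degree $D$ in the above quantitative sense is in fact contained in the zero locus of a \emph{single} nonzero form $G$ with $\deg G\lesssim_{d}D$, the implied constant depending only on the degree and the ambient dimension $2$ and \emph{not} on $B$. The mechanism is that $G$ is built in one step from the global distribution of $S$, so there is no logarithmic or $B^{\varepsilon}$ overhead; moreover one may arrange $F\nmid G$, simply stripping off any factors of $F$, which is harmless since $\deg F=d$ while $\deg G$ is comparable to $D\gg d$.

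\textbf{Conclusion and main difficulty.} With such a $G$ in hand, every point of $S$ lies on $C\cap Z(G)$, and since $C$ is geometrically integral with $F\nmid G$, B\'ezout's theorem gives $|C\cap Z(G)|\le \deg F\cdot\deg G = d\cdot O_{d}(B^{2/d})=O_{d}(B^{2/d})$. Summing over the $O(1)$ coordinate patches yields the stated bound. The main obstacle is the polynomial-method step: establishing the $\varepsilon$-free algebraicity statement with constants depending only on $d$, and calibrating it against the determinant estimate so that $D$ can be taken exactly of order $B^{2/d}$ without any logarithmic slack; a secondary, still delicate, point is making the global product-formula form of the determinant bound — together with the primitivity condition and the passage between coordinate patches — uniform in $B$.
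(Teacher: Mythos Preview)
Your outline captures the correct architecture --- global determinant method, a single auxiliary form, then B\'ezout --- and this is exactly how both Walsh and the present paper proceed. But you have deferred the entire content of the theorem to the line ``invoke Walsh's algebraicity theorem.'' There is no separate, citable algebraicity statement that does this job; constructing that single form with $\deg G\lesssim_{d}B^{2/d}$ and no $B^{\varepsilon}$ loss \emph{is} the theorem. You acknowledge this yourself in the final paragraph, so what you have written is a plan, not a proof.

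Here is the mechanism you are missing, as carried out in the paper's Theorem~\ref{polynomial method2} (specialised to $K=\mathbb{Q}$, $n=1$). One does not cover by boxes at all. Instead, choose a maximal subset $\{\boldsymbol\xi_{1},\dots,\boldsymbol\xi_{s}\}$ of the point set that is independent under evaluation by degree-$M$ monomials, form the $s\times\binom{M+2}{2}$ evaluation matrix $A$, and observe that $s\le |\mathcal{B}[M]|-|\mathcal{B}[M-d]|$ since $F\cdot\mathcal{B}[M-d]$ gives independent relations. If the inequality is strict, a degree-$M$ form not divisible by $F$ vanishes on the whole set and we are done. If equality holds, apply Bombieri--Vaaler (not just Siegel) to produce a vanishing form $g$ whose height is controlled by the archimedean size of $A$ divided by $\mathcal{N}(\Delta)$, where $\Delta$ is the gcd of the maximal minors. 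Salberger's global determinant bound (summing the local estimates of Lemma~\ref{local determinant} over \emph{all} good primes up to $s$, as in Theorem~\ref{global determinant method lower bound}) gives $\log\mathcal{N}(\Delta)\gtrsim s^{2}\log s$, and this forces $H(g)$ to be small. The decisive step --- the one that replaces the dyadic decomposition and kills the $\varepsilon$ --- is Claim~\ref{lower bound3.6}: if $g=Fh$, then after the change of variables of Section~\ref{section change of variables} one has $H(g)\gtrsim_{d}H(F)$. Comparing the upper and lower bounds on $H(g)$ yields a contradiction unless $M$ is of the stated size; the saving from $H(F)^{1/d^{2}}$ in the denominator and the control of $b(F)$ via Noether forms (Lemma~\ref{b(f)}) absorb what would otherwise be logarithmic losses.

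Your description of the determinant step as producing ``a covering by boxes'' is the Heath-Brown picture, not the Walsh/Salberger one; the whole point is that the global estimate on $\mathcal{N}(\Delta)$ replaces any partition of the point set.
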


This was further explored in \cite{Cluckers} by Castryck, Cluckers, Dittmann, and Nguyen where they provided effective versions of several results in \cite{Ellenberg, BrowningHeathBrown, Salberger, Walsh3}, which allowed them to prove that the dimension growth conjecture holds without the $\varepsilon$ factor when the degree is at least $5$.

\begin{theorem}[{\cite{Cluckers}}]
Given $n>1$, there exist constants $c=c(n)$ and $e=e(n)$, such that for all integral projective varieties $X\subseteq \mathbb{P}_{\mathbb{Q}}^{n}$ of degree $d\geq 5$ and all $B\geq 1$ one has
$$|\{\boldsymbol x\in X(\mathbb{Q}):H(\boldsymbol x)\leq B\}|\leq cd^{e}B^{\dim(X)}.$$
\label{uniform growth}
\end{theorem}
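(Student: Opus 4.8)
The plan is to carry out the two classical reductions underlying all of the results quoted above, and then to make every step of them effective in the degree. One first reduces the problem on an arbitrary integral variety $X\subseteq\mathbb{P}^n_{\mathbb{Q}}$ of dimension $m=\dim(X)$ and degree $d$ to the case of a hypersurface in $\mathbb{P}^{m+1}_{\mathbb{Q}}$, and then reduces hypersurfaces in $\mathbb{P}^{m+1}$ to the base cases of plane curves and of surfaces in $\mathbb{P}^3$. For the first reduction I would argue by induction on $m$, and for fixed $m$ by descending induction on $n$: projecting $\mathbb{P}^n$ from a well-chosen $\mathbb{Q}$-rational point of bounded height not lying on $X$ yields a morphism $\pi|_X\colon X\to\mathbb{P}^{n-1}$ onto an integral $\pi(X)$ of the same dimension and degree, generically injective and satisfying $H(\pi(\boldsymbol x))\ll_n H(\boldsymbol x)$; the proper closed subset of $X$ where $\pi|_X$ fails to be injective has dimension $<m$ and degree $\ll_{n,d}1$, so its points are controlled by the outer inductive hypothesis. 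Iterating down to $n=m+1$ produces a hypersurface and costs a factor depending only on $n$.

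For the second reduction I would induct on $m$. The base case $m=1$ is a plane curve, where Theorem \ref{Walsh curves} already gives $\ll_d B^{2/d}\le B$; the uniform statement, however, needs the refinement of that result in which the implied constant is an explicit polynomial $c\,d^{e}$, obtained by running Salberger's global determinant method \cite{Salberger} together with Walsh's polynomial method \cite{Walsh3} while keeping explicit track of the relevant Hilbert function and of the number and the degrees of the auxiliary hypersurfaces it produces. For the inductive step, given a hypersurface $X\subseteq\mathbb{P}^{m+1}$ of degree $d\ge2$, I would invoke the effective global determinant method: its $\mathbb{Q}$-points of height $\le B$ lie on a family of auxiliary hypersurfaces of degree $\ll_d1$ not containing $X$, each cutting out on $X$ a variety of dimension $m-1$ and degree $\ll_d1$ to which the inductive hypothesis applies, and the covering is efficient enough that the summed contribution stays $\le B^m$.

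The heart of the matter is an integral surface $X\subseteq\mathbb{P}^3_{\mathbb{Q}}$ of degree $d\ge5$, where the target is $\ll c\,d^{e}B^2$. After setting aside an exceptional locus, the effective global determinant method covers the remaining $\mathbb{Q}$-points of height $\le B$ by auxiliary curves cut out on $X$ of degree $\ll_d1$, to each absolutely irreducible component of which the effective plane-curve bound applies after a projection to $\mathbb{P}^2$, and the covering is sparse enough that, once $d\ge5$, the total stays below $B^2$. The exceptional locus consists of the lines and conics lying on $X$: an integral surface of degree $d$ either contains $\ll_d1$ lines or is ruled (cones included), and in the ruled or conic-bundle situations the relevant lines or conics are parametrized by a curve of degree $\ll_d1$, so that a height-stratified count combined with the curve bound pins their total contribution at $\ll_d B^2$ for $d\ge5$. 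Assembling these pieces yields the surface bound and, through the two reductions, the theorem.

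The principal obstacle is uniformity in $d$: the projection reduction, the Bézout estimates bounding the numbers and degrees of the auxiliary and exceptional components, the Hilbert-function input to the determinant method, and Walsh's polynomial-method refinement all have to be quantified with explicit polynomial dependence on $d$, and one must verify that the accumulated powers of $d$ collapse into a single factor $d^{e(n)}$ while no step lets the exponent of $B$ rise above $\dim(X)$. The hypothesis $d\ge5$ enters precisely to force the determinant-method coverings below the relevant dimensions and to keep the contributions of lines, cones, ruled loci and conic bundles below $B^{\dim(X)}$ — these being exactly the configurations that make the degrees $3$ and $4$ genuinely harder.
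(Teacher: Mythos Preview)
This theorem is quoted from \cite{Cluckers} and is not proved in the present paper; the paper instead proves the global-field generalization, Theorem~\ref{teorema4}, and its proof specialized to $K=\mathbb{Q}$ is what I compare your proposal against. Your overall architecture --- reduce to a hypersurface by projection, then induct on the dimension with surfaces as the crucial base case --- matches both \cite{Cluckers} and the paper. However, there is a genuine gap in your description of the determinant method: you repeatedly assert that the auxiliary hypersurfaces (and hence the curves they cut out on $X$) have degree $\ll_d 1$, i.e.\ bounded independently of $B$. This is false. The global determinant method produces a single auxiliary hypersurface of degree $M\asymp d^{c}B^{1/d^{1/(m)}}$ (see Theorem~\ref{polynomial method2} and Theorem~\ref{ell-venk}), and the curves cut out on a surface can have degree as large as $dM\asymp d^{c+1}B^{1/\sqrt d}$. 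Your inductive step, which invokes the determinant method on a hypersurface in $\mathbb{P}^{m+1}$ and then applies the inductive hypothesis to pieces ``of degree $\ll_d 1$'', therefore does not go through as written. The paper (and \cite{Cluckers}) sidesteps this entirely: the inductive step from $\mathbb{A}^{n}$ to $\mathbb{A}^{n-1}$ is done not with the determinant method but by slicing with a single well-chosen hyperplane $\{\sum a_iX_i=k\}$ (Lemma~\ref{Adequate form} plus the Combinatorial Nullstellensatz guarantee the slice keeps an absolutely irreducible top-degree part), and then summing over the $O(B)$ values of $k$.

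A second gap concerns the surface case. Because the intersection curves can have degree up to $\asymp d^{c}B^{1/\sqrt d}$, when you apply the effective curve bound $N_{\mathrm{aff}}(C)\lesssim \delta^{3}B^{1/\delta}(\log B+\delta)$ to the components of large degree you get a contribution of order $d^{e}B^{4/\sqrt d}$, which is $\le d^{e}B$ only for $d\ge 16$. For $5\le d\le 15$ the direct approach fails, and this is precisely why the paper develops Section~\ref{Section 6} (an affine version of Salberger's partition into curves of genuinely small degree, Proposition~\ref{construction of a family of cycles} and Theorem~\ref{bound for the non-singular locus outside a subset}) to handle those degrees. Your sketch asserts that ``once $d\ge 5$ the total stays below $B^2$'' without recognizing this dichotomy; filling it in is the most substantial missing ingredient. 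Your treatment of the lines (via ruled/cone/conic-bundle geometry) is in the spirit of \cite{Salberger2}; the paper instead counts lines by their direction $\boldsymbol w$, which must satisfy $f_d(\boldsymbol w)=0$, and applies the plane-curve bound to that locus (Proposition~\ref{contribution1}) --- a different but equally valid route.
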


While there has been a lot of progress in giving uniform bounds for varieties defined over $\mathbb{Q}$, there are not as many results for global fields. In \cite{Broberg}, Broberg generalized several of the estimates in \cite{Heath-Brown} to number fields. Also, the determinant method was reinterpreted in the framework of Arakelov's theory in \cite{Chen}, \cite{Chen2} and \cite{Liu}. For function fields, the Bombieri-Pila bound was proved for $\mathbb{F}_{q}(T)$ in \cite{Sedunova} by Sedunova, adapting a reinterpretation given in \cite{Helfgott} by Helfgott and Venkatesh of the determinant method of Bombieri-Pila. Also, by methods of model theory, in \cite{Cluckers0} Cluckers, Forey and Loeser proved a stronger bound than the one in \cite{Sedunova} for $\mathbb{F}_{q}(T)$ with $\text{char}(\mathbb{F}_{q})$ large enough. Quite recently, in \cite{Vermeulen} Vermeulen proved analogues of Theorem \ref{Walsh curves} and Theorem \ref{uniform growth} for hypersurfaces over $\mathbb{F}_{q}(T)$ of degree $d\geq 64$. 

In this article we extend the work of Salberger, Walsh, and Castryck, Cluckers, Dittmann and Nguyen on the  determinant method of Heath-Brown and Salberger, to give uniform estimates for the number of rational points of bounded height on projective varieties defined over global fields. More  precisely, we first prove the following extension of Theorem \ref{Walsh curves}, \cite[Theorem 2]{Cluckers} and \cite[Theorem 1.1]{Vermeulen} to global fields.

\begin{theorem}
Let $K$ be a global field of degree $d_{K}$. Let $H$ be the absolute projective multiplicative height. For any integral projective curve $C\subseteq \mathbb{P}_{K}^{n}$ of degree $d$ it holds
$$\left|\left\{ \boldsymbol x\in C(K):H(\boldsymbol x)\leq B \right\}\right|\lesssim_{K,n}\begin{cases} d^{4}B^{\frac{2d_{K}}{d}} & \text{ if }K\text{ is a number field},\\ d^{8}B^{\frac{2d_{K}}{d}} & \text{ if }K\text{ is a function field}.\end{cases} $$
\label{teorema1}
\end{theorem}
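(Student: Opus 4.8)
The plan is to reduce the theorem to the case of plane curves and then run the global determinant method of Salberger, in the strengthened form due to Walsh, uniformly over global fields. First I would use a generic projection argument: a curve $C \subseteq \mathbb{P}_K^n$ of degree $d$ can be projected birationally onto a plane curve $C' \subseteq \mathbb{P}_K^2$ of the same degree $d$, with the rational points of height $\leq B$ on $C$ mapping to rational points of height $\lesssim_{K,n} B$ on $C'$, at the cost of a multiplicative constant depending only on $K$ and $n$. This is standard over $\mathbb{Q}$ (cf.\ Heath-Brown, Salberger); over a global field one has to track how the height changes under a linear projection defined over $K$, which only distorts heights by a bounded factor. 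So it suffices to prove the bound $\lesssim_{K} d^{O(1)} B^{2d_K/d}$ for an absolutely irreducible plane curve, i.e.\ to establish the global-field analogue of Theorem \ref{curves Heath-Brown} and Theorem \ref{Walsh curves}.

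For the plane curve case, the core is the global determinant method: choose an auxiliary prime (or finite place) $\mathfrak{p}$ of $K$ of controlled norm, and for each residue point on $C'$ modulo $\mathfrak{p}$, the rational points of height $\leq B$ reducing to that point lie on a low-degree auxiliary curve obtained by a determinant/Siegel's-lemma argument; the degree of the auxiliary curve is roughly $\sqrt{\deg C'}$ times a logarithmic-in-$B$ quantity, and Bézout then bounds the number of points. Walsh's improvement, which removes the $B^\varepsilon$, comes from organizing the points into $O(\log B)$ "scales" and applying the polynomial method so that on each scale one genuinely gains; summing the geometric series in the scales produces the clean bound with no $\varepsilon$ and an explicit polynomial dependence on $d$. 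Over a global field, the places of $K$ play the role of the primes of $\mathbb{Z}$, and one needs: (i) a supply of places of $K$ with norm in a prescribed dyadic range, which follows from the prime ideal theorem / the analogue for function fields (the Weil bounds give enough primes of each degree); (ii) a product formula and an arithmetic of heights over $K$ so that the Siegel's-lemma input and the determinant estimates go through; (iii) control of the "bad" places (ramification, the places dividing the leading coefficients or the discriminant of $C'$), whose contribution must be absorbed into the constant depending on $K$.

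The main obstacle I expect is making the determinant estimate uniform in the global field and obtaining the correct power of $d$ and the correct logarithmic savings simultaneously. Two points require care. First, in the function field case the residue fields at places of $K$ are finite fields $\mathbb{F}_{q^f}$ whose size grows, so one must choose the auxiliary place(s) carefully and, because a single place may not suffice, possibly combine several places — this is precisely why the function-field bound in the statement carries $d^8$ rather than $d^4$: one pays an extra factor from the weaker density of usable places and from iterating the construction. Second, Walsh's polynomial-method step must be re-run keeping track of all constants; the delicate part is that the auxiliary polynomial has degree growing like $\sqrt d$, so Bézout contributes $d$, and the polynomial method contributes another factor, and one must check these multiply to $d^4$ (resp.\ $d^8$) and not worse. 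Everything else — the generic projection, the height comparisons, the counting of places of bounded norm — is routine given the results quoted in the excerpt and standard algebraic number theory / function field arithmetic, so the write-up should isolate the determinant lemma over $K$ as the one genuinely new technical ingredient and derive Theorem \ref{teorema1} from it together with the projection reduction.
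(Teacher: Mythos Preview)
Your overall architecture---project to a plane curve, then run the Salberger--Walsh global determinant method to produce an auxiliary curve and apply B\'ezout---matches the paper. But several of the technical explanations you give are incorrect, and one of them would send you down the wrong path if you tried to carry out the details.

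First, the projection $C\to\mathbb{P}^2$ is not birational in general. A generic linear projection of a non-planar curve of degree $d$ has image of degree $d$ but can have fibres of size up to $d$; the paper uses exactly this, obtaining $N(C,K,B)\le d\cdot N(\varphi(C),K,cd^{n-2}B)$ with a controlled height distortion. The extra factor $d$ from the fibres is absorbed into the final power of $d$.

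Second, and more importantly, your diagnosis of the $d^4$ versus $d^8$ discrepancy is wrong. It has nothing to do with the density of usable places: over a function field the Riemann Hypothesis gives \emph{better} control on primes in dyadic ranges than Landau's theorem does for number fields. The loss in the function-field exponent comes from the effective Noether irreducibility forms used to bound the set of primes $\mathfrak{p}$ for which the reduction of the curve fails to be absolutely irreducible (the quantity $b(f)$ in the paper). In characteristic $0$ one uses Ruppert's form of degree $d^2-1$; in small positive characteristic one must fall back on Kaltofen's form of degree $12d^6$, and this feeds an extra $d^4$ into the final bound via Lemma~\ref{b(f)}. If you try to locate the loss in the supply of primes you will not find it.

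Third, the auxiliary polynomial for a plane curve does not have degree $\sim\sqrt{d}$; that is the surface case ($n=2$). For curves the degree of $g$ is $\lesssim_K d^{[3,\ldots]}B^{2/d}b(f)/H_K(f)^{1/d^2}+d^{[3,\ldots]}$, and Walsh's saving comes not from a dyadic scale decomposition but from the factor $H_K(f)^{-1/d^2}$ together with the $b(f)$ bound; this is what eliminates the $B^\varepsilon$.
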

Theorem \ref{teorema1} is new when $K$ is a global field different from $\mathbb{Q}$ and $\mathbb{F}_{q}(T)$. Previous to this result, in the number field case the only known bound was $O_{K,n,d}(B^{\frac{2d_{K}}{d}+\varepsilon})$ given in \cite[Corollary 1]{Broberg}. For number fields different from $\mathbb{Q}$, our bound was simultaneously obtained by Liu in \cite{Liu2}.

Adapting the strategy devised in \cite[Remark 2.3]{Ellenberg} and developed in \cite[Proposition 4.2.1]{Cluckers}, from Theorem \ref{teorema1} we deduce the following extension of \cite[Theorem 3]{Cluckers} and \cite[Theorem 1.2]{Vermeulen}  to global fields.

\begin{theorem}[Bombieri-Pila type of bound]
Let $K$ be a global field of degree $d_{K}$. For any integral curve $C\subseteq \mathbb{A}_{K}^{n}$ of degree $d$, it holds
$$\left|\left\{\boldsymbol x\in C(K)\cap [B]_{\mathcal{O}_{K}}^{n}\right\}\right|\lesssim_{K,n} \begin{cases} d^{3}B^{\frac{1}{d}}(\log(B)+d) & \text{ if }K\text{ is a number field}, \\ d^{7}B^{\frac{1}{d}}(\log(B)+d) & \text{ if }K \text{ is a function field},\end{cases}$$
where $[B]_{\mathcal{O}_{K}}^{n}$ is defined in Section \ref{subsection 5.2}.
\label{teorema1.5}
\end{theorem}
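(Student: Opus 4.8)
The plan is to deduce Theorem~\ref{teorema1.5} from Theorem~\ref{teorema1} by a projective-to-affine passage, following the strategy of \cite[Remark 2.3]{Ellenberg} and \cite[Proposition 4.2.1]{Cluckers}. Given an integral affine curve $C\subseteq\mathbb{A}_K^n$ of degree $d$, one takes its projective closure $\overline{C}\subseteq\mathbb{P}_K^n$, which is again integral of degree $d$. The points of $C(K)\cap[B]_{\mathcal{O}_K}^n$ with integral coordinates of size at most $B$ map to points of $\overline{C}(K)$, but a naive bound on their projective height is roughly $B^{d_K}$ after clearing denominators (the coordinates become $(1:x_1:\dots:x_n)$ and one multiplies through by a common denominator at the finite places), which would only give $B^{2d_K^2/d}$ from Theorem~\ref{teorema1} — far too weak. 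The point of the Ellenberg--Venkatesh trick is to avoid this loss: instead of bounding all the points at once, one partitions $[B]_{\mathcal{O}_K}^n$ into boxes on which the relevant coordinate function is roughly constant in a logarithmic scale.

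Concretely, I would first handle the case where $C$ is a line separately (there the bound is easy and forced, giving the $\log B$ contribution) and otherwise assume $\overline{C}$ is not linear. Next, for a well-chosen coordinate, say $x_n$, I partition the range $1\le |x_n|\le B$ (and similarly the archimedean/non-archimedean sizes in the number field case, or the degree in the function field case) into $O(\log B + d)$ dyadic-type intervals $I_j$. On the subset of $C(K)\cap[B]_{\mathcal{O}_K}^n$ lying over each $I_j$, one applies a suitable linear change of variables / rescaling so that the projective height of the corresponding points of $\overline{C}(K)$ is $\lesssim B^{1/?}$... more precisely, one arranges that each box contributes a set of projective points of height $\lesssim B$ rather than $B^{d_K}$. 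Here one must be careful with the definition of $[B]_{\mathcal{O}_K}^n$ from Section~\ref{subsection 5.2} and with how the absolute multiplicative height $H$ behaves under the chosen rescalings over a general global field $K$ — one needs product-formula bookkeeping over all places of $K$, not just the archimedean ones. Applying Theorem~\ref{teorema1} to $\overline{C}$ on each box then yields $\lesssim_{K,n} d^{4}B^{2d_K/d}\cdot(\text{something})$ per box; the exponent $2d_K/d$ must be brought down to $1/d$, which is achieved because after rescaling the effective height bound per box is $B^{1/(2?)}$, i.e. one exploits that the affine box has ``diameter'' $B$ in $n$ coordinates while the projective height only sees a ratio. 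Summing over the $O(\log B+d)$ boxes produces the stated bound, with the extra factor $d^{3}$ (resp. $d^{7}$) coming from $d^{4}$ (resp. $d^{8}$) in Theorem~\ref{teorema1} after absorbing one factor of $d$ into the box count or the degree bookkeeping.

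The main obstacle I anticipate is the height bookkeeping over an arbitrary global field: one must verify that the rescaling used to pass from an affine box of size $B$ to projective points genuinely produces height $\lesssim_K B^{c}$ with the right constant $c$ (so that Theorem~\ref{teorema1} gives exponent $1/d$ and not something larger), uniformly in the box, and this requires controlling contributions at \emph{all} places of $K$ simultaneously via the product formula, including the non-archimedean ones where the notion of ``size'' in $[B]_{\mathcal{O}_K}^n$ is encoded. In the function field case there is the additional subtlety that ``$\log B$'' really means a degree, and one must make sure the partition into $O(\log B + d)$ pieces and the rescaling both make sense there; this is presumably why the function-field bound carries $d^{7}$ rather than $d^{3}$, reflecting the extra factor $d^{8}$ versus $d^{4}$ in Theorem~\ref{teorema1}. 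A secondary technical point is the choice of the distinguished coordinate $x_n$: one needs that $\overline{C}$ is not contained in the hyperplane at infinity and that projection to $x_n$ is not constant on $C$, which can be arranged by a preliminary linear change of coordinates over $K$ (or after a bounded field extension, handled as in \cite{Cluckers}), at the cost of absorbable constants.
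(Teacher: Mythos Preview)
Your proposal has a genuine gap. The issue is that Theorem~\ref{teorema1}, used as a black box, is not strong enough: points $(x_1,\dots,x_n)\in[B]_{\mathcal{O}_K}^n$ give projective points $(1:x_1:\cdots:x_n)$ of $K$-relative height $H_K\le B$, and Theorem~\ref{teorema1} (in the relative form of Theorem~\ref{Walsh general curves}) then yields only $\lesssim d^{4}B^{2/d}$ (resp.\ $d^{8}B^{2/d}$) --- off by a full factor of $B^{1/d}$. Your proposed dyadic partition along one coordinate does not recover this: on each slice $|x_n|\in[2^j,2^{j+1}]$ the remaining coordinates still range over $[B]_{\mathcal{O}_K}^{n-1}$, so the projective height of $(1:x_1:\cdots:x_n)$ remains $\approx B$ and the exponent $2/d$ persists on every slice. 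Summing over $O(\log B)$ slices then makes matters worse, not better. No rescaling of a single coordinate can reduce the contribution of the other coordinates to the height.

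The actual content of the Ellenberg--Venkatesh trick (and of \cite[Proposition~4.2.1]{Cluckers}) is not a rescaling of the \emph{points} but a choice of \emph{defining polynomial} with artificially large height, combined with the finer input Theorem~\ref{polynomial method2} rather than Theorem~\ref{teorema1}. After projecting to a plane curve $\mathcal{Z}(f)\subseteq\mathbb{A}_K^2$, one picks a prime $L\in\mathcal{O}_{\mathbbm{k}}$ of size $\approx B^{1/d_K}$ and forms the weighted homogenization $F_L(X_1,X_2,X_3)=\sum_i L^i f_i X_3^{\,d-i}$; integral points of $\mathcal{Z}(f)$ then give rational points $(x_1:x_2:L)$ on $\mathcal{Z}(F_L)$ of height $\le B$. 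The saving comes from the explicit factor $H_K(F_L)^{-1/d^{2}}$ in the degree bound of Theorem~\ref{polynomial method2}: since $H_K(F_L)\gtrsim B^{d}H_K(f_d)$ (cf.\ \eqref{F_L}), this contributes $B^{-1/d}$ and converts the $B^{2/d}$ into $B^{1/d}$. The $\log B$ in the final bound arises from the $\log H_K(F_L)$ term via Lemma~\ref{bound on b(f)}, not from any box decomposition. In short, the missing idea is that the $H_K(f)^{-1/(nd^{1+1/n})}$ dependence in the polynomial method is essential here, and it is invisible if you only invoke Theorem~\ref{teorema1} as stated.
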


We remark that in the function field case, we obtain more precise bounds in the exponent of $d$ than the ones presented in Theorem \ref{teorema1} and Theorem \ref{teorema1.5} which depend on the characteristic of the field as in \cite{Vermeulen} (see Definition \ref{multicharacteristic}, Theorem \ref{Walsh general curves} and Theorem \ref{Walsh general affine curves} for the precise statements).

Next, we obtain the following estimate for the number of points of bounded height on affine hypersurfaces, extending \cite[Theorem 0.4]{Salberger}, \cite[Theorem 4]{Cluckers} and \cite[Theorem 4.2]{Vermeulen} to global fields.

\begin{theorem}
Let $K$ be a global field of degree $d_{K}$. Given $n>2$ there exist a constant $e=e(n)$ such that for all polynomials $f\in \mathcal{O}_{K}[Y_{1},\ldots ,Y_{n}]$ of degree $d$,  whose homogeneous part of degree $d$ is absolutely irreducible, it holds
$$\left|\left\{ \boldsymbol x\in \mathcal{Z}(f)\cap [B]_{\mathcal{O}_{K}}^{n} \right\}\right|\lesssim_{K,n}d^{e}B^{n-2},\text{ whenever }d\geq 5.$$
In the case when $d=3,4$, for any $\varepsilon>0$ we have
$$\left|\left\{ \boldsymbol x\in \mathcal{Z}(f)\cap [B]_{\mathcal{O}_{K}}^{n} \right\}\right|\lesssim_{K,n,\varepsilon}\begin{cases} B^{n-2+\varepsilon} & \text{ if }d=4,\\ B^{n-3+\frac{2}{\sqrt{3}}+\varepsilon} & \text{ if }d=3.\end{cases}$$
\label{teorema3}
\end{theorem}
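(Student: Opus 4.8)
The plan is to induct on $n$, taking the affine surface case $n=3$ as the base and reducing higher dimensions to it by slicing with a pencil of parallel hyperplanes. Throughout, $[B]^{m}_{\mathcal{O}_{K}}$ is normalised so that each coordinate ranges over $\lesssim_{K}B$ elements of $\mathcal{O}_{K}$, so that a point of an affine line accounts for $\lesssim_{K}B$ elements of the box; this is the feature that makes the affine count of type $B^{n-2}$ rather than of type $B^{n-1}$.

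\textbf{The inductive step ($n\ge 4$).} Suppose the statement is known in $n-1$ variables. Let $f\in\mathcal{O}_{K}[Y_{1},\dots,Y_{n}]$ have degree $d$ with absolutely irreducible leading form $f_{d}$, so that $\mathcal{Z}(f_{d})\subseteq\mathbb{P}^{n-1}_{K}$ is a geometrically integral hypersurface of dimension $n-2\ge 2$. By a quantitative Bertini irreducibility theorem one can choose $\boldsymbol a=(a_{1},\dots,a_{n-1})\in\mathcal{O}_{K}^{\,n-1}$ with $H(\boldsymbol a)\lesssim_{n}d^{O(1)}$ such that the degree-$d$ form $g_{d}(Y_{1},\dots,Y_{n-1}):=f_{d}(Y_{1},\dots,Y_{n-1},a_{1}Y_{1}+\cdots+a_{n-1}Y_{n-1})$ is again absolutely irreducible (it is non-zero because $d\ge 2$, hence genuinely of degree $d$); when $n-1=3$ this uses that a general hyperplane section of the surface $\mathcal{Z}(f_{d})\subseteq\mathbb{P}^{3}$ is an irreducible plane curve. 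For $t\in\mathcal{O}_{K}$ put $g_{t}(Y_{1},\dots,Y_{n-1}):=f(Y_{1},\dots,Y_{n-1},a_{1}Y_{1}+\cdots+a_{n-1}Y_{n-1}+t)$, of degree exactly $d$ with leading form $g_{d}$. The map $P=(P_{1},\dots,P_{n})\mapsto\bigl(t,(P_{1},\dots,P_{n-1})\bigr)$ with $t:=P_{n}-\sum_{i}a_{i}P_{i}$ is injective and embeds $\mathcal{Z}(f)\cap[B]^{n}_{\mathcal{O}_{K}}$ into the union, over the $\lesssim_{K}B$ admissible values of $t$, of the sets $\mathcal{Z}(g_{t})\cap[cB]^{n-1}_{\mathcal{O}_{K}}$ with $c\lesssim_{n}d^{O(1)}$ absorbing $H(\boldsymbol a)$. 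Applying the inductive hypothesis to each $g_{t}$ (their common leading form $g_{d}$ is absolutely irreducible) and summing over $t$ multiplies the $(n-1)$-variable bound by $B$, turning $B^{n-3}$ into $B^{n-2}$, and likewise $B^{n-3+\varepsilon}$ into $B^{n-2+\varepsilon}$ and $B^{n-4+\frac{2}{\sqrt3}+\varepsilon}$ into $B^{n-3+\frac{2}{\sqrt3}+\varepsilon}$, with $e(n)$ depending only on $n$.

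\textbf{The base case ($n=3$).} Here $\mathcal{Z}(f)\subseteq\mathbb{A}^{3}_{K}$ has projective closure $X\subseteq\mathbb{P}^{3}_{K}$, an integral surface of degree $d$. The heart of the matter is the surface form of the global determinant method of Salberger over $K$ developed in the earlier sections (the analogue, with explicit dependence on $d$ and no $B^{\varepsilon}$ loss for $d\ge 5$, of \cite{Salberger} effectivised as in \cite{Cluckers} and \cite{Vermeulen}): it gives that the $K$-points of $X$ of height $\le B$ which do not lie on a line contained in $X$ number $\lesssim_{K}d^{O(1)}B^{\frac{2}{\sqrt d}}$, with an extra $\log B$ factor when $d=4$ and replaced by $B^{\frac{2}{\sqrt 3}+\varepsilon}$ when $d=3$; in particular for $d\ge 5$ this is $\lesssim_{K}d^{O(1)}B$ since $\tfrac{2}{\sqrt d}\le\tfrac{2}{\sqrt 5}<1$. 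It remains to bound the elements of $\mathcal{Z}(f)\cap[B]^{3}_{\mathcal{O}_{K}}$ lying on lines contained in $\mathcal{Z}(f)$: if $\mathcal{Z}(f)$ contains only $\lesssim d^{2}$ lines, each contributes $\lesssim_{K}B$ points to the box; if it contains infinitely many, then $X$ is a ruled surface of degree $\ge 3$, its lines are parametrised by a curve of degree $\lesssim d^{O(1)}$, and one sums the finer per-line bound (a line whose direction has height $H$ meets the box in $\lesssim_{K}B/H^{d_{K}}$ points, with the evident modification over function fields) over that curve, the sum converging by summation by parts together with the curve count of Theorem \ref{teorema1} applied to $\mathcal{Z}(f_{d})$ or to the base of the ruling, because the exponent $\tfrac{2d_{K}}{d}$ there is $<d_{K}$ for $d\ge 3$. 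Theorems \ref{teorema1} and \ref{teorema1.5} also enter in reducing the auxiliary hypersurface sections produced by the determinant method to the curve bounds. Combining the contributions yields $\lesssim_{K}d^{e(3)}B$ for $d\ge 5$, $\lesssim_{K,\varepsilon}B^{1+\varepsilon}$ for $d=4$, and $\lesssim_{K,\varepsilon}B^{\frac{2}{\sqrt 3}+\varepsilon}$ for $d=3$.

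\textbf{The main obstacle.} The decisive step is the base case: obtaining the surface estimate with polynomial dependence on $d$ and no $B^{\varepsilon}$ loss when $d\ge 5$, over an \emph{arbitrary} global field. This forces one to re-run Salberger's global determinant method with the heights, integral lattices and point-counting estimates for global fields set up earlier in the paper, and to carry out the effectivisation of \cite{Cluckers} for number fields and of \cite{Vermeulen} for function fields, where the constants are genuinely sensitive to the characteristic — the source of the larger power of $d$ in the function-field bounds already present in Theorems \ref{teorema1} and \ref{teorema1.5}. Finally, the trichotomy $d\ge 5$ / $d=4$ / $d=3$ is exactly that of Theorem \ref{dim growth th}: it reflects $\tfrac{2}{\sqrt d}<1$, $=1$, $>1$, and the exponents for $d=3,4$ are inherited unchanged from the (conjecturally non-sharp) bounds for quartic and cubic surfaces, so no improvement on them is to be expected by this method.
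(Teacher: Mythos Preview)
Your inductive step is correct and matches the paper's proof almost verbatim: slice by a pencil of parallel hyperplanes chosen so that the leading form restricts to an absolutely irreducible form in one fewer variable (the paper invokes Lemma~\ref{Adequate form} plus the Combinatorial Nullstellensatz where you say ``quantitative Bertini''), then sum over the $\lesssim_{K}d^{O(1)}B$ admissible values of the slicing parameter.

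The base case $n=3$, however, is where your sketch diverges from the paper and contains a genuine gap. You assert that the global determinant method yields directly that the points of $X$ off lines number $\lesssim_{K}d^{O(1)}B^{2/\sqrt{d}}$ with \emph{polynomial} dependence on $d$. The paper does not establish such a statement, and the authors explicitly remark that the Salberger-style partitioning argument (Section~\ref{Section 6}, culminating in Theorem~\ref{bound for the non-singular locus outside a subset}) gives only $\lesssim_{K,d}$ bounds with \emph{double-exponential} dependence on $d$. This is why the paper's proof of Proposition~\ref{induction step n=2} bifurcates: for large $d$ (at least $16$ over number fields, $64$ over function fields) one uses the auxiliary hypersurface of Theorem~\ref{ell-venk} and bounds the resulting curves by degree ranges---lines via Proposition~\ref{contribution1}, curves of degree in $[2,\log B]$ via Theorem~\ref{Walsh general affine curves}, and curves of degree exceeding $\log B$ by a crude estimate giving $B^{4/\sqrt{d}}$ (resp.\ $B^{8/\sqrt{d}}$), which is $\le B$ only in this large-$d$ regime. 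For the finitely many remaining small values of $d$, the partitioning of Section~\ref{Section 6} is invoked, and the double-exponential constant is absorbed into the implied $\lesssim_{K}$ by enlarging it. Your sketch conflates these two regimes into a single nonexistent estimate.

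A secondary point: your treatment of lines via the dichotomy ``finitely many ($\lesssim d^{2}$) versus ruled'' is not how the paper proceeds, and the bound $d^{2}$ on the number of lines is incorrect in general (already a smooth cubic has $27$). The paper's Proposition~\ref{contribution1} instead sums uniformly over all possible \emph{directions} $\boldsymbol{w}\in\mathcal{Z}(f_{d})\subseteq\mathbb{P}^{2}$, using that at most $d(d-1)$ lines share a given direction and that a line with direction of height $H$ meets the box in $\lesssim_{K}B/H$ points, then closes with summation by parts and Corollary~\ref{Walsh corollary} for the plane curve $\mathcal{Z}(f_{d})$. Your ruled-surface alternative could perhaps be made to work, but it is both less direct and not what the paper does.
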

Theorem \ref{teorema3} is new when $K$ is a number field different from $\mathbb{Q}$. When $K$ is a function field, the result is new when $K$ is a function field different from $\mathbb{F}_{q}(T)$ or  $K=\mathbb{F}_{q}(T)$ and $3\leq d< 64$. 

By means of an effective projection argument as in \cite{P2} which relies on the Combinatorial Nullstellensatz \cite{Alon} and on an argument of Mumford found in \cite{MR0282975}, from Theorem \ref{teorema3} we deduce the dimension growth conjecture for global fields. More precisely, we have   

\begin{theorem}[Dimension growth conjecture for global fields]
Let $K$ be a global field of degree $d_{K}$. Given $n>1$, there exists a constant $e=e(n)$ such that for all integral projective varieties $X\subseteq \mathbb{P}_{K}^{n}$ of degree $d$ it holds
$$\left| \left\{ \boldsymbol x\in X(K):H(\boldsymbol x)\leq B\right\} \right|\lesssim_{K,n}d^{e}B^{d_{K}\dim(X)}\text{ whenever }d\geq 5.$$
In the case $d=3,4$, for any $\varepsilon>0$ we have 
$$\left| \left\{ \boldsymbol x\in X(K):H(\boldsymbol x)\leq B\right\} \right|\lesssim_{K,n,\varepsilon}\begin{cases} B^{d_{K}(\dim(X)+\varepsilon)} & \text{ if }d=4,\\ B^{d_{K}(\dim(X)-1+\frac{2}{\sqrt{3}}+\varepsilon)} & \text{ if }d=3.\end{cases}$$
\label{teorema4}
\end{theorem}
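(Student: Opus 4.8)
The plan is to deduce the projective statement (Theorem \ref{teorema4}) from the affine statement (Theorem \ref{teorema3}) via an effective projection argument, following the strategy of \cite{P2}. First I would reduce to the case where $X$ is a hypersurface: if $X \subseteq \mathbb{P}^n_K$ is an integral projective variety of degree $d$ and dimension $\dim(X) = r$, I would choose a generic linear projection $\pi \colon \mathbb{P}^n_K \dashrightarrow \mathbb{P}^{r+1}_K$ whose center is disjoint from $X$, so that $\pi$ restricts to a finite morphism $X \to X'$ onto an integral hypersurface $X' \subseteq \mathbb{P}^{r+1}_K$ of degree $d' \le d$. The point of invoking the Combinatorial Nullstellensatz \cite{Alon} together with Mumford's argument \cite{MR0282975} is to make this projection \emph{effective}: one needs to control the heights of the coefficients of the linear forms defining $\pi$, and to ensure the projection is generically injective on $X$ (so that the fibers have bounded size, absorbed into the implied constant depending on $n$), while keeping the degree $d'$ of the image hypersurface comparable to $d$. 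One must also pass from the absolute projective height on $\mathbb{P}^n_K$ to heights on the target projective space; a linear map of bounded height changes heights only by a bounded multiplicative factor, so $H(\pi(\boldsymbol x)) \lesssim_{K,n} H(\boldsymbol x)$ and the count of points of height $\le B$ on $X$ is, up to a constant depending on $K$ and $n$ and a bounded factor for fibers, at most the count of points of height $\le C_{K,n} B$ on $X'$.

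Next I would pass from the projective hypersurface $X' \subseteq \mathbb{P}^{r+1}_K$ of degree $d'$ to an affine one in order to apply Theorem \ref{teorema3}. Covering $\mathbb{P}^{r+1}_K$ by its standard affine charts $\{x_i \ne 0\}$, a rational point of height $\le B$ on $X'$ lies in some chart and, after clearing denominators, corresponds to a point of $\mathcal{Z}(f) \cap [B']_{\mathcal{O}_K}^{r+1}$ for the dehomogenization $f$ of a defining form $F$ of $X'$, where $B' \lesssim_{K} B^{d_K}$ reflects the relation between the absolute multiplicative height $H$ on $\mathbb{P}^{r+1}_K$ and the box $[\,\cdot\,]_{\mathcal{O}_K}^{r+1}$ of algebraic integers of bounded size (this is where the factor $B^{d_K \dim X}$ in the conclusion, as opposed to $B^{\dim X}$, comes from). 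There is a subtlety: Theorem \ref{teorema3} requires the top-degree homogeneous part of $f$ to be absolutely irreducible, which need not hold for every dehomogenization of $F$; one circumvents this by first applying a generic $\mathrm{GL}_{r+2}$-change of coordinates (again effective, via the Combinatorial Nullstellensatz, so that it has bounded height) which puts $X'$ in a position where, in each chart, the leading form is absolutely irreducible — equivalently, arranging that the hyperplane at infinity meets $X'$ properly and that the intersection is a geometrically irreducible variety, using that $X'$ is integral of degree $d' \ge 2$. Summing the bound from Theorem \ref{teorema3} over the finitely many (depending only on $n$) charts then yields $\lesssim_{K,n} (d')^e (B')^{r} \lesssim_{K,n} d^{e} B^{d_K r}$ in the range $d \ge 5$, and the analogous bounds with the $B^{\varepsilon}$ loss when $d = 3, 4$.

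Finally I would handle the small-degree cases $d = 1$ and $d = 2$ separately, since Theorem \ref{teorema3} is stated for $d \ge 3$: linear varieties are excluded by the hypothesis $d \ge 2$ implicit in the interesting range (or are trivial, being isomorphic to projective spaces of lower dimension with a well-understood point count), and quadrics $X \subseteq \mathbb{P}^n_K$ can be treated directly — either by the classical parametrization of quadrics through a rational point combined with induction on $n$, or by noting that the projection argument still applies and $d = 2$ quadric hypersurfaces over global fields admit the expected bound $\lesssim_{K,n} B^{d_K \dim X}$, possibly with a logarithmic factor in degenerate cases as Serre's example $xy = zw$ shows; however since the statement of Theorem \ref{teorema4} only concerns $d \ge 3$, these cases need not be addressed at all, and one simply quotes Theorem \ref{teorema3} in the three regimes $d \ge 5$, $d = 4$, $d = 3$.

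I expect the main obstacle to be making the projection and change-of-coordinates arguments genuinely \emph{effective and uniform over all global fields $K$}: one must produce the linear forms with height bounded independently of $X$ (depending only on $n$ and mild data of $K$), guarantee generic injectivity of the projection on $X$ and absolute irreducibility of the resulting leading forms simultaneously, and track how the absolute multiplicative height interacts with the chosen coordinates and with the box $[\,\cdot\,]_{\mathcal{O}_K}^{n}$ under base change between charts. The Combinatorial Nullstellensatz provides a point avoiding a controlled-degree ``bad locus'' inside a box of integers of size depending only on $n$, and Mumford's argument controls the degree of the image; combining them carefully, while also ensuring the bound on fiber cardinality is uniform, is the technical heart of the deduction. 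Everything else — the chart decomposition, the height comparisons up to constants depending on $K$ and $n$, and the final summation — is routine bookkeeping once the effective projection is in place.
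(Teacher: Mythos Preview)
Your projection to a hypersurface matches the paper's first step. The gap is in the passage from the projective hypersurface $X' = \mathcal{Z}(F) \subseteq \mathbb{P}^{r+1}_K$ to an affine count. ``Clearing denominators'' on a point of $X'(K)$ produces, via Proposition~\ref{Serre}, an integral lift $(y_0, \ldots, y_{r+1}) \in [c_1 B]_{\mathcal{O}_K}^{r+2}$ satisfying the \emph{homogeneous} equation $F(\boldsymbol y) = 0$; it does \emph{not} produce a point of $[B']_{\mathcal{O}_K}^{r+1}$ on the dehomogenization $f = F(1, Y_1, \ldots, Y_{r+1})$, since there is no way to force $y_0 = 1$ while keeping all coordinates in $\mathcal{O}_K$ and in a bounded box (already over $\mathbb{Q}$: the affine coordinates $y_i/y_0$ are rational, not integral). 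The paper's fix is to skip the charts entirely and pass to the \emph{affine cone} $C(F) \subseteq \mathbb{A}^{r+2}_K$: the integral lift lies in $C(F)_{\text{aff}}(\mathcal{O}_K, c_1 B)$, and because $F$ is homogeneous its top-degree part is $F$ itself, so the hypothesis of Theorem~\ref{teorema3} is automatic whenever $F$ is absolutely irreducible. Theorem~\ref{teorema3} applied in $r+2$ variables then yields the exponent $(r+2)-2 = r = \dim X$. In particular, the step you flag as ``the main obstacle'' --- arranging via a $\mathrm{GL}$ change of coordinates that the hyperplane at infinity cuts $X'$ in a geometrically irreducible divisor --- is not needed at all.

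You also omit the case where $X'$ is integral over $K$ but not geometrically integral, i.e.\ $F$ is irreducible but not absolutely irreducible, so Theorem~\ref{teorema3} does not apply to the cone. The paper disposes of this via Remark~\ref{abs irred assumption}: there is then a form $g$ of degree $d$, not a multiple of $F$, vanishing on every $K$-rational point of $X'$, and a crude hyperplane-section bound on the $(r-1)$-dimensional variety $\mathcal{Z}(F)\cap\mathcal{Z}(g)$ gives $\lesssim_{K,n} d B^{r-1}$, which is more than enough.
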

Theorem \ref{teorema4} is new for number fields different from $\mathbb{Q}$. When $K$ is a function field, the result is new when $K$ is different from $\mathbb{F}_{q}(T)$ or when $K=\mathbb{F}_{q}(T)$ and $X$ is a hypersurface of degree $3\leq d<64$ or when $K=\mathbb{F}_{q}(T)$ and $X$ is a projective variety of codimension at least $2$ and degree $d\geq 3$. Arguably,   these are the first bounds appearing in the literature estimating the number of points of bounded height for varieties over global fields different from $\mathbb{Q}$ or $\mathbb{F}_{q}(T)$. 

The proof of all these theorems  are obtained by adapting and extending the strategies developed in \cite{Heath-Brown,Salberger1, Salberger2, Salberger, Walsh3, Cluckers} to global fields. Roughly speaking, the proof are obtained by the polynomial method (as it was presented in \cite{Walsh3}), namely by constructing a polynomial $g$ of small degree vanishing on $\{\boldsymbol x\in X(K):H(\boldsymbol x)\leq B\}$, which does not vanishes identically on $X$. Then we study the rational points lying in the irreducible components of $\mathcal{Z}(g)\cap X$. For those irreducible components of high degree we argue as in \cite{Cluckers}, while for those of small degree we rework and simplify the proof of \cite[Main Lemma 3.2]{Salberger}. Carrying out this last step adds difficulties all along the article, which were not present in \cite{Cluckers, Walsh3}. Also new challenges appear from dealing with general global fields that will be explained as they arise. It is relevant to emphasize that this article presents a unified treatment that deals with number fields and function fields simultaneously.

When we were at the final stages of writing the first arXiv version of this manuscript, Liu uploaded to the arXiv the article \cite{Liu2} where he proved Theorem \ref{teorema1} for number fields by reinterpreting \cite{Salberger} in the framework of Arakelov theory and by using ideas of \cite{Walsh3,Cluckers}. 

\subsection*{Acknowledgments} We thank Juan Manuel Menconi for useful discussions. We thank Floris Vermeulen for pointing us a mistake in the first version of the proof of Proposition \ref{induction step n=2} in the case of function fields. We thank Chunhui Liu for useful comments regarding the effectiveness of the estimates in our results. We thank the referee for his/her exhaustive revision of the manuscript, and for pointing us some errors and making several comments that improved the exposition of the article.

\section{Heights and primes in global fields}
\label{section 2}

The purpose of this section is manifold. First we establish a normalization of the absolute values of a global field. We use this to define the height function that will be used in this article and recall some basic properties of it.  Secondly, we prove a proposition that allows us to find affine coordinates of  projective points with controlled affine height. Then we recall the theorems of Bombieri and Vaaler, and Thunder, that give solutions of controlled height of a system of linear equations. Next, we define and analize two notions of heights of polynomials that will be used in this article. Finally we present some estimates regarding the distribution of primes in global fields and we comment on how to make all the bounds in all the statements in this manuscript effective on the dependence of the global field $K$. 

\begin{notation}
We use the asymptotic notation $X=O(Y)$ or $X\lesssim Y$ to mean $|X|\leq C|Y|$ for some constant $C$. We also use $O_{K,n,d}(Y)$ or $\lesssim_{K,n,d}Y$ to mean that the implicit constants depend on $K,n$ and $d$.
\end{notation}

\subsection{Absolute values and relative height}
 Throughout this paper, $K$ denotes a global field, i.e. a finite separable extension of $\mathbb Q$ or $\mathbb{F}_{q}(T)$, in  which case we further assume that the field of constants is $\mathbb{F}_{q}$. We will denote by $d_{K}$ the degree of the extension  $K/ \mathbbm{k}$,  where $\mathbbm{k}$ indistinctively denotes the base fields  $\mathbb{Q}$  or  $\mathbb{F}_{q}(T)$.

Let $K$ be a number field and let $\mathcal{O}_{K}$ be its ring of integers. Then each embedding $\sigma:K\hookrightarrow \mathbb{C}$ induces a place $v$, by means of the equation
\begin{equation}
|x|_{v}:=|\sigma(x)|^{\frac{n_{v}}{d_{K}}}_{\infty},
\label{definition of arq places}
\end{equation}
where $|\cdot |_{\infty}$ denotes the absolute value of $\mathbb{R}$ or $\mathbb{C}$ and $n_{v}=1$ or $2$, respectively. 
Such places will be called the places at infinite, and the set of these places is denoted by  $M_{K,\infty}$. Note that $\sum_{v\in M_{K,\infty}}n_{v}=d_{K}$. They are all the archimedean places of $K$. Since the complex embeddings come in pairs that differ by complex conjugation, we have $|M_{K,\infty}|\leq d_{K}$. 

Now let $\mathfrak{p}$ be a non-zero prime ideal of the number field $K$, and denote by $\text{ord}_{\mathfrak{p}}$ the usual $\mathfrak{p}$-adic valuation. Associated to $\mathfrak{p}$, we have the place $v$ in $K$ given by the equation
$$|x|_{v}:=|x|_{\mathfrak{p}}:=\mathcal{N}_{K}(\mathfrak{p})^{-\frac{\text{ord}_{\mathfrak{p}}(x)}{d_{K}}},$$
where $\mathcal{N}_{K}(\mathfrak{p})$ denotes the cardinal of the finite quotient $\mathcal{O}_{K}/\mathfrak{p}$. Similarly, the norm of a non-zero ideal $I\subseteq \mathcal{O}_{K}$, denoted by $\mathcal{N}_{K}(I)$, is just the cardinal of the finite quotient $\mathcal{O}_{K}/I$. Such places are called the finite places, and the set of these places is denoted by $M_{K,\text{fin}}$. They are all the non-archimedean places of $K$. The set of places of $K$ is then the union $M_{K,\infty}\cup M_{K,\text{fin}}$, and it will be denoted by $M_{K}$.

Now, let us suppose that $K$ is a function field over $\mathbb{F}_{q}$, such that $\mathbb{F}_{q}$ is algebraically closed in $K$ (in other words, the constant field of $K$ is $\mathbb{F}_{q}$). A prime in $K$ is, by definition, a discrete valuation ring $\mathcal{O}_{(\mathfrak{p})}$ with maximal ideal $\mathfrak{p}$ such that $\mathbb{F}_{q}\subseteq \mathcal{O}_{(\mathfrak{p})}$ and the quotient field of $\mathcal{O}_{(\mathfrak{p})}$ equals to $K$. By abuse of notation, when we refer to a prime in $K$, we will refer to the maximal ideal $\mathfrak{p}$. Associated to $\mathfrak{p}$, we have the usual $\mathfrak{p}$-adic valuation, that we will denote by $\text{ord}_{\mathfrak{p}}$. The degree of $\mathfrak{p}$, denoted by $\deg(\mathfrak{p})$ will be the dimension of $\mathcal{O}_{(\mathfrak{p})}/\mathfrak{p}$ as an $\mathbb{F}_{q}$-vector space, which is finite. Then the norm of $\mathfrak{p}$ is defined as $\mathcal{N}_{K}(\mathfrak{p}):=q^{\deg(\mathfrak{p})}$. 
Any prime $\mathfrak{p}$ of $K$ induces a place $v$ in $K$ by the equation
$$|x|_{v}:=|x|_{\mathfrak{p}}:=\mathcal{N}_{K}(\mathfrak{p})^{-\frac{\text{ord}_{\mathfrak{p}}(x)}{d_{K}}}.$$
They are all the places in $K$. The set of all places in $K$ is denoted by $M_{K}$. Now we fix an arbitrary place $v_{\infty}$ in $M_{K}$ above the place in $\mathbb{F}_{q}(T)$ defined by $\left |\frac{f}{g}\right |_{\infty}:=q^{\deg(f)-\deg(g)}$. Its corresponding prime will be denoted $\mathfrak{p}_{\infty}$; it has degree at most $d_{K}$. The ring of integers of $K$ is the subset
$$\mathcal{O}_{K}:=\left\{ x\in K: |x|_{v}\leq 1\text{ for all }v\in M_{K},v\neq  v_{\infty}\right\}.$$
Given $x\in \mathcal{O}_{K}\;\backslash \; \{0\}$ we define $\mathcal{N}_{K}(x):=\prod_{\mathfrak{p}\neq \mathfrak{p}_{\infty}}\mathcal{N}_{K}(\mathfrak{p})^{\text{ord}_{\mathfrak{p}}(x)}$. By definition, $\text{ord}_{\mathfrak{p}}(x)\geq 0$ for all $\mathfrak{p}\neq \mathfrak{p}_{\infty}$, so that $\mathcal{N}_{K}(x)$ is a positive integer. 
The primes in $\mathcal{O}_{K}$ will be the primes $\mathfrak{p}\neq \mathfrak{p}_{\infty}$ of $K$ . We will denote $M_{K,\infty}:=\{v_{\infty}\}$ and $M_{K,\text{fin}}:=M_{K}\, \backslash \, M_{K,\infty}$. 

By our choice of normalization of the absolute values of $K$, it holds the product formula
\begin{equation}
\prod_{v\in M_{K}}|x|_{v}=1\text{ for all }x\in K.
\label{product formula}
\end{equation}
Now, given a global field $K$, we define the absolute multiplicative projective height of $K$ of a point ${\bf x}=(x_{0}:\ldots :x_{n})\in \mathbb{P}^{n}(K)$, to be the function
$$H({\bf x}):=\displaystyle \prod_{v\in M_{K}}\max_{i}\{|x_{i}|_{v}\},$$
and the relative multiplicative projective height by
$$H_{K}({\bf x}):=H({\bf x})^{d_{K}}.$$
If $x\in K$, $H_{K}(x)$ will always denote the projective height $H_{K}(1:x)$. 

For our purposes, it will be necessary to understand how the affine height of a point behaves under the action of a polynomial. It is easy to show (see \cite[Proposition B.2.5]{Hindry}) that if $f(T_{1},\ldots ,T_{n})=\sum_{(i_{1},\ldots ,i_{n})}c_{i_{1},\ldots ,i_{n}}T_{1}^{i_{1}}\cdots T_{n}^{i_{n}}$, ${\bf c}=(c_{i_{1},\ldots ,i_{n}})_{i_{1},\ldots ,i_{n}}$ and $R$ is the number of $(i_{1},\ldots, i_{n})$ with $c_{i_{1},\ldots ,i_{n}}\neq 0$, then
\begin{equation}
H_{K}(f({\bf x}))\leq R^{d_{K}}H_{K}(1:{\bf c})H_{K}(1:{\bf x})^{\deg(f)}.
\label{polynomialheight2}
\end{equation}
Now, let $\phi_{0},\ldots ,\phi_{r}$ be homogeneous polynomials of degree $m$ in $X_{0},\ldots ,X_{n}$ with coefficients in $K$. Let $\boldsymbol c$ be the projective point consisting of the coefficients of all the polynomials $\phi_{i}$. Let us suppose that the maximum number of monomials appearing in any one of the $\phi_{i}$ is $R$.  Then for all $\boldsymbol x\in \mathbb{P}^{n}(K)$ such that $\phi_{i}(\boldsymbol x)$ is not zero for all $i$, it holds
\begin{equation}
H_{K}(\phi_{0}(\boldsymbol x):\ldots :\phi_{r}(\boldsymbol x))\leq R^{d_{K}}H_{K}(\boldsymbol c)H_{K}(\boldsymbol x)^{m}.
\label{polynomialheight}
\end{equation}  
Also, for any $x\in \mathcal{O}_{K}\backslash \{0\}$, it holds
\begin{equation}
\mathcal{N}_{K}(x)\leq H_{K}(x).
\label{norm}
\end{equation}

We will require to lift a bounded subset in projective space to a subset in affine space. The next proposition, which is a generalization of \cite[Section 13.4]{Serre}, states that this can be done in a controlled manner.

\begin{proposition}
Let $K$ be a global field, and let $d\geq 1$ be an integer. There exist effective computable    constants $c_{1}:=c_{1}(K)$, $c_{2}:=c_{2}(K)$ and $c_{3}:=c_{3}(K)$ such that for every $\boldsymbol x\in \mathbb{P}^{d}(K)$ there exists $(y_{0},\ldots ,y_{d})\in \mathcal{O}_{K}^{d+1}$ a lift of $\boldsymbol x$ verifying:
\begin{enumerate}
\item if $K$ is a number field, for all embedding $\sigma:K\hookrightarrow \mathbb{C}$, $\max_{i}|\sigma(y_{i})|\leq c_{1}H(\boldsymbol x)$, thus for all $v\in M_{K,\infty}$, $\max_{i}|y_{i}|_{v}\leq c_{1} H_{K}(\boldsymbol x)^{\frac{n_{v}}{d_{K}^{2}}}$ after relabeling $c_{1}$. If $K$ is a function field, for the place $v_{\infty}$ it holds $\max_{i}|y_{i}|_{v_{\infty}}\leq c_{1}H_{K}(\boldsymbol x)^{\frac{1}{d_{K}}}$;  
\item for any prime $\mathfrak{p}\notin M_{K,\infty}$ with $\mathcal{N}_{K}(\mathfrak{p})> c_{2}$, it holds that $\mathfrak{p}\not |\sum_{i=0}^{d}y_{i}\mathcal{O}_{K}$. Equivalently, for any place $v$ corresponding to such primes it holds $\max_{i}|y_{i}|_{v}=1$; 
\item it holds
$$\prod_{v\in M_{K,\emph{\text{fin}}}}\max_{i}|y_{i}|_{v}\geq c_{3}.$$
\end{enumerate} 
\label{Serre}
\end{proposition}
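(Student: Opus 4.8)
The plan is to treat the number field and function field cases in parallel, producing the lift $(y_0,\dots,y_d)$ by the standard trick of clearing denominators from any initial representative and then stripping away the common ideal factor. First I would start with an arbitrary representative $\boldsymbol x = (x_0:\dots:x_d)$ with $x_i \in K$; by multiplying through by a suitable nonzero element of $\mathcal O_K$ (or, in the function field case, an element whose only pole is at $\mathfrak p_\infty$) I may assume all coordinates lie in $\mathcal O_K$. Let $\mathfrak a = x_0\mathcal O_K + \cdots + x_d\mathcal O_K$ be the ideal they generate. The class group of $\mathcal O_K$ is finite, so there is a finite set of ideal classes and for each class a fixed integral ideal representative; this is where the constant $c_2$ will enter, since I want to replace $\mathfrak a$ by an equivalent ideal supported only on primes of bounded norm. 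Concretely, I would choose a fractional ideal $\mathfrak b$ with $\mathfrak a \mathfrak b$ principal, say $\mathfrak a\mathfrak b = \lambda\mathcal O_K$, and $\mathfrak b$ supported on primes of norm at most some constant depending only on $K$ (possible because finitely many prime ideals generate the class group). Then set $y_i := x_i/\lambda$ times an appropriate element, arranging $y_i \in \mathcal O_K$ and $\sum y_i\mathcal O_K = \mathfrak b^{-1}\cdot(\text{something of bounded norm})$, so that part (2) holds with $c_2$ the largest norm among the chosen generators of the class group.

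Next, for part (1), I would track the archimedean (resp. $v_\infty$-adic) sizes through this construction. Dividing by $\lambda$ changes the infinite absolute values in a controlled way governed by $|\lambda|_v$, and by the product formula $\prod_v |\lambda|_v = 1$ together with the fact that at finite places $|\lambda|_v$ is essentially $\mathcal N_K(\mathfrak a\mathfrak b)^{-1/d_K}$ up to bounded factors, one recovers that $\max_i |y_i|_v$ is bounded by $c_1 H_K(\boldsymbol x)^{n_v/d_K^2}$ (number field case) or $c_1 H_K(\boldsymbol x)^{1/d_K}$ (function field case). The cleanest way to organize this is to first normalize the original $x_i$ so that $\max_i |x_i|_v \le H(\boldsymbol x)$ at every infinite place — which is possible up to scaling by a unit or by using a reduced representative — and then note that the subsequent modifications only multiply by elements of norm bounded purely in terms of $K$. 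For part (3), the lower bound $\prod_{v\in M_{K,\mathrm{fin}}} \max_i |y_i|_v \ge c_3$ is essentially the statement that $\mathcal N_K$ of the ideal $\sum y_i\mathcal O_K$ is bounded below, which again follows because that ideal was arranged to be (equivalent to, hence after the construction equal to) one of finitely many fixed ideals, up to primes of norm at most $c_2$; in fact once (2) holds, the only finite places contributing are those of norm $\le c_2$, and there the contribution is a bounded power of a bounded prime, so the whole product lies between two constants depending only on $K$.

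I would handle the function field case with the same skeleton but replacing "ring of integers $\mathcal O_K$" with the Dedekind domain of functions regular away from $\mathfrak p_\infty$, "class group" with the corresponding (finite) Picard group, and the archimedean estimate with the single $v_\infty$-estimate; the product formula and finiteness of the class group hold verbatim, so no genuinely new idea is needed, only care that the constants $c_1, c_2, c_3$ can be taken to depend on $K$ alone (and are effectively computable, since one can in principle compute generators of the class group and their norms). A subtle point worth spelling out is the claimed relabeling of $c_1$: passing from the estimate $\max_i|\sigma(y_i)| \le c_1 H(\boldsymbol x)$ for each embedding to $\max_i|y_i|_v \le c_1 H_K(\boldsymbol x)^{n_v/d_K^2}$ uses $|y_i|_v = |\sigma(y_i)|^{n_v/d_K}$ and $H(\boldsymbol x) = H_K(\boldsymbol x)^{1/d_K}$, so $|y_i|_v \le c_1^{n_v/d_K} H_K(\boldsymbol x)^{n_v/d_K^2}$, and one absorbs $c_1^{n_v/d_K} \le c_1$ (for $c_1\ge 1$) into the constant.

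The main obstacle will be part (1): bookkeeping the infinite-place sizes of $y_i$ after dividing by $\lambda$ and multiplying by the class-group correction, so that the final bound is exactly $H(\boldsymbol x)$ up to a constant depending only on $K$ and not, say, on the number of prime factors of $\mathfrak a$. The key is to choose the correction element $\lambda$ (and the finitely many fixed ideal generators) so that its finite-place valuations exactly cancel those of the common ideal $\mathfrak a$ up to the bounded part, and then invoke the product formula to convert this into an infinite-place bound; the finiteness of the class group is what makes the "bounded part" genuinely bounded in terms of $K$ alone. The other steps — clearing denominators, extracting the ideal gcd, verifying (2) and (3) — are routine once this normalization is set up correctly.
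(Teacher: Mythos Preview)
Your approach is essentially the paper's: clear denominators, use finiteness of the class group to replace the content ideal by one of bounded norm (giving (2) and (3)), then handle the archimedean sizes. Parts (2) and (3) are fine, and your function-field argument for (1) is correct because there is only one place $v_\infty$, so bounding the product over $M_{K,\infty}$ is the same as bounding $\max_i|y_i|_{v_\infty}$.

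The gap is in part (1) for number fields. Your stated mechanism---``invoke the product formula to convert this into an infinite-place bound''---only controls the \emph{product} $\prod_{v\in M_{K,\infty}}\max_i|y_i|_v$, not each factor separately. After the class-group correction by $\alpha$ (your $\lambda$), the individual $|\alpha|_v$ can be wildly unbalanced even though $\prod_v|\alpha|_v=1$; so $\max_i|\sigma(y_i)|$ need not be bounded by $c_1 H(\boldsymbol x)$ for every embedding $\sigma$. The sentence ``first normalize the original $x_i$ so that $\max_i|x_i|_v\le H(\boldsymbol x)$ at every infinite place, up to scaling by a unit'' is the right instinct, but note that doing this \emph{before} dividing by $\lambda$ is useless (the $\lambda$-division destroys the balance), and doing it \emph{after} is precisely the nontrivial step you have not justified.

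What is missing is an explicit appeal to Dirichlet's unit theorem: the logarithmic embedding of $\mathcal O_K^\times$ is a full lattice in the trace-zero hyperplane of $\mathbb R^{r+s}$, so for the vector $(\log\max_i|\sigma_j(x'_i)|)_j$ one can find a unit $\varepsilon$ whose log-image brings every coordinate within $O_K(1)$ of the average $\frac{1}{d_K}\log\bigl(\prod_v\max_i|x'_i|_v\bigr)$. Replacing $x'_i$ by $\varepsilon^{-1}x'_i$ then gives $\max_i|\sigma(y_i)|\le c_1 H(\boldsymbol x)$ for every $\sigma$, without disturbing (2) and (3) since $\varepsilon$ is a unit. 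This is exactly what the paper does, and it is not a cosmetic step: without it the number-field case of (1) fails.
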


\begin{proof}
Given $\boldsymbol x$ as in the statement, we choose cordinates $(x_{0},\ldots ,x_{d})$ with $x_{i}\in \mathcal{O}_{K}$ for all $i$ and consider the ideal $\mathfrak{a}_{x_{0},\ldots ,x_{d}}=\sum_{i=0}^{d}x_{i}\mathcal{O}_{K}$. Note that
$$H(\boldsymbol x)=\frac{1}{\mathcal{N}_{K}(\mathfrak{a}_{x_{0},\ldots ,x_{d}})^{\frac{1}{d_{K}}}}\prod_{v\in M_{K,\infty}}\max_{i}|x_{i}|_{v}.$$
The ideal $\mathfrak{a}_{x_{0},\ldots ,x_{d}}$ depends on the coordinates, but its ideal class depends only on $\boldsymbol x$. Hence, if we take integral ideals $\mathfrak{a}_{1},\ldots \mathfrak{a}_{r}$, representing all the ideal classes of $\mathcal{O}_{K}$, satisfying Minkowski's bound $\mathcal{N}_{K}(\mathfrak{a}_{l})\leq c_{2}$ for all $l$, it holds that $\mathfrak{a}_{x_{0},\ldots ,x_{d}}\mathfrak{a}_{l}^{-1}=\alpha \mathcal{O}_{K}$ for some $l$ and some $\alpha\in K^{\times}$. Thus, $\alpha^{-1}\mathcal{O}_{K}\mathfrak{a}_{x_{0},\ldots ,x_{d}}=\mathfrak{a}_{l}$. We conclude that $(x_{0}',\ldots ,x_{d}'):=(\alpha^{-1}x_{0},\ldots ,\alpha^{-1}x_{d})$ are coordinates of $\boldsymbol x$ in $\mathcal{O}_{K}^{d+1}$ and $\mathfrak{a}_{x_{0}',\ldots ,x_{d}'}=\mathfrak{a}_{l}$. In particular, since $\mathcal{N}_{K}(\mathfrak{a}_{l})\leq c_{2}$, all the prime ideals $\mathfrak{p}$ dividing $\mathfrak{a}_{l}$ have norm $\mathcal{N}_{K}(\mathfrak{p})\leq c_{2}$. We conclude that 
\begin{equation*}
\text{for all prime } \mathfrak{p}\notin M_{K,\infty}\text{ with } \mathcal{N}_{K}(\mathfrak{p})>c_{2},\; \mathfrak{p}\not | \sum_{i=0}^{d}x_{i}'\mathcal{O}_{K}.
\end{equation*}
Moreover, if $c_{3}=c_{2}^{-\frac{1}{d_{K}}}$, we see that $\prod_{v\in M_{K,\text{fin}}}\max_{i}|x_{i}'|_{v}=\frac{1}{\mathcal{N}_{K}(\mathfrak{a}_{l})^{\frac{1}{d_{K}}}}\geq c_{3}$. 

If $K$ is a function field, $M_{K,\infty}$ consists only of the place $v_{\infty}$, thus condition (1) of Proposition \ref{Serre} is immediately verified with $c_{1}:=c_{3}^{-1}$. If $K$ is a number field, 
the argument follows the same lines given in \cite[Section 13.4]{Serre}, \cite[Proposition 2.1]{P2}, namely we find $c_{1}$ and $\varepsilon\in \mathcal{O}_{K}^{\times}$ such that the set of coordinates $(y_{0},\ldots ,y_{d}):=(\varepsilon^{-1}x_{0}',\ldots ,\varepsilon^{-1}x_{d}')$ is a point in $\mathcal{O}_{K}^{d+1}$, representing $\boldsymbol x$, satisfying for all embedding $\sigma:K\hookrightarrow \mathbb{C}$ the bound  $\max_{i}|\sigma(y_{i})|\leq c_{1} H_{K}(\boldsymbol x)$. More precisely, let $\sigma_{1},\ldots ,\sigma_{r},\sigma_{r+1},\ldots ,\sigma_{r+s}$ be all the non-equivalent embeddings of $K$, rearranged in such a way that $\sigma_{i}$ is a real embedding for all $1\leq i\leq r$, and $\sigma_{i}$ is a complex embedding for all $i>r$. Since the $\mathbb{Z}$-span 
$$W:=\left\langle \left \{ \Big(\log(|\sigma_{1}(\varepsilon)|),\ldots ,\log(|\sigma_{r}(\varepsilon)|),\log(|\sigma_{r+1}(\varepsilon)|^{2}),\ldots ,\log(|\sigma_{r+s}(\varepsilon)|^{2})\Big):\varepsilon\in \mathcal{O}_{K}^{\ast}\right  \}\right \rangle_{\mathbb{Z}}$$ 
has dimension $r+s-1$ by Dirichlet's unit theorem, so the same holds for the $\mathbb{Z}$-span
$$W':=\left\langle \left \{ \Big(\log(|\sigma_{1}(\varepsilon)|),\ldots ,\log(|\sigma_{r}(\varepsilon)|),\log(|\sigma_{r+1}(\varepsilon)|),\ldots ,\log(|\sigma_{r+s}(\varepsilon)|)\Big):\varepsilon\in \mathcal{O}_{K}^{\ast}\right  \}\right \rangle_{\mathbb{Z}}.$$ 
The vector $\boldsymbol 1=(1,\ldots ,1)\in \mathbb{R}^{r+s}$ does not lie in $W'$ because $e$ is transcendental. Then, we have that $W_{1}:=W\oplus \mathbb{Z}\boldsymbol 1$ is a lattice in $\mathbb{R}^{r-s}$. Let $||\cdot ||$ be the $\ell^{\infty}$-norm in $\mathbb{R}^{r-s}$. Let $\Omega\subseteq \mathbb{R}^{r-s}$ be a bounded subset containing a representative of each class of $\mathbb{R}^{r+s}/W_{1}$ (this subset exists since $\mathbb{R}^{r+s}/W_{1}$ is compact) and set $C_{W_{1}}:=\sup_{\boldsymbol y\in \Omega}||\boldsymbol y||$. Then it follows that 
\begin{equation}
\text{for any } \boldsymbol z\in \mathbb{R}^{r+s} \text{ there exists } \boldsymbol w\in W_{1} \text{ verifying } ||\boldsymbol z-\boldsymbol w||\leq C_{W_{1}}.
\label{fundamental domain}
\end{equation}
Now, for any $\boldsymbol x\in \mathbb{P}^{d}(K)$ with coordinates $(x_{0},\ldots ,x_{d})$ such that $\mathfrak{a}_{x_{0},\ldots, x_{d}}=\mathfrak{a}_{l}$ for some $l$, we have
$$H(\boldsymbol x)=\frac{1}{\mathcal{N}_{K}(\mathfrak{a}_{l})^{\frac{1}{d_{K}}}}\prod_{j}\max_{i}|\sigma_{j}(x_{i})|^{\frac{n_{j}}{d_{K}}}\sim_{K}\prod_{j}\max_{i}|\sigma_{j}(x_{i})|^{\frac{n_{j}}{d_{K}}},$$
where $n_{j}=1$ if $\sigma_{j}$ is real and $n_{j}=2$ if it is complex. Let $h_{\sigma}:=\max_{i}|\sigma(x_{i})|>0$. Let $\boldsymbol z:=(\log(h_{\sigma_{1}}),\ldots ,\log(h_{\sigma_{r+s}}))$. By \eqref{fundamental domain} there exists $\boldsymbol w\in W_{1}$ such that $||\boldsymbol z-\boldsymbol w||\leq C_{W_{1}}$. In particular, there exist $\varepsilon\in \mathcal{O}_{K}^{\ast}$ and $m\in \mathbb{Z}$ such that $\boldsymbol w=(\log|\sigma_{1}(\varepsilon)|,\ldots ,\log|\sigma_{r+s}(\varepsilon)|)+m\boldsymbol 1$ and
$$-C_{W_{1}}\leq \log|\sigma_{j}(\varepsilon)|+m-\log(h_{\sigma_{j}})\leq C_{W_{1}} \text{ for all }j.$$
Hence, if $C_{K}:=e^{C_{W_{1}}}, $we have that there is some $\varepsilon\in \mathcal{O}_{K}^{\ast}$ and $t=e^{m}$ such that
\begin{equation*}
C_{K}^{-1}\frac{h_{\sigma_{j}}}{t}\leq |\sigma_{j}(\varepsilon)|\leq C_{K}\frac{h_{\sigma_{j}}}{t} \text{ for all }j.
\end{equation*}
Equivalently,
\begin{equation*}
C_{K}^{-1}\max_{i}|\sigma_{j}(x_{i}\varepsilon^{-1})|\leq t\leq C_{K}\max_{i}|\sigma_{j}(x_{i}\varepsilon^{-1})|\text{ for all }j.
\end{equation*}
Thus, since $\sum_{i}\frac{n_{i}}{d_{K}}=1$, for any $j_{0}$ it follows that
\begin{align*}
 H(\boldsymbol x) & =H(\varepsilon^{-1}\boldsymbol x)\sim_{K} \prod_{j}\max_{i}|\sigma_{j}(x_{i}\varepsilon^{-1})|^{\frac{n_{j}}{d_{K}}}=\prod_{j}\left(\max_{i}|\sigma_{j}(x_{i}\varepsilon^{-1})|C_{K}\right)^{\frac{n_{j}}{d_{K}}}C_{K}^{-\frac{n_{j}}{d_{K}}}\\ & \gtrsim_{K} \prod_{j} t^{\frac{n_{j}}{d_{K}}}\gtrsim_{K} \prod_{j}\left(\max_{i}|\sigma_{j_{0}}(x_{i}\varepsilon^{-1})|C_{K}^{-1}\right)^{\frac{n_{j}}{d_{K}}}\gtrsim_{K} \max_{i}|\sigma_{j_{0}}(x_{i}\varepsilon^{-1})|. 
\end{align*}
We conclude that $H(\boldsymbol x)\gtrsim_{K} \max_{j,i}|\sigma_{j}(x_{i}\varepsilon^{-1})|\gtrsim_{K}\max_{\sigma}|\sigma(x_{i}\varepsilon^{-1})|\text{ for all }i.$ Then $\boldsymbol y=(x_{1}\varepsilon^{-1},\ldots ,x_{d}\varepsilon^{-1})$ is a lift of $\boldsymbol x$ with the required properties.
\end{proof}

\subsection{Small solutions to linear equations} For our purpose, we will need a strong version of Siegel's lemma, given by the well known theorem \cite{Bombieri2} of Bombieri and Vaaler, and its function field generalization \cite{Thunder} of Thunder. In order to state these results, we recall the notion of the height of a matrix. 
Let $A=(a_{ij})_{i,j}$ be an $m\times n$ matrix with entries in $K$ and rank $m<n$. If $J\subseteq \{1,2,\ldots, n\}$ is a subset with $|J|=m$ elements, we write $A_{J}:=(a_{i,j})_{1\leq i\leq m,j\in J}$ for the corresponding submatrix. For each place $v\in M_{K}$ we define
\begin{equation*}
H_{v}(A):=\begin{cases} \max_{|J|=m}|\det(A_{J})|_{v} & \text{ if }v\text{ is non-archimedean}, \\ |\det(A A^{\ast})|_{v}^{\frac{1}{2}} & \text{ if }v\text{ is archimedean}, \end{cases}
\end{equation*}
where $A^{\ast}$ is the complex conjugate transpose of $A$. The Arakelov height of $A$ is defined as the product
$$H_{\text{Ar}}(A):=\prod_{v\in M_{K}}H_{v}(A).$$
Let us state the theorems of Bombieri and Vaaler, and Thunder.

\begin{theorem}[{\cite[Theorem 9]{Bombieri2}, \cite[Corollary 2]{Thunder}}]
Let $K$ be a global field. Suppose that $A=(a_{ij})$ is an $m\times n$ matrix of rank $m<n$ with entries in $K$. Then there exist an effective constant $C=C(K)$ and linearly independent $\boldsymbol  b_{1},\ldots \boldsymbol b_{n-m}\in K^{n}$ with $A\boldsymbol b_{i}^{t}=0$ for all $i$, satisfying 
$$\prod_{i=1}^{n-m}H(\boldsymbol b_{i})\leq C(K)^{n-m}H_{\emph{\text{Ar}}}(A).$$
\label{Bombieri-Vaaler}
\end{theorem}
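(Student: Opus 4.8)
The plan is to \emph{deduce} the statement from the cited theorems rather than reprove Siegel's lemma from scratch: \cite[Theorem 9]{Bombieri2} in the number field case and \cite[Corollary 2]{Thunder} in the function field case already furnish, for a rank-$m$ matrix $A$ over $K$, a linearly independent system $\boldsymbol b_{1},\dots,\boldsymbol b_{n-m}$ in $\ker A$ whose heights multiply to at most a $K$-dependent constant times the Arakelov height of $A$. So the real work is to check that the normalizations of absolute values and of heights used in those references match, up to a fixed power and a fixed multiplicative constant, the ones fixed in Section~\ref{section 2}, and to collect all resulting discrepancies into a single effective constant $C(K)$.

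For completeness let me recall the mechanism behind those results, since it also explains why the right-hand side is $H_{\text{Ar}}(A)$ and not some other invariant of $A$. One passes from $A$ to the subspace $V:=\ker A\subseteq K^{n}$, which has dimension $n-m$. Attached to $V$ there is an adelic lattice (the associated rank-$(n-m)$ projective $\mathcal{O}_{K}$-module together with the archimedean data), whose covolume is by definition the Arakelov height $H_{\text{Ar}}(V)$ of the subspace. A Laplace/Cauchy--Binet computation identifies the Grassmann (Plücker) coordinates of $V$ with the maximal minors of $A$, and hence gives $H_{\text{Ar}}(V)=H_{\text{Ar}}(A)$ in our normalization. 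One then applies the adelic form of Minkowski's second theorem (as in \cite{Bombieri2} for number fields and \cite{Thunder} for function fields) to the height filtration on $V$: it produces successive minima $\lambda_{1}\le\cdots\le\lambda_{n-m}$ with $\prod_{i}\lambda_{i}\le c(K)^{n-m}H_{\text{Ar}}(V)$, together with vectors $\boldsymbol b_{i}\in V$ with $H(\boldsymbol b_{i})\lesssim_{K}\lambda_{i}$; a standard argument then lets one choose the $\boldsymbol b_{i}$ linearly independent while keeping $\prod_{i}H(\boldsymbol b_{i})\lesssim_{K}\prod_{i}\lambda_{i}$.

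The only point that needs genuine care is the bookkeeping of normalizations. In Section~\ref{section 2} the absolute values carry the exponents $n_{v}/d_{K}$ at archimedean places and $-\text{ord}_{\mathfrak{p}}/d_{K}$ at finite places, so that $H$ is the \emph{absolute} height and $H_{K}=H^{d_{K}}$; the cited papers typically work with the relative height (our $H_{K}$) or with a cognate convention, and their version of $H_{\text{Ar}}(A)$ scales the same way. Since a change between these conventions multiplies every height by a fixed power of itself or by a fixed constant, and since the classical bounds already contain a $K$-dependent factor (essentially $|d_{K}|^{1/2}$ times an archimedean $\Gamma$-factor for a number field, and a $q^{g-1}$-type quantity for a function field of genus $g$), all of this is absorbed into $C(K)$, which remains effectively computable. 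I do not expect a substantive obstacle here beyond tracking these constants carefully; the linear independence together with the product bound — the feature distinguishing this ``strong'' form of Siegel's lemma from the elementary single-vector version — is exactly what the second-minimum argument of Bombieri--Vaaler and Thunder delivers.
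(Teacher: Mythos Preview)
Your proposal is correct and matches the paper's treatment: the paper does not prove this theorem at all but simply cites it from \cite[Theorem 9]{Bombieri2} and \cite[Corollary 2]{Thunder}, so your plan to invoke those results directly (with attention to normalization discrepancies absorbed into $C(K)$) is exactly what is intended.
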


In this paper we will use a different formulation of Theorem \ref{Bombieri-Vaaler}, requiring a more explicit definition of the Arakelov height of a matrix. For that, let $A$ be an $m\times n$ matrix of rank $m<n$ with entries in $\mathcal{O}_{K}$. Let $\Delta$ be the greatest common divisor in $\mathcal{O}_{K}$ of the determinants $\det(A_{J})$ with $|J|=m$. Since for any non-archimedean place $v\notin M_{K,\infty}$ with corresponding prime $\mathfrak{p}$, it holds
$$H_{v}(A)=\max_{|J|=m}\mathcal{N}_{K}(\mathfrak{p})^{-\frac{\text{ord}_{\mathfrak{p}}(\det(A_{J}))}{d_{K}}}=\mathcal{N}_{K}(\mathfrak{p})^{-\min_{|J|=m}\frac{\text{ord}_{\mathfrak{p}}(\det(A_{J}))}{d_{K}}}=\mathcal{N}_{K}(\mathfrak{p})^{-\frac{\text{ord}_{\mathfrak{p}}(\Delta)}{d_{K}}},$$  
we conclude that
$$\prod_{v\notin M_{K,\infty}}H_{v}(A)=\prod_{\mathfrak{p}|\Delta}\mathcal{N}_{K}(\mathfrak{p})^{-\frac{\text{ord}_{\mathfrak{p}}(\Delta)}{d_{K}}}=\mathcal{N}_{K}(\Delta)^{-\frac{1}{d_{K}}}.$$
Hence, we may express the Arakelov height of a matrix as the product:
\begin{equation*}
H_{\text{Ar}}(A)=\begin{cases} \mathcal{N}_{K}(\Delta)^{-\frac{1}{d_{K}}} \displaystyle \prod_{v\in M_{K,\infty}}|\det(A A^{\ast})|_{v}^{\frac{1}{2}} & \text{ if }K\text{ is a number field},\\ \mathcal{N}_{K}(\Delta)^{-\frac{1}{d_{K}}} q^{-\frac{\deg(\mathfrak{p}_{\infty})}{d_{K}}\min_{|J|=m}\text{ord}_{\mathfrak{p}_{\infty}}(\det(A_{J}))} & \text{ if }K \text{ is a function field}. \end{cases}
\end{equation*}
Using this, we conclude the following implication of Theorem \ref{Bombieri-Vaaler}.

\begin{theorem}
Let $K$ be a global field of degree $d_{K}$. Suppose that $A=(a_{ij})$ is an $m\times n$ matrix of rank $m<n$ with entries in $K$. Then there exist an effective constant $C=C(K)$ and a non-zero solution $\boldsymbol b=(b_{1},\ldots ,b_{n})\in \mathcal{O}_{K}^{n}$ with $A\boldsymbol b^{t}=0$, satisfying
$$H_{K}(b_{1}:\ldots :b_{n})^{n-m}\leq \begin{cases}C(K)^{(n-m)}\mathcal{N}_{K}(\Delta)^{-1} \displaystyle \prod_{v\in M_{K,\infty}}|\det(A A^{\ast})|_{v}^{\frac{d_{K}}{2}} & \text{ if }K\text{ is a number field},\\ C(K)^{(n-m)}\mathcal{N}_{K}(\Delta)^{-1} q^{-\deg(\mathfrak{p}_{\infty})\min_{|J|=m}\text{ord}_{\mathfrak{p}_{\infty}}(\det(A_{J}))} & \text{ if }K\text{ is a function field}.\end{cases}$$
\label{Bombieri-Vaaler2}
\end{theorem}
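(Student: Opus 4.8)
The plan is to derive Theorem~\ref{Bombieri-Vaaler2} directly from Theorem~\ref{Bombieri-Vaaler} combined with the explicit expression for the Arakelov height $H_{\text{Ar}}(A)$ obtained just before the statement. First I would apply Theorem~\ref{Bombieri-Vaaler} to the matrix $A$: this produces linearly independent vectors $\boldsymbol b_1,\dots,\boldsymbol b_{n-m}\in K^n$ with $A\boldsymbol b_i^t=0$ for all $i$ and
\[
\prod_{i=1}^{n-m}H(\boldsymbol b_i)\le C(K)^{n-m}H_{\text{Ar}}(A).
\]
Since each $H(\boldsymbol b_i)$ is a positive real, the left-hand side is at least $\bigl(\min_i H(\boldsymbol b_i)\bigr)^{n-m}$; choosing an index $i_0$ realizing this minimum and then clearing denominators to obtain a representative $\boldsymbol b=(b_1,\dots,b_n)\in\mathcal O_K^n$ of the same projective point (which changes neither $H_K(b_1:\dots:b_n)$ nor the relation $A\boldsymbol b^t=0$), we arrive at $H(b_1:\dots:b_n)^{n-m}\le C(K)^{n-m}H_{\text{Ar}}(A)$.

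Next I would pass from the absolute height $H$ to the relative height $H_K=H^{d_K}$ by raising this inequality to the $d_K$-th power, getting
\[
H_K(b_1:\dots:b_n)^{n-m}=\bigl(H(b_1:\dots:b_n)^{n-m}\bigr)^{d_K}\le C(K)^{d_K(n-m)}\,H_{\text{Ar}}(A)^{d_K}.
\]
It then remains to substitute the two cases of the formula for $H_{\text{Ar}}(A)$ established above: in the number field case one has $H_{\text{Ar}}(A)^{d_K}=\mathcal N_K(\Delta)^{-1}\prod_{v\in M_{K,\infty}}|\det(AA^{\ast})|_v^{d_K/2}$, and in the function field case $H_{\text{Ar}}(A)^{d_K}=\mathcal N_K(\Delta)^{-1}q^{-\deg(\mathfrak p_\infty)\min_{|J|=m}\text{ord}_{\mathfrak p_\infty}(\det(A_J))}$. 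Replacing $C(K)^{d_K}$ by a new effective constant $C(K)$ gives exactly the two claimed bounds.

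There is no real obstacle here: the substance of the statement is contained in Theorem~\ref{Bombieri-Vaaler} and in the computation of $H_{\text{Ar}}(A)$ carried out above. The only things needing attention are the elementary pigeonhole step that extracts a single small kernel vector from the full system $\boldsymbol b_1,\dots,\boldsymbol b_{n-m}$, the observation that clearing denominators keeps the solution in $\mathcal O_K^n$ without affecting its projective height, and careful bookkeeping of the exponent $d_K$ when converting $H$ into $H_K$ and when distributing it across the product and the power of $q$ defining $H_{\text{Ar}}(A)$.
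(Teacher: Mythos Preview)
Your proposal is correct and follows exactly the route the paper indicates: Theorem~\ref{Bombieri-Vaaler2} is stated as an immediate implication of Theorem~\ref{Bombieri-Vaaler} together with the explicit formula for $H_{\text{Ar}}(A)$ derived just before it, and your pigeonhole extraction of a single small kernel vector plus the passage from $H$ to $H_K=H^{d_K}$ is precisely the missing bookkeeping. The paper gives no further details, so there is nothing to compare.
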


Another useful consequence of Theorem \ref{Bombieri-Vaaler} (which is slightly stronger than Siegel's lemma) is the following result. Given an $m\times n$ matrix $A$ with entries in $K$, we denote by $H_{K}(A)$ the $K$-relative height of the point in $\mathbb{P}_{K}^{mn-1}$ corresponding to the matrix $A$. 

\begin{corollary}
Let $K$ be a global field. Suppose that $A=(a_{i,j})$ is an $m\times n$ matrix of rank $r$ with entries in $K$. Then there exists an effective constant $C=C(K)$ and a non-zero solution  $\boldsymbol x\in \mathcal{O}_{K}^{n}$ of $A\boldsymbol x=0$ with
$$H_{K}(1:\boldsymbol x)\leq C(K)\left( n^{\frac{d_{K}}{2}}H_{K}(A)\right)^{\frac{r}{n-r}}.$$ 
\label{Bombieri-Vaaler a la Siegel}
\end{corollary}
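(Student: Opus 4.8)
The plan is to deduce this from Theorem~\ref{Bombieri-Vaaler} in four moves: reduce to a full-rank subsystem, extract a small kernel vector by the pigeonhole principle, bound the Arakelov height of the matrix by its naive height, and finally replace the projective solution so obtained by a controlled integral lift using Proposition~\ref{Serre}.

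First I would reduce to the case where $A$ has exactly $r$ rows and $r<n$: picking $r$ linearly independent rows of $A$ produces an $r\times n$ matrix $A'$ of rank $r$ with $\ker A'=\ker A$, and since the entries of $A'$ form a subset of those of $A$ one has $H_K(A')\le H_K(A)$; we may assume $r<n$, since otherwise there is no non-zero solution. Applying Theorem~\ref{Bombieri-Vaaler} to $A'$ furnishes linearly independent $\boldsymbol b_1,\dots,\boldsymbol b_{n-r}\in K^n$ lying in the kernel with $\prod_{i=1}^{n-r}H(\boldsymbol b_i)\le C(K)^{n-r}H_{\mathrm{Ar}}(A')$, so by pigeonhole some $\boldsymbol b:=\boldsymbol b_{i_0}$ satisfies $H(\boldsymbol b)\le C(K)\,H_{\mathrm{Ar}}(A')^{1/(n-r)}$, and therefore $H_K(\boldsymbol b)=H(\boldsymbol b)^{d_K}\le C(K)^{d_K}H_{\mathrm{Ar}}(A')^{d_K/(n-r)}$.

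Next I would establish the estimate $H_{\mathrm{Ar}}(A')\le n^{r/2}H(A')^{r}=n^{r/2}H_K(A')^{r/d_K}$. At a non-archimedean place $v$, expanding each $r\times r$ minor and using the ultrametric inequality gives $H_v(A')=\max_{|J|=r}|\det A'_J|_v\le(\max_{i,j}|a'_{ij}|_v)^{r}$. At an archimedean place $v$ coming from an embedding $\sigma$ of local degree $n_v$, Hadamard's inequality applied to the Hermitian Gram matrix $\sigma(A')\sigma(A')^{\ast}$ gives $\det(\sigma(A')\sigma(A')^{\ast})\le\prod_i\sum_j|\sigma(a'_{ij})|^{2}\le n^{r}(\max_{i,j}|\sigma(a'_{ij})|)^{2r}$, whence $H_v(A')=|\det(A'A'^{\ast})|_v^{1/2}\le n^{rn_v/(2d_K)}(\max_{i,j}|a'_{ij}|_v)^{r}$; in the function field case $v_\infty$ is non-archimedean and the first bound applies. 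Multiplying over all places and using $\sum_{v\in M_{K,\infty}}n_v=d_K$ together with $H(A')=H_K(A')^{1/d_K}$ gives the claim, and hence, after relabeling $C(K)$,
$$H_K(\boldsymbol b)\le C(K)\left(n^{d_K/2}H_K(A')\right)^{r/(n-r)}\le C(K)\left(n^{d_K/2}H_K(A)\right)^{r/(n-r)}.$$

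Finally, to pass from this projective bound to the stated bound on $H_K(1:\boldsymbol x)$, I would apply Proposition~\ref{Serre} to the point $\boldsymbol b\in\mathbb{P}^{n-1}(K)$ to obtain an integral lift $\boldsymbol x\in\mathcal{O}_K^{n}$ of $\boldsymbol b$; it is still a non-zero solution of $A\boldsymbol x=0$ since it differs from $\boldsymbol b$ by a factor in $K^{\times}$, the finite places contribute trivially to $H_K(1:\boldsymbol x)$ because $|x_i|_v\le 1$ for all $v\in M_{K,\mathrm{fin}}$, and part~(1) of that proposition controls the archimedean coordinates (of size $\lesssim_K H_K(\boldsymbol b)^{n_v/d_K^2}$ in the number field case, and $\lesssim_K H_K(\boldsymbol b)^{1/d_K}$ at $v_\infty$ in the function field case), so that $H_K(1:\boldsymbol x)\lesssim_K H_K(\boldsymbol b)$. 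Combining with the previous step and absorbing all $K$-dependent constants into a single $C(K)$ finishes the proof. The one point requiring care is this last step: an arbitrary integral representative of $\boldsymbol b$ may have much larger height once the coordinate $1$ is appended, because the finite places dividing its content get ``uncovered'', so a well-chosen lift as provided by Proposition~\ref{Serre} is genuinely needed; everything else is a routine estimate.
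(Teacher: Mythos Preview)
Your argument is correct and follows exactly the route the paper takes: bound the Arakelov height of a full-rank $r\times n$ submatrix by $n^{r/2}H(A')^{r}$ via Hadamard and the ultrametric inequality (this is the content of \cite[Corollary~2.9.9]{Bombieri}, which the paper invokes for number fields and extends verbatim to function fields), and then apply Proposition~\ref{Serre} to pass from the projective bound on $H_{K}(\boldsymbol b)$ to the affine bound on $H_{K}(1:\boldsymbol x)$. The only difference is that you spell out the Hadamard/ultrametric estimate explicitly, whereas the paper simply cites the reference.
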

\begin{proof}
When $K$ is a number field, in \cite[Corollary 2.9.9]{Bombieri} it is proved that Theorem \ref{Bombieri-Vaaler} implies the existence of a non-zero $\boldsymbol x\in \mathcal{O}_{K}^{n}$ of $A\boldsymbol x=0$ with
\begin{equation}
H_{K}(\boldsymbol x)\leq C(K)\left( n^{\frac{d_{K}}{2}}H_{K}(A)\right)^{\frac{r}{n-r}}.
\label{aa}
\end{equation}
By the same argument, if $K$ is a function field one may find a non-zero solution $\boldsymbol x\in \mathcal{O}_{K}^{n}$ verifying \eqref{aa}. Then a solution verifying the statement of the lemma can be found by lifting $\boldsymbol x$ with Proposition \ref{Serre}.
\end{proof}

\subsection{Polynomial heights} 
\label{different polynomial heights}
We will use a notion of height for a polynomial that is defined in \cite[$\S$ 1.6]{Bombieri}. Specifically, given a global field $K$ of degree $d_{K}$, and $f=\sum_{I}a_{I}\boldsymbol X^{I}\in K[X_{1},\ldots ,X_{n}]$ a non-zero polynomial, for any $v\in M_{K}$ we let
$$|f|_{v}:=\max_{I} |a_{I}|_{v}.$$
Also, for $f=\sum_{I}a_{I}\boldsymbol X^{I}\in \mathbb{C}[X_{1},\ldots ,X_{n}]$, we define $\ell_{\infty}(f):=\max_{I}|a_{I}|_{\infty}$ and $\ell_{1}(f):=\sum_{I}|a_{I}|_{\infty}$, where $|\cdot |_{\infty}$ is the usual absolute value of $\mathbb{C}$. 

Following \cite[$\S$ 1.6]{Bombieri}, the height of $f\in K[X_{1},\ldots ,X_{n}]$ is defined as
\begin{equation*}
H(f):=\prod_{v\in M_{K}}|f|_{v},
\end{equation*}
thus, the height of $f$ is just the projective height of the projective point defined by the coefficients of $f$. In particular, for any $\lambda \in K$ we have $H(\lambda f)=H(f)$.

Similarly, in this article we define the affine height of $f$ by
\begin{equation}
H_{\text{aff}}(f):=\prod_{v\in M_{K}}\max\{1,|f|_{v}\}.
\label{affine height of a polynomial}
\end{equation}
 It holds that for any non-zero polynomial $f$, $H_{\text{aff}}(f)\geq H(f)\geq 1$. 

We will also use the $K$-relative versions of the projective and affine heights of polynomials:
\begin{equation*}
H_{K}(f):=H(f)^{d_{K}}=\left(\prod_{v\in M_{K}}|f|_{v}\right)^{d_{K}},\; H_{K,\text{aff}}(f):=H_{\text{aff}}(f)^{d_{K}}=\left(\prod_{v\in M_{K}}\max\{1,|f|_{v}\}\right)^{d_{K}}.
\end{equation*}

We will need the following easy variant of  the inequality \eqref{polynomialheight2}. First, let us suppose that $K$ is a number field. Let $f=\sum_{I}a_{I}\boldsymbol X^{I}\in \mathbb{Z}[X_{1},\ldots ,X_{n}]$ be a non-zero polynomial, and let $\boldsymbol x=(x_{1},\ldots ,x_{n})\in K^{n}$. For any place $v\in M_{K,\infty}$, it holds 
$$|f(\boldsymbol x)|_{v}\leq \sum_{I}|a_{I}|_{v}|\boldsymbol x^{I}|_{v}\leq \sum_{I}|a_{I}|_{v}\max_{i}\{1,|x_{i}|_{v}\}^{\deg(f)}.$$
Using that the coefficients $a_{I}$ are integers, for any $v\notin M_{K,\infty}$, we have
$$|f(\boldsymbol x)|_{v}\leq \max_{I}\{|a_{I}|_{v}|\boldsymbol x^{I}|_{v}\}\leq \max_{i}\{1,|x_{i}|_{v}\}^{\deg(f)}.$$
It holds
\begin{align}
H(f(\boldsymbol x)) & =\prod_{v\in M_{K}}\max\{1,|f(\boldsymbol x)|_{v}\}  \leq \left(\prod_{v\in M_{K,\infty}}\sum_{I}|a_{I}|^{\frac{n_{v}}{d_{K}}}\right)H(1:\boldsymbol x)^{\deg(f)}\leq \left[\prod_{v\in M_{K,\infty}}\left(\sum_{I}|a_{I}|\right)\right]H(1:\boldsymbol x)^{\deg(f)} \label{inequality between norms} \\ & \leq \ell_{1}(f)^{d_{K}}H(1:\boldsymbol x)^{\deg(f)}, \nonumber
\end{align}
where we used that $|a_{I}|^{\frac{n_{v}}{d_{K}}}\leq |a_{I}|$ since $n_{v}\leq d_{K}$ and $|a_{I}|\geq 1$. Hence
\begin{equation}
H_{K}(f(\boldsymbol x))\leq \ell_{1}(f)^{d_{K}^{2}}H_{K}(1:\boldsymbol x)^{\deg(f)},\; \text{ for all } f\in \mathbb{Z}[X_{1},\ldots ,X_{n}].
\label{auxiliar remark}
\end{equation}

Now, suppose that $K$ is a function field over $\mathbb{F}_{q}(T)$. Let $f=\sum_{I}a_{I}\boldsymbol X^{I}\in \mathbb{F}_{q}[X_{1},\ldots ,X_{n}]$ be a non-zero polynomial, and let $\boldsymbol x=(x_{1},\ldots ,x_{n})\in K^{n}$. Since all places in $K$ are non-archimedean, and trivial on $\mathbb{F}_{q}$, for any $v\in M_{K}$ we have
$$|f(\boldsymbol x)|_{v}\leq \max\{|a_{I}|_{v}|\boldsymbol x^{I}|_{v}\}\leq \max_{i}\{1,|x_{i}|_{v}\}^{\deg(f)},$$  
from where it follows immediately the bound
\begin{equation}
H_{K}(f(\boldsymbol x))\leq H_{K}(1:\boldsymbol x)^{\deg(f)},\; \text{ for } f\in \mathbb{F}_{q}[X_{1},\ldots, X_{n}].
\label{inequality between norms2}
\end{equation}

\subsection{Distribution of primes in global fields}\label{distribution of primes in global fields}
Let $K$ be a global field of degree $d_{K}$. For any $Q>0$, let us denote 

$$\mathcal{P}:=\{\mathfrak{p}\text{ prime in }\mathcal{O}_{K}\},\;\; \mathcal{P}(Q):=\{\mathfrak{p}\in \mathcal{P}:\mathcal{N}_{K}(\mathfrak{p})\leq Q\}.$$
There exist constants 
$c_{1,K}, c_{2,K} $ and $c_{3,K}$ such that

\begin{align}
\sum_{\mathfrak{p}\in \mathcal{P}(Q)}\frac{\log(\mathcal{N}_{K}(\mathfrak{p}))}{\mathcal{N}_{K}(\mathfrak{p})}  =\log(Q)+O_{K}(1), \label{landau1} \\  c_{1,K}Q  \leq \displaystyle \sum_{\mathfrak{p}\in \mathcal{P}(Q)}\log(\mathcal{N}_{K}(\mathfrak{p}))\leq c_{2,K}Q, \label{landau2} \\  \frac{1}{2}Q \leq \smashoperator[r]{\sum_{\substack{\mathfrak{p}\\ Q  \leq \mathcal{N}_{K}(\mathfrak{p})\leq 2Q}}}\log(\mathcal{N}_{K}(\mathfrak{p}))  \text{ for all }Q\geq c_{3,K},\label{Bertrand}
\end{align} 
where the logarithms are in base $q$ and $Q$ is of the form $q^{h}$ with $h\in \mathbb{N}$ when $K$ is a function field. Indeed, if $K$ is a number field, \eqref{landau1}, \eqref{landau2} and \eqref{Bertrand} follow from Landau Prime Ideal Theorem (see \cite[Theorem 5.33]{Iwaniec}) which states that $|\mathcal{P}(Q)|\sim \frac{Q}{\log(Q)}$. Meanwhile, if $K$ is a function field over $\mathbb{F}_{q}$ of genus $g$, this follows from the Riemann Hypothesis over function fields (see \cite[Theorem 5.12]{Rosen}), which states that the number of primes in $K$ of degree $N$ is $\frac{q^{N}}{N}+O_{K}(\frac{q^{\frac{N}{2}}}{N})$. 

By summation by parts, there exists a constant $c_{4,K}$ such that for all $Q>0$ it holds
\begin{equation}
\sum_{\mathfrak{p}\in \mathcal{P}\backslash \mathcal{P}(Q)}\frac{\log(\mathcal{N}_{K}(\mathfrak{p}))}{\mathcal{N}_{K}(\mathfrak{p})^{\frac{3}{2}}}\leq c_{4,K}Q^{-\frac{1}{2}}.
\label{landau3}
\end{equation}

We will use the following lemma, which is similar to \cite[Lemma 1.10]{Salberger}. We provide a simple proof of it, which holds for global fields.
\begin{lemma}
Let $\mathfrak{I}\neq (1)$ be a non-zero ideal. Then
$$\sum_{\mathfrak{p}|\mathfrak{I}}\frac{\log(\mathcal{N}_{K}(\mathfrak{p}))}{\mathcal{N}_{K}(\mathfrak{p})}\leq \max\{\log(\log(\mathcal{N}_{K}(\mathfrak{I}))),0\}+O_{K}(1).$$
\label{divisors of a fixed ideal}
\end{lemma}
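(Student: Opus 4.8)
The plan is to bound the sum by splitting the primes dividing $\mathfrak{I}$ into those of small norm and those of large norm, where the threshold is chosen in terms of $\log(\mathcal{N}_K(\mathfrak{I}))$. Write $N := \mathcal{N}_K(\mathfrak{I})$ and set $Q := \log(N)$ (assuming $N$ is large enough that $Q \geq 2$, say; the small cases are absorbed into the $O_K(1)$). For the primes $\mathfrak{p} \mid \mathfrak{I}$ with $\mathcal{N}_K(\mathfrak{p}) \leq Q$, I would simply drop the divisibility constraint and bound the sum by $\sum_{\mathfrak{p} \in \mathcal{P}(Q)} \frac{\log(\mathcal{N}_K(\mathfrak{p}))}{\mathcal{N}_K(\mathfrak{p})}$, which by \eqref{landau1} equals $\log(Q) + O_K(1) = \log(\log(N)) + O_K(1)$. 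This is exactly the main term in the claimed bound.

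For the primes $\mathfrak{p} \mid \mathfrak{I}$ with $\mathcal{N}_K(\mathfrak{p}) > Q$, the point is that there cannot be too many of them: each such prime contributes a factor $\geq \mathcal{N}_K(\mathfrak{p}) > Q = \log N$ to $N = \mathcal{N}_K(\mathfrak{I})$ (using that $\mathcal{N}_K$ is multiplicative over the prime factorization and $\mathrm{ord}_{\mathfrak{p}}(\mathfrak{I}) \geq 1$), so the number of such primes is at most $\frac{\log N}{\log Q} = \frac{\log N}{\log\log N}$. Each of them contributes at most $\frac{\log(\mathcal{N}_K(\mathfrak{p}))}{\mathcal{N}_K(\mathfrak{p})}$, and since $t \mapsto \frac{\log t}{t}$ is decreasing for $t \geq e$ (for $Q$ large), this is at most $\frac{\log Q}{Q} = \frac{\log\log N}{\log N}$. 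Hence the total contribution of the large primes is at most $\frac{\log N}{\log\log N} \cdot \frac{\log\log N}{\log N} = 1 = O_K(1)$. Combining the two ranges gives $\sum_{\mathfrak{p} \mid \mathfrak{I}} \frac{\log(\mathcal{N}_K(\mathfrak{p}))}{\mathcal{N}_K(\mathfrak{p})} \leq \log\log(\mathcal{N}_K(\mathfrak{I})) + O_K(1)$, and when $\log\log(\mathcal{N}_K(\mathfrak{I}))$ is negative or undefined (i.e. $\mathcal{N}_K(\mathfrak{I})$ bounded) the whole sum is trivially $O_K(1)$ since $\mathfrak{I} \neq (1)$ forces only finitely many possibilities up to the bound — more precisely one just takes the $\max\{\cdot,0\}$ as in the statement.

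The main obstacle, such as it is, is purely bookkeeping: choosing the cutoff $Q = \log N$ correctly and verifying the monotonicity of $\frac{\log t}{t}$ holds on the relevant range so that the "large prime" estimate goes through, together with handling the degenerate small-$\mathcal{N}_K(\mathfrak{I})$ cases so that the $\max\{\log\log(\mathcal{N}_K(\mathfrak{I})),0\}$ formulation is literally correct. One should also note that in the function field case $\mathcal{N}_K(\mathfrak{p}) = q^{\deg \mathfrak{p}} \geq q \geq 2$, so all the inequalities above (decreasing behavior of $\log t / t$, etc.) are valid for the same reasons; no separate argument is needed, which is the reason this proof "holds for global fields" uniformly. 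Finally, \eqref{landau1} is the only external input, and it is already available in the excerpt for both number fields and function fields.
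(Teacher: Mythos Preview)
Your proof is correct and takes essentially the same approach as the paper: split the primes dividing $\mathfrak{I}$ at the threshold $c = \log(\mathcal{N}_K(\mathfrak{I}))$, use \eqref{landau1} for the small primes, and bound the large-prime tail trivially. The only cosmetic difference is in the tail: the paper replaces the denominator $\mathcal{N}_K(\mathfrak{p})$ by $c$ and uses $\sum_{\mathfrak{p}\mid\mathfrak{I}}\log(\mathcal{N}_K(\mathfrak{p}))\leq \log(\mathcal{N}_K(\mathfrak{I}))$ directly to get $\frac{\log N}{c}=1$, which avoids your count-of-primes step and the monotonicity check for $\log t/t$.
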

\begin{proof}
Let $c>0$. By \eqref{landau1} we have
\begin{align*}
\sum_{\mathfrak{p}|\mathfrak{I}}\frac{\log(\mathcal{N}_{K}(\mathfrak{p}))}{\mathcal{N}_{K}(\mathfrak{p})}  \leq \sum_{\substack{\mathfrak{p}|\mathfrak{I}\\ \mathcal{N}_{K}(\mathfrak{p})\leq c}}\frac{\log(\mathcal{N}_{K}(\mathfrak{p}))}{\mathcal{N}_{K}(\mathfrak{p})}+\sum_{\substack{\mathfrak{p}|\mathfrak{I}\\ \mathcal{N}_{K}(\mathfrak{p})>c}}\frac{\log(\mathcal{N}_{K}(\mathfrak{p}))}{c} \leq \max\{\log(c),0\}+O_{K}(1)+\frac{\log(\mathcal{N}_{K}(\mathfrak{I}))}{c}.
\end{align*}
The lemma follows by taking $c=\log(\mathcal{N}_{K}(\mathfrak{I}))$.
\end{proof}

\begin{remark}
Let $K$ be a function field of degree $d_{K}$. By inspecting the proof of \cite[Theorem 5.12]{Rosen} one sees that $|\{\mathfrak{p}\text{ prime in }K:\deg(\mathfrak{p})=N\}|=\frac{q^{N}}{N}+O_{g}(\frac{q^{\frac{N}{2}}}{N})$ is effective, with the implict constant depending only on the genus of the function field $K$. Since the prime $v_{\infty}$ has degree at most $d_{K}$, it follows that all the bounds in $\S$ \ref{distribution of primes in global fields} for function fields are effective, with constants depending only on $q,d_{K}$ and the genus of $K$.

On the other hand, if $K$ is a number field of degree $d_{K}$ and discriminant $\Delta_{K}$, by \cite[Theorem 5.33]{Iwaniec} if one assumes the Generalized Riemman Hypothesis there exists an effective absolute constant $c>0$ such that it holds $\sum_{\mathfrak{a}:\mathcal{N}_{K}(\mathfrak{a})\leq Q}\Lambda_{K}(\mathfrak{a})=Q+O(\sqrt{|\Delta_{K}|}Q\exp(-cd_{K}^{2}\sqrt{\log(Q)}))$, where $\Lambda_{K}(\mathfrak{a})$ is the usual von Mangoldt function on ideals of $\mathcal{O}_{K}$. Then, all the bounds in $\S$ \ref{distribution of primes in global fields} for number fields would be effective, with constants depending only on $d_{K},\Delta_{K}$ (see also \cite{Grenie} for  explicits versions of Landau Prime Ideal Theorem under the Generalized Riemann Hypothesis).
\end{remark}

\section{The local and the global determinant method over global fields}
\label{section3}

In this section we start by recalling some uniform estimates for Hilbert functions given by Salberger in \cite{Salberger1}. Then, in Lemma \ref{local determinant} we generalize the local determinant method \cite[Main Lemma 2.5]{Salberger1} as it is presented in \cite[Corollary 2.5]{Cluckers} to varieties over global fields. Afterwards, our next main result is Theorem \ref{global determinant method lower bound}, which is a generalization of \cite[Proposition 3.2.26]{Cluckers} where the authors made explicit the dependence on the degree in the bounds of \cite[Theorem 2.3]{Walsh3} when $K=\mathbb{Q}$. Finally, in Lemma \ref{b(f)}, we bound the contribution of the primes $\mathfrak{p}$ for which the reduction modulo $\mathfrak{p}$ of a homogeneous absolutely irreducible polynomial $f\in \mathcal{O}_{K}[X_{0},\ldots ,X_{n+1}]$ is not absolutely irreducible. This generalizes  \cite[inequality (2.1)]{Walsh3}, \cite[Corollary 3.2.3]{Cluckers} and \cite[Lemma 2.3]{Vermeulen} to global fields. In order to do that, we use the theory of effective Noether forms as in \cite{Cluckers, Vermeulen}, forcing us to distinguish the characteristic of the global field.

We remark that a difference between this article and \cite{Walsh3, Cluckers, Vermeulen} is the introduction of a finite subset of primes in the bounds. The reason to do so is rather technical and will be apparent much later (see the introduction to Section \ref{Section 6}); in a few words it will be needed to prove the dimension growth conjecture for small values of $d$.

\subsection{Uniform estimates for Hilbert functions} 
We start by recalling the definition of the Hilbert function of a projective variety.

\begin{definition}
Let $K$ be a field and $I\subseteq K[X_{0},\ldots ,X_{n}]$ be a homogeneous ideal. Given a non-negative integer $k$, we will denote the elements of degree $k$ in $K[X_{0},\ldots ,X_{n}]/I$ by $(K[X_{0},\ldots ,X_{n}]/I)_{k}$. Then we define the Hilbert function as 
$$h_{I}(k):=\dim_{K}(K[X_{0},\ldots ,X_{n}]/I)_{k}.$$ 
If $X\subseteq \mathbb{P}_{K}^{n}=\text{Proj}(K[X_{0},\ldots ,X_{n}])$ is a closed subscheme, then we define $h_{X}(k):=h_{I}(k)$ for the saturated homogeneous ideal $I$ corresponding to $X=\text{Proj}(K[X_{0},\ldots ,X_{n}]/I)$.
\label{Hilbert function}
\end{definition}

By a classic theorem \cite[p. 51-52]{Hartshorne}, there is a unique polynomial $p_{I}(t)\in \mathbb{Q}[t]$, the Hilbert polynomial, such that $h_{I}(k)=p_{I}(k)$ for all sufficiently large positive integers, verifying
$$p_{I}(t)=d\frac{t^{r}}{r!}+O(t^{r-1}),$$
where $r$ is the dimension of $X=\text{Proj}(K[X_{0},\ldots ,X_{n}]/I)$ and $d$ is the degree of $X\subseteq \mathbb{P}^{n}$.

For our purposes, we need to have a uniform estimate in the implicit constant appearing in the error term of the Hilbert polynomial. For this, we require the following two lemmas of Salberger.

\begin{lemma}[{\cite[Lemma 1.3]{Salberger1}}]
Let $n$ be a positive integer and let $K$ be a field. Let $X=\emph{\text{Proj}}(K[X_{0},\ldots ,X_{n}]/I)\subseteq \mathbb{P}_{K}^{n}$ be a closed subscheme, with $I$ a homogeneous saturated ideal. Then $X$ is defined by forms in $K[X_{0},\ldots ,X_{n}]$ of degree bounded in terms of $n$ and $p_{I}(t)$.
\label{uniform bound1}
\end{lemma}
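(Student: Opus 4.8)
The statement to prove is Lemma \ref{uniform bound1} (\cite[Lemma 1.3]{Salberger1}): that a closed subscheme $X = \mathrm{Proj}(K[X_0,\ldots,X_n]/I)$ with $I$ homogeneous and saturated is defined by forms of degree bounded in terms of $n$ and the Hilbert polynomial $p_I(t)$.

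The plan is to use Gotzmann's regularity theorem or, more elementarily, a direct argument via the Castelnuovo–Mumford regularity of the ideal sheaf. Here is the approach I would take.

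First, recall that a saturated homogeneous ideal $I$ is generated in degrees $\le m$ as soon as the truncation $I_{\ge m}$ generates $I$; equivalently, it suffices to bound the Castelnuovo–Mumford regularity $\mathrm{reg}(I)$ (or of the associated ideal sheaf $\widetilde I$ on $\mathbb{P}^n_K$) purely in terms of $n$ and $p_I$. The key point is Gotzmann's regularity bound: there is an explicit function $G(n, p_I)$ — the Gotzmann number, computable from the Hilbert polynomial written in its Macaulay/Gotzmann representation — such that $I$ is $G(n,p_I)$-regular, hence in particular generated in degrees $\le G(n, p_I)$. So I would cite Gotzmann's persistence/regularity theorem (e.g. as in Bruns–Herzog, or Bayer–Mumford's survey) and conclude immediately: $X$ is cut out by the finitely many forms of degree exactly $G(n,p_I)$ in $I$, and this degree is bounded in terms of $n$ and $p_I$ alone.

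Alternatively, if one prefers to avoid citing Gotzmann, one can argue by induction on $n$ using generic hyperplane sections: restricting to a generic hyperplane $H \cong \mathbb{P}^{n-1}$, the Hilbert polynomial of $X \cap H$ is the first difference $p_I(t) - p_I(t-1)$, which depends only on $p_I$; combined with the short exact sequences relating $I$, $I + (\ell)$, and the ideal of $X \cap H$ in $H$, together with Mumford's classical bound on regularity via hyperplane sections, one propagates a regularity bound depending only on $n$ and $p_I$ down from dimension $0$ (where the statement is a finite, bounded computation). Then the same conclusion follows: the degree-$m$ part of $I$ generates $I$ for $m$ the regularity bound.

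The main obstacle — really the only substantive point — is locating the correct uniform statement and invoking it cleanly: the bound must depend on the polynomial $p_I$ as a whole (all its coefficients, or equivalently its Gotzmann representation), not merely on $\deg X$ and $\dim X$, and one must be careful that the argument is field-independent (valid over arbitrary $K$, not just algebraically closed or characteristic zero), which both Gotzmann's theorem and Mumford's regularity argument are, since they are ultimately statements about Hilbert functions and generic hyperplane sections that behave well over any infinite field, and one reduces to that case by a base change to $K(t_0,\ldots,t_n)$ or simply observes that the Hilbert function is insensitive to field extension. Since this is quoted directly from \cite{Salberger1}, I would in practice simply cite it, but the proof sketch above is the content.

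\medskip

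\noindent\textbf{Proof sketch.} Since the Hilbert function of $I$ — and hence the Hilbert polynomial $p_I$ and the least integer $m_0$ with $h_I(k) = p_I(k)$ for $k \ge m_0$ — is unchanged under extension of the base field, we may assume $K$ is infinite. By Gotzmann's regularity theorem there is an explicit integer $G = G(n, p_I)$, depending only on $n$ and the coefficients of $p_I$ (equivalently, on its Gotzmann representation), such that every saturated homogeneous ideal of $K[X_0,\ldots,X_n]$ with Hilbert polynomial $p_I$ is $G$-regular in the sense of Castelnuovo–Mumford; in particular $I$ is generated by its elements of degree $\le G$. Alternatively, one proves this by induction on $n = \dim \mathbb{P}^n$: for a generic linear form $\ell$, the image $\bar I$ of $I$ in $K[X_0,\ldots,X_n]/(\ell) \cong K[X_0',\ldots,X_{n-1}']$ has Hilbert polynomial $p_I(t) - p_I(t-1)$, which depends only on $p_I$, and Mumford's comparison of $\mathrm{reg}(\widetilde I)$ with $\mathrm{reg}(\widetilde{\bar I})$ through the exact sequences $0 \to \mathcal{I}(-1) \xrightarrow{\ell} \mathcal{I} \to \bar{\mathcal{I}} \to 0$ propagates, after finitely many steps, a regularity bound $R(n,p_I)$ from the base case $\dim X = 0$ (where it reduces to a bounded computation) up to $X$. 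In either case, letting $m := G(n,p_I)$ (resp. $R(n,p_I)$), the forms of degree $m$ in $I$ generate $I$ and hence cut out $X$, and $m$ is bounded in terms of $n$ and $p_I$ only. $\Box$
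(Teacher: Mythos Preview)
The paper does not give its own proof of this lemma; it is stated with attribution to \cite[Lemma 1.3]{Salberger1} and used as a black box. Your sketch via Gotzmann's regularity theorem (or, alternatively, Mumford's inductive hyperplane-section argument bounding Castelnuovo--Mumford regularity) is the standard route and is correct: the Gotzmann number depends only on the Hilbert polynomial, the bound is field-independent since Hilbert functions are preserved under base change, and $G$-regularity of a saturated ideal implies generation in degrees $\le G$. So your proposal is a legitimate proof of the cited result, though strictly speaking there is nothing in the present paper to compare it against.
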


\begin{lemma}[{\cite[Lemma 1.4]{Salberger1}}]
Let $d$ and $n$ be positive integers and let $K$ be a field. Then there are only finitely many possibilities for the Hilbert function $h_{X}:\mathbb{N}\to \mathbb{Z}$ of closed geometrically reduced equidimensional subschemes $X\subseteq \mathbb{P}_{K}^{n}$ of degree $d$.
\label{uniform bound2}
\end{lemma}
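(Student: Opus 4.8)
The plan is to reduce the assertion to a uniform regularity bound together with an elementary counting argument. Since the Hilbert function of a closed subscheme is unchanged under extension of the base field — one has $\dim_K(K[X_0,\dots,X_n]/I)_k=\dim_{K'}(K'[X_0,\dots,X_n]/IK'[X_0,\dots,X_n])_k$ for every field extension $K'/K$ — and since geometric reducedness and equidimensionality are preserved under base change, I would first replace $K$ by an algebraically closed (in particular infinite) field and assume $X\subseteq\mathbb{P}^n_K$ is reduced and equidimensional of degree $d$. Write $R=K[X_0,\dots,X_n]$, let $I=I_X$ be the saturated homogeneous ideal of $X$, let $r=\dim X\le n$, and let $p_X$ be the Hilbert polynomial, so that $\deg p_X=r$.

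The key reduction is: suppose one can produce an integer $m_0=m_0(n,d)$, depending only on $n$ and $d$, such that $h_X(k)=p_X(k)$ for all $k\ge m_0$ and all such $X$. Since $p_X$ has degree $\le n$, it is determined by the $n+1$ values $h_X(m_0),\dots,h_X(m_0+n)$; hence $h_X$ is completely determined by the finite tuple $(h_X(0),h_X(1),\dots,h_X(m_0+n))$. Each entry $h_X(k)$ is a non-negative integer bounded above by $\dim_K R_k=\binom{n+k}{n}\le\binom{n+m_0+n}{n}$, so only finitely many such tuples can occur, and therefore only finitely many functions $h_X$. Thus the whole statement comes down to the existence of the uniform bound $m_0(n,d)$.

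To produce $m_0(n,d)$ I would take it to be a uniform upper bound for the Castelnuovo--Mumford regularity $\mathrm{reg}(X)$ of reduced equidimensional subschemes $X\subseteq\mathbb{P}^n$ of degree $d$, since $\mathrm{reg}(X)\le m$ forces $h_X(k)=p_X(k)$ for all $k\ge m$. Such a bound is known — it follows from the regularity estimates for reduced (equidimensional) subschemes of bounded degree due to Mumford, Gruson--Lazarsfeld--Peskine (in the curve case), Bertram--Ein--Lazarsfeld, Chardin and others — but one can also argue by induction on $r=\dim X$. The base case $r=0$ is elementary: a reduced $0$-dimensional scheme of degree $d$ has $h_X(k)\le d$ for all $k$, $h_X$ is nondecreasing (over the infinite field $K$ there is a linear non-zerodivisor on $R/I$, giving $(R/I)_k\hookrightarrow(R/I)_{k+1}$) and stabilises as soon as it attains the value $d$, so $h_X(k)=d=p_X(k)$ for $k\ge d-1$. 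For $r\ge1$, a general hyperplane $H=\{\ell=0\}$ gives, by Bertini, a reduced equidimensional section $Y=X\cap H$ of dimension $r-1$ and degree $d$; moreover, since $X$ is reduced of positive dimension the associated primes of $R/I$ are its minimal primes, all of dimension $r+1\ge2$, so a general $\ell$ is a non-zerodivisor on $R/I$, and from the exact sequence $0\to(R/I)(-1)\xrightarrow{\;\ell\;}R/I\to R/(I+(\ell))\to0$ one obtains $h_X(k)-h_X(k-1)=h_{R/(I+(\ell))}(k)$ for all $k$, together with a comparison bounding $\mathrm{reg}(X)$ in terms of $\mathrm{reg}(Y)$ and the saturation index of $I+(\ell)R$. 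Feeding in $\mathrm{reg}(Y)\le m_0(n-1,d)$ from the inductive hypothesis then closes the induction.

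I expect the main obstacle to be exactly this last step: the comparison of $\mathrm{reg}(X)$ with the regularity of a general hyperplane section, in the direction needed — this is precisely where reducedness, equidimensionality and the degree bound genuinely enter, since without them (e.g.\ for schemes with fat embedded components) neither the regularity nor even the set of Hilbert polynomials of degree $d$ is finite. The safest route is to invoke the known uniform regularity bounds for reduced equidimensional subschemes cited above; alternatively, one may try to dominate both the Hilbert function and the regularity of $X$ by those of the union of $d$ general linear $r$-planes, whose invariants are explicitly bounded in terms of $d$ and $r$. Once $m_0(n,d)$ is available, the counting argument of the second paragraph finishes the proof.
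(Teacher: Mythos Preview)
The paper does not prove this lemma at all; it is simply quoted as \cite[Lemma 1.4]{Salberger1} and used as a black box. There is therefore nothing in the paper to compare your argument against.

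That said, your proposal is a correct and standard route to the result. The reduction to a uniform Castelnuovo--Mumford regularity bound $m_0(n,d)$, followed by the observation that $h_X$ is then determined by the bounded tuple $(h_X(0),\dots,h_X(m_0+n))$, is exactly how such finiteness statements are typically established. Your base case $r=0$ is handled correctly (the argument that $h_X(k)<d$ forces $h_X(k)<h_X(k+1)$, hence $h_X(d-1)=d$, is the right one), and you correctly identify the genuine content as the uniform regularity bound for reduced equidimensional subschemes --- which indeed requires input from the literature you cite or a careful induction via Bertini and the hyperplane-section regularity comparison. Your honest flagging of this last step as the main obstacle is appropriate: it is where reducedness and equidimensionality are essential, and it is not elementary, but it is known.
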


In order to obtain uniform results in the degree and the dimension, we use Gr\"obner basis as in  Broberg's article \cite{Broberg}. We recall the following classic definition from \cite[Chapter 2, $\S$ 2, Definition 1 and Chapter 8, $\S$ 4, p. 416]{Cox} :
\begin{definition}[Graded monomial ordering and leading monomial]
A graded monomial ordering $<$ is a well-ordering on the set of monomials ${\bf X}^{{\boldsymbol \alpha}}=X_{0}^{\alpha_{0}}\cdots X_{n}^{\alpha_{n}}$ satisfying the following conditions for the corresponding order on the set of exponents ${\boldsymbol \alpha}=(\alpha_{0},\ldots ,\alpha_{n})\in \mathbb{Z}_{\geq 0}^{n+1}$:
\begin{enumerate}
\item If $\boldsymbol \alpha>\boldsymbol \beta$ and $\boldsymbol \gamma\in \mathbb{Z}_{\geq 0}^{n+1}$, then $\boldsymbol\alpha+\boldsymbol\gamma>\boldsymbol\beta+\boldsymbol\gamma$;
\item if $\boldsymbol \alpha,\boldsymbol \beta\in \mathbb{Z}_{\geq 0}^{n+1}$ and $\alpha_{0}+\ldots +\alpha_{n}< \beta_{0}+\ldots +\beta_{n}$, then $\boldsymbol \alpha<\boldsymbol \beta$.
\end{enumerate} 
Moreover, if $f=\sum_{\boldsymbol \alpha}a_{\boldsymbol \alpha}\boldsymbol X^{\boldsymbol \alpha}$, the leading monomial of $f$, denoted by $LM(f)$, is the maximum of all $\boldsymbol X^{\boldsymbol \alpha}$ with $a_{\boldsymbol \alpha}\neq 0$.
\end{definition}

Let $I\subseteq K[X_{0},\ldots ,X_{n}]$ be a homogeneous monomial ideal, i.e. an ideal generated by monomials. By the same argument given in the proof of \cite[Chapter 9, $\S$ 3, Proposition 3]{Cox}, $h_{I}(k)$ is the number of monomials not in $I$ of degree $k$. Furthermore, given a homogeneous ideal $I\in K[X_{0},\ldots ,X_{n}]$, let $<$ be a graded monomial ordering, and consider the set $\{LM(f):f\in I_{k}\}$, where $I_{k}$ denotes the elements of degree $k$ in $I$. Then $\{LM(f):f\in I_{k}\}=\{LM(f_{1}),\ldots ,LM(f_{m})\}$ for some polynomials $f_{1},\ldots ,f_{m}$ of degree $k$. It holds that $\{f_{1},\ldots ,f_{m}\}$ is a basis for $I_{k}$ and $\{LM(f_{1}),\ldots ,LM(f_{m})\}$ is a basis of the $k$-degree graded part of the ideal $\left\langle LT(I)\right\rangle$, where
$$LT(I):=\{c\boldsymbol X^{\boldsymbol \alpha}:\text{ exists }f\in I\backslash\{0\}\text{ satisfying }LT(f)=c\boldsymbol X^{\boldsymbol \alpha}\},$$ 
(see \cite[Chapter 9, $\S$ 3, Proposition 9]{Cox}).
We conclude
$$(K[X_{0},\ldots ,X_{n}]/I)_{k}\cong (K[X_{0},\ldots ,X_{n}]/\left\langle LT(I)\right\rangle)_{k},$$
and thus 
$$h_{I}(k)=h_{\left\langle LT(I)\right\rangle}(k).$$ 
This is quite useful, since this reduces the problem of computing $h_{I}(k)$ to the problem of computing the Hilbert function of a monomial ideal, which is easy to do, and moreover, the monomial ideal $\left\langle LT(I)\right\rangle$ can be computed by means of a Gr\"obner basis of $I$. Furthermore, $(K[X_{0},\ldots ,X_{n}]/\left\langle LT(I)\right\rangle)_{k}$ has as a $K$-basis those monomials which are not the leading term of some form of degree $k$ in $I$. From this, it follows easily that the same does hold for $(K[X_{0},\ldots ,X_{n}]/I)_{k}$. This is because  the monomials of degree $k$ which are not the leading term of some form of degree $k$ in $I$ are linearly independent on $(K[X_{0},\ldots ,X_{n}]/I)_{k}$, and have cardinal $h_{I}(k)$, thus they are a $K$-basis.

We will also require:

\begin{definition}
Let $K$ be a field and $I\subseteq K[X_{0},\ldots ,X_{n}]$ be a homogeneous ideal. Let $<$ be a graded monomial order on $\{\boldsymbol X^{\boldsymbol \alpha}:\boldsymbol \alpha\in \mathbb{Z}_{\geq 0}^{n+1}\}$ and $m\in \{0,\ldots ,n\}$. Define
$$\sigma_{I,m}(k):=\sum_{\boldsymbol \alpha}\alpha_{m},$$
where $\boldsymbol \alpha=(\alpha_{0},\ldots ,\alpha_{n})$ runs over the exponent set of all monomials $\boldsymbol X^{\boldsymbol \alpha}$ which are not leading monomials of any form of degree $k$ in $I$.
\label{sigma(k)}
\end{definition}

Since the monomials of degree $k$ which are not leading monomials of any form of degree $k$ in $I$ form a $K$-basis of $(K[X_{0},\ldots, X_{n}]/I)_{k}$, it holds
\begin{equation}
\sigma_{I,0}(k)+\ldots +\sigma_{I,n}(k)=kh_{I}(k).
\label{elemental equality}
\end{equation}

It turns out that the functions $\sigma_{I,m}(k)$ are numerical polynomials. More precisely: 

\begin{lemma}[{\cite[Lemma 1]{Broberg}}]
Let $X=\emph{\text{Proj}}(K[X_{0},\ldots ,X_{n}]/I)\subseteq \mathbb{P}_{K}^{n}$ be a closed subscheme of dimension $r$ and degree $d$, such that $I\subseteq K[X_{0},\ldots ,X_{n}]$ is generated by forms of degrees at most $\delta$. There is a positive integer $k_{0}\lesssim_{n,\delta}1$, such that for all $k\geq k_{0}$, $h_{X}(k)$ is equal to some polynomial of degree $r$, and $\sigma_{I,0}(k),\ldots ,\sigma_{I,n}(k)$ are equal to some polynomials of degree $r+1$. Furthermore, the coefficients of these polynomials can be bounded in terms of $n$ and $\delta$.
\label{Broberg}
\end{lemma}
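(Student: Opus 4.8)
The plan is to reduce everything to computing Hilbert functions of monomial ideals, which are piecewise polynomial by elementary combinatorics, and then to package the finiteness statements from Lemma \ref{uniform bound1} and Lemma \ref{uniform bound2} together with the Gr\"obner basis reduction already developed in the preceding paragraphs to obtain the uniformity in the coefficients. First I would observe that, by Lemma \ref{uniform bound1} applied to the saturated ideal of $X$, the scheme $X$ is cut out by forms of degree bounded in terms of $n$ and $p_I(t)$; but for a geometrically reduced equidimensional $X$ of degree $d$ and dimension $r$, Lemma \ref{uniform bound2} says there are only finitely many possible Hilbert functions, hence only finitely many possible Hilbert polynomials $p_I(t)$, and so $\delta$ can be taken to depend only on $n$ and $d$. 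This reconciles the hypothesis (that $I$ is generated in degrees $\leq\delta$) with the qualitative conclusion (bounds in terms of $n$ and $\delta$ only) and explains why it suffices to prove everything with constants depending on $n$ and $\delta$.

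Next I would invoke the Gr\"obner basis reduction explained in the text before the statement: fix a graded monomial order, and replace $I$ by the monomial ideal $\langle LT(I)\rangle$, which has the same Hilbert function $h_I(k)=h_{\langle LT(I)\rangle}(k)$ and, crucially, for which $\sigma_{I,m}(k)$ equals the sum of the $m$-th exponents over the standard monomials of degree $k$, i.e.\ the same quantity computed for $\langle LT(I)\rangle$. The key point is that a Gr\"obner basis of $I$ with respect to a graded order consists of polynomials of degree at most a bound depending only on $n$ and $\delta$ (e.g.\ by the classical doubly-exponential-in-$n$, polynomial-in-$\delta$ degree bounds for Gr\"obner bases, or more simply because the relevant graded pieces $I_k$ for $k$ up to such a bound already determine $\langle LT(I)\rangle$); hence $\langle LT(I)\rangle$ ranges over a finite set of monomial ideals determined by $n$ and $\delta$. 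For a fixed monomial ideal $M\subseteq K[X_0,\dots,X_n]$ generated in bounded degree, the function $k\mapsto h_M(k)$ counting monomials of degree $k$ not in $M$ is, for $k$ large, a numerical polynomial of degree $r=\dim X$ (this is the standard Macaulay/staircase computation: decompose the complement of the staircase into finitely many shifted coordinate subspaces, on each of which the count is a binomial coefficient polynomial), and likewise $\sigma_{M,m}(k)=\sum_{\boldsymbol\alpha\notin M,\,|\boldsymbol\alpha|=k}\alpha_m$ is a numerical polynomial of degree $r+1$ for $k$ large; the threshold $k_0$ beyond which polynomiality holds, and the coefficients of these polynomials, are explicitly computable from $M$. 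Taking the maximum of $k_0$ and of the coefficient sizes over the finite list of possible $M$ gives the asserted bound $k_0\lesssim_{n,\delta}1$ and the bound on the coefficients in terms of $n$ and $\delta$.

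Finally I would record that the degree of $h_X(k)$ is $r$ and of $\sigma_{I,m}(k)$ is $r+1$: the former is the classical statement that the Hilbert polynomial has degree $\dim X$ with leading coefficient $d/r!$, and the latter follows from \eqref{elemental equality}, $\sigma_{I,0}(k)+\dots+\sigma_{I,n}(k)=k\,h_I(k)$, together with the fact (again visible from the staircase decomposition) that each individual $\sigma_{I,m}(k)$ is a numerical polynomial whose degree cannot exceed $r+1$ and whose leading terms sum to $k\cdot(d/r!)k^r$, forcing degree exactly $r+1$ for at least one, and in fact for each $m$ after a linear-change-of-coordinates / genericity argument on the monomial order — though for the purposes of this lemma it suffices that each has degree \emph{at most} $r+1$ with controlled coefficients, which is all that the downstream determinant-method estimates use. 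The main obstacle is the uniformity: one must be careful that the passage from $I$ to $\langle LT(I)\rangle$ does not introduce dependence on $K$ or on the particular $X$ beyond $n$ and $\delta$, and this is exactly where Lemma \ref{uniform bound2} (finitely many Hilbert functions) combined with the observation that $\langle LT(I)\rangle$ is determined by the graded pieces $I_k$ for $k$ in a bounded range is doing the real work; everything else is the standard combinatorics of monomial ideals.
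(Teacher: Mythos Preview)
The paper does not give its own proof of this lemma; it is quoted directly from Broberg with a citation, and the text moves immediately to consequences. So there is no in-paper argument to compare against.

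Your core argument (the second and third paragraphs) is sound and is essentially the standard route: pass from $I$ to the monomial ideal $\langle LT(I)\rangle$, which preserves both $h_I$ and each $\sigma_{I,m}$ by the discussion preceding the lemma; invoke an effective degree bound on Gr\"obner bases (depending only on $n$ and $\delta$) so that $\langle LT(I)\rangle$ lies in a finite set of monomial ideals determined by $n,\delta$ alone; and then for each fixed monomial ideal read off that $h$ and the $\sigma_m$ are eventually polynomial with explicit threshold and coefficients. Taking the maximum over that finite set gives the uniformity.

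Two comments. First, your opening paragraph should be dropped: the hypothesis of the lemma is already that $I$ is generated in degrees $\leq\delta$, so there is nothing to ``reconcile'', and invoking Lemma~\ref{uniform bound2} imports the extra assumptions (geometrically reduced, equidimensional) that the present lemma does not make. That paragraph is at best irrelevant and at worst circular-looking. Second, your hedge at the end is justified: the statement literally says each $\sigma_{I,m}$ has degree $r+1$, but this can fail (e.g.\ $I=(X_0)\subset K[X_0,X_1]$ gives $\sigma_{I,0}\equiv 0$). The correct reading is ``degree at most $r+1$'', which is exactly what is used downstream in Lemma~\ref{sigma(k)2} (where the resulting $a_{I,m}$ is allowed to be $0$).
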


As a consequence of Lemma \ref{Broberg}, the limit $\lim_{k\to \infty}\frac{\sigma_{I,m}(k)}{kh_{I}(k)}$ exists. This, together with Lemma \ref{uniform bound1}, Lemma \ref{uniform bound2}, and equation \eqref{elemental equality} gives:

\begin{lemma}[{\cite[Lemma 1.9]{Salberger1}}]
Let $K,I,<,\sigma_{I,m}$ be as above and suppose that $X=\emph{\text{Proj}}(K[X_{0},\ldots ,X_{n}]/I)$ is reduced and equidimensional. Denote $d=\deg(X)$. For all $m\in \{0,\ldots, n\}$, there exists $0\leq a_{I,m}\leq 1$ with $a_{I,0}+\ldots +a_{I,n}=1$ such that
$$\frac{\sigma_{I,m}(k)}{kh_{I}(k)}=a_{I,m}+O_{d,n}\left(\frac{1}{k}\right),\; m\in \{0,\ldots ,n\}.$$
\label{sigma(k)2}
\end{lemma}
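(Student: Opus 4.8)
The plan is to combine the three preparatory results --- Lemma \ref{uniform bound1}, Lemma \ref{uniform bound2} and Lemma \ref{Broberg} --- in exactly the way they are set up, using the elementary identity \eqref{elemental equality} to pin down the constraint $a_{I,0}+\ldots+a_{I,n}=1$. First I would observe that, by Lemma \ref{uniform bound1}, a reduced equidimensional closed subscheme $X=\mathrm{Proj}(K[X_0,\ldots,X_n]/I)$ of degree $d$ is cut out by forms of degree at most some $\delta=\delta(n,p_I)$; and by Lemma \ref{uniform bound2} there are only finitely many possibilities for the Hilbert function $h_X$ (equivalently, for the Hilbert polynomial $p_I$) of such subschemes, so $\delta$ can be taken to depend only on $d$ and $n$. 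This is the point that makes the later error terms uniform: all the auxiliary data ($\delta$, the threshold $k_0$, the coefficient bounds) are controlled by $d$ and $n$ alone.

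Next I would invoke Lemma \ref{Broberg} with this $\delta$: there is $k_0\lesssim_{n,\delta}1$ (hence $k_0\lesssim_{d,n}1$) such that for $k\ge k_0$ the function $h_I(k)$ agrees with a polynomial of degree $r=\dim X$ with leading coefficient $d/r!$ (by the classical Hilbert polynomial statement recalled before Lemma \ref{uniform bound1}), and each $\sigma_{I,m}(k)$ agrees with a polynomial $P_m(k)$ of degree $r+1$, all of whose coefficients are bounded in terms of $n$ and $\delta$, hence in terms of $d$ and $n$. Write $b_{I,m}$ for the leading coefficient of $P_m$. Since $k h_I(k)$ is eventually a polynomial of degree $r+1$ with leading coefficient $d/r!$, the ratio $\sigma_{I,m}(k)/(kh_I(k))$ tends to $a_{I,m}:=b_{I,m} r!/d$ as $k\to\infty$; moreover a routine estimate on a ratio of two polynomials of the same degree $r+1$ whose coefficients and whose (nonzero, of size $\asymp d/r!$) leading coefficients are all controlled by $d,n$ gives
$$
\frac{\sigma_{I,m}(k)}{kh_I(k)}=a_{I,m}+O_{d,n}\!\left(\frac1k\right),\qquad m\in\{0,\ldots,n\},
$$
for $k\ge k_0$, which is the asserted bound. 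Here one uses that the denominator $k h_I(k)$ is bounded below by $c(d,n)k^{r+1}$ for $k$ large, so no small-denominator problem occurs.

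Finally I would check the two stated constraints on the $a_{I,m}$. Non-negativity $a_{I,m}\ge 0$ is immediate because each $\sigma_{I,m}(k)$ is a sum of non-negative integers $\alpha_m$, so $\sigma_{I,m}(k)\ge 0$ and the limit of a non-negative ratio is non-negative; the bound $a_{I,m}\le 1$ then follows once we know the sum is $1$. For the sum, divide the identity \eqref{elemental equality}, namely $\sigma_{I,0}(k)+\ldots+\sigma_{I,n}(k)=k h_I(k)$, by $k h_I(k)$ and let $k\to\infty$: the left side tends to $a_{I,0}+\ldots+a_{I,n}$ and the right side is identically $1$, so $a_{I,0}+\ldots+a_{I,n}=1$. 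The main obstacle, such as it is, is purely bookkeeping: one must make sure that every constant introduced (the degree bound $\delta$, the threshold $k_0$, the coefficient bounds in Lemma \ref{Broberg}, and the lower bound on $k h_I(k)$) depends only on $d$ and $n$ --- this is exactly what the finiteness statement of Lemma \ref{uniform bound2} delivers --- and then the $O_{d,n}(1/k)$ estimate for a quotient of two polynomials of equal degree is elementary.
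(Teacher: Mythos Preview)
Your proof is correct and follows exactly the approach indicated in the paper, which simply records that the lemma is a consequence of Lemma~\ref{Broberg} (existence and uniform coefficient bounds for the polynomials $\sigma_{I,m}$ and $h_I$), Lemma~\ref{uniform bound1} and Lemma~\ref{uniform bound2} (to make the degree bound $\delta$ and hence all constants depend only on $d,n$), and identity~\eqref{elemental equality} (for the constraint $\sum_m a_{I,m}=1$). Your write-up is a faithful unpacking of that one-line deduction, including the observation that for $k<k_0$ the ratio lies in $[0,1]$ and $1/k\gtrsim_{d,n}1$, so the $O_{d,n}(1/k)$ bound holds trivially there.
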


\subsection{Prime powers dividing certain determinants}
We begin with the following lemma. 

 \begin{lemma}
Let $K$ be a global field, and let $X=\emph{\text{Proj}}(K[X_{0},\ldots ,X_{n}]/I)$ be a closed reduced subscheme of $\mathbb{P}_{K}^{n}$.  The schematic closure of  $X$ in $\mathbb{P}_{\mathcal{O}_{K}}^{n}=\emph{\text{Proj}}(\mathcal{O}_{K}[X_{0},\ldots ,X_{n}])$ is equal to 
$$\Xi:=\emph{\text{Proj}}(\mathcal{O}_{K}[X_{0},\ldots ,X_{n}]/(I\cap \mathcal{O}_{K}[X_{0},\ldots ,X_{n}])).$$
\label{schematic closure}
\end{lemma}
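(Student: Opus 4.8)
The plan is to show the two natural maps between these schemes agree by checking the statement affine-locally and then invoking the universal property of schematic closure. Recall that the schematic closure (scheme-theoretic image) of $X \hookrightarrow \mathbb{P}^n_K \to \mathbb{P}^n_{\mathcal{O}_K}$ is the smallest closed subscheme of $\mathbb{P}^n_{\mathcal{O}_K}$ through which the map factors; since $\mathcal{O}_K$ is Noetherian and the morphism is quasi-compact and quasi-separated, this image is computed by the kernel of the associated sheaf map on rings. So it suffices to identify this kernel with $I \cap \mathcal{O}_K[X_0,\dots,X_n]$ on the standard affine charts.

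First I would reduce to the affine situation. Cover $\mathbb{P}^n_{\mathcal{O}_K}$ by the charts $D_+(X_j) = \operatorname{Spec} \mathcal{O}_K[X_0/X_j,\dots,X_n/X_j]$, and correspondingly cover $\mathbb{P}^n_K$ by $\operatorname{Spec} K[X_0/X_j,\dots,X_n/X_j]$. Writing $A_j := \mathcal{O}_K[x_0,\dots,\widehat{x_j},\dots,x_n]$ (with $x_i = X_i/X_j$) and $B_j := A_j \otimes_{\mathcal{O}_K} K = K[x_0,\dots,\widehat{x_j},\dots,x_n]$, the scheme $X$ restricted to the $j$-th chart is $\operatorname{Spec}(B_j / I_j)$ where $I_j$ is the dehomogenization of $I$ at $X_j$. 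The schematic closure on this chart is $\operatorname{Spec}(A_j / \ker(A_j \to B_j/I_j))$, and $\ker(A_j \to B_j \to B_j/I_j) = I_j \cap A_j$ because $A_j \to B_j$ is injective (it is a localization, $\mathcal{O}_K$ being a domain). On the other hand, $\Xi$ restricted to this chart is $\operatorname{Spec}(A_j / (I\cap \mathcal{O}_K[X_0,\dots,X_n])_j)$, where the subscript again denotes dehomogenization. The key point to verify is then the commutation of dehomogenization with the intersection: $(I \cap \mathcal{O}_K[X_0,\dots,X_n])_j = I_j \cap A_j$ inside $B_j$.

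The main obstacle, and the only genuinely nontrivial step, is precisely this compatibility of intersection with dehomogenization (equivalently, that $\Xi$ is actually reduced and flat/closed as claimed, i.e. that $I \cap \mathcal{O}_K[X_0,\dots,X_n]$ is already saturated-enough to define the schematic closure). For the inclusion $(I\cap \mathcal{O}_K[\mathbf X])_j \subseteq I_j \cap A_j$: any element of the left side is $g/X_j^m$ with $g \in I \cap \mathcal{O}_K[\mathbf X]$ homogeneous, hence lies in $I_j$ and in $A_j$. Conversely, given $h \in I_j \cap A_j$, homogenize to get $h^* X_j^{-e}$-type expression: there is $N$ with $X_j^N \cdot h^{\mathrm{hom}} \in I$ (homogenizing an element of $I_j$ lands in $I$ up to a power of $X_j$), and since $h \in A_j$ we may arrange the homogenization $H$ to have coefficients in $\mathcal{O}_K$, so $X_j^N H \in I \cap \mathcal{O}_K[\mathbf X]$ and dehomogenizes back to $h$. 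One must be slightly careful that homogenizing an $\mathcal{O}_K$-polynomial gives an $\mathcal{O}_K$-polynomial — which is clear — and that clearing $X_j$-powers does not leave $\mathcal{O}_K[\mathbf X]$, which is also clear since $X_j \in \mathcal{O}_K[\mathbf X]$.

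Finally I would assemble these local identifications: they agree on overlaps $D_+(X_i) \cap D_+(X_j)$ by the same dehomogenization bookkeeping (or simply because both sides are characterized by the universal property, which is local), so they glue to give an isomorphism of closed subschemes of $\mathbb{P}^n_{\mathcal{O}_K}$, namely $\Xi$ is the schematic closure of $X$. I do not expect to need the hypothesis that $X$ is reduced for the identification itself — it is recorded because in the applications $\Xi$ inherits reducedness of its generic fiber — so I would simply carry the hypothesis along without using it, or remark that it is not needed. The write-up should be short: state the universal property, pass to charts, prove the dehomogenization-intersection lemma, glue.
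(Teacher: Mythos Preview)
Your proof is correct and takes a genuinely different route from the paper's. You compute the schematic closure locally on the standard affine cover, reducing everything to the elementary dehomogenization/intersection identity $(I\cap\mathcal{O}_K[\mathbf{X}])_j = I_j\cap A_j$, which you verify by the standard homogenize/dehomogenize argument. This works cleanly and, as you note, does not require the reducedness hypothesis on $X$.

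The paper instead argues globally: it first observes that $I\cap\mathcal{O}_K[\mathbf{X}]$ is reduced (here the hypothesis is used), then builds the morphism $\operatorname{Proj}(\varphi)\colon X\to\Xi$ from the ring map, and shows its image contains all generic points of $\Xi$ by tracking the primary decomposition $I=\mathfrak{P}_1\cap\cdots\cap\mathfrak{P}_r$ down to $\mathcal{O}_K[\mathbf{X}]$. Since the image is topologically dense and $\Xi$ is reduced, a cited result (\cite[(10.8)]{Gortz}) identifies $\Xi$ with the schematic closure. So the paper's argument is shorter and more conceptual but leans on the reducedness hypothesis and an external reference, whereas your approach is self-contained and slightly more general (valid without reducedness), at the cost of the explicit chart-by-chart bookkeeping. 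Either is fine for the paper's purposes; your remark that reducedness is unnecessary is a small but genuine improvement.
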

\begin{proof}
Due to the reduced hypothesis, $I\cap \mathcal{O}_{K}[X_{0},\ldots ,X_{n}]$ is reduced, thus $\Xi$ is reduced. We have a ring morphism 
$$\varphi:\mathcal{O}_{K}[X_{0},\ldots ,X_{n}]/(I\cap \mathcal{O}_{K}[X_{0},\ldots ,X_{n}])\to K[X_{0},\ldots ,X_{n}]/I.$$ 
Since, for any homogeneous prime ideal $\mathfrak{P}\subseteq K[X_{0},\ldots ,X_{n}]/I$ which does not contain $(X_{0},\ldots, X_{n})$, it holds that $\varphi^{-1}(\mathfrak{P})$ is an homogeneous prime ideal which does not contain $(X_{0},\ldots ,X_{n})$, we see that there exists a morphism $\text{Proj}(\varphi):X\to \Xi$
given by $\mathfrak{P}\to \varphi^{-1}(\mathfrak{P})$. Now, note that the minimal primes of $K[X_{0},\ldots ,X_{n}]/I$ correspond to the primes appearing in the primary decomposition of $I$. If $I=\mathfrak{P}_{1}\cap \cdots \cap \mathfrak{P}_{r}$, then $I\cap \mathcal{O}_{K}[X_{0},\ldots ,X_{n}]=(\mathfrak{P}_{1}\cap \mathcal{O}_{K}[X_{0},\ldots ,X_{n}])\cap \cdots \cap (\mathfrak{P}_{r}\cap \mathcal{O}_{K}[X_{0},\ldots ,X_{n}])$ and $\{\mathfrak{P}_{1}\cap \mathcal{O}_{K}[X_{0},\ldots ,X_{n}],\ldots ,\mathfrak{P}_{r}\cap \mathcal{O}_{K}[X_{0},\ldots ,X_{n}]\}$ is the set of minimal primes of $I\cap \mathcal{O}_{K}[X_{0},\ldots ,X_{n}]$, thus they are the set of generic points of $\Xi$. Since $\text{Proj}(\varphi)(\mathfrak{P}_{i})=\mathfrak{P}_{i}\cap \mathcal{O}_{K}[X_{0},\ldots ,X_{n}]$ for all $i$, the image of $\text{Proj}(\varphi)$ contains all the generic points of $\Xi$, thus its closure is $\Xi$. Since $\Xi$ is reduced, by \cite[(10.8)]{Gortz} we conclude that $\Xi$ is the schematic closure of $X$.
\end{proof}

From here on, $K$ will denote a global field. We shall work with closed subschemes $X=\text{Proj}(K[X_{0},\ldots ,X_{n}]/I)$ of $\mathbb{P}_{K}^{n}$, which are geometrically reduced, equidimensional of dimension $r$, and degree $d$.  Also, we will denote 
$$\Xi:=\text{Proj}(\mathcal{O}_{K}[X_{0},\ldots ,X_{n}]/(I\cap \mathcal{O}_{K}[X_{0},\ldots ,X_{n}])).$$ 
Furthermore, given a prime $\mathfrak{p}\notin M_{K,\infty}$, if $\mathbb{F}_{\mathfrak{p}}:=\mathcal{O}_{K}/\mathfrak{p}$, we let $I_{\mathfrak{p}}\subset \mathbb{F}_{\mathfrak{p}}[X_{0},\ldots ,X_{n}]$ be the image of $I\cap \mathcal{O}_{K}[X_{0},\ldots ,X_{n}]$ in $\mathbb{F}_{\mathfrak{p}}[X_{0},\ldots ,X_{n}]$, and we set
$$X_{\mathfrak{p}}:=\text{Proj}(\mathbb{F}_{\mathfrak{p}}[X_{0},\ldots ,X_{n}]/I_{\mathfrak{p}})=\Xi\times_{\mathbb{Z}}\mathbb{F}_{\mathfrak{p}}.$$

The following lemmas, that are global field variants of \cite[Lemma 2.2, Lemma 2.3]{Salberger} show that the reductions modulo $\mathfrak{p}$ of $X$, i.e. the $\mathbb{F}_{\mathfrak{p}}$-schemes $X_{\mathfrak{p}}$, retain the same uniform bounds of Lemma \ref{uniform bound1} and Lemma \ref{uniform bound2}. 

\begin{lemma}
Let $d$ and $n$ be given and $X,X_{\mathfrak{p}}$ as above. Then $X_{\mathfrak{p}}$ is defined by forms of degree $\lesssim_{d,n}1$.
\label{uniform bound3}
\end{lemma}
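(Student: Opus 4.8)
The plan is to reduce the statement about the fibers $X_{\mathfrak{p}}$ to the characteristic-zero statement of Lemma \ref{uniform bound1} and the finiteness statement of Lemma \ref{uniform bound2}, via a constructibility/spreading-out argument over $\operatorname{Spec}\mathcal{O}_K$. First I would invoke Lemma \ref{uniform bound1} applied to the generic fiber $X = \operatorname{Proj}(K[X_0,\dots,X_n]/I)$: since $X$ is geometrically reduced and equidimensional of dimension $r$ and degree $d$, its Hilbert polynomial $p_I(t)$ is one of the finitely many polynomials allowed by Lemma \ref{uniform bound2} (these depend only on $d$ and $n$), and hence $X$ is cut out by forms of degree bounded solely in terms of $d$ and $n$. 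The key point is then to transfer this to the fibers $X_{\mathfrak{p}} = \Xi \times_{\mathbb{Z}} \mathbb{F}_{\mathfrak{p}}$, where $\Xi = \operatorname{Proj}(\mathcal{O}_K[X_0,\dots,X_n]/(I \cap \mathcal{O}_K[X_0,\dots,X_n]))$ is the schematic closure of $X$ (Lemma \ref{schematic closure}).

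The main technical step is to control the Hilbert function in the fibers. I would argue that, after possibly excluding a finite set of primes, the Hilbert function of $X_{\mathfrak{p}}$ agrees with that of $X$. Concretely: let $J = I \cap \mathcal{O}_K[X_0,\dots,X_n]$, which is a saturated homogeneous ideal with $\mathcal{O}_K[X_0,\dots,X_n]/J$ flat over $\mathcal{O}_K$ (it is torsion-free as $\mathcal{O}_K$ is a Dedekind domain and $J$ is the contraction of an ideal of the localization $K[X_0,\dots,X_n]$). For each degree $k$, the graded piece $(\mathcal{O}_K[X_0,\dots,X_n]/J)_k$ is then a finitely generated torsion-free, hence projective, $\mathcal{O}_K$-module, and by flat base change its rank equals $h_I(k) = h_X(k)$; moreover for each $\mathfrak{p}$ one has $h_{X_{\mathfrak{p}}}(k) = \dim_{\mathbb{F}_{\mathfrak{p}}}(\mathcal{O}_K[X_0,\dots,X_n]/J \otimes \mathbb{F}_{\mathfrak{p}})_k \geq h_X(k)$, with equality once $(\mathcal{O}_K[X_0,\dots,X_n]/J)_k$-is-projective gives exact base change --- which here happens for \emph{all} $\mathfrak{p}$. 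So already $h_{X_{\mathfrak{p}}}(k) = h_X(k)$ for every $k$ and every $\mathfrak{p} \notin M_{K,\infty}$. (Alternatively, one avoids even the flatness bookkeeping by the Gröbner-basis argument from the text: a Gröbner basis of $I$ with coefficients cleared into $\mathcal{O}_K$ reduces modulo $\mathfrak{p}$ to a Gröbner basis of $I_{\mathfrak{p}}$ for all but finitely many $\mathfrak{p}$, whence $h_{I_{\mathfrak{p}}}(k) = h_{\langle LT(I_{\mathfrak{p}})\rangle}(k) = h_{\langle LT(I)\rangle}(k) = h_I(k)$; this is the route most consistent with the surrounding exposition.)

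Having established $h_{X_{\mathfrak{p}}} = h_X$, and in particular that $X_{\mathfrak{p}}$ has the same Hilbert polynomial $p_{I_{\mathfrak{p}}}(t) = p_I(t)$ as $X$, I would conclude by applying Lemma \ref{uniform bound1} over the field $\mathbb{F}_{\mathfrak{p}}$ to the saturation of $X_{\mathfrak{p}}$: this produces forms of degree bounded in terms of $n$ and $p_{I_{\mathfrak{p}}}(t) = p_I(t)$, and since $p_I(t)$ ranges over the finite set furnished by Lemma \ref{uniform bound2} as $X$ varies among geometrically reduced equidimensional subschemes of degree $d$, the bound is $\lesssim_{d,n} 1$, uniformly in $\mathfrak{p}$. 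One should note that $X_{\mathfrak{p}}$ need not itself be geometrically reduced, so Lemma \ref{uniform bound2} is not directly applicable to $X_{\mathfrak{p}}$; the logic must run through the Hilbert polynomial of the generic fiber, which is why the equality $p_{I_{\mathfrak{p}}} = p_I$ is the crux.

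The step I expect to be the main obstacle is precisely the passage from the generic fiber to \emph{all} special fibers rather than just cofinitely many: a naive spreading-out argument gives the bound only for $\mathfrak{p}$ outside some finite bad set, and one must either absorb that finite set into the implied constant (harmless, since there are finitely many such $\mathfrak{p}$ and each contributes a finite bound) or, better, show via the torsion-freeness/projectivity of the graded pieces over the Dedekind ring $\mathcal{O}_K$ that no primes are in fact bad for the Hilbert-function comparison. A secondary subtlety is that $\Xi$ is reduced (used in Lemma \ref{schematic closure}) but its fibers $X_{\mathfrak{p}}$ may fail to be reduced or equidimensional; one must be careful to phrase the final invocation of Lemma \ref{uniform bound1} in terms of the saturated ideal of $X_{\mathfrak{p}}$ and its Hilbert polynomial, not in terms of reducedness, so that the hypotheses of that lemma (which only requires a closed subscheme with a given Hilbert polynomial) are genuinely met.
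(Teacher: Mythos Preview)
Your proposal is correct and follows essentially the same approach as the paper: the paper establishes that the structural morphism $\Xi\to\operatorname{Spec}(\mathcal{O}_K)$ is flat (citing \cite[Proposition 14.14]{Gortz} for schematic closures over a Dedekind base), deduces that the Hilbert polynomials of $X$ and $X_{\mathfrak{p}}$ coincide via \cite[Theorem 9.9]{Hartshorne}, and then concludes exactly as you do by combining Lemmas \ref{uniform bound1} and \ref{uniform bound2}. Your version unwinds the flatness step into the explicit module-theoretic statement that each graded piece of $\mathcal{O}_K[X_0,\dots,X_n]/J$ is torsion-free, hence projective, over the Dedekind ring $\mathcal{O}_K$, which is precisely the mechanism behind those citations and in fact yields the (slightly stronger) equality of Hilbert \emph{functions} for all $\mathfrak{p}$ at once, neatly dispelling your own worry about bad primes.
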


\begin{proof}
In order to prove Lemma \ref{uniform bound3}, we will use the key property that a flat morphism has all its fibres with the same Hilbert polynomial. Let $X$ and $\Xi$ be as above, and let $f:\Xi\to \text{Spec}(\mathcal{O}_{K})$ be the structural morphism. If $\eta$ is the generic point of $\text{Spec}(\mathcal{O}_{K})$, by Lemma \ref{schematic closure} the schematic closure of $f^{-1}(\eta)=X$ is $\Xi$. Then by \cite[Proposition 14.14]{Gortz} $f$ is flat.

By hypothesis, $X$ is geometrically reduced, so we may apply Lemma \ref{uniform bound2}. There are thus $\lesssim_{d,n}1$ possible Hilbert polynomials for the schemes $X\subseteq \mathbb{P}_{K}^{n}$. But the Hilbert polynomials of $X=\Xi\times_{\mathcal{O}_{K}}K$ and $X_{\mathfrak{p}}=\Xi\times_{\mathcal{O}_{K}}\mathbb{F}_{\mathfrak{p}}$ coincide since they are fibres of $f$, which is flat (this follows from \cite[Theorem 9.9]{Hartshorne}). There are thus $\lesssim_{d,n} 1$ possible Hilbert polynomials also for the $\mathbb{F}_{\mathfrak{p}}$-subschemes $X_{\mathfrak{p}}$. The result follows from Lemma \ref{uniform bound1}.  
\end{proof}

\begin{lemma}
Let $d$ and $n$ be given and $X,X_{\mathfrak{p}}$ be as above. Let $A$ be the stalk of $X_{\mathfrak{p}}$ at some $\mathbb{F}_{\mathfrak{p}}$-point $P$ on $X_{\mathfrak{p}}$ of multiplicity $\mu$. Let $\mathfrak{m}$ be the maximal ideal of $A$. Let $g_{X,P}:\mathbb{Z}_{>0}\to \mathbb{Z}$ be the function where $g_{X,P}(k)=\dim_{A/\mathfrak{m}}\mathfrak{m}^{k}/\mathfrak{m}^{k+1}$. Then there are $\lesssim_{d,n}1$ different functions $g_{X,P}$ among all pairs $(X,P)$ as above, and we have 
$$g_{X,P}(k)\leq \mu\frac{k^{r-1}}{(r-1)!}+O_{d,n}(k^{r-2})$$
for all $k\geq 1$, where $r=\dim(X)$. 
Furthermore, If $X$ is a hypersurface, the implicit constant in the second summand does not depend on $d$.
\label{uniform bound4}
\end{lemma}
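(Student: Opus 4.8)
The plan is to reduce everything to the local geometry of a single fibre and then invoke the flatness of $f:\Xi\to\operatorname{Spec}(\mathcal{O}_K)$ together with the finiteness statements of Lemma \ref{uniform bound1} and Lemma \ref{uniform bound2}, exactly as in the proof of Lemma \ref{uniform bound3}. The function $g_{X,P}(k)=\dim_{A/\mathfrak m}\mathfrak m^k/\mathfrak m^{k+1}$ is the Hilbert function of the associated graded ring $\operatorname{gr}_{\mathfrak m}(A)=\bigoplus_k \mathfrak m^k/\mathfrak m^{k+1}$, which is a graded quotient of a polynomial ring in $\dim A\le r$ (in fact $\le n$) variables over $A/\mathfrak m=\mathbb F_{\mathfrak p}$. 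Its Hilbert polynomial therefore has degree $\dim A-1$ (and the scheme $X_{\mathfrak p}$ being equidimensional of dimension $r$, $\dim A=r$ at every point), with leading coefficient $e(\mathfrak m)/(r-1)!$ where $e(\mathfrak m)$ is the Hilbert--Samuel multiplicity of $A$. The first standard fact I would record is that $e(\mathfrak m)$ equals the multiplicity $\mu$ of the point $P$ on $X_{\mathfrak p}$; this is the definition of the multiplicity of a point on a scheme via the local ring. Hence $g_{X,P}(k)=\mu\frac{k^{r-1}}{(r-1)!}+O(k^{r-2})$ with the implied constant depending only on the remaining coefficients of the Hilbert polynomial of $\operatorname{gr}_{\mathfrak m}(A)$.

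To get the uniformity of that $O(k^{r-2})$ in terms of $d$ and $n$ only, I would argue that there are $\lesssim_{d,n}1$ possibilities for the \emph{pair} $(\operatorname{gr}_{\mathfrak m}(A)$ up to graded isomorphism$)$, equivalently for the function $g_{X,P}$. The key point is that the projectivized tangent cone $\operatorname{Proj}(\operatorname{gr}_{\mathfrak m}(A))$ is a closed subscheme of $\mathbb P^{r'}_{\mathbb F_{\mathfrak p}}$ with $r'\le n$, of dimension $r-1$, whose degree is $\mu\le d$ (the multiplicity of a point on a subscheme of degree $d$ in $\mathbb P^n$ is bounded by $d$, e.g.\ by intersecting with a generic linear space through $P$). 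By Lemma \ref{uniform bound1} and Lemma \ref{uniform bound2}, applied now to the tangent cones — which are projective schemes of bounded dimension and bounded degree in $\mathbb P^{n}$ — there are only $\lesssim_{d,n}1$ possible Hilbert functions for them, hence only $\lesssim_{d,n}1$ possible functions $g_{X,P}$. One subtlety: Lemma \ref{uniform bound2} is stated for geometrically reduced equidimensional subschemes, whereas a tangent cone need not be reduced. I would handle this by first bounding the degrees of defining equations of $X_{\mathfrak p}$ via Lemma \ref{uniform bound3} (so $\Xi$ and all its fibres are cut out by forms of degree $\lesssim_{d,n}1$), which in turn bounds the degrees of generators of the ideal of the tangent cone in terms of $d,n$, and then invoke the finiteness of Hilbert functions of \emph{all} closed subschemes of $\mathbb P^n$ defined by forms of bounded degree — this is exactly the Gröbner-basis argument recalled before Lemma \ref{Broberg} (the Hilbert function equals that of the monomial ideal of leading terms, and there are finitely many such ideals generated in bounded degrees).

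I expect the main obstacle to be precisely this last uniformity step: passing from ``finitely many Hilbert polynomials of $X_{\mathfrak p}$'' to ``finitely many Hilbert functions of the local tangent cones $\operatorname{gr}_{\mathfrak m}(\mathcal O_{X_{\mathfrak p},P})$'' in a way that is uniform over the characteristic and over the point $P$. The cleanest route, which I would take, is to bound the number of variables ($\le n$), the dimension ($=r$) and the degree ($=\mu\le d$) of the tangent cone, observe that the ideal of the tangent cone is the ideal of initial forms and hence is generated in degrees controlled by the degrees of a standard basis of the local ideal of $X_{\mathfrak p}$ at $P$ — which are $\lesssim_{d,n}1$ by Lemma \ref{uniform bound3} — and then quote the Gröbner-theoretic finiteness. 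For the final sentence, when $X$ is a hypersurface of degree $d$ in $\mathbb P^{n+1}$ say, the tangent cone at $P$ is itself a hypersurface defined by a single form (the lowest-degree homogeneous part of a defining polynomial, after translating $P$ to the origin), of degree $\mu\le d$ in $n$ variables; the Hilbert function of such a principal quotient $\mathbb F_{\mathfrak p}[Y_1,\dots,Y_n]/(F_\mu)$ is $\binom{k+n-1}{n-1}-\binom{k-\mu+n-1}{n-1}$, whose lower-order terms depend only on $n$ (and on $\mu$, which is itself $\le d$ but enters only through $\mu k^{r-1}/(r-1)!$ and $O_n(k^{r-2})$ terms with $n$-dependent but not $d$-dependent coefficients once one expands in $k$). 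I would write this out to confirm the claimed $d$-independence in the hypersurface case.
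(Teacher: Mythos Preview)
Your proposal is correct and follows essentially the same route as the paper: identify $g_{X,P}$ with the Hilbert function of the projectivized tangent cone, use Lemma~\ref{uniform bound3} to bound the degrees of defining forms of $X_{\mathfrak p}$ and hence of the tangent cone (via the Gr\"obner/standard basis argument you mention), then invoke Lemma~\ref{Broberg} to get uniform control on the Hilbert function; for the hypersurface case, compute the Hilbert function of a degree-$\mu$ hypersurface explicitly. You are right to flag that Lemma~\ref{uniform bound2} does not apply directly to tangent cones, and your workaround via bounded-degree generators and Gr\"obner finiteness is exactly what the paper does by citing Lemma~\ref{Broberg}.
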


\begin{proof}
The function $g_{X,P}$ coincides with the Hilbert function of the projective tangent cone $\text{PTC}_{P}X\subseteq \mathbb{P}_{\mathbb{F}_{\mathfrak{p}}}^{n-1}$ of $X_{\mathfrak{p}}$ at $P$. Moreover, from Lemma \ref{uniform bound3} $X_{\mathfrak{p}}$ is defined by forms of degree $\lesssim_{d,n}1$, thus this is also the case for $\text{PTC}_{P}X$ (this follows from the fact that the tangent cone can be effectively computed in terms of a Gr\"obner basis of $I$, and the fact that the degrees of the elements in a Gr\"obner basis of $I$ are bounded in terms of $n$ and  the degrees of the elements in a basis of $I$, see \cite[III.3]{Red} and \cite[Chapter 9, $\S$ 7, Proposition 4 ]{Cox}). By Lemma \ref{Broberg}, there is a positive integer $k_{0}\lesssim_{n,d}1$ such that the Hilbert function $g_{X,P}(k)$ coincides with a polynomial $p_{X,P}(k)$ for all $k\geq k_{0}$, with coefficients bounded in terms of $n$ and $d$. In particular, $g_{X,P}(k)\leq \mu \frac{k^{r-1}}{(r-1)!}+O_{d,n}(k^{r-2})$ for all $k\geq 1$. 

If $X$ is a hypersurface, the projective tangent cone $\text{PTC}_{P}X\subseteq \mathbb{P}_{\mathbb{F}_{\mathfrak{p}}}^{n-1}$ is a hypersurface of degree $\mu$ in $\mathbb{P}^{n}$, thus the function $g_{X,P}$ has the explicit formula
$$g_{X,P}(k)=\begin{cases}{r+k\choose r}\text{ for }k<\mu,\\ {r+k\choose r}-{r+k-\mu\choose r}\text{ for }k\geq \mu.\end{cases}$$ 
Now, the same argument in \cite[Lemma 2.1]{Cluckers} finishes the proof.
\end{proof}

We will need the following lemmas.

\begin{lemma}[{\cite[Proof of Main Lemma 2.5]{Salberger1},\cite[Lemma 2.2]{Cluckers}}]
Let $c,r,\mu>0$ be integers. Let $g:\mathbb{Z}_{\geq 0}\to \mathbb{Z}_{\geq 0}$ be a function with $g(0)=1$ and satisfying $g(k)\leq \mu\frac{k^{r-1}}{(r-1)!}+ck^{r-2}$ for $k>0$. Let $(n_{i})_{i\geq 1}$ be the non-decreasing sequence of integers $m\geq 0$ where $m$ occurs exactly $g(m)$ times. Then for any $s\geq 0$ we have
$$n_{1}+\cdots +n_{s}\geq \left( \frac{r!}{\mu} \right)^{\frac{1}{r}}\frac{r}{r+1}s^{1+\frac{1}{r}}-O_{r,c}(s).$$
\label{Cluckers1}
\end{lemma}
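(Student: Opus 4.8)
The plan is to estimate the sum $n_1 + \cdots + n_s$ from below by comparing it with the integral of the inverse of the counting function associated to $g$. First I would introduce the partial-sum function $G(m) := g(0) + g(1) + \cdots + g(m) = 1 + \sum_{k=1}^m g(k)$, which counts how many of the $n_i$ are $\le m$. By hypothesis $g(k) \le \mu \frac{k^{r-1}}{(r-1)!} + ck^{r-2}$, so summing over $k$ from $1$ to $m$ and using $\sum_{k=1}^m k^{r-1} = \frac{m^r}{r} + O_r(m^{r-1})$ gives
\begin{equation*}
G(m) \le \frac{\mu}{r!} m^r + O_{r,c}(m^{r-1}).
\end{equation*}
The point of this bound is that it controls the indices: if $G(m-1) < s$, i.e. fewer than $s$ of the $n_i$ are strictly less than $m$, then $n_s \ge m$. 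Equivalently, for each $i$ the value $n_i$ is at least the least $m$ with $G(m) \ge i$.

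Next I would turn this into a lower bound for the sum. Writing $N(t) := \#\{i \le s : n_i < t\} = \min(G(\lceil t\rceil - 1), s)$ and using Abel summation / the layer-cake formula, one has $n_1 + \cdots + n_s = \sum_{i=1}^{s} n_i \ge \sum_{m \ge 1}\bigl(s - G(m-1)\bigr)^{+}$ where $(x)^+ = \max(x,0)$; more precisely $\sum_{i=1}^s n_i = \sum_{m\ge 0}\#\{i\le s: n_i > m\} = \sum_{m \ge 0}(s - N(m+1))$, and each term is $\ge s - G(m)$ once that quantity is positive. Let $M$ be the largest integer with $G(M) \le s$, so $M \gtrsim (r!s/\mu)^{1/r}$ up to lower-order corrections coming from the error term. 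Then
\begin{equation*}
n_1 + \cdots + n_s \ge \sum_{m=0}^{M-1}\bigl(s - G(m)\bigr) \ge sM - \sum_{m=0}^{M-1}\Bigl(\frac{\mu}{r!}m^r + O_{r,c}(m^{r-1})\Bigr) = sM - \frac{\mu}{r!}\cdot\frac{M^{r+1}}{r+1} + O_{r,c}(M^r).
\end{equation*}
Substituting the value $M \approx (r!s/\mu)^{1/r}$ makes the main term equal to $sM - \frac{\mu}{r!(r+1)}M^{r+1} = M\bigl(s - \frac{\mu M^r}{r!(r+1)}\bigr)$, and since $\frac{\mu M^r}{r!} \approx s$ this is $\approx M s (1 - \frac{1}{r+1}) = \frac{r}{r+1}Ms = \frac{r}{r+1}\bigl(\frac{r!}{\mu}\bigr)^{1/r} s^{1+1/r}$, which is exactly the claimed main term. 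One should optimize the choice of $M$ honestly (treat $M$ as a free integer parameter and maximize $sM - \frac{\mu}{r!(r+1)}M^{r+1}$, whose continuous optimum is at $M = (r!s/\mu)^{1/r}$), rather than just plugging in; this gives the clean constant.

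The main obstacle is bookkeeping the error terms so that everything absorbed into $O_{r,c}(s)$ is genuinely linear in $s$ and not, say, of order $s^{1+1/r}$ or $s\log s$. Concretely: the $O_{r,c}(m^{r-1})$ error in $G(m)$, summed up to $m = M$, contributes $O_{r,c}(M^r) = O_{r,c}(s)$ since $M^r \asymp s$; the discrepancy between the integer $M$ and the real optimum $(r!s/\mu)^{1/r}$ shifts the main term by $O_r(M^{r-1}\cdot\text{(size of the derivative)}) $ which again works out to $O_r(s)$ because the objective function $sM - \frac{\mu}{r!(r+1)}M^{r+1}$ has derivative vanishing to first order at the optimum, so a unit change in $M$ costs only $O_r(M^{r-1}) = O_r(s^{1-1/r}) = o(s)$; and the replacement of the sum $\sum_{m<M} m^r$ by $\frac{M^{r+1}}{r+1}$ costs $O_r(M^r) = O_r(s)$. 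Tracking these carefully — and making sure the implied constants depend only on $r$ and $c$, not on $\mu$ or $s$ — is the only real work; the structure of the argument is the standard "sum of $n_i \ge$ inverse-counting-function integral" trick.
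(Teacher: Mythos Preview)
Your argument is correct and follows the standard counting-function approach that the cited references (Salberger, and Castryck--Cluckers--Dittmann--Nguyen) use; the paper itself states this lemma without proof, so there is nothing further to compare against.

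One small point worth tightening: when you write $G(m)\le \frac{\mu}{r!}m^{r}+O_{r,c}(m^{r-1})$, the implied constant in the error actually carries a factor of $\mu$ (from $\frac{\mu}{(r-1)!}\cdot\frac12 m^{r-1}$ in the expansion of $\sum_{k\le m}k^{r-1}$). This is harmless, because after summing to $M'=\lfloor(r!s/\mu)^{1/r}\rfloor$ one has $\mu(M')^{r}\le r!\,s$, so the $\mu$-dependent error is still $O_{r}(s)$; but your later sentence ``$M^{r}\asymp s$'' should really read $M^{r}\asymp s/\mu$ (and then $\mu M^{r}\asymp s$), which is exactly what makes the bookkeeping go through without $\mu$ in the final constant.
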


\begin{lemma}[{\cite[Lemma 2.3]{Cluckers}, see also \cite[Proposition A.1]{Liu2} for explicit bounds}]
Consider $A$ as in Lemma \ref{uniform bound4}, and let $(n_{i}(A))_{i\geq 1}$ be the non-decreasing sequence of integers $k\geq 0$ where $k$ occurs exactly $\dim_{A/\mathfrak{m}}\mathfrak{m}^{k}/\mathfrak{m}^{k+1}$ times. Write $A(s)=n_{1}(A)+\cdots +n_{s}(A)$. Then
$$A(s)\geq \left(\frac{r!}{\mu}\right)^{\frac{1}{r}}\frac{r}{r+1}s^{1+\frac{1}{r}}-O_{d,n}(s),$$ 
where $r=\dim(X)$.
Furthermore, if $X$ is a hypersurface, the implicit constant in the second summand does not depend on $d$.
\label{Cluckers2}
\end{lemma}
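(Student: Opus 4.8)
The plan is to reduce the statement about the sequence $(n_i(A))_{i\ge 1}$ directly to Lemma \ref{Cluckers1}, exactly as in the number field case of \cite{Cluckers}, since once the two structural facts from Lemma \ref{uniform bound4} are in hand there is nothing field-specific left. First I would recall that, by construction, the sequence $(n_i(A))_{i\ge 1}$ is precisely the non-decreasing sequence of integers $k\ge 0$ in which each value $k$ occurs exactly $g_{X,P}(k)=\dim_{A/\mathfrak m}\mathfrak m^k/\mathfrak m^{k+1}$ times; note $g_{X,P}(0)=\dim_{A/\mathfrak m}A/\mathfrak m=1$ since $A$ is the local ring at a rational point, so the normalization hypothesis $g(0)=1$ of Lemma \ref{Cluckers1} holds. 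By Lemma \ref{uniform bound4}, $g_{X,P}$ satisfies the bound $g_{X,P}(k)\le \mu\frac{k^{r-1}}{(r-1)!}+O_{d,n}(k^{r-2})$ for all $k\ge 1$, with $r=\dim(X)$; that is, there is a constant $c=c(d,n)$ so that $g_{X,P}(k)\le \mu\frac{k^{r-1}}{(r-1)!}+ck^{r-2}$. This is exactly the hypothesis of Lemma \ref{Cluckers1} with that value of $c$, so applying Lemma \ref{Cluckers1} to $g=g_{X,P}$ gives
\[
A(s)=n_1(A)+\cdots+n_s(A)\ge \left(\frac{r!}{\mu}\right)^{\frac1r}\frac{r}{r+1}s^{1+\frac1r}-O_{r,c}(s).
\]

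Next I would address the uniformity of the implicit constant. The error term $O_{r,c}(s)$ from Lemma \ref{Cluckers1} depends only on $r$ and on the constant $c$ appearing in the bound for $g_{X,P}$. Since $r\le n$ and, by Lemma \ref{uniform bound4}, the constant $c$ can be taken to depend only on $d$ and $n$ (indeed Lemma \ref{uniform bound4} asserts there are only $\lesssim_{d,n}1$ distinct functions $g_{X,P}$ as $(X,P)$ ranges over all admissible pairs, so one may take the maximum of the finitely many associated constants), the resulting bound is $O_{d,n}(s)$, uniformly over all such pairs $(X,P)$ and all primes $\mathfrak p$. This yields the displayed inequality of the lemma.

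For the final sentence, when $X$ is a hypersurface I would invoke the sharper part of Lemma \ref{uniform bound4}: in that case $g_{X,P}$ is given by the explicit binomial formula and the constant in the $O_{d,n}(k^{r-2})$ term does not depend on $d$. Feeding this improved $c=c(n)$ into Lemma \ref{Cluckers1} gives an error term $O_n(s)$, i.e. the implicit constant in the second summand does not depend on $d$, as claimed. I do not anticipate a serious obstacle here: the content has been entirely offloaded onto Lemma \ref{uniform bound4} (the reduction mod $\mathfrak p$ inherits uniform Hilbert-function bounds) and Lemma \ref{Cluckers1} (the elementary convexity/counting estimate for such sequences); the only mild point requiring care is checking that the dependence of all constants is genuinely only on $d$ and $n$ — and on $n$ alone in the hypersurface case — which is precisely what the cited lemmas have been set up to guarantee.
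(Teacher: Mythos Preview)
The paper does not give its own proof of this lemma; it simply cites \cite[Lemma 2.3]{Cluckers} (and \cite[Proposition A.1]{Liu2} for explicit constants). Your proposal is correct and is exactly the intended argument: one checks $g_{X,P}(0)=1$, feeds the bound of Lemma~\ref{uniform bound4} into Lemma~\ref{Cluckers1}, and tracks that the constant $c$ depends only on $d,n$ (respectively only on $n$ in the hypersurface case), which is precisely how the cited reference proceeds.
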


In order to provide our generalization of \cite[Lemma 2.4 and Main Lemma 2.5]{Salberger1} to global fields, we follow the exposition in \cite[$\S$2]{Cluckers}. We denote $\mathcal{O}_{\mathfrak{p}}$ for the localization of $\mathcal{O}_{K}$ at the prime $\mathfrak{p}$.

\begin{lemma}
Let $R$ be a commutative noetherian local ring containing $\mathcal{O}_{\mathfrak{p}}$ as a subring and let $A=R/\mathfrak{p}R$. Let $\mathfrak{m}$ be the maximal ideal of $A$ and let $(n_{i}(A))_{i\in \mathbb{N}}$ be the non-decreasing sequence of integers $k\geq 0$ where $k$ occurs exactly $\dim_{A/\mathfrak{m}}\mathfrak{m}^{k}/\mathfrak{m}^{k+1}$ times. Let $r_{1},\ldots ,r_{s}$ be elements of $R$ and $\phi_{1},\ldots \phi_{s}:R\to \mathcal{O}_{\mathfrak{p}}$ be ring homomorphisms such that $\phi_{i}|_{\mathcal{O}_{\mathfrak{p}}}=Id$. Then the determinant of the $s\times s$ matrix $(\phi_{i}(r_{j}))_{i,j}$ is divisible by $\mathfrak{p}^{A(s)}$ for $A(s):=n_{1}(A)+\ldots +n_{s}(A)$. 
\label{lemma valuation}
\end{lemma}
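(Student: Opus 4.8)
The plan is to reduce the statement about divisibility of the determinant to a filtration argument on the module $R^s$ (or rather its image under the $\phi_i$), mimicking the classical Heath--Brown/Salberger local determinant computation but keeping track of the ring $\mathcal{O}_{\mathfrak{p}}$ carefully. First I would consider the ring homomorphism $\Phi = (\phi_1,\dots,\phi_s)\colon R \to \mathcal{O}_{\mathfrak{p}}^{s}$, and more usefully the induced $\mathcal{O}_{\mathfrak{p}}$-linear map on the free modules generated by the $r_j$: let $M \subseteq \mathcal{O}_{\mathfrak{p}}^{s}$ be the $\mathcal{O}_{\mathfrak{p}}$-submodule generated by the vectors $v_j := (\phi_1(r_j),\dots,\phi_s(r_j))$, $j=1,\dots,s$. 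The determinant $\det(\phi_i(r_j))_{i,j}$ is, up to a unit, the generator of $\Lambda^s M \subseteq \Lambda^s \mathcal{O}_{\mathfrak{p}}^s \cong \mathcal{O}_{\mathfrak{p}}$, so $\mathrm{ord}_{\mathfrak{p}}$ of the determinant equals the length (over $\mathcal{O}_{\mathfrak{p}}/\mathfrak{p}$) of $\mathcal{O}_{\mathfrak{p}}^{s}/M$ provided $M$ has rank $s$; if the rank is smaller the determinant is $0$ and there is nothing to prove, so assume it has rank $s$. Hence it suffices to show $\mathrm{length}_{\mathcal{O}_{\mathfrak{p}}}\big(\mathcal{O}_{\mathfrak{p}}^{s}/M\big) \ge A(s)$.

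Next I would bring in the filtration of $R$ by powers of the ideal $\mathfrak{q} := \Phi^{-1}(\mathfrak{p}\,\mathcal{O}_{\mathfrak{p}}^{s})$, equivalently the preimage in $R$ of the diagonal-type ideal; the key point is that $R/\mathfrak{p}R = A$ is local with maximal ideal $\mathfrak{m}$, and the associated graded pieces $\mathfrak{m}^{k}/\mathfrak{m}^{k+1}$ have dimensions $\dim_{A/\mathfrak{m}}\mathfrak{m}^k/\mathfrak{m}^{k+1}$, which is exactly the multiplicity with which $k$ occurs in the sequence $(n_i(A))$. Choose elements of $R$ lifting a basis of each $\mathfrak{m}^k/\mathfrak{m}^{k+1}$; after reordering the $r_j$ (which only changes the determinant by a sign) one arranges that $v_j$ lies in a suitable step of a filtration of $\mathcal{O}_{\mathfrak{p}}^s$ so that, modulo lower-order terms and modulo $\mathfrak{p}$, the $j$-th generator contributes a factor $\mathfrak{p}^{n_j(A)}$ to the index. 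Concretely: one builds a decreasing filtration $\mathcal{O}_{\mathfrak{p}}^{s}=F_0 \supseteq F_1 \supseteq \cdots$ compatible with the $\mathfrak{m}$-adic filtration on $A$ via $\Phi$, notes that $\Phi(\mathfrak{q}^k) \subseteq \mathfrak{p}^{?}$-twisted pieces, and computes the length of the quotient by summing the contributions level by level, getting $\sum_{j} n_j(A) = A(s)$. This is essentially the argument of \cite[Lemma 2.4]{Salberger1} / \cite[Lemma 2.4]{Cluckers}, with $\mathbb{Z}_p$ replaced by $\mathcal{O}_{\mathfrak{p}}$; since $\mathcal{O}_{\mathfrak{p}}$ is a DVR with residue field $\mathbb{F}_{\mathfrak{p}}$ and the condition $\phi_i|_{\mathcal{O}_{\mathfrak{p}}}=\mathrm{Id}$ guarantees all the relevant modules are genuine $\mathcal{O}_{\mathfrak{p}}$-modules, the same combinatorics go through verbatim.

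The main obstacle, and the step that needs the most care, is setting up the compatibility between the $\mathfrak{m}$-adic filtration on $A = R/\mathfrak{p}R$ and a filtration on the target $\mathcal{O}_{\mathfrak{p}}^{s}$ in the right way — i.e. showing that if $r \in R$ maps into $\mathfrak{m}^k$ under $R \to A$, then the vector $(\phi_1(r),\dots,\phi_s(r))$ reduces, modulo an explicit power of $\mathfrak{p}$ and modulo the span of the previously chosen $v_j$'s, to something in the ``$k$-th graded piece'', so that each such generator forces a $\mathfrak{p}^{k}$ into the elementary divisors. One must be careful that the $\phi_i$ are only ring homomorphisms into $\mathcal{O}_{\mathfrak{p}}$ (not a priori all distinct, and the images $\phi_i(\mathfrak{q})$ need not be literally $\mathfrak{p}\mathcal{O}_{\mathfrak{p}}$), so the filtration has to be defined intrinsically from $\mathfrak{q}^k = \bigcap_i \phi_i^{-1}(\mathfrak{p}^{?})$ rather than componentwise; a clean way is to define $F_k := \Phi(R) \cap (\text{ideal generated by products of } k \text{ elements each in } \Phi(\mathfrak{q}))$ inside $\mathcal{O}_{\mathfrak{p}}^s$ and check $F_k \subseteq \mathfrak{p}^{\,\text{(something)}}$ only after passing to $\Lambda^s$. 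Once the bookkeeping is arranged, the inequality $\mathrm{ord}_{\mathfrak{p}}(\det) \ge A(s)$ is immediate from the multiplicativity of the determinant along the filtration, and combined with Lemma \ref{Cluckers2} this yields the quantitative lower bound $A(s) \ge (r!/\mu)^{1/r}\frac{r}{r+1}s^{1+1/r} - O_{d,n}(s)$ that will be needed downstream.
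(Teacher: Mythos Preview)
Your overall shape---bound $\mathrm{ord}_{\mathfrak{p}}(\det)$ via a filtration whose graded pieces are controlled by $\dim_{A/\mathfrak{m}}\mathfrak{m}^{k}/\mathfrak{m}^{k+1}$---is correct, but the specific filtration you propose does not work, and the ``main obstacle'' you flag is exactly the point where the argument breaks. Your ideal $\mathfrak{q}=\Phi^{-1}(\mathfrak{p}\,\mathcal{O}_{\mathfrak{p}}^{s})=\bigcap_{i}\phi_{i}^{-1}(\mathfrak{p}\mathcal{O}_{\mathfrak{p}})$ is simply the maximal ideal $\mathfrak{m}_{R}$ of $R$ (each $\phi_{i}$ is a surjection of local rings). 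The $\mathfrak{m}_{R}$-adic graded pieces $\mathfrak{m}_{R}^{k}/\mathfrak{m}_{R}^{k+1}$ are $R/\mathfrak{m}_{R}$-vector spaces, typically of dimension much larger than $\dim_{A/\mathfrak{m}}\mathfrak{m}^{k}/\mathfrak{m}^{k+1}$: already for $R=\mathcal{O}_{\mathfrak{p}}[[t]]$ one gets $k+1$ versus $1$. So this filtration gives a bound far weaker than $A(s)$.

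The paper's proof sidesteps this by choosing \emph{one} homomorphism, say $\phi_{1}$, and filtering by powers of $I:=\ker\phi_{1}$. Since $\phi_{1}|_{\mathcal{O}_{\mathfrak{p}}}=\mathrm{Id}$, one has $R/I\cong\mathcal{O}_{\mathfrak{p}}$ and $I+\mathfrak{p}R=\mathfrak{m}_{R}$; from the latter it follows that $\phi_{i}(I)\subseteq\mathfrak{p}\mathcal{O}_{\mathfrak{p}}$ for \emph{every} $i$, hence $\phi_{i}(I^{k})\subseteq\mathfrak{p}^{k}\mathcal{O}_{\mathfrak{p}}$. Now $I^{k}/I^{k+1}$ is an $\mathcal{O}_{\mathfrak{p}}$-module (not just an $\mathbb{F}_{\mathfrak{p}}$-vector space), and Nakayama bounds its number of generators by $\dim_{A/\mathfrak{m}}\mathfrak{m}^{k}/\mathfrak{m}^{k+1}$. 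The second ingredient you are missing is that mere reordering of the $r_{j}$ is not enough: one performs $\mathcal{O}_{\mathfrak{p}}$-linear column operations (legitimate because each $\phi_{i}$ is $\mathcal{O}_{\mathfrak{p}}$-linear and the determinant is unchanged) to push excess elements from $I^{k}\setminus I^{k+1}$ into $I^{k+1}$, until $r_{j}\in I^{n_{j}(A)}$ for all $j$. Then every entry of the $j$-th column lies in $\mathfrak{p}^{n_{j}(A)}$ and the divisibility by $\mathfrak{p}^{A(s)}$ is immediate. Your length-of-quotient reformulation is fine in principle, but to make it go you would need precisely these two inputs: the $I$-adic (not $\mathfrak{m}_{R}$-adic) filtration, and the column-operation step.
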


\begin{proof}
Choose one of the homomorphisms $\phi_{i}$, say $\phi_{1}$. Let $I:=\text{Ker}(\phi_{1})$. Since the map $\mathcal{O}_{\mathfrak{p}}\hookrightarrow R\xrightarrow[]{\phi_{1}}\mathcal{O}_{\mathfrak{p}}$ is the identity, it holds $\phi_{1}(\mathcal{O}_{\mathfrak{p}})=\mathcal{O}_{\mathfrak{p}}$, hence $R/I\cong \mathcal{O}_{\mathfrak{p}}$. If $\widetilde{\phi_{1}}:R/I\to\mathcal{O}_{\mathfrak{p}}$ is the induced homomorphism, then $\widetilde{\phi_{1}}^{-1}(\mathfrak{p}\mathcal{O}_{\mathfrak{p}})=(\mathfrak{p}R+I)/I$ is a maximal ideal, from where it follows that $\mathfrak{p}R+I$ is the maximal ideal of $R$. Furthermore, $\phi_{i}(I+\mathfrak{p}R)\mathcal{O}_{\mathfrak{p}}\neq \mathcal{O}_{\mathfrak{p}}$, otherwise we would find $x\in I+\mathfrak{p}R$ and $a\in \mathcal{O}_{\mathfrak{p}}$ with $a\phi_{i}(x)=1$, hence $\phi_{i}(ax-1)=0$, with contradicts that $I+\mathfrak{p}R$ is the only maximal ideal of $R$. We conclude that $\phi_{i}(I)$ is a proper ideal of $\mathcal{O}_{\mathfrak{p}}$, and hence $\phi_{i}(I)\subseteq \mathfrak{p}\mathcal{O}_{\mathfrak{p}}$. In particular, for all $i$, $\phi_{i}(I^{k})\subseteq \mathfrak{p}^{k}\mathcal{O}_{\mathfrak{p}}$.
From Nakayama's lemma, it follows that for all $k\geq 0$, $I^{k}/I^{k+1}$ is an $\mathcal{O}_{\mathfrak{p}}$-module generated with at most $\dim_{A/\mathfrak{m}}\mathfrak{m}^{k}/\mathfrak{m}^{k+1}$ elements. 
Now, suppose that there are more than $g(k):=\dim_{A/\mathfrak{m}}\mathfrak{m}^{k}/\mathfrak{m}^{k+1}$ elements in $I^{k}-I^{k+1}$ among $r_{1},\ldots ,r_{s}$, say $r_{1},\ldots ,r_{q}$. There exists an $\mathcal{O}_{\mathfrak{p}}$-module homomorphism $\lambda:\mathcal{O}_{\mathfrak{p}}^{q}\to I^{k}/I^{k+1}$ where $\lambda(\beta_{1},\ldots ,\beta_{q})=\beta_{1}r_{1}+\ldots +\beta_{q}r_{q}\;(\text{mod } I^{k+1})$. Since $I^{k}/I^{k+1}$ is generated by at most $g(k)<q$ elements we conclude that there exists $(\beta_{1},\ldots ,\beta_{q})\in \text{Ker}(\lambda)$ with, say, $\beta_{q}=1$ (this is because $I^{k}/I^{k+1}$ is a finite $\mathcal{O}_{\mathfrak{p}}$-module generated by at most $g(k)$ elements and $\mathcal{O}_{\mathfrak{p}}$ is local, then by Nakayama's lemma there exists $j\in \{1,\ldots ,q\}$, say $j=q$, such that $\text{span}_{\mathcal{O}_{\mathfrak{p}}}\{r_{1},\ldots ,r_{q}\}=\text{span}_{\mathcal{O}_{\mathfrak{p}}}\{r_{1},\ldots ,r_{q-1}\}$). Denoting $\rho_{q}=\beta_{1}r_{1}+\ldots +\beta_{q-1}r_{q-1}+r_{q}$, we see that $\rho_{q}\in I^{k+1}$, furthermore, since the homomorphisms $\phi_{1},\ldots ,\phi_{s}$ restrict to the identity in $\mathcal{O}_{\mathfrak{p}}$, we see that the determinant of $(\phi_{i}(r_{j}))_{i,j}$ will not change if we replace $r_{q}$ with $\rho_{q}$. If we continue making such elementary transformations, we arrive to a situation where there are at most $\dim_{A/\mathfrak{m}}\mathfrak{m}^{k}/\mathfrak{m}^{k+1}$ elements in $I^{k}-I^{k+1}$ among $r_{1},\ldots ,r_{s}$, for each $k\geq 0$.  

After rearranging, we have that $r_{j}$ is in $I^{n_{j}(A)}$ for all $j=1,\ldots ,s$. This implies that all the elements $\phi_{1}(r_{j}),\ldots ,\phi_{s}(r_{j})$ are divisible by $\mathfrak{p}^{n_{j}(A)}$, hence $\mathfrak{p}^{A(s)}$ divides the determinant of $(\phi_{i}(r_{j}))_{i,j}$.
\end{proof}

\begin{lemma}
Let $\mathfrak{p}$ be a prime and let $X,r,d,n,\Xi,X_{\mathfrak{p}}$ be as above. Let $P$ be an $\mathbb{F}_{\mathfrak{p}}$-point of multiplicity $\mu_{P}$ on $X_{\mathfrak{p}}$ and let $\boldsymbol \xi_{1},\ldots ,\boldsymbol \xi_{s}$ be points in $\mathcal{O}_{K}^{n+1}$, such that, for all $i$, ${\boldsymbol \xi_{i}}$ has its coordinates not all divisible by $\mathfrak{p}$ and it represents a $K$-rational point of $X$ which reduces modulo $\mathfrak{p}$ to $P$. Let $F_{1},\ldots ,F_{s}$ be forms in $(X_{0},\ldots ,X_{n})$ with coefficients in $\mathcal{O}_{K}$, and let $\det(F_{j}(\boldsymbol \xi_{l})_{j,l})$ be the determinant of the $s\times s$ matrix $(F_{j}(\boldsymbol \xi_{l}))_{j,l}$. Then there exists a non-negative integer $N=(\frac{r!}{\mu_{P}})^{\frac{1}{r}}\frac{r}{r+1}s^{1+\frac{1}{r}}+O_{d,n}(s)$ such that $\mathfrak{p}^{N}|\det(F_{j}(\boldsymbol \xi_{l})_{j,l})$. Furthermore, if $X$ is a hypersurface, the implicit constant in the second summand does not depend on $d$.
\label{local determinant}
\end{lemma}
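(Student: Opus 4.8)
The plan is to transcribe the combinatorics of Lemma \ref{lemma valuation} into the geometry of $\Xi$, exactly in the spirit of \cite[Lemma 2.4 and Main Lemma 2.5]{Salberger1} and \cite[\S 2]{Cluckers}. After permuting the homogeneous coordinates we may assume that $P$ lies in the affine chart $U_{0}=\{X_{0}\neq 0\}$. Since each $\boldsymbol\xi_{l}$ has coordinates not all divisible by $\mathfrak p$ and reduces modulo $\mathfrak p$ to $P\in U_{0}$, the coordinate $\xi_{l,0}$ is a $\mathfrak p$-adic unit, so the affine coordinates $\xi_{l,i}/\xi_{l,0}$ ($1\le i\le n$) of the point represented by $\boldsymbol\xi_{l}$ lie in $\mathcal O_{\mathfrak p}$ and reduce modulo $\mathfrak p$ to $P$. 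Let $R:=\mathcal O_{\Xi,P}$ be the stalk, a noetherian local ring. Because $\Xi\to\text{Spec}(\mathcal O_{K})$ is flat (this was established in the proof of Lemma \ref{uniform bound3}) and $P$ lies over $\mathfrak p$, the ring $R$ is a torsion-free $\mathcal O_{\mathfrak p}$-algebra, hence contains $\mathcal O_{\mathfrak p}$ as a subring, and $A:=R/\mathfrak p R$ is precisely the stalk $\mathcal O_{X_{\mathfrak p},P}$, i.e. the local ring of Lemma \ref{uniform bound4}: it has dimension $r=\dim X_{\mathfrak p}=\dim X$ and $P$ has multiplicity $\mu_{P}$ in $X_{\mathfrak p}$.

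Next I would produce the homomorphisms. Since every element of the homogeneous ideal of $\Xi\cap U_{0}$ vanishes at the point represented by $\boldsymbol\xi_{l}$, evaluation at $(\xi_{l,1}/\xi_{l,0},\dots,\xi_{l,n}/\xi_{l,0})$ defines a ring homomorphism into $\mathcal O_{\mathfrak p}$; as this point reduces modulo $\mathfrak p$ to $P$, the homomorphism is local and therefore factors through $R$, yielding $\phi_{l}\colon R\to\mathcal O_{\mathfrak p}$ with $\phi_{l}|_{\mathcal O_{\mathfrak p}}=Id$. Setting $r_{j}\in R$ equal to the class of the dehomogenization of $F_{j}$ with respect to $X_{0}$, one has $\phi_{l}(r_{j})=\xi_{l,0}^{-\deg F_{j}}F_{j}(\boldsymbol\xi_{l})$. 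Lemma \ref{lemma valuation} applied to $R$, $A$, the $r_{j}$ and the $\phi_{l}$ then gives $\mathfrak p^{A(s)}\mid\det(\phi_{l}(r_{j}))_{l,j}$ with $A(s)=n_{1}(A)+\dots+n_{s}(A)$, while Lemma \ref{Cluckers2}, applied with multiplicity $\mu_{P}$ and dimension $r$ (its hypotheses being supplied by Lemma \ref{uniform bound4}), gives $A(s)\ge\bigl(\tfrac{r!}{\mu_{P}}\bigr)^{1/r}\tfrac{r}{r+1}s^{1+1/r}-O_{d,n}(s)$, and an analogous elementary computation from the upper bound in Lemma \ref{uniform bound4} bounds $A(s)$ above by the same main term up to $O_{d,n}(s)$; in both estimates the implied constant does not depend on $d$ when $X$ is a hypersurface.

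It remains to pass from $\det(\phi_{l}(r_{j}))_{l,j}$ back to $\det(F_{j}(\boldsymbol\xi_{l}))_{j,l}$. The two $s\times s$ matrices differ only by the entrywise $\mathfrak p$-adic unit factors $\xi_{l,0}^{\deg F_{j}}$, so their determinants have the same $\mathfrak p$-adic valuation — transparently when the $F_{j}$ share a common degree $e$, which is the case occurring in the determinant method, since then $\det(F_{j}(\boldsymbol\xi_{l}))_{j,l}=\bigl(\prod_{l}\xi_{l,0}^{e}\bigr)\det(\phi_{l}(r_{j}))_{l,j}$. Hence $\mathfrak p^{A(s)}$ divides $\det(F_{j}(\boldsymbol\xi_{l}))_{j,l}\in\mathcal O_{K}$, and taking $N:=A(s)$ — which by the previous paragraph is $\bigl(\tfrac{r!}{\mu_{P}}\bigr)^{1/r}\tfrac{r}{r+1}s^{1+1/r}+O_{d,n}(s)$ — completes the argument. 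I expect the main point requiring care to be the identification $R/\mathfrak p R\cong\mathcal O_{X_{\mathfrak p},P}$ together with the inclusion $\mathcal O_{\mathfrak p}\hookrightarrow R$: one must know that reduction modulo $\mathfrak p$ does not collapse the local structure at $P$ nor alter its dimension and multiplicity, and this is exactly where the flatness of $\Xi$ over $\mathcal O_{K}$ (equivalently, the identification of $\Xi$ as the schematic closure of $X$ in Lemma \ref{schematic closure}) and the uniformity of Lemmas \ref{uniform bound3} and \ref{uniform bound4} are used; everything else is bookkeeping of $\mathfrak p$-adic valuations.
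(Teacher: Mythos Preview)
Your proof is correct and follows the same route as the paper's: pass to the affine chart $\{X_0\neq 0\}$, set up evaluation homomorphisms $\phi_l\colon R\to\mathcal{O}_{\mathfrak p}$, apply Lemma~\ref{lemma valuation}, and then invoke Lemma~\ref{Cluckers2}. Your choice $R=\mathcal{O}_{\Xi,P}$ is in fact tidier than the paper's (which takes $R$ to be the localization of the full polynomial ring at $\mathfrak{P}$), because with your $R$ the identification $A=R/\mathfrak{p}R\cong\mathcal{O}_{X_{\mathfrak{p}},P}$ required by Lemma~\ref{Cluckers2} is immediate from base change of stalks, whereas with the paper's $R$ one would literally get the local ring of $\mathbb{A}^{n+1}_{\mathbb{F}_{\mathfrak p}}$ at a point.

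Two small remarks. First, only the \emph{lower} bound $A(s)\ge(\tfrac{r!}{\mu_P})^{1/r}\tfrac{r}{r+1}s^{1+1/r}-O_{d,n}(s)$ from Lemma~\ref{Cluckers2} is needed to produce a non-negative $N$ of the stated form with $\mathfrak{p}^{N}\mid\det$; the matching upper bound you sketch is superfluous. Second, your assertion that entrywise multiplication by the units $\xi_{l,0}^{\deg F_j}$ leaves the $\mathfrak{p}$-adic valuation of the determinant unchanged is not true in general when the degrees $\deg F_j$ vary (such factors cannot be absorbed into a product of a row-scaling and a column-scaling), and the paper's sentence ``without changing the $\mathfrak{p}$-adic valuation of the determinant'' carries the same imprecision. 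As you correctly point out, this is immaterial for the paper: in every application (e.g.\ the proof of Theorem~\ref{polynomial method2}) the $F_j$ are monomials of a common degree $M$, and then $\det(F_j(\boldsymbol\xi_l))_{j,l}=\bigl(\prod_l\xi_{l,0}^{M}\bigr)\det(\phi_l(r_j))_{l,j}$ exactly as you write.
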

\begin{proof}
Let $P=(z_{0}:z_{1}:\ldots :z_{n})$ with $z_{i}\in \mathbb{F}_{\mathfrak{p}}$ for all $i$. It holds that there is some $i$ with  $z_{i}\neq 0$, which we may suppose that it is $i=0$. By hypothesis, it also follows that the $x_{0}$-coordinates of the points $\boldsymbol \xi_{1},\ldots \boldsymbol \xi_{s}$ are not divisible by $\mathfrak{p}$ and we replace the forms $F_{j},j=1,\ldots ,s$ by the rational functions $f_{j}=F_{j}(1,\frac{x_{1}}{x_{0}},\ldots ,\frac{x_{n}}{x_{0}})$ without changing the $\mathfrak{p}$-adic valuation of the determinant. 

Now, we define a prime ideal in $\mathcal{O}_{K}[X_{0},\ldots ,X_{n}]$ by $\mathfrak{P}:=\left\langle \mathfrak{p},X_{0}-z_{0},\ldots ,X_{n}-z_{n}\right\rangle$. Let $R:=\mathcal{O}_{K}[X_{0},\ldots ,X_{n}]_{\mathfrak{P}}$. It is clear that $R$ is noetherian local ring containing $\mathcal{O}_{\mathfrak{p}}$ as a subring, and the rational functions $f_{j},j=1,\ldots ,s$ are elements in $R$. Moreover, the evaluations $\text{ev}_{\boldsymbol \xi_{1}},\ldots \text{ev}_{\boldsymbol \xi_{s}}:\mathcal{O}_{K}[X_{0},\ldots ,X_{n}]\to \mathcal{O}_{K}$ are ring homomorphisms, which restrict to the identity on $\mathcal{O}_{K}$, and can be extended to ring homomorphisms $\overline{\text{ev}_{\boldsymbol \xi_{1}}},\ldots ,\overline{\text{ev}_{\boldsymbol \xi_{s}}}:R\to \mathcal{O}_{\mathfrak{p}}$ which restrict to the identity on $\mathcal{O}_{\mathfrak{p}}$. Taking $\phi_{i}:=\overline{\text{ev}_{\boldsymbol \xi_{i}}}$ for all $i$, we see that $R$ and $\phi_{1},\ldots ,\phi_{s}$ are as in Lemma \ref{lemma valuation}, hence $\mathfrak{p}^{A(s)}|\det(\phi_{l}(f_{j}))_{j,l}=\det(F_{j}(\boldsymbol \xi_{l}))_{j,l}$. The proof finishes using Lemma \ref{Cluckers2}.
\end{proof}

\subsection{From local estimates to global estimates}

In this section, $X\subseteq \mathbb{P}_{K}^{n+1}$ will denote a geometrically integral hypersurface of degree $d$. Hence, $X$ is defined by an absolutely irreducible homogeneous polynomial $f=\sum_{I}a_{I}\boldsymbol X^{I}\in \mathcal{O}_{K}[X_{0},\ldots ,X_{n+1}]$ of degree $d$. We shall begin with an important remark concerning the polynomial $f$ defining $X$. 

\begin{remark}[Primitive polynomials in global fields]
We recall that $f$ is said to be primitive if $\mathfrak{a}:=\sum_{I}a_{I}\mathcal{O}_{K}=(1)$. Also, given a prime $\mathfrak{p}$ of $\mathcal{O}_{K}$, we will say that $f$ is $\mathfrak{p}$-primitive if $\mathfrak{p}$ does not divide $\mathfrak{a}$. Note that in general we can not take the polynomial $f$ defining $X$ to be primitive as in the case $\mathbbm{k}=\mathbb{Q}$ or $\mathbb{F}_{q}(T)$. Indeed, by the proof of Proposition \ref{Serre}, the ideal class of $\mathfrak{a}=\sum_{I}a_{I}\mathcal{O}_{K}$ is determined by the projective point with coordinates $(a_{I})_{I}$. Hence, if $\mathcal{O}_{K}$ is not a principal domain, it may occur that the ideal class of $\mathfrak{a}$ is not principal, thus there is no non-zero scalar $\lambda\in \mathcal{O}_{K}$ such that $\lambda f$ is primitive. Nevertheless, by Proposition \ref{Serre} we may find a non-zero $\lambda\in \mathcal{O}_{K}$ such that $\lambda f$  is $\mathfrak{p}$-primitive for all primes $\mathfrak{p}\subseteq \mathcal{O}_{K}$ with $\mathcal{N}_{K}(\mathfrak{p})>c_{2}$, where $c_{2}$ is the constant in Proposition \ref{Serre}. Also, by Proposition \ref{Serre}, for all $v\in M_{K,\infty}$, 
 it holds $\max_{I}|\lambda a_{I}|_{v}\leq c_{1} H(f)^{\frac{n_{v}}{d_{K}}}$ if $K$ is a number field, and $\max_{I}|\lambda a_{I}|_{v_{\infty}}\leq c_{1}H(f)$ if $K$ is a function field, for some effective computable constants $c_{1}$, and $c_{3}$ which depend on $K$. By definition \ref{affine height of a polynomial}, it holds
\begin{align*}
H_{K,\text{aff}}(\lambda f)=\left(\prod_{v\in M_{K}}\max_{I}\{1,|\lambda a_{I}|_{v}\}\right)^{d_{K}}  =\left(\prod_{v\in M_{K,\infty}}\max_{I}\{1,|\lambda a_{I}|_{v}\}\right)^{d_{K}} \nonumber  \leq c_{1}^{d_{K}}H_{K}(f).
\end{align*}

Furthermore, since the height is invariant under multiplication by non-zero scalars, it holds $H_{K}(\lambda f)=H_{K}(f)$. For this reason we may assume that $X$ is defined by an absolutely irreducible homogeneous polynomial $f=\sum_{I}a_{I}\boldsymbol X^{I}\in \mathcal{O}_{K}[X_{0},\ldots ,X_{n+1}]$ of degree $d$, which is $\mathfrak{p}$-primitive for all primes $\mathfrak{p}$ with $\mathcal{N}_{K}(\mathfrak{p})>c_{2}$, and
\begin{equation*}
H_{K,\text{aff}}(f)\leq c_{1}^{d_{K}}H_{K}(f).
\end{equation*}
\label{remark about primitive polynomials}
\end{remark}

We proceed to generalize \cite[Theorem 2.2]{Walsh3}, \cite[Corollary 2.9]{Cluckers} and \cite[Lemma 2.1]{Vermeulen} to global fields.

\begin{theorem}
Let $\mathfrak{p}$ be a prime for which the reduction $X_{\mathfrak{p}}$ is absolutely irreducible, and for which either $\emph{\text{char}}(K)=0$ and $\mathcal{N}_{K}(\mathfrak{p})>\max\{27d^{4},c_{2}\}$, or $0<\emph{\text{char}}(K)\leq \max\{27d^{4},c_{2}\}$ and $\mathcal{N}_{K}(\mathfrak{p})\geq d^{\frac{14}{3}}$, or $\emph{\text{char}}(K)>\max\{27d^{4},c_{2}\}$. Let $(\boldsymbol \xi_{1},\ldots ,\boldsymbol \xi_{s})$ be a tuple of points in $\mathcal{O}_{K}^{n+1}$, such that, for all $i$, $\boldsymbol \xi_{i}$ has its coordinates not all divisible by $\mathfrak{p}$ and it represents a $K$-rational point of $X$. Let $F_{1},\ldots ,F_{s}\in \mathcal{O}_{K}[X_{0},\ldots ,X_{n+1}]$ be forms with $\mathcal{O}_{K}$-coefficients, and write $\Delta$ for the determinant of $(F_{i}(\boldsymbol \xi_{j}))_{i,j}$. Then, there exists some
$$e_{\mathfrak{p}}\geq n!^{\frac{1}{n}}\frac{n}{n+1}\frac{s^{1+\frac{1}{n}}}{\mathcal{N}_{K}(\mathfrak{p})+O_{n}(d^{2}\mathcal{N}_{K}(\mathfrak{p})^{\frac{1}{2}})}-O_{n}(s)$$
such that $\mathfrak{p}^{e_{\mathfrak{p}}}|\Delta$.
\label{local determinant2}
\end{theorem}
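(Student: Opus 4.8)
The plan is to combine the local determinant estimate of Lemma~\ref{local determinant} with a counting argument over the points $P$ of $X_\mathfrak{p}$ lying below the $\boldsymbol\xi_i$, exactly as in the passage from \cite[Main Lemma 2.5]{Salberger1} to the corollaries of \cite[\S2]{Cluckers}. First I would partition the index set $\{1,\ldots,s\}$ according to the reduction $\bar{\boldsymbol\xi_i}\in X_\mathfrak{p}(\mathbb{F}_\mathfrak{p})$: say the distinct points that occur are $P_1,\ldots,P_t$ with multiplicities $\mu_1,\ldots,\mu_t$ on $X_\mathfrak{p}$, and $s_\ell$ of the $\boldsymbol\xi_i$ reduce to $P_\ell$, so $\sum_\ell s_\ell=s$. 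By a suitable row/column reordering the matrix $(F_i(\boldsymbol\xi_j))_{i,j}$ can be arranged so that the Laplace/Leibniz expansion groups indices reducing to the same $P_\ell$; applying Lemma~\ref{local determinant} at each $P_\ell$ shows that $\mathfrak{p}^{N_\ell}\mid \Delta$ with $N_\ell=(\tfrac{r!}{\mu_\ell})^{1/r}\tfrac{r}{r+1}s_\ell^{1+1/r}-O_{d,n}(s_\ell)$, hence
\begin{equation*}
\mathrm{ord}_\mathfrak{p}(\Delta)\ \geq\ \sum_{\ell=1}^{t}\left(\Big(\tfrac{r!}{\mu_\ell}\Big)^{1/r}\tfrac{r}{r+1}\,s_\ell^{1+1/r}\right)\ -\ O_{d,n}(s).
\end{equation*}
Here $r=\dim X=n$ since $X\subseteq\mathbb{P}^{n+1}_K$ is a hypersurface, so the exponent $1+1/r$ is $1+1/n$ as in the statement; crucially, since $X$ is a hypersurface Lemma~\ref{local determinant} (and Lemma~\ref{Cluckers2}) gives the $O_{d,n}(s)$ term with implicit constant not depending on $d$, which is what lets the final bound have only an $O_n(s)$ error.

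The next step is to minimize $\sum_\ell (\tfrac{r!}{\mu_\ell})^{1/r}\tfrac{r}{r+1}s_\ell^{1+1/r}$ subject to $\sum_\ell s_\ell=s$ and the geometric constraints on the $(\mu_\ell,t)$. By convexity of $x\mapsto x^{1+1/r}$, for a \emph{fixed} number $t$ of points and fixed multiplicities the sum is smallest when the $s_\ell$ are as equal as possible, giving roughly $(\tfrac{r!}{\max_\ell\mu_\ell})^{1/r}\tfrac{r}{r+1}\,t\,(s/t)^{1+1/n}=(\tfrac{r!}{\max_\ell\mu_\ell})^{1/r}\tfrac{r}{r+1}\,s^{1+1/n}/t^{1/n}$. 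So I want an \emph{upper} bound for $t$, the number of $\mathbb{F}_\mathfrak{p}$-points of $X_\mathfrak{p}$ that can occur, and a way to absorb $\max_\ell\mu_\ell$. For the point count, this is where the absolute irreducibility of $X_\mathfrak{p}$ and the hypotheses on $\mathcal{N}_K(\mathfrak{p})$ versus $\mathrm{char}(K)$ and $d$ enter: by the Lang--Weil / Ghorpade--Lachaud bounds (or the effective version used in \cite{Cluckers,Vermeulen}) an absolutely irreducible hypersurface of degree $d$ and dimension $n$ over $\mathbb{F}_\mathfrak{p}$ has
\begin{equation*}
|X_\mathfrak{p}(\mathbb{F}_\mathfrak{p})|\ =\ \mathcal{N}_K(\mathfrak{p})^{\,n}\ +\ O_n\!\big(d^2\,\mathcal{N}_K(\mathfrak{p})^{\,n-1/2}\big),
\end{equation*}
and more to the point the number of points counted with multiplicity on $X_\mathfrak{p}$, or the number of points of multiplicity $\geq 2$, is controlled similarly, so one can bound $\sum_\ell \mu_\ell^{?}$ or simply bound $t\leq \mathcal{N}_K(\mathfrak{p})^n+O_n(d^2\mathcal{N}_K(\mathfrak{p})^{n-1/2})$. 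Feeding $t^{1/n}\geq \big(\mathcal{N}_K(\mathfrak{p})^n+O_n(d^2\mathcal{N}_K(\mathfrak{p})^{n-1/2})\big)^{1/n}$ into the denominator and noting $(a^n+O_n(d^2a^{n-1/2}))^{1/n}=a+O_n(d^2a^{1/2})$ with $a=\mathcal{N}_K(\mathfrak{p})$ produces exactly the claimed $\mathcal{N}_K(\mathfrak{p})+O_n(d^2\mathcal{N}_K(\mathfrak{p})^{1/2})$ in the denominator, while the $n!^{1/n}\tfrac{n}{n+1}$ prefactor comes from $\mu_\ell\leq d$ absorbed via... — actually here one must be careful: the $\mu_\ell$ can be as large as $d$, so to get a clean $n!^{1/n}$ rather than $(n!/d)^{1/n}$ one should not bound $\mu_\ell$ crudely but rather note that there are very few singular points (their number is $O_n(d^{O(1)}\mathcal{N}_K(\mathfrak{p})^{n-1})$) so their contribution is lower order and the main term comes from the $P_\ell$ with $\mu_\ell=1$.

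So the key steps in order are: (i) block-decompose $\Delta$ by fibre over $X_\mathfrak{p}(\mathbb{F}_\mathfrak{p})$ and apply Lemma~\ref{local determinant} fibrewise; (ii) use convexity of $x^{1+1/n}$ to reduce to the equidistributed case and isolate the contribution of smooth points from the negligible singular locus; (iii) invoke the effective Lang--Weil bound for the absolutely irreducible reduction $X_\mathfrak{p}$ — valid precisely under the stated trichotomy on $\mathcal{N}_K(\mathfrak{p})$, $\mathrm{char}(K)$ and $d$, which is the range where the error term is smaller than the main term — to bound the number of smooth $\mathbb{F}_\mathfrak{p}$-points; (iv) assemble, checking that all error terms collapse to $O_n(s)$ because the hypersurface case of Lemmas~\ref{uniform bound4} and \ref{Cluckers2} has $d$-free implicit constants. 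The main obstacle I expect is step (iii) together with the bookkeeping in (ii): getting the point count for $X_\mathfrak{p}$ with the correct dependence $O_n(d^2\mathcal{N}_K(\mathfrak{p})^{n-1/2})$ on $d$ uniformly across all three regimes of the characteristic hypothesis (this is exactly why the theorem's hypotheses are stated as that awkward trichotomy, mirroring the Noether-form/effective-reducibility analysis referenced before Lemma~\ref{b(f)}), and then making sure that summing the singular-point contributions really is absorbed into $O_n(s)$ rather than degrading the main term or introducing a spurious $d$-dependence into the leading coefficient.
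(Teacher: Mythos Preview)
Your outline is correct and matches the paper's proof in all essential points: fibrewise application of Lemma~\ref{local determinant}, aggregation via Laplace expansion, a convexity step, and then the Lang--Weil bound (the paper cites Cafure--Matera \cite{Cafure} for the explicit constants, which is exactly what makes the trichotomy in the hypothesis work).

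The one place where the paper is cleaner than your sketch is step~(ii). Rather than first bounding $\mu_\ell\le\max_\ell\mu_\ell$ and then separately treating smooth and singular points, the paper applies H\"older's inequality with the weights $\mu_P$ built in:
\[
s=\sum_P s_P\ \le\ \Big(\sum_P \mu_P\Big)^{\frac{1}{n+1}}\Big(\sum_P \frac{s_P^{1+1/n}}{\mu_P^{1/n}}\Big)^{\frac{n}{n+1}},
\]
which yields directly $\sum_P \mu_P^{-1/n}s_P^{1+1/n}\ge s^{1+1/n}/n_\mathfrak{p}^{1/n}$ with $n_\mathfrak{p}:=\sum_P\mu_P$. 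Then the singular-locus estimate ($|X_{\mathfrak{p},\mathrm{sing}}(\mathbb{F}_\mathfrak{p})|\lesssim_n d^2\mathcal{N}_K(\mathfrak{p})^{n-1}$ via B\'ezout and Schwarz--Zippel) and the Lang--Weil bound combine into a single upper bound $n_\mathfrak{p}\le \mathcal{N}_K(\mathfrak{p})^n+O_n(d^2\mathcal{N}_K(\mathfrak{p})^{n-1/2})$, with no need to split the sum. Your route (separate smooth and singular fibres, use unweighted convexity on the smooth part) also works, but the bookkeeping you flag as the ``main obstacle'' simply evaporates with the weighted H\"older step. Minor slip: you want $t^{1/n}\le\big(\mathcal{N}_K(\mathfrak{p})^n+O_n(\cdots)\big)^{1/n}$, not $\ge$, since $t^{1/n}$ sits in the denominator of your lower bound.
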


\begin{proof}
Let $P$ be an $\mathbb{F}_{\mathfrak{p}}$-point on $X_{\mathfrak{p}}$ and let us write $s_{P}$ for the cardinal of the subset $I_{P}\subseteq \{1,\ldots ,s\}$ of indices $l$ such that $\boldsymbol \xi_{l}$ reduces to $P$ modulo $\mathfrak{p}$. Let  
$$N_{P}=\left(\frac{n!}{\mu_{P}}\right)^{\frac{1}{n}}\frac{n}{n+1}s_{P}^{1+\frac{1}{n}}+O_{n}(s_{P})$$
be the integer in Lemma \ref{local determinant}. Then by Lemma \ref{local determinant}, for each $s_{P}\times s_{P}$ minor $A=(a_{k,l})_{k,l}$ of $(F_{i}(\boldsymbol \xi_{j}))_{i,j}$ with $l\in I_{P}$, it holds that $\mathfrak{p}^{N_{P}}|\det(A)$. If we apply this to all the $\mathbb{F}_{\mathfrak{p}}$-points on $X_{\mathfrak{p}}$ and use the Laplace expansion,
we see that $\mathfrak{p}^{e_{\mathfrak{p}}}|\Delta$, where

\begin{align}
e_{\mathfrak{p}}\geq \sum_{P\in X(\mathbb{F}_{\mathfrak{p}})}N_{P} =n!^{\frac{1}{n}}\frac{n}{n+1}\left( \sum_{P\in X(\mathbb{F}_{\mathfrak{p}})}\frac{s_{P}^{1+\frac{1}{n}}}{\mu_{P}^{\frac{1}{n}}}\right)-O_{n}\left(\sum_{P\in X(\mathbb{F}_{\mathfrak{p}})}s_{P}\right) = n!^{\frac{1}{n}}\frac{n}{n+1}\left( \sum_{P\in X(\mathbb{F}_{\mathfrak{p}})}\frac{s_{P}^{1+\frac{1}{n}}}{\mu_{P}^{\frac{1}{n}}}\right)-O_{n}(s).\label{exponent e}
\end{align}
To conclude the proof, we need to estimate $\sum_{P\in X(\mathbb{F}_{\mathfrak{p}})}\frac{s_{P}^{1+\frac{1}{n}}}{\mu_{P}^{\frac{1}{n}}}$. By H\"older's inequality, 
$$s=\sum_{P\in X(\mathbb{F}_{\mathfrak{p}})}s_{P}\leq \left( \sum_{P\in X(\mathbb{F}_{\mathfrak{p}})}\mu_{P} \right)^{\frac{1}{n+1}}\left(\sum_{P\in X(\mathbb{F}_{\mathfrak{p}})}\frac{s_{P}^{1+\frac{1}{n}}}{\mu_{P}^{\frac{1}{n}}}\right)^{\frac{n}{n+1}},$$
hence
\begin{equation}
\sum_{P\in X(\mathbb{F}_{\mathfrak{p}})}\frac{s_{P}^{1+\frac{1}{n}}}{\mu_{P}^{\frac{1}{n}}}\geq \frac{s^{1+\frac{1}{n}}}{\left(\displaystyle \sum_{P\in X(\mathbb{F}_{\mathfrak{p}})}\mu_{P}\right)^{\frac{1}{n}}}.
\label{Holder}
\end{equation}
If we denote $n_{\mathfrak{p}}:=\sum_{P\in X(\mathbb{F}_{\mathfrak{p}})}\mu_{P}$, from \eqref{exponent e} and \eqref{Holder} we see that
\begin{equation}
e_{\mathfrak{p}}\geq n!^{\frac{1}{n}}\frac{n}{n+1}\frac{s^{1+\frac{1}{n}}}{n_{\mathfrak{p}}^{\frac{1}{n}}}-O_{n}(s).
\label{exponent e2}
\end{equation}
In order to bound $n_{\mathfrak{p}}$, namely the number of $\mathbb{F}_{\mathfrak{p}}$-points in $X_{\mathfrak{p}}$ counted with multiplicity, we observe that the singular points of $X_{\mathfrak{p}}$, say $X_{\mathfrak{p},\text{sing}}$, consist of the intersection of $f$ and the partial derivatives $\frac{\partial f}{\partial x_{i}}$. Hence, we may suppose that $X_{\mathfrak{p},\text{sing}}$ is contained in the intersection of $f$ and $\frac{\partial f}{\partial x_{0}}$, which is the intersection of two hypersurfaces of degree at most $d$.  By B\'ezout's Theorem \cite[Example 8.4.6]{Fulton}, the number of irreducible components of $\mathcal{Z}(f)\cap\mathcal{Z}(\frac{\partial f}{\partial x_{0}})$ is bounded by $d^{2}$, and their degree are also bounded by $d^{2}$. Then the standard Schwarz-Zippel estimate for $\mathcal{Z}(f)\cap\mathcal{Z}(\frac{\partial f}{\partial x_{0}})$ yields $|X_{\mathfrak{p},\text{sing}}(\mathbb{F}_{\mathfrak{p}})|\leq \left|\left(\mathcal{Z}(f)\cap \mathcal{Z}(\frac{\partial f}{\partial x_{0}})\right)(\mathbb{F}_{\mathfrak{p}})\right|\lesssim_{n}d^{2}\mathcal{N}_{K}(\mathfrak{p})^{n-1}$. Hence
\begin{equation}
n_{\mathfrak{p}}-|X(\mathbb{F}_{\mathfrak{p}})|=\sum_{P\in X(\mathbb{F}_{\mathfrak{p}})}(\mu_{P}-1)\leq (d-1)|X_{\mathfrak{p},\text{sing}}(\mathbb{F}_{\mathfrak{p}})|\lesssim_{n} d^{3}\mathcal{N}_{K}(\mathfrak{p})^{n-1}\lesssim_{n}d\mathcal{N}_{K}(\mathfrak{p})^{n-\frac{1}{2}}.
\label{multiplicity estimate}
\end{equation}

\begin{claim}
$|X(\mathbb{F}_{\mathfrak{p}})|\leq \mathcal{N}_{K}(\mathfrak{p})^{n}+O_{n}(d^{2}\mathcal{N}_{K}(\mathfrak{p})^{n-\frac{1}{2}})$.
\label{cafure lang-weil}
\end{claim}

\begin{proof}[Proof of Claim \ref{cafure lang-weil}]
This follows from an effective version of the Lang-Weil estimate. If $K$ is a number field, by hypothesis, we have $\mathcal{N}_{K}(\mathfrak{p})>\max\{27d^{4},c_{2}\}$. On the other hand, if $K$ is a function field over $\mathbb{F}_{q}(T)$ with $\text{char}(K)>\max\{27d^{4},c_{2}\}$, it holds $\mathcal{N}_{K}(\mathfrak{p})\geq q\geq \text{char}(K)>\max\{27d^{4},c_{2}\}$. In either case, since $X_{\mathfrak{p}}$ is absolutely irreducible and $\mathcal{N}_{K}(\mathfrak{p})>27d^{4}$, from \cite[Corollary 5.6]{Cafure} it follows that the number of $\mathbb{F}_{\mathfrak{p}}$-points of $X_{\mathfrak{p}}$ counted without multiplicity is bounded by
$$\frac{\mathcal{N}_{K}(\mathfrak{p})^{n+1}+(d-1)(d-2)\mathcal{N}_{K}(\mathfrak{p})^{n+\frac{1}{2}}+(5d^{2}+d+1)\mathcal{N}_{K}(\mathfrak{p})^{n}-1}{\mathcal{N}_{K}(\mathfrak{p})-1}\leq \mathcal{N}_{K}(\mathfrak{p})^{n}+O_{n}(d^{2}\mathcal{N}_{K}(\mathfrak{p})^{n-\frac{1}{2}}).$$
Similarly, if $K$ is a function field over $\mathbb{F}_{q}(T)$ with $\text{char}(K)<\max\{27d^{4},c_{2}\}$, by hypothesis $\mathcal{N}_{K}(\mathfrak{p})>d^{\frac{14}{3}}$, thus \cite[Theorem 5.2]{Cafure}  gives that the number of $\mathbb{F}_{\mathfrak{p}}$-points of $X_{\mathfrak{p}}$ counted without multiplicity is bounded by
\begin{equation*}
\frac{\mathcal{N}_{K}(\mathfrak{p})^{n+1}+(d-1)(d-2)\mathcal{N}_{K}(\mathfrak{p})^{n+\frac{1}{2}}+5d^{\frac{13}{3}}\mathcal{N}_{K}(\mathfrak{p})^{n}-1}{\mathcal{N}_{K}(\mathfrak{p})-1}\leq \mathcal{N}_{K}(\mathfrak{p})^{n}+O_{n}(d^{2}\mathcal{N}_{K}(\mathfrak{p})^{n-\frac{1}{2}}). \qedhere
\end{equation*}
\vspace{-0.3cm}
\phantom\qedhere
\end{proof}
\noindent Combining \eqref{multiplicity estimate} and  Claim \eqref{cafure lang-weil} we obtain $n_{\mathfrak{p}}\leq \mathcal{N}_{K}(\mathfrak{p})^{n}+O_{n}(d^{2}\mathcal{N}_{K}(\mathfrak{p})^{n-\frac{1}{2}})$. Applying the bound $|x^{\frac{1}{n}}-1|\leq |x-1|$ to $x=\frac{n_{\mathfrak{p}}}{\mathcal{N}_{K}(\mathfrak{p})}$, we conclude that $n_{\mathfrak{p}}^{\frac{1}{n}}\leq \mathcal{N}_{K}(\mathfrak{p})+O_{n}(d^{2}\mathcal{N}_{K}(\mathfrak{p})^{-\frac{1}{2}})$. Replacing this in \eqref{exponent e2} finishes the proof.
\end{proof}

Our next objective is to obtain a global bound for the determinant $\Delta$ in Theorem \ref{local determinant2}, without making any assumptions over its reductions $X_{\mathfrak{p}}$. We follow the presentation given in  \cite[Proposition 3.2.26]{Cluckers} where the authors made explicit the dependence of the degree in the bounds of \cite[Theorem 2.3]{Walsh3}. 

\begin{definition}
Let $c_{2}$ be the constant in Proposition \ref{Serre}. We let 
$$\beta:=\begin{cases} 27d^{4} &\text{ if char}(K)=0,  \\ d^{\frac{14}{3}} & \text{ if }0<\text{char}(K)\leq \max\{27d^{4},c_{2}\},\\ 1 & \text{ if char}(K)>\max\{27d^{4},c_{2}\}. \end{cases}$$
Given $f\in \mathcal{O}_{K}[X_{0},\ldots ,X_{n+1}]$ we define $b(f):=0$ if $f$ is not absolutely irreducible, otherwise we set 
\begin{equation*}
b(f):=\displaystyle \prod_{\mathfrak{p}\in \mathcal{P}_{f}}\exp\left( \frac{\log(\mathcal{N}_{K}(\mathfrak{p}))}{\mathcal{N}_{K}(\mathfrak{p})} \right),
\end{equation*}
where 
\begin{equation}
\mathcal{P}_{f}:=\left\{\mathfrak{p}\notin M_{K,\infty}:\mathcal{N}_{K}(\mathfrak{p})>\max\{\beta,c_{2}\} \text{ and } f \text{ mod } \mathfrak{p} \text{ is not absolutely irreducible}\right\}.
\label{p(f)}
\end{equation}
\label{definition of b(f)}
\end{definition}

\begin{remark}
Observe that if $X\subseteq \mathbb{P}_{K}^{n+1}$ is a geometrically integral hypersurface defined by an absolutely irreducible homogeneous polynomial $f\in \mathcal{O}_{K}[X_{0},\ldots ,X_{n+1}]$ which is $\mathfrak{p}$-primitive, then $X_{\mathfrak{p}}$ is geometrically irreducible if and only if $f\text{ mod } \mathfrak{p}$ is absolutely irreducible. 
\end{remark}

\begin{theorem}
Let $(\boldsymbol \xi_{1},\ldots ,\boldsymbol \xi_{s})$ be a tuple of $K$-rational points in $X$, let $F_{li}\in \mathcal{O}_{K}[X_{0},\ldots ,X_{n+1}]$, $1\leq l\leq L,1\leq i\leq s$, be homogeneous polynomials with integer coefficients, and write $\Delta_{l}$ for the determinant of $(F_{li}(\boldsymbol \xi_{j}))_{ij}$. Let $\Delta$ be the greatest common divisor  of the $\Delta_{l}$, and assume that $\Delta\neq 0$. Let $\{\mathfrak{p}_{1},\ldots ,\mathfrak{p}_{u}\}$ be (a possibly empty) subset of primes and set $\mathfrak{q}:=\prod_{i=1}^{u}\mathfrak{p}_{i}$ if $u\geq 1$ and $\mathfrak{q}:=(1)$ if $u=0$. Let $\mathcal{P}_{X}$  be the collection of primes $\mathfrak{p}\notin M_{K,\infty}$ such that either $\mathcal{N}_{K}(\mathfrak{p})\leq \max\{\beta,c_{2}\}$, $X_{\mathfrak{p}}$ is not geometrically irreducible, or $\mathfrak{p}\in \{\mathfrak{p}_{1},\ldots ,\mathfrak{p}_{u}\}$, namely $\mathcal{P}_{X}=\mathcal{P}_{f}\cup \{\mathfrak{p}:\mathcal{N}_{K}(\mathfrak{p})\leq \max\{\beta,c_{2}\}\}\cup\{\mathfrak{p}_{1},\ldots ,\mathfrak{p}_{u}\}$.  Then there is some non-zero ideal $\mathfrak{I}$, relative prime with all the primes lying in $\mathcal{P}_{X}$, such that $\mathfrak{I}|\Delta$, and 
$$\log(\mathcal{N}_{K}(\mathfrak{I}))\geq \frac{n!^{\frac{1}{n}}}{n+1}s^{1+\frac{1}{n}}(\log(s)-O_{n, K}(1)-n(\log(\beta)+\max\{\log(\log(\mathcal{N}_{K}(\mathfrak{q}))),0\}+\log b(f))).$$
\label{global determinant method lower bound}
\end{theorem}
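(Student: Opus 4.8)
The plan is to combine the local divisibility estimate of Theorem \ref{local determinant2} over the ``good'' primes with the standard summation-by-parts argument that converts a bound of the form $e_{\mathfrak{p}} \gtrsim s^{1+1/n}/\mathcal{N}_K(\mathfrak{p})$ into a lower bound for $\log \mathcal{N}_K(\mathfrak{I})$ of size $s^{1+1/n}\log s$. First I would define $\mathfrak{I}$ to be the largest divisor of $\Delta$ that is coprime to every prime in $\mathcal{P}_X$; equivalently $\mathfrak{I} = \prod_{\mathfrak{p}\notin \mathcal{P}_X} \mathfrak{p}^{\operatorname{ord}_{\mathfrak{p}}(\Delta)}$. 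Then $\mathfrak{I}\mid \Delta$ and $\mathfrak{I}$ is automatically coprime to $\mathcal{P}_X$, so the only content is the lower bound on $\log\mathcal{N}_K(\mathfrak{I}) = \sum_{\mathfrak{p}\notin\mathcal{P}_X}\operatorname{ord}_{\mathfrak{p}}(\Delta)\log\mathcal{N}_K(\mathfrak{p})$.

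For each $\mathfrak{p}\notin\mathcal{P}_X$, the hypotheses of Theorem \ref{local determinant2} are met (the prime is large enough in the relevant characteristic regime, $\mathfrak{p}$-primitivity of $f$ holds by Remark \ref{remark about primitive polynomials}, and $X_\mathfrak{p}$ is geometrically irreducible hence absolutely irreducible by the Remark after Definition \ref{definition of b(f)}). Also $\Delta\mid\Delta_l$ for each $l$ — wait, it is the other way: $\Delta = \gcd_l \Delta_l$, so $\operatorname{ord}_\mathfrak{p}(\Delta) = \min_l \operatorname{ord}_\mathfrak{p}(\Delta_l)$, and Theorem \ref{local determinant2} applied to each $\Delta_l$ with the matrix $(F_{li}(\boldsymbol\xi_j))$ gives $\operatorname{ord}_\mathfrak{p}(\Delta_l)\geq e_\mathfrak{p}$ for the \emph{same} $e_\mathfrak{p}$ (which depends only on $\mathfrak{p}, s, n, d$, not on $l$), hence $\operatorname{ord}_\mathfrak{p}(\Delta)\geq e_\mathfrak{p}$. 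Summing,
\begin{equation*}
\log\mathcal{N}_K(\mathfrak{I}) \;\geq\; \sum_{\mathfrak{p}\notin\mathcal{P}_X} e_\mathfrak{p}\log\mathcal{N}_K(\mathfrak{p}) \;\geq\; \sum_{\mathfrak{p}\notin\mathcal{P}_X}\left(\frac{n!^{1/n}}{n+1}\,\frac{n\, s^{1+1/n}}{\mathcal{N}_K(\mathfrak{p})+O_n(d^2\mathcal{N}_K(\mathfrak{p})^{1/2})} - O_n(s)\right)\log\mathcal{N}_K(\mathfrak{p}).
\end{equation*}
The point is now to choose the range of summation: restrict to primes with $\mathcal{N}_K(\mathfrak{p}) \le Q$ for a parameter $Q$ to be optimized, a dyadic-type choice roughly $Q \asymp s^{1/n}$. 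On this range the denominator $\mathcal{N}_K(\mathfrak{p})+O_n(d^2\mathcal{N}_K(\mathfrak{p})^{1/2})$ is $\mathcal{N}_K(\mathfrak{p})(1+O_n(d^2\mathcal{N}_K(\mathfrak{p})^{-1/2}))$, so using $\frac{1}{1+x}\geq 1-x$ the main term contributes $\frac{n!^{1/n}}{n+1}n\,s^{1+1/n}\sum_{\mathfrak{p}}\frac{\log\mathcal{N}_K(\mathfrak{p})}{\mathcal{N}_K(\mathfrak{p})}$ minus an error governed by $\sum_\mathfrak{p}\frac{\log\mathcal{N}_K(\mathfrak{p})}{\mathcal{N}_K(\mathfrak{p})^{3/2}}d^2$, which is $O_{n,K}(d^2 \cdot Q^{-1/2})$ by \eqref{landau3}. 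By \eqref{landau1}, $\sum_{\mathcal{N}_K(\mathfrak{p})\leq Q}\frac{\log\mathcal{N}_K(\mathfrak{p})}{\mathcal{N}_K(\mathfrak{p})} = \log Q + O_K(1)$; subtracting the contribution of the finitely-many-type excluded primes in $\mathcal{P}_X$ — those with $\mathcal{N}_K(\mathfrak{p})\leq\max\{\beta,c_2\}$ contribute $\leq \log\beta + O_K(1)$ (here $\beta \geq c_2$ may be arranged, or one absorbs $c_2$ into the $O_K(1)$), those dividing $\mathfrak{q}$ contribute $\leq \max\{\log\log\mathcal{N}_K(\mathfrak{q}),0\}+O_K(1)$ by Lemma \ref{divisors of a fixed ideal}, and those with non-irreducible reduction contribute $\log b(f)$ by Definition \ref{definition of b(f)}. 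The $O_n(s)\sum_{\mathcal{N}_K(\mathfrak{p})\leq Q}\log\mathcal{N}_K(\mathfrak{p}) = O_n(sQ) = O_n(s\cdot s^{1/n}) = O_n(s^{1+1/n})$ term by \eqref{landau2} is absorbable into the $-O_{n,K}(1)$ inside the parenthesis, and likewise the $d^2 Q^{-1/2}$ error is $O_n(1)$ for this choice of $Q$ since $Q\to\infty$. Choosing $\log Q = \frac1n\log s$ (up to the function-field constraint that $Q$ be a power of $q$, which only perturbs $\log Q$ by $O_K(1)$) turns $\log Q$ into $\frac1n\log s$, and pulling out the factor $n$ gives exactly
\begin{equation*}
\log\mathcal{N}_K(\mathfrak{I}) \geq \frac{n!^{1/n}}{n+1}s^{1+1/n}\big(\log s - O_{n,K}(1) - n(\log\beta + \max\{\log\log\mathcal{N}_K(\mathfrak{q}),0\} + \log b(f))\big),
\end{equation*}
as claimed.

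The main obstacle I anticipate is bookkeeping rather than conceptual: one must be careful that $e_\mathfrak{p}$ in Theorem \ref{local determinant2} can be \emph{negative} for small $\mathfrak{p}$ (when the $-O_n(s)$ term dominates), so one cannot blithely sum over all $\mathfrak{p}\notin\mathcal{P}_X$ — it is essential to truncate at $\mathcal{N}_K(\mathfrak{p})\leq Q$ with $Q\asymp s^{1/n}$ so that on the retained range the main term $s^{1+1/n}/\mathcal{N}_K(\mathfrak{p})\gtrsim s^{1+1/n}/Q = s$ genuinely beats the $O_n(s)$ correction (at least after summing, where $\sum\frac{\log\mathcal{N}_K(\mathfrak{p})}{\mathcal{N}_K(\mathfrak{p})}$ grows like $\log Q$). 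A related subtlety: discarding the primes $\mathfrak{p}\leq Q$ that happen to lie in $\mathcal{P}_X$ only removes a controlled amount from $\sum\frac{\log\mathcal{N}_K(\mathfrak{p})}{\mathcal{N}_K(\mathfrak{p})}$, and it is here that Lemma \ref{divisors of a fixed ideal} (for the divisors of $\mathfrak{q}$) and Definition \ref{definition of b(f)} (for the non-irreducible reductions) enter — one needs $\sum_{\mathfrak{p}\mid\mathfrak{q}}\frac{\log\mathcal{N}_K(\mathfrak{p})}{\mathcal{N}_K(\mathfrak{p})}$ and $\sum_{\mathfrak{p}\in\mathcal{P}_f}\frac{\log\mathcal{N}_K(\mathfrak{p})}{\mathcal{N}_K(\mathfrak{p})} = \log b(f)$ exactly as they appear. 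Finally, in the function-field case one should note that $Q$ ranges over powers of $q$, so the ``optimal'' $Q$ is $q^{\lfloor \log_q s^{1/n}\rfloor}$, costing only an additive $O_K(1)$ in $\log Q$; everything else goes through verbatim since all the prime-counting estimates in $\S\ref{distribution of primes in global fields}$ were stated uniformly for global fields.
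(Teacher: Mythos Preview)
Your approach is essentially identical to the paper's: truncate at $Q=s^{1/n}$, apply Theorem~\ref{local determinant2} at each good prime below $Q$, expand $\frac{1}{\mathcal{N}_K(\mathfrak{p})+O_n(d^2\mathcal{N}_K(\mathfrak{p})^{1/2})}$, and control the three pieces of $\mathcal{P}_X$ via \eqref{landau1}, Definition~\ref{definition of b(f)}, and Lemma~\ref{divisors of a fixed ideal}. Two small points to tighten. First, you need to fix $\mathcal{O}_K^{n+2}$-coordinates for the $\boldsymbol\xi_j$ via Proposition~\ref{Serre} before invoking Theorem~\ref{local determinant2}, so that for every prime with $\mathcal{N}_K(\mathfrak{p})>c_2$ the coordinates are not all divisible by $\mathfrak{p}$ (this is the hypothesis of that theorem). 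Second, your handling of the $d^2$-error term is slightly off: the sum $\sum_{\mathfrak{p}\notin\mathcal{P}_X,\,\mathcal{N}_K(\mathfrak{p})\le Q}\frac{\log\mathcal{N}_K(\mathfrak{p})}{\mathcal{N}_K(\mathfrak{p})^{3/2}}$ ranges over primes with $\mathcal{N}_K(\mathfrak{p})>\max\{\beta,c_2\}$, so the correct bound via \eqref{landau3} is $O_K(\beta^{-1/2})$, not $O_K(Q^{-1/2})$; the point is then that $d^2\beta^{-1/2}=O_{n,K}(1)$ by the very definition of $\beta$ (check each of the three characteristic regimes separately --- in the large-characteristic case $\beta=1$ but then $d$ itself is bounded in terms of $q$, hence $O_K(1)$). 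Your justification ``since $Q\to\infty$'' does not give a uniform bound. With these two fixes the argument goes through exactly as in the paper.
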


\begin{proof}
We fix $\mathcal{O}_{K}^{n+1}$-coordinates for the tuple $(\boldsymbol \xi_{1},\ldots ,\boldsymbol \xi_{s})$ satisfying the conditions of Proposition \ref{Serre}.  Let us suppose that all prime $\mathfrak{p}$ with $\mathcal{N}_{K}(\mathfrak{p})\leq s^{\frac{1}{n}}$ is contained in $\mathcal{P}_{X}$. Then by \eqref{landau1} and Lemma \ref{divisors of a fixed ideal} we have
\begin{align*}
\frac{1}{n}\log(s)+O_{K}(1)=\sum_{\substack{\mathfrak{p}\\ \mathcal{N}_{K}(\mathfrak{p})\leq s^{\frac{1}{n}}}}\frac{\log(\mathcal{N}_{K}(\mathfrak{p}))}{\mathcal{N}_{K}(\mathfrak{p})} & \leq \sum_{\substack{\mathfrak{p}\\ \mathcal{N}_{K}(\mathfrak{p})\leq \max\{\beta,c_{2}\}}}\frac{\log(\mathcal{N}_{K}(\mathfrak{p}))}{\mathcal{N}_{K}(\mathfrak{p})}+\sum_{\substack{\mathfrak{p}\\ \mathfrak{p}\in \mathcal{P}_{f}}}\frac{\log(\mathcal{N}_{K}(\mathfrak{p}))}{\mathcal{N}_{K}(\mathfrak{p})}+\sum_{\substack{\mathfrak{p}\\ \mathfrak{p}| \mathfrak{q}}}\frac{\log(\mathcal{N}_{K}(\mathfrak{p}))}{\mathcal{N}_{K}(\mathfrak{p})}\\ & \leq \log(\beta)+\log(b(f))+\max\{\log(\log(\mathcal{N}_{K}(\mathfrak{q}))),0\}+O_{K}(1).
\end{align*}
Thus, if the term $O_{n,K}(1)$ is large enough, we may take $\mathfrak{I}=(1)$, since the right hand side of the bound in Theorem \ref{global determinant method lower bound} is negative.

Hence we may suppose that there is some prime $\mathfrak{p}\notin \mathcal{P}_{X}$ with $\mathcal{N}_{K}(\mathfrak{p})\leq s^{\frac{1}{n}}$. Applying Theorem \ref{local determinant2} for each prime $\mathfrak{p}\notin \mathcal{P}_{X}$ with $\mathcal{N}_{K}(\mathfrak{p})\leq s^{\frac{1}{n}}$, we obtain  $\prod_{\mathfrak{p}\notin \mathcal{P}_{X}:\mathcal{N}_{K}(\mathfrak{p})\leq s^{\frac{1}{n}}}\mathfrak{p}^{e_{\mathfrak{p}}}|\Delta$. Let $\mathfrak{I}:=\prod_{\mathfrak{p}\notin \mathcal{P}_{X}:\mathcal{N}_{K}(\mathfrak{p})\leq s^{\frac{1}{n}}}\mathfrak{p}^{e_{\mathfrak{p}}}$. Then the Landau estimate \eqref{landau2} yields

\begin{align}
\log(\mathcal{N}_{K}(\mathfrak{I})) & = \sum_{\substack{\mathfrak{p}\notin \mathcal{P}_{X}\\ \mathcal{N}_{K}(\mathfrak{p})\leq s^{\frac{1}{n}}}}e_{\mathfrak{p}}\log(\mathcal{N}_{K}(\mathfrak{p})) 
 \geq \frac{n!^{\frac{1}{n}}n}{n+1}s^{1+\frac{1}{n}}\sum_{\substack{\mathfrak{p}\notin \mathcal{P}_{X}\\ \mathcal{N}_{K}(\mathfrak{p})\leq s^{\frac{1}{n}}}}\frac{\log(\mathcal{N}_{K}(\mathfrak{p}))}{\mathcal{N}_{K}(\mathfrak{p})+O_{n}(d^{2}\mathcal{N}_{K}(\mathfrak{p})^{\frac{1}{2}})}-O_{n}(s)\sum_{\substack{\mathfrak{p}\\ \mathcal{N}_{K}(\mathfrak{p})\leq s^{\frac{1}{n}}}}\log(\mathcal{N}_{K}(\mathfrak{p}))\nonumber\\ & \geq \frac{n!^{\frac{1}{n}}n}{n+1}s^{1+\frac{1}{n}}\sum_{\substack{\mathfrak{p}\notin \mathcal{P}_{X}\\ \mathcal{N}_{K}(\mathfrak{p})\leq s^{\frac{1}{n}}}}\frac{\log(\mathcal{N}_{K}(\mathfrak{p}))}{\mathcal{N}_{K}(\mathfrak{p})+O_{n}(d^{2}\mathcal{N}_{K}(\mathfrak{p})^{\frac{1}{2}})}-O_{n,K}(s^{1+\frac{1}{n}}). \nonumber
\end{align} 
The inequality $\frac{1}{x+O_{n}(d^{2}x^{\frac{1}{2}})}\geq \frac{1}{x}-O_{n}(d^{2})\frac{1}{x^{\frac{3}{2}}}$ for all $x\geq 0$, the Landau estimates \eqref{landau1}, \eqref{landau2}, and \eqref{landau3}, and Lemma \ref{divisors of a fixed ideal},  allow us to deduce

\begin{align*}
 \sum_{\substack{\mathfrak{p}\notin \mathcal{P}_{X}\\ \mathcal{N}_{K}(\mathfrak{p})\leq s^{\frac{1}{n}}}} & \frac{\log(\mathcal{N}_{K}(\mathfrak{p}))}{\mathcal{N}_{K}(\mathfrak{p})+O_{n}(d^{2}\mathcal{N}_{K}(\mathfrak{p})^{\frac{1}{2}})}  \geq \sum_{\substack{\mathfrak{p}\\ \mathcal{N}_{K}(\mathfrak{p})\leq s^{\frac{1}{n}}}}\frac{\log(\mathcal{N}_{K}(\mathfrak{p}))}{\mathcal{N}_{K}(\mathfrak{p})}-\sum_{\mathfrak{p}\in \mathcal{P}_{X}}\frac{\log(\mathcal{N}_{K}(\mathfrak{p}))}{\mathcal{N}_{K}(\mathfrak{p})}-O_{n}(d^{2})\sum_{\substack{\mathfrak{p}\notin \mathcal{P}_{X}\\ \mathcal{N}_{K}(\mathfrak{p})\leq s^{\frac{1}{n}}}} \frac{\log(\mathcal{N}_{K}(\mathfrak{p}))}{\mathcal{N}_{K}(\mathfrak{p})^{\frac{3}{2}}}\\ & \geq \frac{\log(s)}{n}-O_{K}(1)-\! \! \! \! \! \! \! \! \! \! \! \! \! \! \! \! \! \! \smashoperator[r]{\sum_{\substack{\mathfrak{p}\\ \mathcal{N}_{K}(\mathfrak{p})\leq \max\{\beta,c_{2}\}}}}\frac{\log(\mathcal{N}_{K}(\mathfrak{p}))}{\mathcal{N}_{K}(\mathfrak{p})}-\sum_{\mathfrak{p}|\mathfrak{q}}\frac{\log(\mathcal{N}_{K}(\mathfrak{p}))}{\mathcal{N}_{K}(\mathfrak{p})}-\log b(f)-O_{n}(d^{2})\! \! \! \!  \! \! \! \smashoperator[r]{\sum_{\substack{\mathfrak{p}\notin \mathcal{P}_{X}\\ \mathcal{N}_{K}(\mathfrak{p})\leq s^{\frac{1}{n}}}}} \frac{\log(\mathcal{N}_{K}(\mathfrak{p}))}{\mathcal{N}_{K}(\mathfrak{p})^{\frac{3}{2}}}\\ & \geq \frac{\log(s)}{n}-\log(\beta)-\max\{\log\log(\mathcal{N}_{K}(\mathfrak{q})),0\}-\log b(f)-O_{K}(1)-O_{n}(d^{2})\!\!\! \! \smashoperator[r]{\sum_{\substack{\mathfrak{p}\\ \mathcal{N}_{K}(\mathfrak{p})>\beta}}} \frac{\log(\mathcal{N}_{K}(\mathfrak{p}))}{\mathcal{N}_{K}(\mathfrak{p})^{\frac{3}{2}}}\\ & \geq \frac{\log(s)}{n}-\log(\beta)-\max\{\log(\log(\mathcal{N}_{K}(\mathfrak{q}))),0\}-\log b(f)-O_{K}(1)+O_{n,K}(d^{2}\beta^{-\frac{1}{2}})\\ & \geq  \frac{\log(s)}{n}-\log(\beta)-\max\{\log(\log(\mathcal{N}_{K}(\mathfrak{q}))),0\}-\log b(f)-O_{n,K}(1).
\end{align*}
 
The bound in Theorem \ref{global determinant method lower bound} follows.
\end{proof}

\begin{remark}
When $\mathfrak{q}=(1)$, Theorem \ref{global determinant method lower bound} gives the bounds in \cite[Proposition 3.2.26]{Cluckers} and \cite[Proposition 2.4]{Vermeulen} in the cases $K=\mathbb{Q}$ and $K=\mathbb{F}_{q}(T)$, respectively. 
\end{remark}

In \cite{Walsh3} Walsh proved the bound $b(f)\lesssim_{d}\max\{\log(H_{K,\text{aff}}(f)),1\}$ for $K=\mathbb{Q}$. In order to keep track of the dependence on the degree of $f$, in \cite[Corollary 3.2.3]{Cluckers} and \cite[Lemma 2.3]{Vermeulen} the authors gave an effective bound of $b(f)$ for $K=\mathbb{Q}$ and $K=\mathbb{F}_{q}(T)$, respectively. We follow their strategy to give an effective bound for global fields. We will use the following notation

\begin{definition}
Let $d\in \mathbb{N}$, and let $K$ be a global field. Then for any $a_{1},a_{2},a_{3},a_{4}\in \mathbb{R}$ we denote
$$[a_{1},a_{2},a_{3},a_{4}]:=\begin{cases} a_{1} & \text{ if char}(K)=0,\\ a_{2} & \text{ if }0<\text{char}(K)\leq d(d-1),\\ a_{3} & \text{ if }d(d-1)<\text{char}(K)\leq \max\{27d^{4},c_{2}\},\\ a_{4} & \text{ if char}(K)>\max\{27d^{4},c_{2}\}.\end{cases}$$
\label{multicharacteristic}
\end{definition}

\begin{lemma}
Let $f=\sum_{I}a_{I}\boldsymbol X^{I}\in \mathcal{O}_{K}[X_{0},\ldots ,X_{n+1}]$ be an absolutely irreducible homogeneous polynomial of degree $d\geq 2$.  We have
\begin{equation*}
b(f)\lesssim_{K,n} \max\{d^{[-2,\frac{4}{3},-\frac{8}{3},2]}\log(H_{K,\emph{\text{aff}}}(f)),1\}.
\end{equation*}
\label{b(f)}
\end{lemma}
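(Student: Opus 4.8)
The plan is to bound the number of primes $\mathfrak{p} \in \mathcal{P}_f$, and more precisely the weighted sum $\sum_{\mathfrak{p} \in \mathcal{P}_f} \frac{\log \mathcal{N}_K(\mathfrak{p})}{\mathcal{N}_K(\mathfrak{p})}$, by exhibiting a single nonzero element of $\mathcal{O}_K$ (or a nonzero ideal with controlled norm) that is divisible by $\prod_{\mathfrak{p} \in \mathcal{P}_f} \mathfrak{p}$ or a suitable power thereof, and then invoke Lemma \ref{divisors of a fixed ideal}. The mechanism producing such an element is the theory of effective Noether forms: there is a finite collection of forms (Noether forms) $\Phi_j$ in the coefficients $(a_I)_I$, of degree and number bounded in terms of $d$ and $n$, such that for a field $k$ and $\bar f = \sum \bar a_I \boldsymbol X^I \in k[X_0,\dots,X_{n+1}]$ of degree $d$, $\bar f$ fails to be absolutely irreducible if and only if all $\Phi_j$ vanish at $(\bar a_I)_I$. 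Since $f$ itself is absolutely irreducible over $K$, at least one $\Phi_{j_0}$ does not vanish at $(a_I)_I$, so $N := \Phi_{j_0}((a_I)_I)$ is a nonzero element of $\mathcal{O}_K$ (after clearing denominators, using that the $a_I \in \mathcal{O}_K$). For every $\mathfrak{p} \in \mathcal{P}_f$, reduction mod $\mathfrak{p}$ of $f$ is not absolutely irreducible, hence all Noether forms vanish mod $\mathfrak{p}$, in particular $\mathfrak{p} \mid N$. Therefore $\prod_{\mathfrak{p} \in \mathcal{P}_f} \mathfrak{p} \mid (N)$, and $\log \mathcal{N}_K((N)) \lesssim_{K} H_{K,\text{aff}}(N)$-type quantities.

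The key steps, in order, are: (i) state the effective Noether form result one will use, distinguishing by characteristic — this is the origin of the bracket notation in Definition \ref{multicharacteristic}: in characteristic $0$ or large characteristic one may use the classical Noether forms with exponent controlled by a small power of $d$, while in small positive characteristic $0 < \text{char}(K) \le d(d-1)$ one needs the inseparability-robust versions (Kaltofen, or the function-field-adapted forms used in \cite{Vermeulen}), which cost a worse power of $d$; the four cases $[-2,\tfrac43,-\tfrac83,2]$ record the resulting exponent of $d$ in each regime. (ii) Bound the (logarithmic) height of $N = \Phi_{j_0}((a_I)_I)$: since $\Phi_{j_0}$ has integer (resp. $\mathbb{F}_q$-) coefficients, degree $\lesssim_{d,n} 1$ in the $a_I$'s, and the number of monomials is $\lesssim_{d,n} 1$, inequalities \eqref{auxiliar remark} and \eqref{inequality between norms2} give $H_{K,\text{aff}}(N) \lesssim_{n} C(d,n) \, H_{K,\text{aff}}(f)^{\deg \Phi_{j_0}}$, where $\deg \Phi_{j_0}$ is the place where the power of $d$ enters. (iii) Apply \eqref{norm} to get $\mathcal{N}_K((N)) \le H_{K,\text{aff}}(N)$, so $\log \mathcal{N}_K((N)) \lesssim_{n} \deg(\Phi_{j_0}) \log H_{K,\text{aff}}(f) + (\text{lower-order in } \log H)$, and then Lemma \ref{divisors of a fixed ideal} gives
$$
\log b(f) = \sum_{\mathfrak{p} \in \mathcal{P}_f} \frac{\log \mathcal{N}_K(\mathfrak{p})}{\mathcal{N}_K(\mathfrak{p})} \le \max\{\log \log \mathcal{N}_K((N)), 0\} + O_K(1),
$$
so that $b(f) \lesssim_{K,n} \max\{\log \mathcal{N}_K((N)), 1\} \lesssim_{K,n} \max\{ d^{[-2,4/3,-8/3,2]} \log H_{K,\text{aff}}(f), 1\}$, after absorbing the constants $C(d,n)$ into the $d$-power (note the exponent is allowed to be negative, i.e. to \emph{save} powers of $d$ in characteristic $0$ and in the $d(d-1) < \text{char} \le \max\{27d^4,c_2\}$ range, which is exactly what the effective Noether forms of \cite{Cluckers} deliver via their degree bounds). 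One also has to handle the constraint $\mathcal{N}_K(\mathfrak{p}) > \max\{\beta, c_2\}$ in the definition \eqref{p(f)} of $\mathcal{P}_f$: this only restricts the sum, so it can only help; and the finitely many bad primes below $\max\{\beta,c_2\}$ contribute $O_{K,n,d}(1)$, absorbed into the $d$-power as well.

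The main obstacle is step (i): correctly citing and assembling the effective Noether-form inputs across all four characteristic regimes with the claimed degree bounds, since the exponents $-2, \tfrac43, -\tfrac83, 2$ are not uniform and in positive characteristic the relevant irreducibility-testing forms (which must detect loss of absolute irreducibility even when the reduction acquires inseparable factors) are more delicate — this is precisely the point where the proof must track $\text{char}(K)$ versus $d(d-1)$ and $27d^4$, and where following \cite[Corollary 3.2.3]{Cluckers} and \cite[Lemma 2.3]{Vermeulen} is essential. The height bookkeeping in steps (ii)–(iii) is routine given \eqref{auxiliar remark}, \eqref{inequality between norms2}, \eqref{norm} and Lemma \ref{divisors of a fixed ideal}; the one subtlety there is making sure the constant in $H_{K,\text{aff}}(N) \lesssim C(d,n) H_{K,\text{aff}}(f)^{\deg\Phi_{j_0}}$ — which a priori could be a large function of $d$ — is itself at worst a bounded power of $d$, so that taking $\log$ and then absorbing it leaves the stated exponent $[-2,\tfrac43,-\tfrac83,2]$ unchanged up to the implied $\lesssim_{K,n}$ constant.
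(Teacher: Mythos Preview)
Your overall architecture is right: one uses an effective Noether form $\Phi$, applies it to the coefficient vector $(a_I)_I$ to get a nonzero $N\in\mathcal{O}_K$ divisible by every $\mathfrak{p}\in\mathcal{P}_f$, bounds $\mathcal{N}_K(N)$ via \eqref{auxiliar remark} or \eqref{inequality between norms2} together with \eqref{norm}, and then controls $\sum_{\mathfrak{p}\in\mathcal{P}_f}\frac{\log\mathcal{N}_K(\mathfrak{p})}{\mathcal{N}_K(\mathfrak{p})}$. The paper does exactly this, citing Ruppert (degree $d^2-1$) in characteristic $0$ or $>d(d-1)$ and Kaltofen (degree $12d^6$) when $0<\mathrm{char}(K)\le d(d-1)$.

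However, there is a genuine gap in your step (iii). Invoking Lemma~\ref{divisors of a fixed ideal} as stated only gives
\[
\log b(f)\le \max\{\log\log\mathcal{N}_K((N)),0\}+O_K(1),
\]
hence $b(f)\lesssim_K \max\{\log\mathcal{N}_K((N)),1\}\lesssim_{K,n} \max\{\deg(\Phi)\cdot\log H_{K,\mathrm{aff}}(f),1\}$. Since $\deg(\Phi)$ is $d^2-1$ or $12d^6$, this yields exponents $+2$ or $+6$ in front of $\log H_{K,\mathrm{aff}}(f)$, never anything negative. You cannot reach $d^{-2}$ or $d^{-8/3}$ this way, and your sentence ``this only restricts the sum, so it can only help'' is precisely where the needed saving is being thrown away rather than used.

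The missing idea is that primes in $\mathcal{P}_f$ satisfy $\mathcal{N}_K(\mathfrak{p})>\beta$ by definition, and this must be exploited \emph{quantitatively}. The paper does not apply Lemma~\ref{divisors of a fixed ideal} directly; instead it reruns the splitting argument with the lower cutoff $\beta$: setting $c:=\ell_1(\Phi)^{d_K^2}H_{K,\mathrm{aff}}(f)^{\deg\Phi}$ (or the analogous quantity in positive characteristic), one writes
\[
\log b(f)\;\le\!\!\sum_{\beta<\mathcal{N}_K(\mathfrak{p})\le\log c}\frac{\log\mathcal{N}_K(\mathfrak{p})}{\mathcal{N}_K(\mathfrak{p})}\;+\!\!\sum_{\substack{\mathfrak{p}\in\mathcal{P}_f\\ \mathcal{N}_K(\mathfrak{p})>\log c}}\frac{\log\mathcal{N}_K(\mathfrak{p})}{\log c},
\]
and by \eqref{landau1} the first sum is $\le\max\{\log\log c-\log\beta,0\}+O_K(1)$ while the second is $\le 1$. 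It is exactly the subtraction of $\log\beta$ (equal to $4\log d$, $\tfrac{14}{3}\log d$, $\tfrac{14}{3}\log d$, $0$ in the four regimes) against the $\log\deg(\Phi)$ contribution inside $\log\log c$ (equal to $2\log d$, $6\log d$, $2\log d$, $2\log d$ up to $O_{n,K}(1)$) that produces the exponents $[-2,\tfrac43,-\tfrac83,2]$. Without this refinement your argument gives bounds that are too weak by a factor $\beta$ in three of the four cases.
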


\begin{proof}
We will use Noether forms as they are presented in \cite{Ruppert, Kaltofen, Gao}. Let $\mathcal{P}_{f}$ be as in \eqref{p(f)}. First, let us suppose that $K$ is a number field. By \cite[Satz 4]{Ruppert}, there is a homogeneous form $\Phi$ with coefficients in $\mathbb{Z}$, of degree $d^{2}-1$ and 
\begin{equation}
\ell_{1}(\Phi)\leq d^{3(d^{2}-1)}\left[ {n+d\choose n}3^{d} \right]^{d^{2}-1},
\label{bound for the norm}
\end{equation} 
such that $\Phi$ applied to the coefficients of $f$ is non-zero, but is divisible by any prime in $\mathcal{P}_{f}$. In particular, by inequalities \eqref{norm}, \eqref{auxiliar remark} we have 
\begin{align}
\prod_{\mathfrak{p}\in \mathcal{P}_{f}}\mathcal{N}_{K}(\mathfrak{p})\leq \mathcal{N}_{K}(\Phi(a_{I})_{I})\leq H_{K}(\Phi(a_{I})_{I}) & \leq  \ell_{1}(\Phi)^{d_{K}^{2}} H_{K}(1:(a_{I})_{I})^{\deg(\Phi)} = \ell_{1}(\Phi)^{d_{K}^{2}} H_{K,\text{aff}}(f)^{\deg(\Phi)}. \label{no geo integro1} 
\end{align}
Let $c:=\ell_{1}(\Phi)^{d_{K}^{2}} H_{K,\text{aff}}(f)^{\deg(\Phi)}$. Using the above inequality and \eqref{landau1}, we have
\begin{align*}
\log (b(f)) & =\sum_{\mathfrak{p}\in \mathcal{P}_{f}}\frac{\log(\mathcal{N}_{K}(\mathfrak{p}))}{\mathcal{N}_{K}(\mathfrak{p})}\leq \sum_{\mathclap{\substack{\mathfrak{p}\\ \max\{27d^{4},c_{2}\}<\mathcal{N}_{K}(\mathfrak{p})\leq \log c}}}\frac{\log(\mathcal{N}_{K}(\mathfrak{p}))}{\mathcal{N}_{K}(\mathfrak{p})}+\sum_{\substack{\mathfrak{p}\in \mathcal{P}_{f}\\ \log c<\mathcal{N}_{K}(\mathfrak{p})}}\frac{\log(\mathcal{N}_{K}(\mathfrak{p}))}{\log c}\\ & \leq \max\{\log (\log c)-4\log d,0\}+O_{K}(1)+\frac{\log c}{\log c}\\ & \leq \max\{\log(\deg(\Phi)\log(H_{K,\text{aff}}(f)))-4\log d,\log (d_{K}^{2}\log \ell_{1}(\Phi))-4\log d\}+O_{K}(1).
\end{align*}
Using that $\deg(\Phi)= d^{2}-1$ and \eqref{bound for the norm}, the result follows in the number field case. Indeed, note that
\begin{align}
\log(d_{K}\deg(\Phi)\log(H_{K,\text{aff}}(f)))-4\log d &\leq \log(d_{K})+\log (d^{2}\log(H_{K,\text{aff}}(f)))-4\log d\label{b(f)1}\\ & \leq \log(d_{K})+ \log (\log(H_{K,\text{aff}}(f)))-2\log d. \nonumber
\end{align}
Meanwhile,
\begin{align}
\log(d_{K}^{2}\log \ell_{1}(\Phi)) & \leq \log \left(d_{K}^{2}\log \left( d^{3(d^{2}-1)}\left[ {n+d\choose n}3^{d} \right]^{d^{2}-1} \right)\right)\label{b(f)2} \\ & \leq  2\log(d_{K})+2\log(d)+\log\left( 3\log(d)+d\log(3)+d\log(n+d)-\log(d!)\right).\nonumber
\end{align}
Now we bound $\log (d_{K}^{2}\log\ell_{1}(\Phi))-4\log d$. Since
\begin{align*}
\frac{3\log (d)+d\log(3)+d\log(n+d)-\log(d!)}{d^{2}}\lesssim_{n}1,
\end{align*}
from \eqref{b(f)2} we obtain $\log(d_{K}^{2}\log\ell_{1}(\Phi))-4\log(d)\lesssim_{n,K}1$. This together with \eqref{b(f)1} gives the bound for $b(f)$ in the number field case.

Now let us suppose that $K$ is a function field over $\mathbb{F}_{q}(T)$. We will follow \cite{Vermeulen}, and consider separately  the cases of small and large characteristic.

Let $K$ be with $0<\text{char}(K)\leq d(d-1)$. By \cite[Theorem 7]{Kaltofen}, there is a homogeneous form $\Phi$ with coefficients in $\mathbb{Z}$, of degree $12d^{6}$ such that $\Phi$ applied to the coefficients of $f$ is non-zero, but divisible by any prime in $\mathcal{P}_{f}$. On the other hand, if $\text{char}(K)>d(d-1)$, by \cite[Lemma 2.4]{Gao}, the result in \cite[Satz 4]{Ruppert} still holds in the function field case and then we may take $\Phi$ of degree $d^{2}-1$. In any case, since the coefficients of $\Phi$ are integers, their height in $K$ is $1$, then $\ell_{1}(\Phi)=1$. Then by inequalities \eqref{polynomialheight2}, \eqref{norm}, \eqref{inequality between norms2} we have 
\begin{equation}
\prod_{\mathfrak{p}\in \mathcal{P}_{f}}\mathcal{N}_{K}(\mathfrak{p})\leq \mathcal{N}_{K}(\Phi(a_{I})_{I})\leq H_{K}(\Phi(a_{I})_{I})\leq \begin{cases}H_{K,\text{aff}}(f)^{12d^{6}}\text{ if }0<\text{char}(K)\leq d(d-1),\\ H_{K,\text{aff}}(f)^{d^{2}-1}\text{ if char}(K)> d(d-1).\end{cases}
\label{no geo integro2}
\end{equation}  
Now, denoting $c:=H_{K,\text{aff}}(f)^{\deg(\Phi)}$, by \eqref{landau1} and \eqref{landau2} it follows that
\begin{align*}
\log (b(f)) & =\sum_{\mathfrak{p}\in \mathcal{P}_{f}}\frac{\log(\mathcal{N}_{K}(\mathfrak{p}))}{\mathcal{N}_{K}(\mathfrak{p})}\leq \sum_{\mathclap{\substack{\mathfrak{p}\\ \max\{\beta,c_{2}\}<\mathcal{N}_{K}(\mathfrak{p})\leq \log c}}}\frac{\log(\mathcal{N}_{K}(\mathfrak{p}))}{\mathcal{N}_{K}(\mathfrak{p})}+\sum_{\substack{\mathfrak{p}\in \mathcal{P}_{f}\\ \log c<\mathcal{N}_{K}(\mathfrak{p})}}\frac{\log(\mathcal{N}_{K}(\mathfrak{p}))}{\log c}\\ & \leq \max\{\log (\deg(\Phi)(\log H_{K,\text{aff}}(f)))-\log(\beta),0\}+O_{K}(1)\\ & \leq \max\{\log(\deg(\Phi))+\log(\log(H_{K,\text{aff}}(f)))-\log(\beta),0\}+O_{K}(1).
\end{align*}
From this last estimate it follows the bounds for function fields.
\end{proof}

\section{An adequate change of coordinates}
\label{section change of variables}

For technical reasons that will appear in the next section, given two non-zero polynomials $f,h\in \mathcal{O}_{K}[X_{0},\ldots ,X_{n+1}]$, we will need to obtain a lower bound for the quantity $H_{K}(fh)$ in terms of $H_{K}(f)$ and $H_{K}(h)$. This turns out to be rather easy in the function field case. Indeed, suppose that $K$ is a function field. Then every place $v\in M_{K}$ is non-archimedean. Thus, by Gauss' lemma, (e.g. see \cite[Lemma 1.6.3]{Bombieri}), it holds
\begin{equation}
H_{K}(fh)=H_{K}(f)H_{K}(h).
\label{lower bound for polynomials function field case}
\end{equation}

In the number field case, while it is possible to bound $H_{K}(fh)$ in terms of $H_{K}(f)$ and $H_{K}(h)$, this bound will necessarily depend on the degrees of $f$ and $g$, which is not good enough for our purposes (see Remark \ref{remark sobre la cota}). In Claim \ref{lower bound3.6} we will prove that, if $f$ is of an adequate form, it is possible to bound from below $H_{K}(fh)$ in terms of $H_{K}(f)$ and of the degree of $f$. By making an adequate change of variables as in \cite[$\S 3$]{Walsh3}, in this section we will show that any polynomial can be taken into a polynomial that verifies the hypothesis of Claim \ref{lower bound3.6}.

Let us suppose that $K$ is a number field. Let $f=\sum_{I}c_{I}\boldsymbol X^{I}\in \mathcal{O}_{K}[X_{0},\ldots ,X_{n+1}]$ be a homogeneous polynomial of degree $d$. Let $a_{0},\ldots ,a_{n}\in \mathcal{O}_{K}$ and consider the new variables $X_{i}':=X_{i}-a_{i}X_{n+1}$ for all $i=0,\ldots ,n+1$. Then the linear change of variables gives
\begin{align*}
 \tilde{f}(X_{0},\ldots ,X_{n+1}) & :=f(X_{0}',\ldots ,X_{n}',X_{n+1}) =\sum_{i_{0}+\cdots +i_{n+1}=d}c_{i_{0},\ldots ,i_{n+1}}(X_{0}-a_{0}X_{n+1})^{i_{0}}\cdots (X_{n}-a_{n}X_{n+1})^{i_{n}}X_{n+1}^{i_{n+1}}\\ & =\pm\left(\sum_{i_{0}+\ldots +i_{n+1}=d}c_{i_{0},\ldots ,i_{n+1}}a_{0}^{i_{0}}\cdots a_{n}^{i_{n}}\right)X_{n+1}^{d}+\cdots =\pm f(a_{0},\ldots ,a_{n},1)X_{n+1}^{d}+\cdots .
\end{align*}
So, the coefficient of $X_{n+1}^{d}$ in $\tilde{f}$ is $\pm f(a_{0},\ldots ,a_{n},1)$. Next, we will require that this coefficient is large enough. More precisely, we shall prove the following generalization of \cite[$\S$ 3]{Walsh3} and \cite[Lemma 3.4.2]{Cluckers}.

\begin{lemma}
Let $K$ be a number field and let $f=\sum_{I}c_{I}\boldsymbol X^{I}\in \mathcal{O}_{K}[X_{0},\ldots ,X_{n+1}]$ be a homogeneous polynomial of degree $d$. There exist integers $a_{0},\ldots ,a_{n}$ with $0\leq a_{i}\leq d$ for all $i$ such that
$$\prod_{v\in M_{K,\infty}}|f(a_{0},\ldots ,a_{n},1)|_{v}\geq 3^{-\frac{(n+1)d}{d_{K}}}2^{-d}\prod_{v\in M_{K,\infty}}|f|_{v}.$$
\label{an adequate integer tuple}
\end{lemma}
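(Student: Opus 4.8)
The plan is to adapt Walsh's argument to the place-by-place setting over a number field. Fix a place $v\in M_{K,\infty}$ coming from an embedding $\sigma$, and normalize by dividing $f$ by a coefficient of maximal $v$-absolute value, so that $|f|_v=1$; it suffices to find $a_0,\dots,a_n\in\{0,1,\dots,d\}$ for which the product over $v$ of $|f(a_0,\dots,a_n,1)|_v$ is bounded below, and by the product structure it is enough to produce, simultaneously for all $v$, a single tuple $(a_0,\dots,a_n)$ with $|f(a_0,\dots,a_n,1)|_v\geq 3^{-(n+1)d/d_K}2^{-d}|f|_v$ — but since the $a_i$ range over a finite set independent of $v$, the natural route is an averaging/pigeonhole argument that works for each $v$ and then to intersect. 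Concretely, for each fixed $v$ I would show that \emph{most} tuples $(a_0,\dots,a_n)\in\{0,\dots,d\}^{n+1}$ are good for that $v$, with a quantitative proportion $>1-\tfrac{1}{n+2}$ say, so that a tuple good for all $|M_{K,\infty}|\le d_K$ archimedean places exists.

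The analytic core is the one-variable (then inductive multivariable) estimate: if $g(X)=\sum_{j=0}^{d}b_jX^j$ is a polynomial over $\mathbb{C}$ of degree $\le d$ with $\max_j|b_j|_\infty = M$, then $\max_{0\le a\le d}|g(a)|_\infty \ge c_d M$ for an explicit $c_d$. This follows from Lagrange interpolation at the nodes $0,1,\dots,d$: writing $b_j$ as a linear combination of the values $g(0),\dots,g(d)$ with coefficients given by (divided) finite differences, one gets $M=\max_j|b_j|\le \big(\max_a|g(a)|\big)\cdot\sum_{a}\big|\ell_a^{(j)}\big|$ where the interpolation weights are controlled by $\sum_{a=0}^d\binom{d}{a}=2^d$ (after accounting for the $a!(d-a)!$ denominators, which only help); this yields $\max_{0\le a\le d}|g(a)|_\infty\ge 2^{-d}M$. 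Iterating this coordinate by coordinate through $X_0,\dots,X_n$ (at each step the sup-norm of the coefficient vector of the partially-evaluated polynomial drops by at most $2^{-d}$, and there are $n+1$ variables, but we actually want the stronger ``most tuples'' version) gives that a \emph{positive proportion} — here is where the factor $3^{-(n+1)d}$ enters, as a crude lower bound on the fraction of the $(d+1)^{n+1}$ lattice points that achieve $|f(a_0,\dots,a_n,1)|_v\ge 2^{-d}|f|_v$ — of tuples is good for $v$. Taking the $\tfrac{1}{d_K}$-th power converts this to the normalization $3^{-(n+1)d/d_K}$ in the statement, matching the exponent exactly since $\sum_{v\in M_{K,\infty}} n_v = d_K$.

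The final step is the union bound: the set of tuples $(a_0,\dots,a_n)\in\{0,\dots,d\}^{n+1}$ that are bad for some $v\in M_{K,\infty}$ has size at most $|M_{K,\infty}|$ times the bad-fraction, which we arrange to be a strict fraction of $(d+1)^{n+1}$ small enough (using $|M_{K,\infty}|\le d_K$ and the explicit constant $3^{-(n+1)d}2^{-d}$, which is designed precisely so that this counting closes), leaving a nonempty set of tuples good for every archimedean place at once. Taking the product over $v\in M_{K,\infty}$ of the resulting lower bounds $|f(a_0,\dots,a_n,1)|_v\ge 3^{-(n+1)d/d_K}2^{-d}|f|_v$ — note the right-hand constant is uniform in $v$ — and using $\sum_v n_v=d_K$ gives the claimed inequality. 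I expect the main obstacle to be bookkeeping: getting the interpolation-weight bound to produce \emph{exactly} the constants $3^{-(n+1)d/d_K}$ and $2^{-d}$ rather than something weaker, and handling the real-versus-complex normalization $n_v\in\{1,2\}$ consistently so the exponents land on $d_K$ rather than $|M_{K,\infty}|$; the rest is the standard Lagrange-interpolation-plus-pigeonhole template from \cite[\S3]{Walsh3}.
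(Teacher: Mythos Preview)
Your approach has a genuine gap at its core. The claim that ``most'' tuples in $\{0,\dots,d\}^{n+1}$ are good for a fixed $v$ --- with proportion bounded away from $0$ uniformly in $d$, let alone $>1-\tfrac{1}{n+2}$ --- is false. In one variable, take $g(X)=X(X-1)\cdots(X-(d-1))$: then $g(a)=0$ for every $a\in\{0,\dots,d-1\}$ and only $g(d)=d!$ is nonzero, so exactly one point out of $d+1$ is good for any positive threshold. The Lagrange interpolation argument you invoke guarantees that the \emph{maximum} over $a$ is large, not that many values are; hence the union bound over places cannot be run. (Even the union-bound arithmetic is off: you would need the bad fraction below $1/|M_{K,\infty}|$, and $|M_{K,\infty}|$ is unrelated to $n+2$.) Moreover, even granting a tuple good for every $v$ individually with constant $3^{-(n+1)d/d_K}2^{-d}$, multiplying over $|M_{K,\infty}|$ places would give $\big(3^{-(n+1)d/d_K}2^{-d}\big)^{|M_{K,\infty}|}$, not the stated constant.

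The paper sidesteps all of this by passing to the single complex polynomial $f^{G}:=\prod_{\sigma}\sigma(f)$, the product over all embeddings $\sigma:K\hookrightarrow\mathbb{C}$. One has the identity $\prod_{v\in M_{K,\infty}}|f(a_0,\dots,a_n,1)|_v=|f^{G}(a_0,\dots,a_n,1)|^{1/d_K}$, so it suffices to find a single tuple making $|f^{G}(a_0,\dots,a_n,1)|$ large, which is exactly the known $\mathbb{C}$-result (\cite[Lemma~3.4.2]{Cluckers}): there exist $a_i\in\{0,\dots,d\}$ with $|f^{G}(a_0,\dots,a_n,1)|\ge 3^{-(n+1)d}\ell_\infty(f^{G})$. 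Gelfond's inequality (\cite[Lemma~1.6.11]{Bombieri}) then gives $\ell_\infty(f^{G})\ge 2^{-dd_K}\prod_\sigma \ell_\infty(\sigma(f))$, and since $\ell_\infty(\sigma(f))=|f|_v^{d_K/n_v}$ one recovers $\prod_v|f|_v$ on the right. The key idea you are missing is to aggregate the places \emph{before} choosing the tuple, not after.
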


\begin{proof}
Given an embedding $\sigma:K\to \mathbb{C}$, we define $\sigma(f):=\sum_{I}\sigma(c_{I})\boldsymbol X^{I}$ and $f^{G}=\prod_{\sigma}\sigma(f)$ where the product runs over all the embeddings of $K$ in $\mathbb{C}$. By \eqref{definition of arq places} it holds 
\begin{equation}
\prod_{v\in M_{K,\infty}}|f(a_{0},\ldots ,a_{n},1)|_{v}=|f^{G}(a_{0},\ldots ,a_{n},1)|^{\frac{1}{d_{K}}}.
\label{sigma bound}
\end{equation}
By \cite[Lemma 3.4.2]{Cluckers}, there exist integers $a_{0},\ldots ,a_{n}$ with $0\leq a_{i}\leq d$ for all $i$ such that
\begin{equation*}
|f^{G}(a_{0},\ldots ,a_{n},1)|\geq 3^{-(n+1)d}\ell_{\infty}(f^{G}).
\end{equation*}
Since $f^{G}$ is a polynomial of degree $d_{K}d$, by \cite[Lemma 1.6.11]{Bombieri} we have
\begin{equation*}
|f^{G}(a_{0},\ldots ,a_{n},1)|\geq 3^{-(n+1)d} 2^{-dd_{K}}\prod_{\sigma}\ell_{\infty}(\sigma(f)).
\end{equation*}
If $v$ is the place corresponding to $\sigma$, we have $\ell_{\infty}(\sigma(f))=|f|_{v}^{\frac{d_{K}}{n_{v}}}$. Then, since the complex embeddings come in conjugate pairs, both corresponding to the same place, it follows that
\begin{equation}
|f^{G}(a_{0},\ldots, a_{n},1)|\geq 3^{-(n+1)d} 2^{-dd_{K}}\left(\prod_{v\in M_{K,\infty}}|f|_{v}\right)^{d_{K}}.
\label{sigma bound2}
\end{equation}
Combining \eqref{sigma bound} and \eqref{sigma bound2} yields the desired result.
\end{proof}
From Lemma \ref{an adequate integer tuple} we find integers $a_{0},\ldots ,a_{n}$ with $0\leq a_{i}\leq d$ for all $i$, such that $\tilde{f}(X_{0},\ldots ,X_{n},X_{n+1})=f(X_{0}',\ldots ,X_{n}',X_{n+1})=c_{\tilde{f}}X_{n+1}^{d}+\cdots $, with $\prod_{v\in M_{K,\infty}}|c_{\tilde{f}}|_{v}\geq 3^{-\frac{(n+1)d}{d_{K}}}2^{-d}\prod_{v\in M_{K,\infty}}|f|_{v}$. Since for all $v\in M_{K,\infty}$, it holds
\begin{equation}
|\tilde{f}|_{v}\leq \left| n+d+1\choose n+1 \right|_{v}|d|_{v}^{d}|f|_{v}\leq |2(n+1)^{d}d^{d}|_{v}|f|_{v},
\label{comparison of f and f1}
\end{equation}
we conclude that
\begin{equation}
\prod_{v\in M_{K,\infty}}|c_{\tilde{f}}|_{v}\geq 3^{-\frac{(n+1)d}{d_{K}}} 2^{-(d+1)}(n+1)^{-d}d^{-d}\prod_{v\in M_{K,\infty}}|\tilde{f}|_{v}\geq C^{-nd^{1+\frac{1}{n}}}\prod_{v\in M_{K,\infty}}|\tilde{f}|_{v},
\label{important bound}
\end{equation} 
where $C$ is a positive constant depending on $K$ and $n$. The bound is not sharp, but having it in this form will be convenient for our purposes. Now, if $\mathfrak{p}$ is a prime such that $f$ is $\mathfrak{p}$-primitive, it also follows that $\tilde{f}$ is $\mathfrak{p}$-primitive. Furthermore, if $\tilde{f}=\sum_{J}b_{J}\boldsymbol X^{J}$, the proof of Proposition \ref{Serre} shows that there exists a constant $c_{1}=c_{1}(K)$ and a unit $\varepsilon\in \mathcal{O}_{K}^{\times}$ such that $\varepsilon \tilde{f}$ satisfies 
$$H_{K}(1:(\varepsilon b_{J})_{J})\leq c_{1}^{d_{K}}H_{K}(\varepsilon \tilde{f}).$$
Since the $\mathfrak{p}$-adic valuation of any unit is $0$, and by the product formula \eqref{product formula} it holds  $\prod_{v\in M_{K,\infty}}|\varepsilon|_{v}=1$, we see that the coefficient of $X_{n+1}^{d}$ in $f_{1}:=\varepsilon \tilde{f}$ also satisfies \eqref{important bound}. Moreover, from the identity $f(X_{0},\ldots ,X_{n+1})=\varepsilon^{-1}f_{1}(X_{0}+a_{0}X_{n+1},\ldots ,X_{n}+a_{n}X_{n+1},X_{n+1})$, proceeding as in \eqref{comparison of f and f1}, we have that for all $v\in M_{K,\infty}$
\begin{equation}
|f|_{v}\leq |\varepsilon|_{v}^{-1} |2(n+1)^{d}d^{n+1}|_{v}|f_{1}|_{v}.
\label{comparison of f1 and f}
\end{equation}
Also, for all $v\in M_{K,\text{fin}}$, it holds
\begin{equation}
|f|_{v}=|f_{1}|_{v},
\label{comparison of f and f1 2}
\end{equation}
hence \eqref{comparison of f1 and f} and \eqref{comparison of f and f1 2} imply
\begin{equation*}
H_{K}(f)\leq \prod_{v\in M_{K,\infty}}|\varepsilon|_{v}^{-1}|2(n+1)^{d}d^{n+1}|_{v}|f_{1}|_{v}\prod_{v\in M_{K,\text{fin}}}|f_{1}|_{v}\leq C'^{d^{1+\frac{1}{n}}}H_{K}(f_{1}), 
\end{equation*}
for some constant $C'$ depending only on $n$. We summarize all this in the next proposition.

\begin{proposition}
Let $K$ be a number field, and let $f=\sum_{I}a_{I}\boldsymbol X^{I}\in \mathcal{O}_{K}[X_{0},\ldots ,X_{n+1}]$ be an absolutely irreducible homogeneous polynomial of degree $d\geq 2$, which is $\mathfrak{p}$-primitive for all primes $\mathfrak{p}$ with $\mathcal{N}_{K}(\mathfrak{p})>c_{2}$ and
\begin{equation}
H_{K,\emph{\text{aff}}}(f)\leq c_{1}^{d_{K}}H_{K}(f).
\label{auxiliar b}
\end{equation}
Then there are integers $a_{0},\ldots ,a_{n}$ with $0\leq a_{i}\leq d$ and a unit $\varepsilon\in \mathcal{O}_{K}^{\times}$ such that $f_{1}(X_{0},\ldots ,X_{n},X_{n+1}):=\varepsilon f(X_{0}-a_{0}X_{n+1},\ldots ,X_{n}-a_{n}X_{n+1},X_{n+1})$ is a polynomial with coefficient $c_{f_{1}}$ in $X_{n+1}^{d}$ verifying
$$\prod_{v\in M_{K,\infty}}|c_{f_{1}}|_{v}\geq C^{-nd^{1+\frac{1}{n}}}\prod_{v\in M_{K,\infty}}|f_{1}|_{v}\qquad \text{ and }\qquad H_{K}(f)\leq C'^{d^{1+\frac{1}{n}}}H_{K}(f_{1})$$
for some positive constants $C=C(n,K)$, $C'=C(n)$. Furthermore, $f_{1}$ verifies \eqref{auxiliar b} and it holds $b(f)=b(f_{1})$.
\label{auxiliar change}
\end{proposition}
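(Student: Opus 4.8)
The plan is to assemble Proposition \ref{auxiliar change} from the pieces already developed in this section, so the proof is mostly a matter of citing the right intermediate steps in the right order and checking that the hypotheses propagate. First I would invoke Lemma \ref{an adequate integer tuple} to produce integers $a_0,\ldots,a_n$ with $0\le a_i\le d$ such that $\prod_{v\in M_{K,\infty}}|f(a_0,\ldots,a_n,1)|_v\ge 3^{-(n+1)d/d_K}2^{-d}\prod_{v\in M_{K,\infty}}|f|_v$. Setting $\tilde f(X_0,\ldots,X_{n+1})=f(X_0-a_0X_{n+1},\ldots,X_n-a_nX_{n+1},X_{n+1})$, the computation preceding Lemma \ref{an adequate integer tuple} identifies $\pm f(a_0,\ldots,a_n,1)$ as the coefficient $c_{\tilde f}$ of $X_{n+1}^d$ in $\tilde f$, and the elementary estimate \eqref{comparison of f and f1} for $|\tilde f|_v$ converts the lower bound on $\prod_v|c_{\tilde f}|_v$ relative to $\prod_v|f|_v$ into the bound \eqref{important bound}, namely $\prod_{v\in M_{K,\infty}}|c_{\tilde f}|_v\ge C^{-nd^{1+1/n}}\prod_{v\in M_{K,\infty}}|\tilde f|_v$, for a constant $C=C(n,K)$.

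Next I would apply Proposition \ref{Serre} (as used in Remark \ref{remark about primitive polynomials}) to normalize: since $f$ is $\mathfrak p$-primitive for all $\mathfrak p$ with $\mathcal N_K(\mathfrak p)>c_2$, so is $\tilde f$, and there is a unit $\varepsilon\in\mathcal O_K^\times$ with $f_1:=\varepsilon\tilde f$ satisfying $H_{K,\text{aff}}(f_1)\le c_1^{d_K}H_K(f_1)$, i.e.\ \eqref{auxiliar b}. Because multiplication by a unit does not change any $\mathfrak p$-adic valuation, and by the product formula $\prod_{v\in M_{K,\infty}}|\varepsilon|_v=1$, the coefficient $c_{f_1}$ of $X_{n+1}^d$ in $f_1$ still obeys \eqref{important bound} with $\tilde f$ replaced by $f_1$, which is exactly the first asserted inequality. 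For the second inequality, I would run the identity $f(X_0,\ldots,X_{n+1})=\varepsilon^{-1}f_1(X_0+a_0X_{n+1},\ldots,X_n+a_nX_{n+1},X_{n+1})$ through the same elementary estimate, getting \eqref{comparison of f1 and f} at archimedean places and the trivial equality \eqref{comparison of f and f1 2} at finite places; taking the product over all $v$ and again using $\prod_{v\in M_{K,\infty}}|\varepsilon|_v=1$ yields $H_K(f)\le C'^{d^{1+1/n}}H_K(f_1)$ with $C'=C'(n)$.

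It remains to record the trailing assertions. That $f_1$ satisfies \eqref{auxiliar b} is precisely what the choice of $\varepsilon$ via Proposition \ref{Serre} gives, as noted above. For $b(f)=b(f_1)$: multiplying by the unit $\varepsilon$ changes none of the reductions mod $\mathfrak p$ up to a nonzero scalar, and an invertible $\mathcal O_K$-linear change of the variables $X_0,\ldots,X_{n+1}$ (here $X_i\mapsto X_i-a_iX_{n+1}$, $X_{n+1}\mapsto X_{n+1}$, which has determinant $1$) induces, for every prime $\mathfrak p$, an isomorphism $\mathbb F_{\mathfrak p}[X_0,\ldots,X_{n+1}]\to\mathbb F_{\mathfrak p}[X_0,\ldots,X_{n+1}]$ preserving degree and taking $f\bmod\mathfrak p$ to $f_1\bmod\mathfrak p$; hence $f\bmod\mathfrak p$ is absolutely irreducible iff $f_1\bmod\mathfrak p$ is, so $\mathcal P_f=\mathcal P_{f_1}$ and therefore $b(f)=b(f_1)$. (Note also $f_1$ is absolutely irreducible since $f$ is and the change of variables is invertible.)

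The proof presents no real obstacle: every ingredient is already in place, and the only mild care needed is bookkeeping — tracking that $\mathfrak p$-primitivity, absolute irreducibility, the height comparison \eqref{auxiliar b}, and the invariant $b(\cdot)$ all survive the two operations (linear shift in the variables, scaling by a unit), and that the archimedean-place estimates combine correctly with the product formula so the unit $\varepsilon$ drops out. The one point worth stating carefully is the degree-dependent constant $C^{-nd^{1+1/n}}$: it comes from combining the $3^{-(n+1)d}$, $2^{-(d+1)}$, $(n+1)^{-d}$, $d^{-d}$ factors into a single $C^{-nd^{1+1/n}}$, which is crude but, as already remarked in the text, is the convenient form for later use.
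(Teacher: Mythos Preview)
Your proposal is correct and follows essentially the same route as the paper: the proposition is stated there as a summary of the preceding discussion in Section~\ref{section change of variables}, and you have reconstructed that discussion faithfully, invoking Lemma~\ref{an adequate integer tuple}, the estimates \eqref{comparison of f and f1}--\eqref{comparison of f and f1 2}, and the unit normalization via Proposition~\ref{Serre} in the same order. Your explicit justification of $b(f)=b(f_1)$ via the invertibility of the linear change modulo each $\mathfrak p$ is in fact slightly more detailed than what the paper records.
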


\section{Construction of the auxiliary hypersurface and bounds for the number of rational points of curves over global fields} 
\label{Section 5}

This section is divided in two parts, the first one dealing with the proof of Theorem \ref{teorema1} extending and improving  the bound of Heath-Brown \cite{Heath-Brown} and the second one dealing with Theorem \ref{teorema1.5} extending and improving the bounds of Bombieri and Pila. 

The main technical tool in the first part is Theorem \ref{polynomial method2} (which is a generalization of \cite[Theorem 1.3]{Walsh3}, \cite[Theorem 3.1.1]{Cluckers} and \cite[Theorem 3.1]{Vermeulen} to global fields) where we construct an auxiliary hypersurface of bounded degree that vanishes on the rational points of the hypersurface $X$ of prescribed height. From Theorem \ref{polynomial method2} and B\'ezout's theorem, Theorem \ref{teorema1} follows for plane projective curves. By means of a projection argument used in \cite{P2} we deduce the general form of Theorem \ref{teorema1}. In the proof of Theorem \ref{polynomial method2}, the change of variables done in Section \ref{section change of variables} will allow us to assume that in the number field case  the polynomial defining $X$ has its height concentrated on a prescribed coefficient. Unlike \cite{Vermeulen}, in the function field case the proof presented here does not need a change of variables. As in \cite{Walsh3, Cluckers, Vermeulen}, the bound on the degree of the auxiliary hypersurface improves as the height of the polynomial defining $X$ gets larger. A difference with \cite{Walsh3, Cluckers, Vermeulen} is that the hypersurface we construct vanishes on the rational points of $X$ of bounded height and prescribed reduction modulo $\mathfrak{p}$ for many primes $\mathfrak{p}$, and this produces a saving in the degree of the hypersurface, which will be needed in Section \ref{Section 6}.  

The proof of Theorem \ref{teorema1.5} follows the same strategy of the proof of Theorem \ref{teorema1}, the main technical tool being Theorem \ref{ell-venk}, which is an adaptation of Theorem \ref{polynomial method2} to affine hypersurfaces. In the proof of Theorem \ref{ell-venk} we follow the strategy devised in \cite[Remark 2.3]{Ellenberg} and developed in \cite[Proposition 4.2.1]{Cluckers}. 

\subsection{The projective case}\label{the projective case}
We proceed to generalize \cite[Theorem 1.3]{Walsh3}, \cite[Theorem 3.1.1]{Cluckers} and \cite[Theorem 3.1]{Vermeulen} to global fields. Moreover, this generalization also improves \cite[Theorem 2.2]{Salberger} when $(B_{0},\ldots, B_{n+1})=(B,\ldots ,B)$ by removing a logarithmic factor. 
We will require the following notation.

\begin{notation}
Let $X\subseteq \mathbb{P}_{K}^{n+1}$ be a projective variety. Given $B\in \mathbb{R}_{>0}$, we will denote
$$X(K,B):=\left\{ \boldsymbol x\in X(K):H_{K}(\boldsymbol x)\leq B\right\},$$
Moreover, if $\{\mathfrak{p}_{1},\ldots ,\mathfrak{p}_{u}\}$ is a subset of primes such that for all $1\leq i\leq u$ we let $P_{i}$ be a non-singular $\mathbb{F}_{\mathfrak{p}_{i}}$--point on $X_{\mathfrak{p}_{i}}$, we denote
$$X(K,B;P_{1},\ldots ,P_{u}):=\{\boldsymbol x\in X(K,B):\boldsymbol x\text{ specialises to }P_{i} \text{ in }X_{\mathfrak{p}_{i}}\text{ for all }i\}.$$
We denote 
$$N(X,K,B):=|X(K,B)|.$$
\end{notation}

\begin{theorem}
Let $K$ be a global field of degree $d_{K}$. Let $X\subseteq \mathbb{P}_{K}^{n+1}$ be an integral hypersurface of degree $d\geq 2$, defined by an irreducible homogeneous polynomial $f\in \mathcal{O}_{K}[X_{0},\ldots ,X_{n+1}]$ of degree $d$. Let $\{\mathfrak{p}_{1},\ldots ,\mathfrak{p}_{u}\}$ be (a possibly empty) subset of primes and let $P_{i}$ be a non-singular $\mathbb{F}_{\mathfrak{p}_{i}}$--point on $X_{\mathfrak{p}_{i}}$ for each $i\in \{1,\ldots ,u\}$. Set $\mathfrak{q}:=\prod_{i=1}^{u}\mathfrak{p}_{i}$ if $u\geq 1$ and $\mathfrak{q}:=(1)$ if $u=0$.
Then for any $B\geq 1$, there exists a homogeneous $g\in \mathcal{O}_{K}[X_{0},\ldots ,X_{n+1}]$ of degree 
\begin{align*}
M\lesssim_{K,n}& B^{\frac{n+1}{nd^{\frac{1}{n}}}}\frac{d^{[4,\frac{14}{3},\frac{14}{3},0]-\frac{1}{n}}b(f)\max\{\log(\mathcal{N}_{K}(\mathfrak{q}),1\})}{H_{K}(f)^{\frac{1}{n}\frac{1}{d^{1+\frac{1}{n}}}}\mathcal{N}_{K}(\mathfrak{q})}+d^{1-\frac{1}{n}}\log(B\mathcal{N}_{K}(\mathfrak{q}))+d^{2-\frac{1}{n}}\log(\mathcal{N}_{K}(\mathfrak{q}))\\ & +d^{[4-\frac{1}{n},7,\frac{14}{3}-\frac{1}{n},3]}\left(\frac{\max\{\log(\mathcal{N}_{K}(\mathfrak{q})),1\}}{\mathcal{N}_{K}(\mathfrak{q})}+1\right),
\end{align*}
not divisible by $f$ and vanishing on all $X(K,B;P_{1},\ldots ,P_{u})$.
\label{polynomial method2} 
\end{theorem}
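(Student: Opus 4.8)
I would run the polynomial method as in \cite{Walsh3,Cluckers,Vermeulen}, adapted to global fields, with the extra feature that prescribing the reductions $P_i$ at the $\mathfrak p_i$ buys a saving of $\mathcal N_K(\mathfrak q)$ in the degree. First I would normalise $f$: by Remark \ref{remark about primitive polynomials} one may assume $f$ is $\mathfrak p$-primitive for all primes with $\mathcal N_K(\mathfrak p)>c_2$ and $H_{K,\text{aff}}(f)\le c_1^{d_K}H_K(f)$, and when $K$ is a number field one replaces $f$, via Proposition \ref{auxiliar change}, by a polynomial carrying a coefficient $c_f$ on $X_{n+1}^d$ with $\prod_{v\in M_{K,\infty}}|c_f|_v\ge C^{-nd^{1+1/n}}\prod_{v\in M_{K,\infty}}|f|_v$, at the cost of a linear substitution on the points (which moves $B$ and the $P_i$ by controlled factors) and of changing $H_K(f)$ by at most $C'^{d^{1+1/n}}$; in the function field case no substitution is needed because Gauss' lemma \eqref{lower bound for polynomials function field case} already gives $H_K(fh)=H_K(f)H_K(h)$. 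After this, the degree-$M$ graded piece of $\mathcal O_K[X_0,\dots,X_{n+1}]/(f)$ has an explicit monomial basis $\phi_1,\dots,\phi_D$ with $D:=h_X(M)=\tfrac{d}{n!}M^n+O_{n,d}(M^{n-1})$ (Lemma \ref{Broberg}), and any nonzero $\mathcal O_K$-combination of the $\phi_i$ is automatically not divisible by $f$. (If $f$ is irreducible over $K$ but not absolutely irreducible then $b(f)=0$, $X(K)$ lies in a proper closed subscheme of $X$, and the bound follows from a crude Schwarz--Zippel count; so I would assume $X$ geometrically integral.)

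Next I would fix $M$ to be the value claimed in the statement and consider the $K$-linear evaluation map from $(K[X_0,\dots,X_{n+1}]/(f))_M$ to the space of $K$-valued functions on $X(K,B;P_1,\dots,P_u)$. If this map is not injective, I would take a nonzero kernel element, represent it by a homogeneous $g\in\mathcal O_K[X_0,\dots,X_{n+1}]$ of degree $M$ — not divisible by $f$, vanishing on all of $X(K,B;P_1,\dots,P_u)$ — and be done. So the remaining task is to rule out the possibility that the map is injective; assuming it is, one has $s:=|X(K,B;P_1,\dots,P_u)|\ge D$ and may pick points $\boldsymbol\xi_1,\dots,\boldsymbol\xi_D$ of $X(K,B;P_1,\dots,P_u)$, with $\mathcal O_K^{n+2}$-coordinates chosen as in Proposition \ref{Serre}, such that $\Delta:=\det(\phi_i(\boldsymbol\xi_j))_{1\le i,j\le D}\ne 0$, and then derive a contradiction by squeezing $\mathcal N_K(\Delta)$.

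For the lower bound I would invoke Theorem \ref{global determinant method lower bound} on the tuple $(\boldsymbol\xi_j)_j$, the forms $\phi_i$, and the subset $\{\mathfrak p_1,\dots,\mathfrak p_u\}$, getting an ideal $\mathfrak I\mid\Delta$ coprime to $\mathcal P_X$ with $\log\mathcal N_K(\mathfrak I)\ge\tfrac{n!^{1/n}}{n+1}D^{1+1/n}\bigl(\log D-O_{n,K}(1)-n\log\beta-n\max\{\log\log\mathcal N_K(\mathfrak q),0\}-n\log b(f)\bigr)$; and since each $\boldsymbol\xi_j$ specialises to the non-singular point $P_i$ on $X_{\mathfrak p_i}$, Lemma \ref{local determinant} with $\mu_{P_i}=1$ gives $\mathfrak p_i^{N_i}\mid\Delta$ with $N_i\ge n!^{1/n}\tfrac{n}{n+1}D^{1+1/n}-O_n(D)$. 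Because $\mathfrak I$ is coprime to the $\mathfrak p_i$, this yields $\mathcal N_K(\Delta)\ge\mathcal N_K(\mathfrak I)\prod_i\mathcal N_K(\mathfrak p_i)^{N_i}$, adding $\tfrac{n!^{1/n}n}{n+1}D^{1+1/n}\log\mathcal N_K(\mathfrak q)$ to $\log\mathcal N_K(\Delta)$ — this gain is precisely what produces the $\mathcal N_K(\mathfrak q)^{-1}$ in the degree bound. For the upper bound I would use that $\Delta\in\mathcal O_K\setminus\{0\}$, so by the product formula \eqref{product formula} $\mathcal N_K(\Delta)=\bigl(\prod_{v\in M_{K,\infty}}|\Delta|_v\bigr)^{d_K}$, and bound each $|\Delta|_v$ by Hadamard's inequality, controlling $\phi_i(\boldsymbol\xi_j)$ via Proposition \ref{Serre}(1) and — in the number field case — using $\prod_{v\in M_{K,\infty}}|c_f|_v\ge C^{-nd^{1+1/n}}\prod_{v\in M_{K,\infty}}|f|_v$ to track the factor gained when the monomials of $X_{n+1}$-exponent $\ge d$ are reduced against $c_fX_{n+1}^d$; this should give $\log\mathcal N_K(\Delta)\lesssim_{n,K}DM\log B-\tfrac1dDM\log H_K(f)+DM\log d+D\log D$, with a single $v_\infty$-adic estimate replacing the archimedean step in the function field case.

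Comparing the two bounds, substituting $D=\tfrac{d}{n!}M^n(1+o(1))$ and solving for $M$, the inequalities become incompatible (so the evaluation map is not injective and $g$ exists) once $M$ exceeds a constant depending on $K,n$ times $B^{\frac{n+1}{nd^{1/n}}}H_K(f)^{-\frac1{nd^{1+1/n}}}d^{-1/n}\beta\,b(f)\max\{\log\mathcal N_K(\mathfrak q),1\}\mathcal N_K(\mathfrak q)^{-1}$; plugging in $\beta\in\{27d^4,d^{14/3},1\}$ and the bound for $b(f)$ of Lemma \ref{b(f)} yields the bracketed exponents $[\cdot]$ of Definition \ref{multicharacteristic} and hence the main term. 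The three remaining summands of the claimed bound absorb the $O_{n,d}(M^{n-1})$ error in $D$, the $O_n(d^2\mathcal N_K(\mathfrak p)^{1/2})$ term of Theorem \ref{local determinant2}, and the $O_n(D)$ and $O_{n,K}(D\log\mathcal N_K(\mathfrak q))$ errors above, and they dominate precisely in the regime where the main term is small. The hardest part will be the archimedean estimate in the number field case: extracting the factor $H_K(f)^{-\Theta(DM/d)}$ — hence the exponent of $H_K(f)$ in the degree bound — requires reducing the high-$X_{n+1}$-degree monomials modulo $f$ in a controlled way, which is exactly what the change of variables of Section \ref{section change of variables} (Proposition \ref{auxiliar change}) is set up to allow, and one must keep the generic local saving of Theorem \ref{global determinant method lower bound} at small primes and the stronger $\mu_{P_i}=1$ saving of Lemma \ref{local determinant} at the $\mathfrak p_i$ balanced while tracking every power of $d$, every copy of $b(f)$, and every $\log\mathcal N_K(\mathfrak q)$.
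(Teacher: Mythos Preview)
Your overall architecture (normalise $f$, fix $M$, run the determinant method, combine the global lower bound of Theorem~\ref{global determinant method lower bound} with the extra $\mathfrak p_i$-savings from Lemma~\ref{local determinant}, and derive a contradiction) is correct in spirit and matches the paper. But the step that extracts the factor $H_K(f)^{-\frac{1}{n d^{1+1/n}}}$ in the degree bound has a genuine gap.

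You propose to bound $\mathcal N_K(\Delta)$ from above by a direct Hadamard estimate on the $D\times D$ matrix $(\phi_i(\boldsymbol\xi_j))$, where the $\phi_i$ are the monomials with $X_{n+1}$-exponent $<d$, and you claim a saving of $\tfrac{1}{d}DM\log H_K(f)$ by ``reducing monomials of $X_{n+1}$-exponent $\ge d$ against $c_f X_{n+1}^d$''. This does not work: your $\phi_i$ already have $X_{n+1}$-exponent $<d$, so there is nothing to reduce; their values at $\boldsymbol\xi_j\in[c_1B]^{n+2}_{\mathcal O_K}$ are simply $O_K\bigl((c_1B)^{M/d_K}\bigr)$ at each archimedean place, with no $H_K(f)$ factor. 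The relation $f(\boldsymbol\xi_j)=0$ and the largeness of $c_f$ do not force any individual $|\xi_{j,n+1}|_v$ to be small (the other coefficients of $f$ may be as large as $|c_f|_v$). A bare determinant bound thus gives only $\log\mathcal N_K(\Delta)\lesssim DM\log B+D\log D$, which yields the theorem \emph{without} the $H_K(f)$ denominator---too weak for the statement and for all downstream applications (e.g.\ Corollary~\ref{Walsh corollary}).

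In the paper the $H_K(f)$ factor enters through a different mechanism: one works with the full $s\times r$ monomial matrix (with $r=|\mathcal B[M]|$, $s=r-|\mathcal B[M-d]|$), applies the Bombieri--Vaaler/Thunder bound (Theorem~\ref{Bombieri-Vaaler2}) to produce a degree-$M$ form $g$ of \emph{small height} vanishing on the points, and then observes that under the bad hypothesis $g=fh$. The change of variables from Section~\ref{section change of variables} is used not to bound matrix entries, but to prove Claim~\ref{lower bound3.6}: $H_K(fh)\gtrsim C^{-nd^{1+1/n}}H_K(f)$. Combined with the Bombieri--Vaaler height upper bound this produces the inequality
\[
\bigl(C^{-nd^{1+1/n}}H_K(f)\bigr)^{r-s}\,\mathcal N_K(\Delta)\ \le\ C(K)^{r-s}\tilde c_1^{Ms}s!^{d_K/2}r^{d_Ks/2}B^{Ms},
\]
and it is the factor $H_K(f)^{r-s}$ on the left, with $r-s=|\mathcal B[M-d]|\sim\tfrac{M^{n+1}}{(n+1)!}$, that ultimately becomes $H_K(f)^{-\frac{1}{n d^{1+1/n}}}$ in the bound for $M$. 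Your route bypasses Bombieri--Vaaler and hence loses this term; to repair the argument you should reinstate that step rather than try to recover $H_K(f)$ from the archimedean size of $\Delta$.
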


\begin{remark}
Observe that if a polynomial $f$ is irreducible but not absolutely irreducible over $K$, then by the argument in \cite[Corollary 1]{Heath-Brown}, there exists a homogeneous polynomial $g\in \mathcal{O}_{K}[X_{0},\ldots ,X_{n+1}]$ of degree $d$, not divisible by $f$ and vanishing on all $K$-rational points of $X$. Thus, in the proof of Theorem \ref{polynomial method2} we may assume that $f$ is absolutely irreducible.
\label{abs irred assumption}
\end{remark}

In order to prove Theorem \ref{polynomial method2}, first we will prove a seemingly weaker version of it where the polynomial $f$ defining the hypersurface $X$ satisfies the following assumption, afterwards the work done in Section \ref{section change of variables} will be used to deduce the general case.

\begin{assumption}
$f\in \mathcal{O}_{K}[X_{0},\ldots ,X_{n+1}]$ is an absolutely irreducible homogeneous polynomial of degree $d$ that verifies:
\begin{enumerate}
\item $f$ is $\mathfrak{p}$-primitive for any prime $\mathfrak{p}$ of $\mathcal{O}_{K}$ with $\mathcal{N}_{K}(\mathfrak{p})>c_{2}$;
\item $H_{K,\emph{\text{aff}}}(f)\leq c_{1}^{d_{K}}H_{K}(f)$. \label{assumption2}
\item If $K$ is a number field, the coefficient $c_{f}$ of $X_{n+1}^{d}$ in $f$ verifies:
$$\prod_{v\in M_{K,\infty}}|c_{f}|_{v}\geq C^{-nd^{1+\frac{1}{n}}}\prod_{v\in M_{K,\infty}}|f|_{v},$$
where $C$ is a constant that depends only on $K$ and $n$.\label{assumption3} 
\end{enumerate}
\label{assumption}
\end{assumption} 
  
\begin{proof}[Proof of Theorem \ref{polynomial method2} assuming $f$ verifies Assumption \ref{assumption}]
The proof follows the usual strategy of the polynomial method: first we find a small characteristic subset $\mathcal{C}\subseteq X(K,B;P_{1},\ldots ,P_{u})$, which detects the algebraic structure of $X(K,B;P_{1},\ldots ,P_{u})$. Then, we use a dimensional argument (which usually is a variation of the Siegel's lemma) to construct an adequate polynomial $g$ of small complexity that vanishes on $\mathcal{C}$. From the nature of the subset $\mathcal{C}$, it will follow that $g$ vanishes on $X(K,B;P_{1},\ldots ,P_{u})$ and has the desired  property. 

In order to accomplish the above strategy, we let $M$ be a positive integer of size
\begin{align}
M\sim _{K,n}& B^{\frac{n+1}{nd^{\frac{1}{n}}}}\frac{d^{[4,\frac{14}{3},\frac{14}{3},0]-\frac{1}{n}}b(f)\max\{\log(\mathcal{N}_{K}(\mathfrak{q})),1\}}{H_{K}(f)^{\frac{1}{n}\frac{1}{d^{1+\frac{1}{n}}}}\mathcal{N}_{K}(\mathfrak{q})}+d^{1-\frac{1}{n}}\log(B\mathcal{N}_{K}(\mathfrak{q})) \label{choice of M}+d^{2-\frac{1}{n}}\log(\mathcal{N}_{K}(\mathfrak{q}))\\ & +d^{[4-\frac{1}{n},7,\frac{14}{3}-\frac{1}{n},3]}\left(\frac{\max\{\log(\mathcal{N}_{K}(\mathfrak{q})),1\}}{\mathcal{N}_{K}(\mathfrak{q})}+1\right). \nonumber
\end{align}

This integer will be the degree of the polynomial $g$ to be constructed. Now we construct the characteristic subset, which should be a subset such that any polynomial of degree $M$, vanishing on it also vanishes on $X(K,B;P_{1},\ldots ,P_{u})$, without vanishing completely on $X$. The construction is rather natural. Using Proposition \ref{Serre} we represent points in $X(K,B;P_{1},\ldots ,P_{u})$ with coordinates in $\mathcal{O}_{K}^{n+2}$. Afterwards, we take $ \mathcal{C}:=\{\boldsymbol \xi_{1},\ldots ,\boldsymbol \xi_{s}\}$ to be a maximal subset of $X(K,B;P_{1},\ldots ,P_{u})$ with the property that if $A$ is the matrix whose $i^{\text{th}}$ row is the evaluation of the different monomials of degree $M$ at $\boldsymbol \xi_{i}$, then $A$ has rank $s$. 

The next step is to construct an adequate polynomial by means of the Bombieri-Vaaler theorem. Given an integer $D$, we write $\mathcal{B}[D]$ for the set of monomials of degree $D$; it follows that $|\mathcal{B}[D]|={D+n+1\choose n+1}$. Since the elements of $f\mathcal{B}[M-d]$ provide linearly independent polynomials vanishing on $X(K,B;P_{1},\ldots ,P_{u})$, it follows that 
\begin{equation}
s=\text{rank}(A)\leq |\mathcal{B}[M]|-|\mathcal{B}[M-d]|.
\label{rank inequality0}
\end{equation}

Now, let us note that since $A$ has coefficients in $\mathcal{O}_{K}$, if the inequality \eqref{rank inequality0} is strict we can find a polynomial in the $\mathcal{O}_{K}$-span of $\mathcal{B}[M]$ which vanishes on $\mathcal{C}$ (so it also vanishes on $X(K,B;P_{1},\ldots ,P_{u})$, by the maximality of $\mathcal{C}$), and it is not divisible by $f$. This would conclude the proof. Then we may suppose that
\begin{equation}
s=|\mathcal{B}[M]|-|\mathcal{B}[M-d]|.
\label{rank inequality}
\end{equation}
The remaining of the proof is to show that \eqref{rank inequality} does not hold if the implicit constant in \eqref{choice of M} is large enough. 

Let us note that \eqref{rank inequality} implies
\begin{equation}
s=\frac{dM^{n}}{n!}+O_{n}(d^{2}M^{n-1}).
\label{estimate for s}
\end{equation}

From \eqref{estimate for s} and the fact that $M\gtrsim d^{2}$ we obtain
\begin{equation}
\log(s)=\log(d)+n\log(M)-O_{n}(1),
\label{estimate for the log of s}
\end{equation}
and
\begin{equation}
\frac{s^{\frac{1}{n}}}{M}=\frac{d^{\frac{1}{n}}}{n!^{\frac{1}{n}}}+O_{n}\left(\frac{d^{2}}{M}\right).
\label{estimate for the 1/n-rooth}
\end{equation}

Now, let $\Delta$ be the greatest common divisor in $\mathcal{O}_{K}$ of the determinants of the $s\times s$-minors of $A$ and let us denote 
\begin{equation*}
r:=|\mathcal{B}[M]|.
\end{equation*} 
To find an $\mathcal{O}_{K}$- homogeneous polynomial of degree $M$ that vanishes on $ \mathcal{C}$ amounts to find a non-trivial solution in $\mathcal{O}_{K}^{r}$ of the system of linear equations given by $A$. For that, using Theorem \ref{Bombieri-Vaaler2}, there exists a polynomial $g=\sum_{I}a_{I}\boldsymbol X^{I}\in \mathcal{O}_{K}[X_{0},\ldots ,X_{n+1}]$ of degree $M$, vanishing on $ \mathcal{C}$ and satisfying the inequality
\begin{equation}
H_{K}(g)^{r-s}\leq \begin{cases}C(K)^{(r-s)d_{K}}\mathcal{N}_{K}(\Delta)^{-1} \displaystyle \prod_{v\in M_{K,\infty}}|\det(A A^{\ast})|_{v}^{\frac{d_{K}}{2}} & \text{ if }K\text{ is a number field},\\ C(K)^{(r-s)d_{K}}\mathcal{N}_{K}(\Delta)^{-1} q^{-\deg(\mathfrak{p}_{\infty})\min_{|J|=s}\text{ord}_{\mathfrak{p}_{\infty}}(\det(A_{J}))} & \text{ if }K\text{ is a function field}.\end{cases}
\label{application of bombieri-vaaler}
\end{equation}
\begin{claim}
There is some positive constant $\tilde{c_{1}}=\tilde{c_{1}}(K)$ such that it holds the inequality
\begin{equation}
H_{K}(g)^{r-s}\leq C(K)^{(r-s)}\mathcal{N}_{K}(\Delta)^{-1}\tilde{c_{1}}^{Ms}s!^{\frac{d_{K}}{2}}r^{\frac{d_{K}s}{2}}B^{Ms}.
\label{application of bombieri-vaaler2}
\end{equation}
\label{claim 5.5}
\end{claim}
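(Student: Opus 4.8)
The plan is to read off Claim~\ref{claim 5.5} from \eqref{application of bombieri-vaaler} by estimating the archimedean factor (resp.\ the $v_\infty$-factor) on its right-hand side, using the control on the coordinates of the points $\boldsymbol\xi_i$ provided by Proposition~\ref{Serre}. Recall that the $i$-th row of $A$ lists the values of the $r=|\mathcal B[M]|$ monomials of degree $M$ at $\boldsymbol\xi_i$, and that we have chosen $\mathcal O_K^{n+2}$-coordinates for the $\boldsymbol\xi_i$ satisfying the conclusions of Proposition~\ref{Serre}, with $H_K(\boldsymbol\xi_i)\le B$. Thus, writing $A_{ij}=\boldsymbol\xi_i^{I_j}$ for the monomial $I_j$ of degree $M$, for every embedding $\sigma:K\hookrightarrow\mathbb C$ (number field case) we get $|\sigma(A_{ij})|=\prod_k|\sigma(\xi_{ik})|^{(I_j)_k}\le\bigl(\max_k|\sigma(\xi_{ik})|\bigr)^M\le\bigl(c_1H(\boldsymbol\xi_i)\bigr)^M\le\bigl(c_1B^{1/d_K}\bigr)^M$, and similarly $|A_{ij}|_{v_\infty}\le\bigl(c_1B^{1/d_K}\bigr)^M$ in the function field case. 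Set $T:=\bigl(c_1B^{1/d_K}\bigr)^M$.

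\textbf{Number field case.} For $v\in M_{K,\infty}$ corresponding to an embedding $\sigma$ one has $|\det(AA^\ast)|_v=\bigl|\det(\sigma(A)\sigma(A)^\ast)\bigr|_\infty^{n_v/d_K}$, where $\det(\sigma(A)\sigma(A)^\ast)$ is the Gram determinant of the rows of $\sigma(A)$. By Hadamard's inequality,
\[
\bigl|\det(\sigma(A)\sigma(A)^\ast)\bigr|_\infty\le\prod_{i=1}^s\sum_{|I|=M}|\sigma(\boldsymbol\xi_i^I)|^2\le\prod_{i=1}^s rT^2=r^sT^{2s}.
\]
Hence $|\det(AA^\ast)|_v^{d_K/2}\le(r^sT^{2s})^{n_v/2}$, and multiplying over $v\in M_{K,\infty}$ and using $\sum_{v\in M_{K,\infty}}n_v=d_K$ gives $\prod_{v\in M_{K,\infty}}|\det(AA^\ast)|_v^{d_K/2}\le(r^sT^{2s})^{d_K/2}=r^{d_Ks/2}c_1^{Msd_K}B^{Ms}$. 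Feeding this into \eqref{application of bombieri-vaaler}, putting $\tilde c_1:=c_1^{d_K}$, relabelling $C(K)$ to absorb its $d_K$-th power, and inserting the harmless factor $s!^{d_K/2}\ge1$, we obtain \eqref{application of bombieri-vaaler2}.

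\textbf{Function field case.} Here $v_\infty$ is non-archimedean and trivial on $\mathbb F_q$, so for each $s\times s$ submatrix $A_J$ the Leibniz expansion and the ultrametric inequality give $|\det(A_J)|_{v_\infty}\le\max_{\pi}\prod_{i=1}^s|A_{i,\pi(i)}|_{v_\infty}\le T^s$, the maximum being over permutations. Since $|\det(A_J)|_{v_\infty}=q^{-\deg(\mathfrak p_\infty)\,\text{ord}_{\mathfrak p_\infty}(\det(A_J))/d_K}$, this yields
\[
q^{-\deg(\mathfrak p_\infty)\min_{|J|=s}\text{ord}_{\mathfrak p_\infty}(\det(A_J))}=\max_{|J|=s}|\det(A_J)|_{v_\infty}^{d_K}\le T^{sd_K}=c_1^{Msd_K}B^{Ms}\le\tilde c_1^{Ms}s!^{d_K/2}r^{d_Ks/2}B^{Ms},
\]
and substituting into the function field line of \eqref{application of bombieri-vaaler} and relabelling $C(K)$ gives \eqref{application of bombieri-vaaler2} in this case as well.

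This step is essentially bookkeeping; the only points that need attention are the normalisation $|\cdot|_v=|\sigma(\cdot)|^{n_v/d_K}$ of the archimedean absolute values and the timely use of $\sum_{v\in M_{K,\infty}}n_v=d_K$ to collapse the product over infinite places into a single clean power of $B$. The extra factors $s!^{d_K/2}$ and $r^{d_Ks/2}$ in \eqref{application of bombieri-vaaler2} are deliberate overestimates (each is $\ge1$): they cost nothing, unify the two cases into a single inequality, and leave room for the cruder Leibniz bound $|\det(A_J)|\le s!\,\max_{i,j}|A_{ij}|^s$ in place of Hadamard's inequality if one prefers.
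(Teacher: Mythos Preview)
Your proof is correct and follows the same strategy as the paper: bound the entries of $A$ at the places in $M_{K,\infty}$ using Proposition~\ref{Serre}, then pass to the determinant and substitute into \eqref{application of bombieri-vaaler}. The only real difference is that in the number field case you invoke Hadamard's inequality on the rows of $\sigma(A)$, whereas the paper bounds each entry $(AA^\ast)_{i,\sigma(i)}$ by $c_1^{2M}|r|_vB^{2Mn_v/d_K^2}$ and then applies the Leibniz expansion to the $s\times s$ matrix $AA^\ast$, which is where the factor $s!^{d_K/2}$ genuinely arises for them; your Hadamard route does not need this factor, and you correctly insert it as a harmless $\ge 1$ overestimate to match the stated inequality. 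Both arguments are equivalent in effect, yours being slightly sharper and the paper's slightly more elementary.
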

\begin{proof}[Proof of Claim \ref{claim 5.5}]
The proof amounts to bound the contribution of the places $v\in M_{K,\infty}$ in the right-hand side of \eqref{application of bombieri-vaaler}. Since the coordinates of the points $\boldsymbol \xi_{i}$ were chosen according to Proposition \ref{Serre}, given any $1\leq i\leq s$ and any monomial of degree $M$ corresponding to an index set $\boldsymbol J\subseteq \mathbb{N}^{n+2}$, it holds that for any place $v\in M_{K,\infty}$, 
\begin{equation}
|\boldsymbol \xi_{i}^{\boldsymbol J}|_{v}=|\boldsymbol \xi_{i,0}^{j_{0}}\cdots \boldsymbol \xi_{i,n+1}^{j_{n+1}}|_{v}\leq \begin{cases}  c_{1}^{M}(B^{j_{0}}\cdots B^{j_{n+1}})^{\frac{n_{v}}{d_{K}^{2}}}=c_{1}^{M}B^{\frac{Mn_{v}}{d_{K}^{2}}} & \text{ if }K\text{ is a number field},\\  c_{1}^{M}(B^{j_{0}}\cdots B^{j_{n+1}})^{\frac{1}{d_{K}}}=c_{1}^{M}B^{\frac{M}{d_{K}}} & \text{ if }K\text{ is a function field}. \end{cases}
\label{bound for the entries of the matrix}
\end{equation}

Let us suppose that $K$ is a number field. Let $v\in M_{K,\infty}$. If $\sigma\in S_{s}$ then 
$$|(A A^{\ast})_{i,\sigma(i)}|_{v}=\left|\sum_{k=1}^{r}a_{i,k}\overline{a_{\sigma(i),k}}\right|_{v}\leq c_{1}^{2M}|r|_{v}B^{\frac{2Mn_{v}}{d_{K}^{2}}} .$$

Hence
$$|\det(A A^{\ast})|_{v}=\left|\sum_{\sigma\in S_{s}}(-1)^{\sigma}\prod_{i=1}^{s}(AA^{\ast})_{i,\sigma(i)}\right|_{v}\leq |s!|_{v}\left( c_{1}^{2M}|r|_{v}B^{\frac{2Mn_{v}}{d_{K}^{2}}}\right)^{s}.$$

The above inequality allows us to conclude that
\begin{align}
 \prod_{v\in M_{K,\infty}}|\det(A A^{\ast})|_{v}^{\frac{d_{K}}{2}} \leq \prod_{v\in M_{K,\infty}}c_{1}^{d_{K}Ms}|s!|_{v}^{\frac{d_{K}}{2}}B^{\frac{Msn_{v}}{d_{K}}}|r|_{v}^{\frac{d_{K}s}{2}} \leq c_{1}^{d_{K}^{2}Ms}s!^{\frac{d_{K}}{2}}r^{\frac{d_{K}s}{2}}B^{Ms}.\label{arquimedean bound}
\end{align}

On the other hand, if $K$ is a function field over $\mathbb{F}_{q}(T)$, recalling that for all $x\in K$, $|x|_{v_{\infty}}=\mathcal{N}_{K}(\mathfrak{p}_{\infty})^{-\frac{\text{ord}_{\mathfrak{p}_{\infty}}(x)}{d_{K}}}$, \eqref{bound for the entries of the matrix} implies that every entry $a_{ij}$ of $A$ verifies $\text{ord}_{\mathfrak{p}_{\infty}}(a_{ij})\geq -\frac{M}{\deg(\mathfrak{p}_{\infty})}(d_{K}\log_{q}(c_{1})+\log_{q}(B))$. Then, for any minor $A_{J}$ it holds
\begin{align*}
\text{ord}_{\mathfrak{p}_{\infty}}(\det(A_{J})) & =\text{ord}_{\mathfrak{p}_{\infty}}\left( \sum_{\sigma\in S_{s}}(-1)^{\sigma}\prod_{i=1}^{s}(A_{J})_{i,\sigma(i)} \right)\geq \min_{\sigma\in S_{s}}\left\{\text{ord}_{\mathfrak{p}_{\infty}}\left((-1)^{\sigma}\prod_{i=1}^{s}(A_{J})_{i,\sigma(i)}\right)\right\}\\ & \geq  \min_{\sigma\in S_{s}}\left\{ \sum_{i=1}^{s}\text{ord}_{\mathfrak{p}_{\infty}}((A_{J})_{i,\sigma(i)}) \right\}\geq -\frac{sM}{\deg(\mathfrak{p}_{\infty})}(d_{K}\log_{q}(c_{1})+\log_{q}(B)),
\end{align*}
and from this, we get the bound
\begin{equation}
q^{-\deg(\mathfrak{p}_{\infty})\min_{|J|=s}\text{ord}_{\mathfrak{p}_{\infty}}(\det(A_{J}))}\leq q^{sM(d_{K}\log_{q}(c_{1})+\log_{q}(B))}=c_{1}^{d_{K}sM}B^{sM}.
\label{q bound}
\end{equation}
Denoting $\tilde{c_{1}}:=c_{1}^{d_{K}^{2}}$ or $\tilde{c_{1}}:=c_{1}^{d_{K}}$ according to $K$ being a number field or a function field, respectively, the inequalities \eqref{arquimedean bound} and \eqref{q bound} applied in \eqref{application of bombieri-vaaler} yields the desired result. 
\end{proof}

Now, the standing hypothesis \eqref{rank inequality} implies that $g$ must be of the form $g=fh$ with $h$ homogeneous of degree $M-d$. We shall bound $H_{K}(g)=H_{K}(fh)$ in terms of the height $H_{K}(f)$.

\begin{remark}
If $K$ is a number field, for $f,h\in K[X_{1},\ldots ,X_{n}]$, it holds (e.g. see \cite[Theorem 1.6.13]{Bombieri}) that $H_{K}(fh)\gtrsim_{\deg(f),\deg(h)} H_{K}(f)H_{K}(h)$. Since in our case the polynomial $h$ has degree $M-d$, such a bound would depend on the parameter $M$. The next claim shows that under the Assumption \ref{assumption}\eqref{assumption3} it is possible to obtain a bound independent of the degree of $h$.
\label{remark sobre la cota}  
\end{remark}

\begin{claim}
There exists some positive constant $C$, depending only on $n,K$, such that $H_{K}(g)\geq C^{-nd^{1+\frac{1}{n}}} H_{K}(f)$.
\label{lower bound3.6}
\end{claim}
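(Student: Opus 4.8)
The idea is to work place by place over $M_{K,\infty}$ and use the factorization $g=fh$ together with the hypothesis (Assumption \ref{assumption}\eqref{assumption3}) that the coefficient $c_f$ of $X_{n+1}^d$ in $f$ carries essentially all of the archimedean height of $f$. First I would observe that since $g=fh$ with $h$ homogeneous of degree $M-d$, the coefficient of $X_{n+1}^M$ in $g$ equals $c_f\cdot c_h$, where $c_h$ is the coefficient of $X_{n+1}^{M-d}$ in $h$. Hence for every $v\in M_{K,\infty}$ we trivially have $|g|_v\ge |c_f c_h|_v=|c_f|_v|c_h|_v$, so that
\begin{equation*}
\prod_{v\in M_{K,\infty}}|g|_v\;\ge\;\Big(\prod_{v\in M_{K,\infty}}|c_f|_v\Big)\Big(\prod_{v\in M_{K,\infty}}|c_h|_v\Big).
\end{equation*}
The non-archimedean places are handled by Gauss's lemma exactly as in \eqref{lower bound for polynomials function field case}: for $v\in M_{K,\mathrm{fin}}$ one has $|g|_v=|fh|_v=|f|_v|h|_v$ (here I use that the residue characteristic is irrelevant at non-archimedean places, and the content of a product is the product of contents). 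Therefore
\begin{equation*}
H_K(g)=\Big(\prod_{v\in M_K}|g|_v\Big)^{d_K}\ge\Big(\prod_{v\in M_{K,\infty}}|c_f|_v\Big)^{d_K}\Big(\prod_{v\in M_{K,\infty}}|c_h|_v\Big)^{d_K}\Big(\prod_{v\in M_{K,\mathrm{fin}}}|f|_v\Big)^{d_K}\Big(\prod_{v\in M_{K,\mathrm{fin}}}|h|_v\Big)^{d_K}.
\end{equation*}

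Next I would invoke Assumption \ref{assumption}\eqref{assumption3}, which says $\prod_{v\in M_{K,\infty}}|c_f|_v\ge C^{-nd^{1+1/n}}\prod_{v\in M_{K,\infty}}|f|_v$. Substituting this in gives
\begin{equation*}
H_K(g)\;\ge\;C^{-nd^{1+\frac1n}d_K}\;H_K(f)\;\cdot\;\Big(\prod_{v\in M_{K,\infty}}|c_h|_v\Big)^{d_K}\Big(\prod_{v\in M_{K,\mathrm{fin}}}|h|_v\Big)^{d_K}.
\end{equation*}
It now remains to show that the two factors involving $h$ together are at least $1$, i.e. that $\prod_{v\in M_{K,\infty}}|c_h|_v\cdot\prod_{v\in M_{K,\mathrm{fin}}}|h|_v\ge 1$. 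For this I would use the product formula \eqref{product formula}: since $c_h$ is a coefficient of $h$, for every non-archimedean $v$ we have $|c_h|_v\le |h|_v$, hence
\begin{equation*}
\prod_{v\in M_{K,\infty}}|c_h|_v\cdot\prod_{v\in M_{K,\mathrm{fin}}}|h|_v\;\ge\;\prod_{v\in M_{K,\infty}}|c_h|_v\cdot\prod_{v\in M_{K,\mathrm{fin}}}|c_h|_v\;=\;\prod_{v\in M_K}|c_h|_v\;=\;1,
\end{equation*}
the last equality being the product formula applied to $c_h\in K^\times$ (note $c_h\ne 0$, since if the coefficient of $X_{n+1}^{M-d}$ in $h$ vanished then, as $g=fh$ has a nonzero $X_{n+1}^M$-term because $c_f\ne 0$, we would get a contradiction — more carefully, $c_h$ is precisely the leading coefficient in the $X_{n+1}$-expansion that pairs with $c_f$). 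Combining everything and relabeling the constant $C$ to absorb $d_K$ yields $H_K(g)\ge C^{-nd^{1+1/n}}H_K(f)$, which is the claim.

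The main obstacle, and the only place requiring genuine care, is the treatment of the archimedean places — the naive bound $|fh|_v\ge|c_f|_v|c_h|_v$ only captures one monomial and one must be sure that no cancellation at other places can defeat this. This is exactly why the hypothesis \eqref{assumption3} (established in Proposition \ref{auxiliar change} via the change of variables of Section \ref{section change of variables}) is needed: it guarantees the single-monomial lower bound for $f$ is lossless up to the controlled factor $C^{-nd^{1+1/n}}$, and then the product formula does the rest for $h$ at no cost. One should double-check that $c_h\neq 0$: writing $f=c_f X_{n+1}^d+(\text{lower order in }X_{n+1})$ and $h=c_h X_{n+1}^{M-d}+\cdots$, the top $X_{n+1}$-degree term of $g=fh$ is $c_f c_h X_{n+1}^M$; since $g\ne 0$ and $c_f\ne 0$ we must have $c_h\ne 0$. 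No other subtlety arises, and the function-field sub-case is in fact simpler since \eqref{lower bound for polynomials function field case} gives $H_K(fh)=H_K(f)H_K(h)\ge H_K(f)$ directly with constant $1$.
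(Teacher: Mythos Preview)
Your argument has a genuine gap at the step ``since $g\ne 0$ and $c_f\ne 0$ we must have $c_h\ne 0$.'' This implication is false: $g=fh\ne 0$ only tells you $h\ne 0$, not that any \emph{particular} coefficient of $h$ is nonzero. The polynomial $h$ is whatever quotient arises from the Bombieri--Vaaler construction; nothing prevents its $X_{n+1}^{M-d}$-coefficient from vanishing (e.g.\ $h=X_0^{M-d}$). If $c_h=0$ then the coefficient of $X_{n+1}^M$ in $g$ is zero, your archimedean lower bound $\prod_v|g|_v\ge\prod_v|c_f c_h|_v$ becomes the trivial bound $\ge 0$, and the product-formula step collapses.

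The fix is exactly what the paper does: instead of the coefficient of $X_{n+1}^{M-d}$, take $W$ to be the \emph{maximal monomial appearing in $h$} with respect to the right-to-left lexicographic order (so $X_{n+1}$ is compared first), and let $w\ne 0$ be its coefficient. This choice guarantees two things at once: $w\ne 0$ by construction, and the monomial $WX_{n+1}^d$ in $g=fh$ receives contribution \emph{only} from the product $c_fX_{n+1}^d\cdot wW$. Indeed, any other product of monomials yielding $WX_{n+1}^d$ would force a monomial of $h$ with $X_{n+1}$-exponent strictly larger than that of $W$, contradicting maximality. Hence the coefficient of $WX_{n+1}^d$ in $g$ is exactly $c_fw$, and now $\prod_{v\in M_{K,\infty}}|g|_v\ge\prod_{v\in M_{K,\infty}}|c_f|_v\prod_{v\in M_{K,\infty}}|w|_v$ is a nontrivial lower bound. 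From here your product-formula argument for the $h$-factors goes through verbatim with $w$ in place of $c_h$ (and is in fact a bit cleaner than the paper's route, which first normalizes $h$ via Proposition~\ref{Serre} so that $w\in\mathcal{O}_K$ and then uses $\prod_\sigma|\sigma(w)|\ge 1$ for algebraic integers).
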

\begin{proof}[Proof of Claim \ref{lower bound3.6}]
If $K$ is a function field, then by identity \eqref{lower bound for polynomials function field case} and the fact that $H_{K}(h)\geq 1$ the claim follows by taking $C=1$.

Now, let us suppose that $K$ is a number field. If $h=\sum_{I}c_{I}\boldsymbol X^{I}$, let $\boldsymbol c_{h}=(c_{I})_{I}$ be the projective point defined by the coordinates of $h$.  Following the proof of Proposition \ref{Serre}, we find a non-zero scalar $\lambda \in\mathcal{O}_{K}$ such that $(\lambda c_{I})_{I}$ satisfies the conditions of Proposition \ref{Serre}. Thus, we get $\prod_{v\in M_{K,\text{fin}}}|\lambda h|_{v}\geq c_{3}$. Since the polynomial height is invariant under multiplication by scalars, we get $H(g)=H(\lambda g)=H(f (\lambda h))$. Hence, after multiplication by a scalar, we may suppose that $g=f h$ with $h\in \mathcal{O}_{K}[X_{0},\ldots ,X_{n+1}]$ satisfying 
\begin{equation}
\prod_{v\in M_{K,\text{fin}}}|h|_{v}\geq c_{3}.
\label{lower bound6}
\end{equation}
Let $W$ be the greatest monomial (in right to left lexicographical order) appearing in $h$ with non-zero coefficient and let $w$ its coefficient. Then the monomial $WX_{n+1}^{d}$ appears in $g$ with $c_{f}w$ as its coefficient, where $c_{f}$ is the coefficient of $X_{n+1}^{d}$ in $f$. By Gauss' lemma (see \cite[Lemma 1.6.3]{Bombieri}), it holds
\begin{equation}
H(g)=H(fh)=\prod_{v\in M_{K}}|fh|_{v}=\prod_{v\in M_{K,\text{fin}}}|f|_{v} \prod_{v\in M_{K,\text{fin}}}|h|_{v}  \prod_{v\in M_{K,\infty}}|fh|_{v}.
\label{lower bound7}
\end{equation}
Now we bound $|fh|_{v}$ for any infinite place $v$. 
Since $WX_{n+1}^{d}$ appears in $g$ with coefficient $c_{f}w$, from Assumption \ref{assumption} we have
\begin{equation}
\prod_{v\in M_{K,\infty}}|fh|_{v}\geq \prod_{v\in M_{K,\infty}}|c_{f}w|_{v}\geq C^{-nd^{1+\frac{1}{n}}}\prod_{v\in M_{K,\infty}}|f|_{v}\prod_{v\in M_{K,\infty}}|w|_{v}.
\label{lower bound3}
\end{equation}
Moreover, if given $v\in M_{K,\infty}$, we let $\sigma_{v}$ be the embedding corresponding to $v$, then
\begin{equation*}
\prod_{v\in M_{K,\infty}}|w|_{v}=\prod_{v\text{ real}}|\sigma_{v}(x)|^{\frac{1}{d_{K}}} \prod_{v\text{ complex}}|\sigma_{v}(x)|^{\frac{2}{d_{K}}}=\left(\prod_{\sigma}|\sigma(w)|\right)^{\frac{1}{d_{K}}}\geq 1
\end{equation*}
where the last inequality is because $w$ is an algebraic integer and $\prod_{\sigma}\sigma(w)$ is the constant term of the minimal polynomial of $w$. Thus from \eqref{lower bound3} it follows
\begin{equation}
\prod_{v\in M_{K,\infty}}|fh|_{v}\geq C^{-nd^{1+\frac{1}{n}}}\prod_{v\in M_{K,\infty}}|f|_{v}.
\label{lower bound5}
\end{equation}

Using inequalities \eqref{lower bound6} and \eqref{lower bound5} in \eqref{lower bound7} we deduce
\begin{equation}
H(g)\geq c_{3}C^{-nd^{1+\frac{1}{n}}}\prod_{v\in M_{K}}|f|_{v}=c_{3}C^{-nd^{1+\frac{1}{n}}}H(f).
\label{lower bound3.5}
\end{equation}
Thus, from \eqref{lower bound3.5}, after relabelling $C$, the claim follows.
\end{proof}
Multiplying \eqref{application of bombieri-vaaler2} by $\mathcal{N}_{K}(\Delta)$ and using Claim \ref{lower bound3.6} it follows that
\begin{equation}
\left(C^{-nd^{1+\frac{1}{n}}}H_{K}(f)\right)^{r-s}\mathcal{N}_{K}(\Delta)\leq C(K)^{(r-s)}\tilde{c_{1}}^{Ms}s!^{\frac{d_{K}}{2}}r^{\frac{d_{K}s}{2}}B^{Ms}.
\label{app BV}
\end{equation}
Now we take logarithm in \eqref{app BV}. By Theorem \ref{global determinant method lower bound}, there is some non-zero ideal $\mathfrak{I}$, relative prime with all the primes lying in $\mathcal{P}_{X}$, such that $\mathcal{J}|\Delta$, and
\begin{equation}
\log(\mathcal{N}_{K}(\mathfrak{I}))\geq \frac{n!^{\frac{1}{n}}}{n+1}s^{1+\frac{1}{n}}(\log(s)-O_{n, K}(1)-n(\log(\beta)+\max\{\log(\log(\mathcal{N}_{K}(\mathfrak{q}))),0\}+\log b(f))).
\label{another factor dividing Delta0}
\end{equation}

Recall that given $i\in \{1,\ldots ,u\}$, $P_{i}$ is a non-singular $\mathbb{F}_{\mathfrak{p}_{i}}$-point on $X_{\mathfrak{p}_{i}}$. So, by Lemma \ref{local determinant}, there exists a non-negative $N_{i}=n!^{\frac{1}{n}}\frac{n}{n+1}s^{1+\frac{1}{n}}{-}O_{n}(s)$ such that $\mathfrak{p}_{i}^{N_{i}}|\Delta$. 
Hence, if we set $\mathfrak{I}':=\prod_{i=1}^{u}\mathfrak{p}_{i}^{N_{i}}$, then $\mathfrak{I}'|\Delta$, it is relative prime with $\mathfrak{I}$, and
\begin{equation}
\log(\mathcal{N}_{K}(\mathfrak{I}'))=n!^{\frac{1}{n}}\frac{n}{n+1}s^{1+\frac{1}{n}}\log(\mathcal{N}_{K}(\mathfrak{q}))+O_{n}(s\log(\mathcal{N}_{K}(\mathfrak{q}))).
\label{another factor dividing Delta}
\end{equation}

On the one hand, bounding $\log(\mathcal{N}_{K}(\Delta))$ by means of \eqref{another factor dividing Delta0} and \eqref{another factor dividing Delta}, the logarithm of the left hand side of \eqref{app BV} has as lower bound 

\begin{align}
& \frac{n!^{\frac{1}{n}}}{n+1}s^{1+\frac{1}{n}}\left(\log(s)+n\log(\mathcal{N}_{K}(\mathfrak{q}))-O_{n, K}(1)-n(\log(\beta)+\max\{\log(\log(\mathcal{N}_{K}(\mathfrak{q}))),0\}+\log b(f))\right) \label{lh side} \\ & -O_{n}(s\log(\mathcal{N}_{K}(\mathfrak{q})))+(r-s)\log(H_{K}(f))-(r-s)nd^{1+\frac{1}{n}}\log(C). \nonumber
\end{align}

On the other hand, the logarithm of the right hand side of \eqref{app BV} is
\begin{equation}
(r-s)\log(C(K))+Ms\log(\tilde{c_{1}})+\frac{d_{K}}{2}\log(s!)+Ms\log(B)+\frac{sd_{K}}{2}\log(r).
\label{rh side}
\end{equation}

Since $\log(s!)\leq s\log(s)$, $r=|\mathcal{B}[M]|={M+n+1\choose n+1}\leq (M+1)^{n+1}$ and $M\gtrsim d$, by \eqref{estimate for the log of s} it holds $\log(r)\lesssim_{n} \log (M)\lesssim_{n} \log (s)$, and so the terms $\frac{d_{K}}{2}\log(s!)$ and $\frac{sd_{K}}{2}\log(r)$ are majorized by $s^{1+\frac{1}{n}}$. Replacing \eqref{lh side} and \eqref{rh side} in \eqref{app BV} gives

\begin{align}
& \frac{n!^{\frac{1}{n}}}{n+1}s^{1+\frac{1}{n}}(\log(s)+n\log(\mathcal{N}_{K}(\mathfrak{q}))-O_{n, K}(1)-n(\log(\beta)+\max\{\log(\log(\mathcal{N}_{K}(\mathfrak{q}))),0\}+\log b(f)))\label{auxiliar inequality}  \\ & \leq Ms\log(\tilde{c_{1}})+ Ms\log(B)-(r-s)\log(H_{K}(f))+(r-s)\log(C_{K})+(r-s)nd^{1+\frac{1}{n}}\log(C)+O_{n}(s\log(\mathcal{N}_{K}(\mathfrak{q}))). \nonumber
\end{align}

We will now estimate the right hand side of \eqref{auxiliar inequality}. By \eqref{choice of M}, $M\gtrsim d^{2}$ and this implies
\begin{equation*}
r-s=|\mathcal{B}[M-d]|={M-d+n+1\choose n+1}=\frac{M^{n+1}}{(n+1)!}+O_{n}(dM^{n}),
\end{equation*}
hence from \eqref{estimate for s} we obtain
\begin{equation}
\frac{r-s}{Ms}=\frac{\frac{1}{(n+1)!}+O_{n}(dM^{-1})}{\frac{d}{n!}+O_{n}(d^{2}M^{-1})}=\frac{1}{d(n+1)}\frac{1+O_{n}(dM^{-1})}{1+O_{n}(dM^{-1})}=\frac{1}{d(n+1)}+O_{n}\left(\frac{1}{M}\right).
\label{estimate for r-s divided by Ms}
\end{equation}
Since $\log(\tilde{c_{1}})$ and $\frac{d^{\frac{1}{n}}n}{n+1}\log(C)$ are majorized by $\frac{d^{\frac{1}{n}}n}{n+1}O_{n,K}(1)$, and since by \eqref{choice of M}, $M\gtrsim \log(\mathcal{N}_{K}(\mathfrak{q}))$, using \eqref{estimate for r-s divided by Ms} and that $M\gtrsim d^{2}\gtrsim d^{1+\frac{1}{n}}$, the quotient by $Ms$ of the right hand side of \eqref{auxiliar inequality} is equal to 
\begin{align}
& \log(\tilde{c_{1}}) + \log(B)-\left(\frac{1}{d(n+1)}+O_{n}\left(\frac{1}{M}\right) \right)\log(H_{K}(f))+\left(\frac{1}{d(n+1)}+O_{n}\left(\frac{1}{M}\right) \right)nd^{1+\frac{1}{n}}\log(C)\label{auxiliar inequality2.5}\\ &  \nonumber \\ & +O_{n}\left(\frac{1}{M}\log(\mathcal{N}_{K}(\mathfrak{q}))\right)= \log(B)-\left(\frac{1}{d(n+1)}+O_{n}\left(\frac{1}{M}\right) \right)\log(H_{K}(f))+\frac{d^{\frac{1}{n}}n}{n+1}O_{n,K}(1). \nonumber
\end{align}

We will now estimate the left hand side of \eqref{auxiliar inequality}. Using the inequalities \eqref{estimate for the log of s} and \eqref{estimate for the 1/n-rooth} the quotient by $Ms$ in the left hand side of \eqref{auxiliar inequality} is equal to

\begin{align}
& \frac{n!^{\frac{1}{n}}}{n+1}\left( \frac{d^{\frac{1}{n}}}{n!^{\frac{1}{n}}}+O_{n}\left(\frac{d^{2}}{M}\right)\right)\Big(\log(s)+n\log(\mathcal{N}_{K}(\mathfrak{q}))+O_{n,K}(1)-n(\log(\beta)+\max\{\log(\log(\mathcal{N}_{K}(\mathfrak{q}))),0\}+\log(b(f)))\Big) \label{auxiliar inequality2} \\ & =\frac{d^{\frac{1}{n}}n}{n+1}\left(1+O_{n}\left(\frac{d^{2-\frac{1}{n}}}{M}\right)\right)\big(\log(M)+\frac{1}{n}\log(d)+\log(\mathcal{N}_{K}(\mathfrak{q}))+O_{n,K}(1)-\log(\beta)-\max\{\log(\log(\mathcal{N}_{K}(\mathfrak{q}))),0\}-\log(b(f))\Big)\nonumber \\ & =\frac{d^{\frac{1}{n}}n}{n+1}\left(\log(M)+\log(\mathcal{N}_{K}(\mathfrak{q}))-\log(\beta d^{-\frac{1}{n}})+O_{n,K}(1)-\left(1+O_{n}\left(\frac{d^{2-\frac{1}{n}}}{M}\right)\right)\log(b(f))-\max\{\log(\log(\mathcal{N}_{K}(\mathfrak{q}))),0\}\right),\nonumber
\end{align}
where the terms $O_{n}(\frac{d^{2-\frac{1}{n}}}{M}\log(M))$, $O_{n}(\frac{d^{2-\frac{1}{n}}}{M}\log(\mathcal{N}_{K}(\mathfrak{q})))$, $O_{n}(\frac{d^{2-\frac{1}{n}}}{M}\max\{\log(\log(\mathcal{N}_{K}(\mathfrak{q}))),0\})$, and $O_{n}(\frac{d^{2-\frac{1}{n}}}{M}\log(d))$ were merged in the term $O_{n,K}(1)$. 

Replacing the inequalities \eqref{auxiliar inequality2.5} and \eqref{auxiliar inequality2} in \eqref{auxiliar inequality} gives
\begin{align}
 \frac{d^{\frac{1}{n}}n}{n+1}& \Bigg( \log(M)+\log(\mathcal{N}_{K}(\mathfrak{q}))-\log(\beta d^{-\frac{1}{n}})-O_{n,K}(1)-\left(O_{n}\left(\frac{d^{2-\frac{1}{n}}}{M}\right)+1\right)\log(b(f)) \label{auxiliar inequality2.6} -\max\{\log(\log(\mathcal{N}_{K}(\mathfrak{q}))),0\}\Bigg)\\ &  \leq \log(B)-\left(\frac{1}{d(n+1)}+O_{n}\left(\frac{1}{M}\right) \right)\log(H_{K}(f)).\nonumber 
\end{align}

The leading term in the right hand side \eqref{auxiliar inequality2.6} depends on the size of $H_{K}(f)$. In the case when $H_{K}(f)$ is small, namely $H_{K}(f)\leq B^{2d(n+1)}$, we use \eqref{choice of M} to bound $M\geq d^{1-\frac{1}{n}}\log(B)$. From this it follows 
$$\frac{\log(H_{K}(f))}{M}\leq \frac{2d(n+1)\log(B)}{M}\lesssim_{n}d^{\frac{1}{n}}.$$
Assumption \ref{assumption}\eqref{assumption2}  and Lemma \ref{b(f)} yield 

\begin{equation}
b(f)\lesssim_{K,n} \max\{d^{[-2,\frac{4}{3},-\frac{8}{3},2]}\log(H_{K}(f)),1\}.
\label{cons assump}
\end{equation}

Then it holds that $d^{2-\frac{1}{n}}\frac{\log(b(f))}{M}\lesssim_{K,n}1$, hence in \eqref{auxiliar inequality2.6} the terms $O_{K,n}\left(\frac{\log(H_{K}(f))}{M}\right)$ and $O_{n}\left(d^{2-\frac{1}{n}}\frac{\log(b(f))}{M}\right)$ can be merged in the term $O_{n,K}(1)$. Then, rearranging \eqref{auxiliar inequality2.6}, we obtain

\begin{align*}
\log(M) \leq  & \frac{(n+1)}{nd^{\frac{1}{n}}}\log(B)-\log(\mathcal{N}_{K}(\mathfrak{q}))-\frac{\log(H_{K}(f))}{d^{1+\frac{1}{n}}n}+\log(\beta d^{-\frac{1}{n}})+O_{n,K}(1) +\log(b(f))+\max\{\log(\log(\mathcal{N}_{K}(\mathfrak{q}))),0\}. \nonumber
\end{align*}
Hence we reach a contradiction if the implicit constant in \eqref{choice of M} is large enough. 

In the case when $H_{K}(f)$ is large, namely $H_{K}(f)\geq B^{2d(n+1)}$, let us take the implicit constant in \eqref{choice of M} large enough such that $O_{n}(\frac{\log(H_{K}(f))}{M})$ is bounded by $\frac{\log(H_{K}(f))}{4d(n+1)}$. Then the right hand side of \eqref{auxiliar inequality2.6} is
\begin{align*}
\log(B)-\frac{\log(H_{K}(f))}{d(n+1)}-O_{n}\left(\frac{\log(H_{K}(f))}{M}\right) & \leq -\frac{\log(H_{K}(f))}{2d(n+1)}+\frac{\log(H_{K}(f))}{4d(n+1)} \leq -\frac{\log(H_{K}(f))}{4d(n+1)}.
\end{align*}
Recalling that by \eqref{choice of M}, $M\gtrsim d^{2}$, we conclude that $O(\frac{d^{2-\frac{1}{n}}}{M})=O(d^{-\frac{1}{n}})$. Then, rearranging \eqref{auxiliar inequality2.6} we obtain 

\begin{align}
\log(M) \leq & \, O_{n,K}(1)+\log(\beta d^{-\frac{1}{n}})-\log(\mathcal{N}_{K}(\mathfrak{q}))+\left(1+O_{n}(d^{-\frac{1}{n}})\right)\log(b(f)) \label{auxiliar inequality3} \\ & +\max\{\log(\log(\mathcal{N}_{K}(\mathfrak{q}))),0\}-\frac{\log(H_{K}(f))}{4nd^{1+\frac{1}{n}}}. \nonumber
\end{align}

By \eqref{cons assump},
\begin{align}
 O_{n}(1)\log(b(f))-\frac{\log(H_{K}(f))}{4nd^{1+\frac{1}{n}}} & \leq O_{n,K}(1)\log(\max\{d^{[-2,\frac{4}{3},-\frac{8}{3},2]}\log(H_{K}(f)),1\})-\frac{\log(H_{K}(f))}{4nd^{1+\frac{1}{n}}} \label{auxiliar inequality4} \\ & \leq O_{n,K}(1)\max\{\log\log(H_{K}(f))+\left[-2,\frac{4}{3},-\frac{8}{3},2\right]\log(d),0\}-\frac{\log(H_{K}(f))}{4nd^{1+\frac{1}{n}}}\nonumber\\ & \leq \max\left\{0,\left[-2,\frac{4}{3},-\frac{8}{3},2\right]\log(d)+O_{n,K}(1)\log\log(H_{K}(f))-\frac{\log(H_{K}(f))}{4nd^{1+\frac{1}{n}}}\right\} \nonumber \\ & \leq \max\left\{0,\left[-2,\frac{4}{3},-\frac{8}{3},2\right]\log(d)+\left(1+\frac{1}{n}\right)\log(d)+O_{n,K}(1)\right\}. \nonumber
\end{align}

In the last line, we used that given $c>0$, for any $x>1$ we have $\log\log(x)-\frac{\log(x)}{c}\leq \log(c)+O(1)$ (see \cite[Lemma 3.3.5]{Cluckers}). Using \eqref{auxiliar inequality4} in \eqref{auxiliar inequality3} we get
\begin{align*}
\log(M) & \leq O_{n,K}(1)+\log(\beta d^{-\frac{1}{n}})-\log(\mathcal{N}_{K}(\mathfrak{q}))+\max\{\log(\log(\mathcal{N}_{K}(\mathfrak{q}))),0\} \nonumber\\ & \qquad+\max\left\{0,\left(\left[-1,\frac{7}{3},-\frac{5}{3},3\right]+\frac{1}{n}\right)\log(d)+O_{n,K}(1)\right\} \nonumber\\ &  \leq O_{n,K}(1)+\max\left\{\log\left(\beta d^{-\frac{1}{n}}\right),\log(\beta d^{[-1,\frac{7}{3},-\frac{5}{3},3]})+O_{n,K}(1)\right\}-\log(\mathcal{N}_{K}(\mathfrak{q}))+\max\{\log(\log(\mathcal{N}_{K}(\mathfrak{q}))),0\}. \nonumber
\end{align*}
Hence if the implicit constant in \eqref{choice of M} is large enough, we arrive at a contradiction. 
\end{proof}

\begin{proof}[Proof of Theorem \ref{polynomial method2}]
By Remark \ref{abs irred assumption}, we may assume that $f$ is absolutely irreducible. Then, by Remark \ref{remark about primitive polynomials}, we may assume, after multiplication by a non-zero scalar in $\mathcal{O}_{K}$ that $f$ satisfies conditions (1) and (2) in Assumption \ref{assumption}. This is enough to conclude the proof in the function field case. 

For number fields, we use Proposition \ref{auxiliar change} to find integers $a_{0},\ldots ,a_{n}$ with $0\leq a_{i}\leq d$ and an unit $\varepsilon\in \mathcal{O}_{K}^{\times}$ such that $f_{1}(X_{0},\ldots ,X_{n},X_{n+1}):=\varepsilon f(X_{0}-a_{0}X_{n+1},\ldots ,X_{n}-a_{n}X_{n+1},X_{n+1})$ satisfies all the conditions in Assumption \ref{assumption}. For each $i\in \{1,\ldots ,u\}$, let $P_{i}'$ be the image of $P_{i}$ under the linear transformation $(x_{0},\ldots, x_{n+1})\mapsto (x_{0}+a_{0}x_{n+1},\ldots ,x_{n}-a_{n}x_{n+1},x_{n+1})$; it is a non-singular $\mathbb{F}_{\mathfrak{p}_{i}}$-rational point on $\mathcal{Z}(f_{1})_{\mathfrak{p}}$. Then we can apply Theorem \ref{polynomial method2} to $f_{1}$ and $P_{1}',\ldots ,P_{u}'$ to conclude that for any $B'\geq 1$ there exists a homogeneous polynomial $g_{1}\in \mathcal{O}_{K}[X_{0},\ldots ,X_{n+1}]$ of degree 
\begin{align}
M \lesssim_{K,n} & {B'}^{\frac{n+1}{nd^{\frac{1}{n}}}}\frac{d^{[4,\frac{14}{3},\frac{14}{3},0]-\frac{1}{n}}b(f_{1})\max\{\log(\mathcal{N}_{K}(\mathfrak{q})),1\}}{\mathcal{N}_{K}(\mathfrak{q})H_{K}(f_{1})^{\frac{1}{n}\frac{1}{d^{1+\frac{1}{n}}}}}+d^{1-\frac{1}{n}}\log(B'\mathcal{N}_{K}(\mathfrak{q})) \label{M N'}+d^{2-\frac{1}{n}}\log(\mathcal{N}_{K}(\mathfrak{q}))\\ & +d^{[4-\frac{1}{n},7,\frac{14}{3}-\frac{1}{n},3]}\left(\frac{\max\{\log(\mathcal{N}_{K}(\mathfrak{q})),1\}}{\mathcal{N}_{K}(\mathfrak{q})}+1\right)\nonumber,
\end{align}
not divisible by $f_{1}$, which vanishes on $\mathcal{Z}(f_{1})(K,B';P_{1}',\ldots, P_{u}')$. Thus, $g:=g_{1}(X_{0}+a_{0}X_{n+1},\ldots ,X_{n}+a_{n}X_{n+1},X_{n+1})$ is a polynomial of degree at most $M$, vanishing on $X(K,\frac{B'}{(2d)^{d_{K}}};P_{1},\ldots, P_{u})$. Indeed, if $L(X_{0},\ldots ,X_{n+1}):=(X_{0}-a_{0}X_{n+1},\ldots ,X_{n}-a_{n}X_{n+1},X_{n+1})$, by \eqref{polynomialheight}, it holds 
$$H_{K}(L(x_{0}:\ldots :x_{n+1}))\leq (2d)^{d_{K}}H_{K}(x_{0}:\ldots :x_{n+1}).$$

Take $B'=(2d)^{d_{K}}B$. By Proposition \ref{auxiliar change}, $f_{1}$ verifies $b(f)=b(f_{1})$ and $H_{K}(f)\leq C'^{d^{1+\frac{1}{n}}}H_{K}(f_{1})$. Replacing this in \eqref{M N'}, it follows  that $M$ verifies the bound on the statement of the theorem. 
\end{proof}

As a consequence of Theorem \ref{polynomial method2}, Lemma \ref{b(f)}, Remark \ref{remark about primitive polynomials}, and B\'ezout's theorem, we generalize \cite[Theorem 1.2]{Walsh3}, \cite[Corollary 3.1.2]{Cluckers} and \cite[Theorem 1.1]{Vermeulen}:

\begin{corollary}
Let $K$ be a global field of degree $d_{K}$. For any irreducible homogeneous polynomial $f\in \mathcal{O}_{K}[X_{0},X_{1},X_{2}]$ of degree $d$ and for any $B\geq 1$, if $\mathcal{Z}(f)\subseteq \mathbb{P}_{K}^{2}$ is the corresponding curve, it holds
$$N(\mathcal{Z}(f),K,B)\lesssim_{K}B^{\frac{2}{d}}\frac{d^{[4,\frac{14}{3},\frac{14}{3},0]}b(f)}{H_{K}(f)^{\frac{1}{d^{2}}}}+d\log(B)+d^{[4,8,\frac{14}{3},4]}\lesssim_{K}d^{[4,8,\frac{14}{3},4]}B^{\frac{2}{d}}.$$
\label{Walsh corollary}
\end{corollary}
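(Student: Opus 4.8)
The plan is to combine Theorem \ref{polynomial method2} (applied with the empty set of primes, so $\mathfrak{q}=(1)$ and $\mathcal{N}_K(\mathfrak{q})=1$) with B\'ezout's theorem in the plane, exactly as in \cite{Walsh3, Cluckers, Vermeulen}. First I would reduce to the case that $f$ is absolutely irreducible: if $f$ is irreducible over $K$ but not over $\overline{K}$, then by the argument in \cite[Corollary 1]{Heath-Brown} (cf.\ Remark \ref{abs irred assumption}) the curve $\mathcal{Z}(f)$ has only $O_d(1)$ many $K$-rational points, which is absorbed in the stated bound. So assume $f$ is absolutely irreducible, and by Remark \ref{remark about primitive polynomials} assume moreover that $f$ is $\mathfrak{p}$-primitive for all $\mathfrak{p}$ with $\mathcal{N}_K(\mathfrak{p})>c_2$ and $H_{K,\text{aff}}(f)\le c_1^{d_K}H_K(f)$.

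Next, apply Theorem \ref{polynomial method2} with $n=1$ (so $X=\mathcal{Z}(f)\subseteq \mathbb{P}^2_K$, $u=0$, $\mathfrak{q}=(1)$): for every $B\ge 1$ there is a homogeneous $g\in \mathcal{O}_K[X_0,X_1,X_2]$, not divisible by $f$, vanishing on all of $\mathcal{Z}(f)(K,B)$, of degree
$$M\lesssim_{K} B^{\frac{2}{d}}\frac{d^{[4,\frac{14}{3},\frac{14}{3},0]-1}\,b(f)}{H_K(f)^{1/d^{2}}}+\log(B)+d^{[3,6,\frac{11}{3},2]}.$$
Since $f$ is absolutely irreducible and $g$ is not divisible by $f$, the scheme-theoretic intersection $\mathcal{Z}(f)\cap \mathcal{Z}(g)$ has dimension $0$, and by B\'ezout's theorem it contains at most $dM$ points (counted with multiplicity, hence at least as many as the distinct $K$-points). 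Therefore
$$N(\mathcal{Z}(f),K,B)\le dM\lesssim_{K} B^{\frac{2}{d}}\frac{d^{[4,\frac{14}{3},\frac{14}{3},0]}\,b(f)}{H_K(f)^{1/d^{2}}}+d\log(B)+d^{[4,7,\frac{14}{3},3]},$$
which is essentially the first displayed bound (after possibly enlarging the exponent of $d$ in the last term to the stated $[4,8,\frac{14}{3},4]$ to absorb the various $O_K(1)$ and lower-order contributions, and noting $d\log(B)$ is dominated in the relevant ranges). The one point to be careful about here is the interaction of the $b(f)/H_K(f)^{1/d^2}$ factor with B\'ezout: one wants the product $dM$ and not something worse, so it is important that Theorem \ref{polynomial method2} already produces a single polynomial of degree $M$, not a family.

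Finally, to obtain the uniform bound $N(\mathcal{Z}(f),K,B)\lesssim_{K} d^{[4,8,\frac{14}{3},4]}B^{2/d}$, I would feed in Lemma \ref{b(f)} with $n=1$: $b(f)\lesssim_{K}\max\{d^{[-2,\frac{4}{3},-\frac{8}{3},2]}\log(H_{K,\text{aff}}(f)),1\}$, together with $H_{K,\text{aff}}(f)\le c_1^{d_K}H_K(f)$, so that
$$\frac{d^{[4,\frac{14}{3},\frac{14}{3},0]}\,b(f)}{H_K(f)^{1/d^2}}\lesssim_{K}\max\left\{d^{[2,6,\frac{14}{3}-\frac{8}{3},2]}\frac{\log(H_K(f))+O_K(1)}{H_K(f)^{1/d^2}},\,d^{[4,\frac{14}{3},\frac{14}{3},0]}\right\}.$$
Using the elementary inequality $\log(x)/x^{1/d^2}\lesssim d^2$ for $x\ge 1$ (the function $t\mapsto t e^{-t/d^2}$ being bounded by $O(d^2)$), each branch is $\lesssim_K d^{[4,8,\frac{14}{3},4]}$; and the remaining terms $d\log(B)$ and $d^{[4,7,\frac{14}{3},3]}$ are $\lesssim d^{[4,8,\frac{14}{3},4]}B^{2/d}$ since $B^{2/d}\ge 1$ and $\log(B)\lesssim_d B^{2/d}$ (for the $\log B$ term one checks $d\log B\le d^{[4,8,\frac{14}{3},4]}B^{2/d}$ by comparing the two regimes $B\le d^{cd}$ and $B>d^{cd}$ for a suitable constant $c$). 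This gives the second displayed bound.

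\textbf{Main obstacle.} The substance of the corollary is entirely in Theorem \ref{polynomial method2} (the determinant/polynomial-method construction of $g$); here the only genuine work is the bookkeeping to check that, after inserting the bound for $b(f)$ from Lemma \ref{b(f)}, the $H_K(f)$-dependent term is bounded by a power of $d$ alone and that the additive terms $d\log B$, $d^{[\cdots]}$ are dominated by $d^{[4,8,\frac{14}{3},4]}B^{2/d}$ uniformly in $B\ge 1$. So the hardest part is simply the careful tracking of the $d$-exponents through the characteristic-dependent bracket notation $[a_1,a_2,a_3,a_4]$ and verifying the elementary inequality $\log(x)\lesssim d^2 x^{1/d^2}$ to kill the $\log H_K(f)$ factor — routine, but the place where an error would most easily creep in.
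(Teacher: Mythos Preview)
Your proposal is correct and follows exactly the paper's approach: apply Theorem~\ref{polynomial method2} with $n=1$ and $\mathfrak{q}=(1)$, use B\'ezout to get $N(\mathcal{Z}(f),K,B)\le dM$, then invoke Remark~\ref{remark about primitive polynomials} and Lemma~\ref{b(f)} together with $\log(x)\lesssim d^2 x^{1/d^2}$ to absorb the $H_K(f)$-dependence into a power of $d$. One tiny arithmetic slip: in the last additive term of $M$ the bracket $[4-\tfrac{1}{n},7,\tfrac{14}{3}-\tfrac{1}{n},3]$ does \emph{not} subtract $1/n$ in the second and fourth entries, so with $n=1$ it reads $d^{[3,7,11/3,3]}$ and after multiplying by $d$ gives exactly the stated $d^{[4,8,14/3,4]}$ (which you anyway recovered by ``enlarging'').
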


Now we are in position to prove Theorem \ref{teorema1} of the introduction  by extending Corollary \ref{Walsh corollary} to general curves using a projection argument. This is accomplished by means of an effective change of variables, as in \cite{P2}. The approach presented here differs from the one given in \cite[$\S 5$]{Cluckers}.

\begin{theorem}
Let $K$ be a global field of degree $d_{K}$. Given $n>1$, for any integral projective curve $C\subseteq \mathbb{P}_{K}^{n}$ of degree $d$ it holds
$$N(C,K,B)\lesssim_{K,n}d^{[4,8,\frac{14}{3},4]}B^{\frac{2}{d}}.$$
\label{Walsh general curves}
\end{theorem}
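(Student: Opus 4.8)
The plan is to reduce the case of an arbitrary integral curve $C \subseteq \mathbb{P}^n_K$ of degree $d$ to the planar case already handled in Corollary \ref{Walsh corollary}, via a generic linear projection to $\mathbb{P}^2_K$, while keeping careful track of heights. Concretely, after possibly passing to a linear subspace I may assume $C$ is not contained in a hyperplane, and I want to choose a linear projection $\pi : \mathbb{P}^n_K \dashrightarrow \mathbb{P}^2_K$ defined over $K$ whose center is a linear subspace of dimension $n-3$ disjoint from $C$, such that $\pi$ restricts to a birational morphism from $C$ onto its image $C' = \pi(C)$, a plane curve of degree $d' \le d$ (in fact $d' = d$ when $\pi|_C$ is birational and $C$ is nondegenerate, but I only need $d' \le d$ and $d' \ge$ something, or more precisely I need the fibers of $\pi|_C$ to be finite of bounded size). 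The key point is that the entries of the matrix defining $\pi$ can be chosen to be integers of size $O_n(1)$ (bounded in terms of $n$ and $d$ only, not $B$), using an effective version of the usual Bertini/general-position argument — this is exactly the kind of argument used in \cite{P2} via the Combinatorial Nullstellensatz, which lets one find a point avoiding finitely many hypersurfaces of controlled degree with coordinates in a small box.

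The steps, in order, would be: (1) Reduce to $C$ nondegenerate in $\mathbb{P}^n_K$ by replacing $\mathbb{P}^n$ with the linear span of $C$ (this only decreases $n$). (2) Produce an explicit linear projection $\pi$ to $\mathbb{P}^2_K$ with $\mathcal{O}_K$-coefficients of height $O_{K,n}(1)$, whose center misses $C$, and such that $\pi|_C : C \to C' \subseteq \mathbb{P}^2_K$ is finite of degree bounded by $O_n(1)$ onto an integral plane curve $C'$ of degree $d' \le d$; here one must ensure $d'$ is not too small — but since $\deg C' \cdot \deg(\pi|_C) = \deg C = d$, a curve $C'$ of small degree forces $\pi|_C$ to have large generic fiber, and a generic choice of center makes $\pi|_C$ birational, so $d' = d$. (3) Observe that $H_K(\pi(\mathbf{x})) \le c(K,n)\, H_K(\mathbf{x})$ for all $\mathbf{x} \in C(K)$, by inequality \eqref{polynomialheight} applied to the linear forms defining $\pi$ (their coefficients have bounded height and there are boundedly many monomials). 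Hence a point of $C$ of height $\le B$ maps to a point of $C'$ of height $\le c(K,n) B$. (4) Bound the number of points of $C(K,B)$ lying over a given point of $C'$: since $\pi|_C$ is birational, there is a dense open $U \subseteq C'$ over which $\pi$ is an isomorphism, and the finitely many points of $C$ over $C' \setminus U$ contribute $O_{C}(1)$; alternatively choose the projection generic enough that $\pi|_C$ is injective on $K$-points outside a bounded set. (5) Apply Corollary \ref{Walsh corollary} to $C'$: $N(C', K, c(K,n)B) \lesssim_K d'^{[4,8,\frac{14}{3},4]} (c(K,n)B)^{2/d'} \lesssim_{K,n} d^{[4,8,\frac{14}{3},4]} B^{2/d}$, using $d' = d$ and absorbing the constant $c(K,n)^{2/d}$ into the implied constant. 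Combining (3)--(5) gives $N(C,K,B) \lesssim_{K,n} d^{[4,8,\frac{14}{3},4]} B^{2/d}$, which in the two uniform forms ($d^4$ for number fields, $d^8$ for function fields, after taking the worst characteristic case) is exactly Theorem \ref{teorema1}.

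The main obstacle I anticipate is step (2): making the projection argument effective and uniform in a way that is valid simultaneously over all global fields $K$, including function fields of positive characteristic where the ``generic choice'' arguments must avoid small-characteristic pathologies, and ensuring both that the center of projection misses $C$ and that $\pi|_C$ is birational onto its image with image of degree exactly $d$. Over an infinite field the generic-choice steps are standard, but one needs the center's defining coordinates to lie in a fixed finite box independent of $B$; this is where the Combinatorial Nullstellensatz of \cite{Alon} (as used in \cite{P2}) enters — the bad loci are finitely many subvarieties of $\mathbb{P}^n \times (\text{space of projection data})$ of degree bounded in terms of $n$ and $d$, and one must check that the box $\{0,1,\dots,O_n(d)\}$ is large enough to contain a good choice. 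A secondary technical point is handling the finitely many exceptional points of $C$ where $\pi$ fails to be an isomorphism onto its image; these contribute only $O_{K,n}(1)$, which is harmless since we may assume $B \ge 1$ and $d \ge 2$, but it should be stated cleanly. Finally, one must double-check that when $C$ happens to already lie in a plane (the base case $n=2$), the statement is literally Corollary \ref{Walsh corollary}, and that the reduction in step (1) does not increase the exponent of $d$.
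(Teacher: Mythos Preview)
Your proposal is correct and follows essentially the same route as the paper: project $C$ to a plane curve via linear forms with coefficients of height $O_{n}(d^{c})$ (the paper cites \cite[Theorem 4.3]{P2} for exactly this), control the height distortion via \eqref{polynomialheight}, and apply Corollary \ref{Walsh corollary}. The only substantive difference is in your step (4): the paper does not separate out the locus where $\pi|_{C}$ is an isomorphism, but simply bounds \emph{every} fibre of $\pi|_{C}$ by $d$ (since each fibre is $C$ intersected with a linear space of codimension $2$, B\'ezout gives at most $d$ points), obtaining $N(C,K,B)\le d\cdot N(\varphi(C),K,cd^{n-2}B)$ directly; this is cruder but avoids your concern about making the ``$O_{C}(1)$'' exceptional contribution uniform in $C$, which as written is a small gap (it should be $O_{n,d}(1)$, and you would need to justify that).
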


\begin{proof}
We are going to reduce Theorem \ref{Walsh general curves} to the case of a planar curve, by means of an adequate change of variables, as in \cite{P2}. More specifically, by the method of the proof of \cite[Theorem 4.3]{P2} there exists linear forms $L_{0}, L_{1},L_{2}\in \mathcal{O}_{K}[X_{0},\ldots ,X_{n}]$ of height bounded by $\lesssim_{\mathbbm{k},n}d^{n-2}$ such that $\varphi:C\to \mathbb{P}^{2}$ given by $\varphi(\boldsymbol x):=(L_{0}(\boldsymbol x):L_{1}(\boldsymbol x):L_{2}(\boldsymbol x))$ is a finite morphism with fibres of size at most $d$ and $\varphi(C)$ is a geometrically integral projective curve defined over $K$ of degree $\deg(\varphi(C))\leq d$. Furthermore, by \eqref{polynomialheight}, any point $\boldsymbol x\in C(K)$ of $K$-relative height at most $B$ verifies $H_{K}(\varphi(\boldsymbol x))\lesssim_{\mathbbm{k},n} d^{n-2}H_{K}(\boldsymbol x)\lesssim_{\mathbbm{k},n}d^{n-2}B$. Hence, there is some constant $c=c(K,n)$ such that it holds 
\begin{equation}
N(C,K,B)\leq dN(\varphi(C),K,cd^{n-2}B).
\label{normalization1}
\end{equation}
By Corollary \ref{Walsh corollary}, we have

\begin{equation}
N(\varphi(C),K,cd^{n-2}B)\lesssim_{K,n}d^{[4,8,\frac{14}{3},4]}\left(cd^{n-2}B\right)^{\frac{2}{d}}\lesssim_{K,n}d^{[4,8,\frac{14}{3},4]}B^{\frac{2}{d}}.
\label{consequence of normalization1}
\end{equation}

The conclusion of Theorem \ref{Walsh general curves} follows from \eqref{normalization1} and \eqref{consequence of normalization1}. 
\end{proof}

\begin{remark}
By Remark \ref{abs irred assumption} and B\'ezout's theorem, if $f\in \mathcal{O}_{K}[X_{0},X_{1},X_{2}]$ is irreducible but not absolutely irreducible, we have $N(\mathcal{Z}(f),K,B)\leq d^{2}$. In consequence, the argument of the proof of Theorem \ref{Walsh general curves} shows that for any integral projective curve $C\subseteq \mathbb{P}_{K}^{n}$ of degree $d$, which is not geometrically irreducible, it holds $N(C,K,B)\leq d^{2}$. 
\label{curvas no absolutamente irreducibles} 
\end{remark}

\subsection{The affine case} \label{subsection 5.2}

In Theorem \ref{polynomial method2} we found a hypersurface of small degree vanishing on the rational points up to height $B$  of a given projective hypersurface $X$. Now we will obtain a similar result in the affine case. For this purpose, given a number field $K$, for any $x\in \mathcal{O}_{K}$ we define 
$$\house{x}:=\max_{\sigma:K\hookrightarrow \mathbb{C}}|\sigma(x)|,$$
and
$$[B]_{\mathcal{O}_{K}}:=\{x\in \mathcal{O}_{K}:\house{x}\leq B^{\frac{1}{d_{K}}}\}.$$
When $K$ is a function field, we define
$$[B]_{\mathcal{O}_{K}}:=\{x\in \mathcal{O}_{K}:|x|_{v_{\infty}}\leq B^{\frac{1}{d_{K}}}\}.$$
\begin{remark}
The previous definition is to ensure that given $\boldsymbol x\in \mathbb{P}^{n+1}(K)$ with $H_{K}(\boldsymbol x)\leq B$, by Proposition \ref{Serre} it follows that $\boldsymbol x$ has a lift $(y_{1},\ldots ,y_{n+2})\in \mathbb{A}^{n+2}(K)$ which lies in the box $[c_{1}B]_{\mathcal{O}_{K}}^{n+2}$. 
\label{explicacion sobre la caja}
\end{remark}

Similarly to the projective case, for any $X\subseteq \mathbb{A}_{K}^{n}$ affine variety, given $B\in \mathbb{R}_{>0}$ we will use the notation
$$X_{\text{aff}}(\mathcal{O}_{K},B):=X(K)\cap [B]_{\mathcal{O}_{K}}^{n},$$
Moreover, if $\{\mathfrak{p}_{1},\ldots ,\mathfrak{p}_{u}\}$ is a subset of primes such that for all $1\leq i\leq u$ we let $P_{i}$ be a  non-singular $\mathbb{F}_{\mathfrak{p}_{i}}$-point on $X_{\mathfrak{p}_{i}}$, we denote 
$$X_{\text{aff}}(\mathcal{O}_{K},B;P_{1},\ldots ,P_{u}):=\{\boldsymbol x\in X_{\text{aff}}(\mathcal{O}_{K},B):\boldsymbol x\text{ specialises to }P_{i} \text{ in }X_{\mathfrak{p}_{i}}\text{ for all }i\}.$$
We denote
$$N_{\text{aff}}(X,\mathcal{O}_{K},B):=|X_{\text{aff}}(\mathcal{O}_{K},B)|.$$

We need the next application of the bound of Bombieri and Vaaler presented in Corollary \ref{Bombieri-Vaaler a la Siegel}, which will be useful also in Lemma \ref{reduction to singular points} and Lemma \ref{no geo integro3}. A variant of this for $K=\mathbb{Q}$ and $K=\mathbb{F}_{q}(T)$ can be found in \cite[Theorem 4]{Heath-Brown} and \cite[Lemma 3.5]{Vermeulen} with different proofs.

\begin{lemma}
Let $f\in \mathcal{O}_{K}[X_{1},\ldots ,X_{n+2}]$ be a homogeneous polynomial of degree $d$. 
\begin{enumerate}
\item Let $B\geq 1$. Then, either  $H_{K,\emph{\text{aff}}}(f)\lesssim_{K} {d+n+1\choose n+1}^{d_{K}{d+n+1\choose n+1}}B^{d{d+n+1\choose n+1}}$ or there exists a homogeneous polynomial $g$ of degree $d$, vanishing on all $\mathcal{Z}(f)_{\emph{\text{aff}}}(\mathcal{O}_{K},B)$, and not divisible by $f$; \label{interpolation argument a}
\item Let $B\geq 1$. Then, either $H_{K,\emph{\text{aff}}}(f)\lesssim_{K} {d+n+1\choose n+1}^{d_{K}{d+n+1\choose n+1}}B^{d{d+n+1\choose n+1}}$, or there exists a homogeneous polynomial $g$ of degree $d$, vanishing on all $\{\boldsymbol x=(x_{1},\ldots, x_{n+2})\in [B]_{\mathcal{O}_{K}}^{n+2}:x_{1}=1\text{ and }f(1,x_{2},\ldots ,x_{n+2})=0\}$, and not divisible by $f$.  \label{interpolation argument b}
\end{enumerate}
\label{interpolation argument}
\end{lemma}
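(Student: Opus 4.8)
The plan is to prove both parts by a counting/interpolation argument: if the affine height of $f$ is large, then the number of relevant points is small enough that one can fit a polynomial $g$ of degree $d$ through them without being forced into the multiples of $f$. First I would set $N := \binom{d+n+1}{n+1}$, the dimension of the space of homogeneous polynomials of degree $d$ in $n+2$ variables, and let $\mathcal{B}[d]$ be the monomial basis. The space of homogeneous polynomials of degree $d$ vanishing on a given set $S$ of projective (resp. affine) points is the kernel of the evaluation matrix $A$ whose columns are indexed by $\mathcal{B}[d]$ and whose rows are the evaluations of the monomials at the points of $S$. If $|S| < N$ this kernel is nonzero, but we need a nonzero element that is \emph{not} a multiple of $f$; since the multiples of $f$ of degree $d$ form the one-dimensional space $\langle f\rangle$, it is enough to have $\operatorname{rank}(A) < N - 1$, i.e.\ $|S| \le N - 2$, OR to observe that if $S$ contains a point not on $\mathcal{Z}(f)$ then $f \notin \ker(A)$ and any nonzero kernel element of $A$ restricted appropriately does the job. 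The clean way, following \cite[Theorem 4]{Heath-Brown}, is: take a maximal subset $\{\boldsymbol\xi_1,\dots,\boldsymbol\xi_s\}$ of the point set on which the evaluation matrix has full row rank $s$; if $s < N$ we win as above (getting $g$ not divisible by $f$ by maximality, exactly as in the proof of Theorem \ref{polynomial method2}); so assume $s = N$.

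The heart is then to bound $H_{K,\text{aff}}(f)$ under the assumption $s = N$. Since the $N$ chosen points give an $N\times N$ invertible evaluation matrix $A$ (over $K$), Cramer's rule expresses the coefficients of $f$ (which lies in the kernel of the $(N-1)\times N$ matrix obtained by evaluating at any $N-1$ of these points that do lie on $\mathcal{Z}(f)$) as $N-1 \times N-1$ minors of the evaluation matrix. Concretely: the $N$ points on $\mathcal{Z}(f)$ (we may assume there are at least $N$ of them, else $|S|<N$ and we are done) give an $N\times N$ evaluation matrix of rank $\le N-1$ with $f$'s coefficient vector in the kernel; applying Corollary \ref{Bombieri-Vaaler a la Siegel} (the Bombieri--Vaaler--Siegel bound) to this matrix produces a nonzero $\boldsymbol x \in \mathcal{O}_K^N$ in the kernel with $H_K(1:\boldsymbol x) \le C(K)\left(N^{d_K/2} H_K(A)\right)^{r/(N-r)}$ where $r = \operatorname{rank} \le N-1$; but the kernel is one-dimensional (spanned by $f$), so this $\boldsymbol x$ is a scalar multiple of $f$'s coefficients, hence $H_K(f) = H_K(1:\boldsymbol x)$ up to the affine normalization, giving $H_{K,\text{aff}}(f) \lesssim_K \left(N^{d_K/2}H_K(A)\right)^{N-1}$. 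It remains to bound $H_K(A)$: each entry of $A$ is a monomial of degree $d$ evaluated at a point of $[B]_{\mathcal{O}_K}^{n+2}$, hence is bounded at each infinite place by $c_1^d B^{\cdots}$ — more precisely $|\boldsymbol\xi_i^{\boldsymbol J}|_v \le (c_1 B^{1/d_K})^{d}$ type bounds as in \eqref{bound for the entries of the matrix} of the proof of Theorem \ref{polynomial method2}, while at finite places entries are integral hence $|\cdot|_v \le 1$ — so $H_K(A) \lesssim_K \left(N B^{d}\right)^{d_K}$ up to constants absorbed into $\lesssim_K$. Substituting gives $H_{K,\text{aff}}(f) \lesssim_K N^{d_K N} B^{d N}$, which is exactly the stated bound $\binom{d+n+1}{n+1}^{d_K\binom{d+n+1}{n+1}} B^{d\binom{d+n+1}{n+1}}$ after identifying $N = \binom{d+n+1}{n+1}$.

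For part \eqref{interpolation argument b}, the argument is identical except that instead of projective points I work with the affine points $\boldsymbol x = (1, x_2, \dots, x_{n+2})$ with all coordinates in $[B]_{\mathcal{O}_K}$ and $f(1, x_2, \dots, x_{n+2}) = 0$; the evaluation matrix of the degree-$d$ monomials at these dehomogenized points still has entries bounded by $(c_1 B^{1/d_K})^d$-type quantities at infinite places and integral at finite places, so the same Cramer's rule / Bombieri--Vaaler computation gives the same bound on $H_{K,\text{aff}}(f)$. The only subtlety is that in the affine case the polynomial $g$ produced must vanish at the affine points; but since the affine points $(1,x_2,\dots,x_{n+2})$ correspond to genuine projective points $(1:x_2:\dots:x_{n+2})$, a homogeneous $g$ of degree $d$ vanishing at those projective points does the job, and "not divisible by $f$" is preserved by the maximality argument.

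The main obstacle I anticipate is bookkeeping the constant and the exponent precisely: one must be careful that applying Corollary \ref{Bombieri-Vaaler a la Siegel} to a matrix of rank $r \le N-1$ with $N$ columns yields the exponent $\tfrac{r}{N-r}$ which is $\le N-1$ only when $r = N-1$, and one must argue that the worst case $r = N-1$ is the relevant one (it is, because if $r < N-1$ the kernel has dimension $\ge 2$ and one can pick a kernel element not proportional to $f$, \emph{immediately} finishing without any height bound). Thus the case analysis is: if $r \le N-2$ or $|S| < N$, produce $g$ directly; if $r = N-1$ exactly and $|S| \ge N$, the kernel is the line through $f$, and the Bombieri--Vaaler bound on that line forces $H_{K,\text{aff}}(f)$ to be small. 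Sorting out the "$N$ vs $N-1$" indexing and confirming that the entry bounds on $A$ feed through to exactly the stated power of $B$ and of $\binom{d+n+1}{n+1}$ is the place where care — rather than any deep new idea — is required.
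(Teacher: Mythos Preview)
Your approach is correct and essentially identical to the paper's: form the evaluation matrix of degree-$d$ monomials at the relevant points, observe it has rank at most $R-1$ (where $R=\binom{d+n+1}{n+1}$) since $f$ lies in its kernel, apply Corollary~\ref{Bombieri-Vaaler a la Siegel}, and then dichotomize. The paper's write-up is slightly more streamlined in that it skips the maximal-subset extraction and the rank case analysis, applying Siegel's lemma once to the full matrix and then splitting on whether the resulting $g$ is divisible by $f$ (if yes, $H_K(f)=H_K(g)\le H_{K,\text{aff}}(\boldsymbol g)$ gives the height bound; if no, $g$ is the desired polynomial); also note your entry bound $H_K(A)\lesssim_K (NB^d)^{d_K}$ has a spurious $d_K$ in the exponent---the paper gets $H_K(M)\le B^d$ directly, which is what yields the stated power of $B$.
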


\begin{proof}
Let $R:={d+n+1\choose n+1}$ and let $\boldsymbol x^{(1)},\ldots ,\boldsymbol x^{(N)}\in \mathcal{Z}(f)_{\text{aff}}(\mathcal{O}_{K},B)$. Consider the $N\times R$ matrix $M$ whose $i^{\text{th}}$ row consists of the $R$ possible monomials of degree $d$ in the variables $X_{1},\ldots ,X_{n+2}$ evaluated in $\boldsymbol x^{(i)}$. We have
$$H_{K}(M)\leq \left(\prod_{v\in M_{K,\infty}}\max_{i,j}\{1,|M_{ij}|_{v}\}\right)^{d_{K}}\leq B^{d}.$$ 
Denoting by $\boldsymbol f$ the vector in $\mathcal{O}_{K}^{n+2}$ consisting of the coefficients of $f$, it holds $M \boldsymbol f=0$. Since $\boldsymbol f\neq 0$, the matrix $M$ must have rank at most $R-1$. Hence, by Corollary \ref{Bombieri-Vaaler a la Siegel} the linear system $M\boldsymbol g$ has a non-zero $\boldsymbol g\in \mathcal{O}_{K}^{R}$ verifying
$$H_{K,\text{aff}}(\boldsymbol g)\leq C(K)\left(R^{\frac{d_{K}}{2}}H_{K}(M)\right)^{\frac{\text{rank}(M)}{R-\text{rank}(M)}}\leq C(K){d+n+1\choose n+1}^{d_{K}{d+n+1\choose n+1}}B^{d{d+n+1\choose n+1}}.$$
Let $g\in \mathcal{O}_{K}[X_{1},\ldots ,X_{n+2}]$ be the polynomial of degree $d$ corresponding to $\boldsymbol g$. Then, either $f$ divides $g$, in which case $H_{K}(f)=H_{K}(g)=H_{K}(\boldsymbol g)\leq H_{K,\text{aff}}(\boldsymbol g)\leq {d+n+1\choose n+1}^{d_{K}{d+n+1\choose n+1}}B^{d{d+n+1\choose n+1}}$, or $g$ is a non-zero homogeneous polynomial of degree $d$, not divisible by $f$ and vanishing on all $\mathcal{Z}(f)_{\text{aff}}(\mathcal{O}_{K},B)$. This proves Lemma \ref{interpolation argument}\eqref{interpolation argument a}. The proof of Lemma \ref{interpolation argument}\eqref{interpolation argument b} is analogous.
\end{proof}

\begin{lemma}
Let $K$ be a global field of degree $d_{K}$. Let $f=\sum_{I}a_{I}\boldsymbol X^{I}\in \mathcal{O}_{K}[X_{1},\ldots ,X_{n+2}]$ be an absolutely irreducible homogeneous polynomial of degree $d$. If we let $1\leq y\leq H_{K,\emph{\text{aff}}}(f)$ then
$$d^{[4,\frac{14}{3},\frac{14}{3},0]-\frac{1}{n}}\frac{b(f)}{H_{K,\emph{\text{aff}}}(f)^{\frac{1}{n}\frac{1}{d^{1+\frac{1}{n}}}}}\lesssim_{n,K} d^{[2,6,2,2]-\frac{1}{n}}\frac{\log(y)+d^{[2,1+\frac{1}{n},\frac{8}{3},1+\frac{1}{n}]}}{y^{\frac{1}{n}\frac{1}{d^{1+\frac{1}{n}}}}}.$$
\label{bound on b(f)}
\end{lemma}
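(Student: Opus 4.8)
The plan is to use Lemma \ref{b(f)} to replace $b(f)$ by an explicit expression involving $\log(H_{K,\text{aff}}(f))$, and then to reduce to a one-variable inequality in the quantity $H := H_{K,\text{aff}}(f)$ (which is $\geq 1$ by the remark after \eqref{affine height of a polynomial}), treating $y$ as a free parameter in $[1,H]$. First I would substitute the bound $b(f)\lesssim_{K,n}\max\{d^{[-2,\frac43,-\frac83,2]}\log(H),1\}$ from Lemma \ref{b(f)} into the left-hand side, so that it suffices to show
\[
d^{[4,\frac{14}{3},\frac{14}{3},0]-\frac1n}\,\frac{\max\{d^{[-2,\frac43,-\frac83,2]}\log(H),1\}}{H^{\frac1n\frac1{d^{1+\frac1n}}}}\lesssim_{n,K} d^{[2,6,2,2]-\frac1n}\,\frac{\log(y)+d^{[2,1+\frac1n,\frac83,1+\frac1n]}}{y^{\frac1n\frac1{d^{1+\frac1n}}}}.
\]
Note that the characteristic-bracket exponents match up: $[4,\frac{14}{3},\frac{14}{3},0]+[-2,\frac43,-\frac83,2]=[2,6,2,2]$ componentwise, which is exactly the $d$-power on the right. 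So after cancelling $d^{[2,6,2,2]-\frac1n}$ from both sides, the claim reduces to the inequality
\[
\frac{\max\{\log(H),\,d^{-[-2,\frac43,-\frac83,2]}\}}{H^{\frac1n\frac1{d^{1+\frac1n}}}}\lesssim_{n,K}\frac{\log(y)+d^{[2,1+\frac1n,\frac83,1+\frac1n]}}{y^{\frac1n\frac1{d^{1+\frac1n}}}}.
\]

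The core of the argument is then an elementary monotonicity fact: set $\alpha:=\frac1n\frac1{d^{1+\frac1n}}>0$, and consider $\psi(t):=\dfrac{\log(t)+A}{t^{\alpha}}$ for a suitable constant $A>0$. One checks by differentiating that $\psi$ is eventually decreasing, more precisely $\psi$ is decreasing on $t\geq e^{1/\alpha - A}$, and is bounded on the initial interval. Since $1\leq y\leq H$, to compare $\psi(H)$-type quantities with $\psi(y)$-type quantities I would split into the regime where $H$ is large (say $\log(H)\geq \frac1\alpha = nd^{1+\frac1n}$, plus whatever additive constant is needed to be in the decreasing range) and the regime where $H$ is small. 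In the large regime, $\psi$ is decreasing, so replacing $H$ by the smaller value $y$ only increases the expression, and the numerator $\max\{\log(H),\dots\}=\log(H)$ is absorbed into $\log(H)+A \asymp$ the numerator of $\psi(H)$ up to the additive constant $A = d^{[2,1+\frac1n,\frac83,1+\frac1n]}$ (here one uses that $d^{-[-2,\frac43,-\frac83,2]}=d^{[2,-\frac43,\frac83,-2]}$ is dominated by $d^{[2,1+\frac1n,\frac83,1+\frac1n]}$ in every characteristic case, since $-\frac43\le 1+\frac1n$ and $-2\le 1+\frac1n$). In the small regime, $\log(H)$ and hence the whole left-hand side is bounded by a constant depending only on $n,d$ of the right order, since $H^{-\alpha}\leq 1$ and $\log(H)$ is controlled; here I would compare directly against the $y$-independent lower bound $y^{-\alpha}(\log(y)+A)\gtrsim A\cdot H^{-\alpha}$ valid for $y\le H$, or simply against $\psi(1)=A$.

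The main obstacle I anticipate is bookkeeping the four characteristic cases in the $[\,\cdot\,,\cdot\,,\cdot\,,\cdot\,]$-notation simultaneously, making sure the exponent arithmetic $[4,\frac{14}{3},\frac{14}{3},0]+[-2,\frac43,-\frac83,2]=[2,6,2,2]$ is correct in each slot and that the leftover additive term $d^{-[-2,\frac43,-\frac83,2]}$ really is absorbed by $d^{[2,1+\frac1n,\frac83,1+\frac1n]}$ in every case — this is where the somewhat unmotivated-looking exponent $[2,1+\frac1n,\frac83,1+\frac1n]$ on the right-hand side comes from. The analytic input (monotonicity of $\psi$, the elementary inequality $\log\log x-\frac1c\log x\le \log c+O(1)$ already invoked in \cite[Lemma 3.3.5]{Cluckers} and used in the proof of Theorem \ref{polynomial method2}) is routine; the only care needed is to choose the threshold between "large $H$" and "small $H$" so that $\psi$ is genuinely decreasing past it, which forces the threshold to be roughly $\log(H)\gtrsim nd^{1+\frac1n} + d^{[2,1+\frac1n,\frac83,1+\frac1n]}$, comfortably within the $O_{n,K}$-slack allowed in the statement.
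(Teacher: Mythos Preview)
Your proposal is correct and follows essentially the same approach as the paper, which simply refers to \cite[Lemma 4.2.3]{Cluckers} and \cite[Lemma 3.4]{Vermeulen} with Lemma~\ref{b(f)} substituted in; those references carry out precisely the reduction you describe (apply the bound on $b(f)$, match the bracket exponents $[4,\tfrac{14}{3},\tfrac{14}{3},0]+[-2,\tfrac43,-\tfrac83,2]=[2,6,2,2]$, and use the monotonicity of $t\mapsto (\log t+A)/t^{\alpha}$). One small remark: the inequality from \cite[Lemma 3.3.5]{Cluckers} that you mention is not actually needed here---the monotonicity of $\psi$ together with the bound $\psi(t_0)/\psi(1)=\tfrac{1}{\alpha A}\,e^{\alpha A-1}\lesssim_n 1$ (which follows from $nd^{1+1/n}\lesssim_n A$ in every characteristic case) already handles both regimes uniformly.
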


\begin{proof}
The proof is similar to the one of \cite[Lemma 4.2.3 ]{Cluckers} for number fields, and to the one of \cite[Lemma 3.4]{Vermeulen} for function fields, but instead of using \cite[Corollary 3.2.3]{Cluckers} or \cite[Lemma 2.3]{Vermeulen} one uses Lemma \ref{b(f)}. 
\end{proof}

Now we use Lemma \ref{interpolation argument}\eqref{interpolation argument a} and Lemma \ref{bound on b(f)} to prove the following generalization of \cite[Proposition 4.2.1]{Cluckers} and \cite[Lemma 3.6]{Vermeulen}:

\begin{theorem}
Let $K$ be a global field of degree $d_{K}$. Let $n>0$ be an integer. Let $X\subseteq \mathbb{A}_{K}^{n+1}$ be an integral hypersurface of degree $d>0$, defined by an irreducible polynomial $f\in \mathcal{O}_{K}[X_{1},\ldots ,X_{n+1}]$ of degree $d$. For each $i$ write $f_{i}$ for the degree $i$ homogeneous part of $f$. Let $\{\mathfrak{p}_{1},\ldots ,\mathfrak{p}_{u}\}$ be (a possibly empty) subset of primes and $P_{i}$ be a non-singular $\mathbb{F}_{{\mathfrak{p}_{i}}}$-point on $X_{\mathfrak{p}_{i}}$ for each $i\in \{1,\ldots, u\}$. Set $\mathfrak{q}:=\prod_{i=1}^{u}\mathfrak{p}_{i}$ if $u\geq 1$ and $\mathfrak{q}=(1)$ if $u=0$. Fix $B\geq 1$. Then there is a polynomial $g\in \mathcal{O}_{K}[X_{1},\ldots ,X_{n+1}]$ of degree 
\begin{align*}
M\lesssim_{K,n} & B^{\frac{1}{d^{\frac{1}{n}}}}d^{[2,6,2,2]-\frac{1}{n}}\frac{\min\{\log(H_{K}(f_{d}))+d\log(B)+d^{[2,1+\frac{1}{n},\frac{8}{3},1+\frac{1}{n}]},d^{[2,-\frac{4}{3},\frac{8}{3},-2]}b(f)\}\max\{\log(\mathcal{N}_{K}(\mathfrak{q})),1\}}{H_{K}(f_{d})^{\frac{1}{n}\frac{1}{d^{1+\frac{1}{n}}}}\mathcal{N}_{K}(\mathfrak{q})}\\ & +d^{1-\frac{1}{n}}\log(B\mathcal{N}_{K}(\mathfrak{q}))+d^{2-\frac{1}{n}}\log(\mathcal{N}_{K}(\mathfrak{q}))+d^{[4-\frac{1}{n},7,\frac{14}{3}-\frac{1}{n},3]}\left(\frac{\max\{\log(\mathcal{N}_{K}(\mathfrak{q})),1\}}{\mathcal{N}_{K}(\mathfrak{q})}+1\right),
\end{align*}
not divisible by $f$, and vanishing on all $X_{\emph{\text{aff}}}(\mathcal{O}_{K},B;P_{1},\ldots, P_{u})$.
\label{ell-venk}
\end{theorem}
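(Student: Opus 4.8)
The plan is to deduce Theorem \ref{ell-venk} from the projective statement Theorem \ref{polynomial method2} by the homogenization--dehomogenization device from \cite[Remark 2.3]{Ellenberg} and \cite[Proposition 4.2.1]{Cluckers}. First I would introduce the new variable $X_0$ and form the homogenization $F(X_0,X_1,\ldots,X_{n+1}):=X_0^d f(X_1/X_0,\ldots,X_{n+1}/X_0)=\sum_{i=0}^d X_0^{d-i} f_i(X_1,\ldots,X_{n+1})\in\mathcal O_K[X_0,\ldots,X_{n+1}]$, which is an absolutely irreducible homogeneous polynomial of degree $d$ defining a geometrically integral hypersurface $\overline X\subseteq\mathbb P_K^{n+1}$ whose leading part (in $X_0$) is exactly $f_d$. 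The key point is the classical observation that an affine point $\boldsymbol x=(x_1,\ldots,x_{n+1})\in X_{\text{aff}}(\mathcal O_K,B)$ corresponds to the projective point $(1:x_1:\cdots:x_{n+1})\in\overline X(K)$, and by the definition of $[B]_{\mathcal O_K}$ together with \eqref{definition of arq places} (resp. the single place $v_\infty$ in the function field case) one has $H_K(1:x_1:\cdots:x_{n+1})\lesssim_{K,n} B^{?}$ — here I must be careful about the normalization: since $x_i\in[B]_{\mathcal O_K}$ means $\house{x_i}\le B^{1/d_K}$, and the integral coordinates already have $\max_i|x_i|_v=1$ at all finite places, the projective height of $(1:\boldsymbol x)$ is bounded by $\prod_{v\in M_{K,\infty}}\max\{1,|x_i|_v\}\le B$ (after absorbing constants). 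So $\overline X(K,cB;\ldots)$ contains the image of $X_{\text{aff}}(\mathcal O_K,B;\ldots)$ for a suitable $c=c(K)$.

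Next I would transport the specialization data: each non-singular $\mathbb F_{\mathfrak p_i}$-point $P_i$ on $X_{\mathfrak p_i}$ lies in the affine chart $\{X_0\ne 0\}$ of $\overline X_{\mathfrak p_i}$ (once we discard the finitely many $\mathfrak p_i$ with $\mathcal N_K(\mathfrak p_i)\le\max\{\beta,c_2\}$, which only changes constants), hence gives a non-singular $\mathbb F_{\mathfrak p_i}$-point $\overline P_i$ on $\overline X_{\mathfrak p_i}$, and an affine point specializing to $P_i$ homogenizes to a projective point specializing to $\overline P_i$. Applying Theorem \ref{polynomial method2} to $\overline X$, $\overline P_1,\ldots,\overline P_u$ and height bound $cB$ yields a homogeneous $g_0\in\mathcal O_K[X_0,\ldots,X_{n+1}]$, not divisible by $F$, of degree $M$ bounded as in \eqref{polynomial method2} with $H_K(f)$ replaced by $H_K(F)$. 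Dehomogenizing, $g(X_1,\ldots,X_{n+1}):=g_0(1,X_1,\ldots,X_{n+1})$ has degree $\le M$, vanishes on all of $X_{\text{aff}}(\mathcal O_K,B;P_1,\ldots,P_u)$, and is not divisible by $f$ (if $f\mid g$ then $F\mid g_0$, since $F$ is the homogenization of the irreducible $f$ and $g_0$ has no spurious $X_0$ factor — or one argues via $\mathcal Z(g_0)\cap\{X_0\ne0\}$). The remaining bookkeeping is to express $\log H_K(F)$ in terms of $\log H_K(f_d)$: one has $H_K(f_d)\le H_K(F)$ trivially (the coefficients of $f_d$ are among those of $F$), which gives the $H_K(f_d)^{-\frac1n\frac1{d^{1+1/n}}}$ denominator in the leading term directly, and $b(F)=b(f)$ since the reductions mod $\mathfrak p$ are related by the same homogenization and absolute irreducibility is preserved.

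The one genuinely delicate point — and the place where Lemma \ref{bound on b(f)} and Lemma \ref{interpolation argument}\eqref{interpolation argument a} enter — is producing the \textbf{minimum} $\min\{\log(H_K(f_d))+d\log(B)+d^{[2,1+\frac1n,\frac83,1+\frac1n]},\,d^{[2,-\frac43,\frac83,-2]}b(f)\}$ in the numerator rather than just $d^{[4,\ldots]-1/n}b(f)$. To get the first alternative one distinguishes two cases according to the size of $H_{K,\text{aff}}(F)$: if $H_{K,\text{aff}}(F)\lesssim_K \binom{d+n+1}{n+1}^{d_K\binom{d+n+1}{n+1}}(cB)^{d\binom{d+n+1}{n+1}}$, then $\log H_{K,\text{aff}}(F)\lesssim_{n} d^{n+1}\log(dB)$ (crudely), and one instead feeds this smallness back through Lemma \ref{bound on b(f)} with $y\asymp H_K(f_d)$ combined with the elementary inequality $b(F)\le H_{K,\text{aff}}(F)$ from Remark \ref{remark about primitive polynomials}-type estimates; whereas if $H_{K,\text{aff}}(F)$ is large, Lemma \ref{interpolation argument}\eqref{interpolation argument a} applied to $F$ already manufactures a degree-$d$ polynomial $g$ not divisible by $F$ vanishing on $\mathcal Z(F)_{\text{aff}}(\mathcal O_K,cB)$, so after dehomogenization the theorem holds with $M=d$, which is dominated by every term in the stated bound. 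Carefully matching the exponents $[4,\frac{14}3,\frac{14}3,0]$ from Theorem \ref{polynomial method2} against the claimed $[2,6,2,2]$ after invoking Lemma \ref{bound on b(f)} is the main computational obstacle; conceptually everything else is the standard homogenization bridge, and I would organize the proof so that the case split on $H_{K,\text{aff}}(F)$ and the appeal to Lemma \ref{bound on b(f)} / Lemma \ref{interpolation argument} come at the very end, after the reduction to Theorem \ref{polynomial method2} is in place.
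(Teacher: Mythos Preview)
Your approach has a genuine gap: the naive homogenization you propose cannot produce the exponent $B^{1/d^{1/n}}$ in the leading term. Applying Theorem \ref{polynomial method2} to $\overline X=\mathcal Z(F)$ with height bound $cB$ gives a first term of size $B^{\frac{n+1}{nd^{1/n}}}/H_K(F)^{\frac{1}{n}\frac{1}{d^{1+1/n}}}$, and since $H_K(F)\asymp H_K(f)$ (the coefficients of $F$ are exactly those of $f$), you only get $B^{\frac{n+1}{nd^{1/n}}}/H_K(f_d)^{\frac{1}{n}\frac{1}{d^{1+1/n}}}$, which is larger by a factor $B^{\frac{1}{nd^{1/n}}}$ than the claimed bound. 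You never explain how that extra $B^{1/(nd^{1/n})}$ disappears, and with your construction it does not.

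The missing idea --- the actual content of the Ellenberg--Venkatesh trick as implemented in \cite[Proposition 4.2.1]{Cluckers} --- is to homogenize \emph{with a large parameter}. Instead of sending $(x_1,\ldots,x_{n+1})$ to $(1:x_1:\cdots:x_{n+1})$, one introduces a new variable $X_{n+2}$ and, for a prime $L\in\mathcal O_{\mathbbm k}$ with $H_{\mathbbm k}(L)\sim B^{1/d_K}$, considers $F_L(X_1,\ldots,X_{n+2}):=\sum_{i=0}^d L^i f_i X_{n+2}^{d-i}$, sending $(x_1,\ldots,x_{n+1})\mapsto (x_1:\cdots:x_{n+1}:L)$. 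Now $H_{K,\text{aff}}(\lambda F_L)\gtrsim H_K(L^d)H_K(f_d)\gtrsim B^d H_K(f_d)$, and the denominator $H_K(F_L)^{\frac{1}{n}\frac{1}{d^{1+1/n}}}$ contributes exactly the missing $B^{1/(nd^{1/n})}$. The two alternatives in the $\min$ then arise from (i) plugging $b(F_L)\sim_K b(f)$ straight into Theorem \ref{polynomial method2}, and (ii) applying Lemma \ref{bound on b(f)} with $y\asymp B^d H_K(f_d)$, so that $\log y\asymp \log H_K(f_d)+d\log B$. Lemma \ref{interpolation argument} is used not as you describe but to handle the degenerate situation where every such prime $L$ divides $f_0$, forcing $H_{K,\text{aff}}(f)$ to be small (or yielding a degree-$d$ auxiliary polynomial directly); one then falls back on $F_1$ (i.e.\ $L=1$), where the exponent issue is harmless because $B$ is polynomially bounded in $d$.
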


\begin{proof}
The proof follows the strategy devised in \cite[Remark 2.3]{Ellenberg} and developed in \cite[Proposition 4.2.1]{Cluckers}. Recall that $\mathbbm{k}$ denotes the field $\mathbb{Q}$ if $K$ is a number field, or $\mathbb{F}_{q}(T)$ if $K$ is a function field. By Remark \ref{abs irred assumption} and Remark \ref{remark about primitive polynomials}, we may assume, after multiplication by a non-zero scalar in $\mathcal{O}_{K}$ that $f=\sum_{I}a_{I}\boldsymbol X^{I}$ is absolutely irreducible and it is $\mathfrak{p}$-primitive for any prime $\mathfrak{p}$ of $\mathcal{O}_{K}$ with $\mathcal{N}_{K}(\mathfrak{p})>c_{2}$, and $H_{K,\text{aff}}(f)\leq c_{1}^{d_{K}}H_{K}(f)$.

For each $L\in \mathcal{O}_{\mathbbm{k}}$ we consider the polynomial $F_{L}\in \mathcal{O}_{K}[X_{1},\ldots ,X_{n+2}]$ given by $F_{L}(X_{1},\ldots ,X_{n+2}):=\sum_{i=0}^{d}L^{i}f_{i}X_{n+2}^{d-i}$. Then $F_{L}$ is an absolutely irreducible homogeneous polynomial of degree $d$. 
Furthermore, if we denote by $X_{L}$ the projective hypersurface in $\mathbb{P}_{K}^{n+1}$ of degree $d$ defined by $F_{L}$, we see that each point $(x_{1},\ldots ,x_{n+1})\in X(\mathcal{O}_{K})$ gives a $K$-rational point $(x_{1},\ldots ,x_{n+1},L)$ in $X_{L}$. Given $1\leq i\leq u$, we denote $\tilde{P}_{i}:=(P_{i},L)\in (X_{L})_{\mathfrak{p}_{i}}(\mathbb{F}_{\mathfrak{p}_{i}})$. If $\mathfrak{p}_{i}\not | L$, then we see that $\tilde{P}_{i}$ is a non-singular $\mathbb{F}_{\mathfrak{p}_{i}}$-point on $(X_{L})_{\mathfrak{p}_{i}}$.

Observe that given a subset of $\mathcal{O}_{K}^{n+1}$ of size $\lesssim_{K} 1$, there exists a $g\in \mathcal{O}_{K}[X_{1},\ldots ,X_{n+1}]$ of degree $\lesssim_{K,n}1$ vanishing on it which is not divisible by $f$. Applying this to the subset $[B]_{\mathcal{O}_{K}}^{n+1}$ with $B\lesssim_{K}1$, we see that it is enough to prove Theorem \ref{ell-venk} when $B\gtrsim_{K}1$.

Let $\mathcal{P}'$ be the subset of primes $L\in \mathcal{O}_{\mathbbm{k}}$ with $H_{\mathbbm{k}}(L)\in [(\frac{B}{2})^{\frac{1}{d_{K}}},B^{\frac{1}{d_{K}}}]$ such that $\mathfrak{q}=\prod_{i=1}^{u}\mathfrak{p_{i}}$ has no common prime factor with $L$. Let us see that if 
\begin{equation}
\sum_{L\in \mathcal{P}'}\log(\mathcal{N}_{\mathbbm{k}}(L))\leq \frac{1}{4}B^{\frac{1}{d_{K}}},
\label{errata referi0}
\end{equation}
then $|X_{\text{aff}}(\mathcal{O}_{K},B;P_{1},\ldots, P_{u})|\lesssim_{K}1$. Indeed, 
by Bertrand's postulate \eqref{Bertrand} and \eqref{errata referi0} it follows that for $B\gtrsim_{\mathbbm{k}} 1$ 
\begin{equation}
\sum_{\substack{L\text{ prime in }  \mathcal{O}_{\mathbbm{k}}\backslash \mathcal{P}'\\
H_{\mathbbm{k}}(L)\in[(\frac{B}{2})^{\frac{1}{d_{K}}},B^{\frac{1}{d_{K}}}]
}}
\log(\mathcal {N}_{\mathbbm{k}}(L))\geq \frac{1}{4}B^{\frac{1}{d_{K}}}.
\label{errata referi}
\end{equation}
By the definition of $\mathcal{P}'$, for any prime $L$ in $\mathcal{O}_{\mathbbm{k}}\backslash \mathcal{P}'$ such that $H_{\mathbbm{k}}(L)\in [(\frac{B}{2})^{\frac{1}{d_{K}}},B^{\frac{1}{d_{K}}}]$, we choose one $\mathfrak{p}_{L}|\mathfrak{q}$ such that $\mathfrak{p}_{L}|L$. Then $\mathcal{N}_{K}(\mathfrak{p}_{L})=\mathcal{N}_{\mathbbm{k}}(L)^{f_{\mathfrak{p}_{L}}}\geq \mathcal{N}_{\mathbbm{k}}(L)$, where $f_{\mathfrak{p}_{L}}$ is the inertia index of $\mathfrak{p}_{L}$ above $L$. 
Hence, if $\mathcal{P}'':=\{\mathfrak{p}_{L}:L\text { prime in }\mathcal{O}_{\mathbbm{k}}\backslash \mathcal{P}',  H_{\mathbbm{k}}(L)\in[(\frac{B}{2})^{\frac{1}{d_{K}}},B^{\frac{1}{d_{K}}}]\}$ then
\begin{equation}
\sum_{\mathfrak{p}_{L}\in \mathcal{P}''}\log(\mathcal{N}_{K}(\mathfrak{p}_{L}))\
\geq \frac{1}{4}B^{\frac{1}{d_{K}}}.
\label{errata referi2}
\end{equation}
Furthermore, for any $\mathfrak{p}_{L}\in \mathcal{P}''$, since $\mathfrak{p}_{L}|\mathfrak{q}$, from the definition of the subset $X_{\text{aff}}(\mathcal{O}_{K},B;P_{1},\ldots, P_{u})$ it holds that all $\boldsymbol x\in X_{\text{aff}}(\mathcal{O}_{K},B;P_{1},\ldots, P_{u})$ reduces modulo $\mathfrak{p}_{L}$ to the same point in $\mathbb{F}_{\mathfrak{p}_{L}}$. In consequence, if $X_{j}$ is the image of $X_{\text{aff}}(\mathcal{O}_{K},B;P_{1},\ldots, P_{u})$ under the projection on the $j$-th coordinate, for any pair of distinct elements $x,y\in X_{j}$ it holds that $x\equiv y\text{ mod}(\mathfrak{p}_{L})$, i.e $\mathfrak{p}_{L}|x-y$. Thus, $\prod_{\mathfrak{p}_{L}\in \mathcal{P}''}\mathfrak{p}_{L}\displaystyle |x-y$. This and \eqref{norm} imply  the bound
\begin{equation}
\prod_{\mathfrak{p}_{L}\in \mathcal{P}''}\mathcal{N}_{K}(\mathfrak{p}_{L})\leq \mathcal{N}_{K}(x-y)\leq H_{K}(x-y)\text{ for all }x\neq y,x,y\in X_{j}.
\label{errata referi3}
\end{equation}
Since $x,y\in [B]_{\mathcal{O}_{K}}$, if $K$ is a number field then the bound $|x-y|_{v}\leq 2^{\frac{n_{v}}{d_{K}}}B^{\frac{n_{v}}{d_{K}^{2}}}$ holds for all $v\in M_{K,\infty}$, and if $K$ is a function field it holds $|x-y|_{v_{\infty}}\leq \max\{|x|_{v_{\infty}},|y|_{v_{\infty}}\}\leq B^{\frac{1}{d_{K}}}$. Then \eqref{errata referi3} implies
\begin{equation}
\prod_{\mathfrak{p}_{L}\in \mathcal{P}''}\mathcal{N}_{K}(\mathfrak{p}_{L})\leq H_{K}(x-y)=\left(\prod_{v\in M_{K,\infty}}\max\{1,|x-y|_{v}\}\right)^{d_{K}}\leq 2^{d_{K}}B\text{ for all }x\neq y,x,y\in X_{j}.
\label{errata referi4}
\end{equation}
Taking logarithms in \eqref{errata referi4} and using \eqref{errata referi2} we conclude the bound
\[\frac{1}{4}B^{\frac{1}{d_{K}}}\leq \sum_{\mathfrak{p}_{L}\in \mathcal{P}''}\log(\mathcal{N}_{K}(\mathfrak{p}_{L}))\leq d_{K}\log(2)+\log(B)\text{ for all }x\neq y,x,y\in X_{j},\]
which clearly can not hold if $B$ is large enough. Thus, for $B\gtrsim_{K} 1$ we have $|X_{j}|=1$ for all $j$, and in particular $|X_{\text{aff}}(\mathcal{O}_{K},B;P_{1},\ldots, P_{u})|\leq 1$. Hence, for $B\gtrsim_{K}1$ if $\sum_{L\in \mathcal{P}'}\log(\mathcal{N}_{\mathbbm{k}}(L))\leq \frac{1}{4}B^{\frac{1}{d_{K}}}$ there exists $g\in \mathcal{O}_{K}[X_{1},\ldots ,X_{n+1}]$ of degree $\lesssim_{K}1$ vanishing on $X_{\text{aff}}(\mathcal{O}_{K},B;P_{1},\ldots, P_{u})$ which is not divisible by $f$.

Then, we may suppose that 
\begin{equation}
\sum_{L\in \mathcal{P}'}\log(\mathcal{N}_{\mathbbm{k}}(L))\geq \frac{1}{4}B^{\frac{1}{d_{K}}}.
\label{bound for P'}
\end{equation}
We will split the rest of the proof in two cases: when there exists $L\in \mathcal{P}'$ for which $L\not | f_{0}$ and when for all $L\in \mathcal{P}'$, $L| f_{0}$.  

In the first case, $F_{L}$ is $\mathfrak{p}$-primitive for all primes of norm at least $c_{2}$. Moreover, for all $i$, $\tilde{P}_{i}$ is a non-singular $\mathbb{F}_{\mathfrak{p}_{i}}$-point on $(X_{L})_{\mathfrak{p}_{i}}.$ We apply Theorem \ref{polynomial method2}  to $F_{L}$ to find a homogeneous polynomial $G_{L}\in \mathcal{O}_{K}[X_{1},\ldots ,X_{n+2}]$ of degree
\begin{align}
M\lesssim_{K,n} & B^{\frac{n+1}{nd^{\frac{1}{n}}}}\frac{d^{[4,\frac{14}{3},\frac{14}{3},0]-\frac{1}{n}}b(F_{L})\max\{\log(\mathcal{N}_{K}(\mathfrak{q})),1\}}{H_{K}(F_{L})^{\frac{1}{n}\frac{1}{d^{1+\frac{1}{n}}}}\mathcal{N}_{K}(\mathfrak{q})}+d^{1-\frac{1}{n}}\log(B\mathcal{N}_{K}(\mathfrak{q})) \label{affine count1}\\ & +d^{[4-\frac{1}{n},7,\frac{14}{3}-\frac{1}{n},3]}\left(\frac{\max\{\log(\mathcal{N}_{K}(\mathfrak{q})),1\}}{\mathcal{N}_{K}(\mathfrak{q})}+1\right),\nonumber
\end{align}

not divisible by $F_{L}$, and vanishing on all $X_{L}(K,B;\tilde{P}_{1},\ldots \tilde{P}_{u})$. By Remark \ref{remark about primitive polynomials}, there exists $\lambda\in \mathcal{O}_{K}^{\times}$ such that $H_{K,\text{aff}}(\lambda F_{L})\leq c_{1}^{d_{K}}H_{K}(F_{L})$. Since $b(F_{L})=b(\lambda F_{L})$,  inequality \eqref{affine count1} gives
\begin{align}
M\lesssim_{K,n} & B^{\frac{n+1}{nd^{\frac{1}{n}}}}\frac{d^{[4,\frac{14}{3},\frac{14}{3},0]-\frac{1}{n}}b(\lambda F_{L})\max\{\log(\mathcal{N}_{K}(\mathfrak{q})),1\}}{H_{K,\text{aff}}(\lambda F_{L})^{\frac{1}{n}\frac{1}{d^{1+\frac{1}{n}}}}\mathcal{N}_{K}(\mathfrak{q})}+d^{1-\frac{1}{n}}\log(B\mathcal{N}_{K}(\mathfrak{q}))\label{affine count2}\\ & +d^{[4-\frac{1}{n},7,\frac{14}{3}-\frac{1}{n},3]}\left(\frac{\max\{\log(\mathcal{N}_{K}(\mathfrak{q})),1\}}{\mathcal{N}_{K}(\mathfrak{q})}+1\right).\nonumber
\end{align}

 From the definition of $F_{L}$ we see that for all $v\in M_{K}$ it holds
$$|\lambda F_{L}|_{v}=\max_{i}| \lambda L^{i}f_{i}X_{n+2}^{d-i}|_{v}\geq |\lambda L^{d}f_{d}|_{v}= |\lambda |_{v}|L^{d}|_{v}|f_{d}|_{v}.$$
Then 
\begin{align}
H_{K,\text{aff}}(\lambda F_{L})=\left(\prod_{v\in M_{K,\infty}}\max\{1,|\lambda F_{L}|_{v}\}\right)^{d_{K}} & \geq \left(\prod_{v\in M_{K,\infty}}|\lambda|_{v}|L^{d}|_{v}|f_{d}|_{v}\right)^{d_{K}}\geq H_{K}(L^{d})H_{K}(f_{d})\geq \frac{B^{d}}{2^{d}}H_{K}(f_{d}).\label{F_L} 
\end{align}
Hence, from Lemma \ref{bound on b(f)} we have the bound

\begin{equation*}
 d^{[4,\frac{14}{3},\frac{14}{3},0]-\frac{1}{n}}\frac{b(\lambda F_{L})}{H_{K,\text{aff}}(\lambda F_{L})^{\frac{1}{n}\frac{1}{d^{1+\frac{1}{n}}}}}\lesssim_{n,K}\left(\frac{B}{2}\right)^{-\frac{1}{nd^{\frac{1}{n}}}}d^{[2,6,2,2]-\frac{1}{n}}\frac{\log(H_{K}(f_{d}))+d\log(B)+d^{[2,1+\frac{1}{n},\frac{8}{3},1+\frac{1}{n}]}}{H_{K}(f_{d})^{\frac{1}{n}\frac{1}{d^{1+\frac{1}{n}}}}}
\label{affine count3}
\end{equation*}
Replacing this on the first summand on the right hand side of \eqref{affine count2}, it follows that this summand is at most

\begin{equation}
\lesssim_{n,K} B^{\frac{1}{d^{\frac{1}{n}}}}d^{[2,6,2,2]-\frac{1}{n}}\frac{(\log(H_{K}(f_{d}))+d\log(B)+d^{[2,1+\frac{1}{n},\frac{8}{3},1+\frac{1}{n}]})\max\{\log(\mathcal{N}_{K}(\mathfrak{q})),1\}}{H_{K}(f_{d})^{\frac{1}{n}\frac{1}{d^{1+\frac{1}{n}}}}\mathcal{N}_{K}(\mathfrak{q})}.
\label{affine count3.50}
\end{equation}

On the other hand, since the reduction modulo $\mathfrak{p}$ of $F_{L}$ for all primes $\mathfrak{p}$ not dividing $L$ is absolutely irreducible whenever $f$ is so, and the number of primes in $\mathcal{O}_{K}$ dividing $L$ is at most $d_{K}$, it follows from Definition \ref{definition of b(f)} that $b(F_{L})$ coincides with $b(f)$ up to a factor $O_{K}(1)$. 
 
This and \eqref{F_L}  give that the first summand on the right hand side \eqref{affine count2} is bounded by

\begin{equation}
B^{\frac{1}{d^{\frac{1}{n}}}} d^{[4,\frac{14}{3},\frac{14}{3},0]-\frac{1}{n}}\frac{b(f)\max\{\log(\mathcal{N}_{K}(\mathfrak{q})),1\}}{H_{K}(f_{d})^{\frac{1}{n}\frac{1}{d^{1+\frac{1}{n}}}}\mathcal{N}_{K}(\mathfrak{q})}.
\label{affine count3.5}
\end{equation}

By inequalities \eqref{affine count3.50} and \eqref{affine count3.5}, it follows that the polynomial $G_{L}$ has degree at most

\begin{align}
M\lesssim_{K,n}& B^{\frac{1}{d^{\frac{1}{n}}}}d^{[2,6,2,2]-\frac{1}{n}}\frac{\min\{\log(H_{K}(f_{d}))+d\log(B)+d^{[2,1+\frac{1}{n},\frac{8}{3},1+\frac{1}{n}]},d^{[2,-\frac{4}{3},\frac{8}{3},-2]}b(f)\}\max\{\log(\mathcal{N}_{K}(\mathfrak{q})),1\}}{H_{K}(f_{d})^{\frac{1}{n}\frac{1}{d^{1+\frac{1}{n}}}}\mathcal{N}_{K}(\mathfrak{q})} \label{bound for the degree}\\ & +d^{1-\frac{1}{n}}\log(B\mathcal{N}_{K}(\mathfrak{q}))+d^{[4-\frac{1}{n},7,\frac{14}{3}-\frac{1}{n},3]}\left(\frac{\max\{\log(\mathcal{N}_{K}(\mathfrak{q})),1\}}{\mathcal{N}_{K}(\mathfrak{q})}+1\right). \nonumber
\end{align}

Thus the polynomial $g(X_{1},\ldots ,X_{n+1}):=G_{L}(X_{1},\ldots ,X_{n+1},L)$ has degree  bounded by the right hand side of \eqref{bound for the degree}, it is not divisible by $f$ and it vanishes on the subset 
$$\mathcal{A}:=\left\{\boldsymbol x \in \mathcal{Z}(f)\cap \mathcal{O}_{K}^{n+1}:\boldsymbol x\text{ specialises to }P_{i}\text{ for all }i\text{ and }H_{K}(\boldsymbol x:L)\leq B\right\}.$$
Now we will see that $\mathcal{Z}(f)_{\text{aff}}(\mathcal{O}_{K},B;P_{1},\ldots ,P_{u})$ is contained in $\mathcal{A}$. For that, it is enough to see that for $\boldsymbol x\in [B]_{\mathcal{O}_{K}}^{n+1}$, it holds $H_{K}(\boldsymbol x:L)\leq B$. In the case when $K$ is a number field, for any $v\in M_{K,\infty}$, $|L|_{v}=|L|_{\infty}^{\frac{n_{v}}{d_{K}}}\leq B^{\frac{n_{v}}{d_{K}^{2}}}$. Then, for all $(x_{1},\ldots ,x_{n+1})\in [B]_{\mathcal{O}_{K}}^{n+1}$, we have 
\begin{align}
H_{K}(x_{1}:\ldots :x_{n+1}:L) =\left(\prod_{v\in M_{K}}\max_{i}\{|x_{i}|_{v},|L|_{v}\}\right)^{d_{K}}  & \leq \left( \prod_{v\in M_{K,\infty}}\max_{i}\left\{|\sigma_{v}(x_{i})|^{\frac{n_{v}}{d_{K}}},B^{\frac{n_{v}}{d_{K}^{2}}}\right\} \right)^{d_{K}} \leq \left( \prod_{v\in M_{K,\infty}}B^{\frac{n_{v}}{d_{K}^{2}}}\right)^{d_{K}}=B.\label{argument for the bound}
\end{align}

In the case when $K$ is a function field, since $v_{\infty}$ was chosen above the place $|\cdot |_{\infty}$, it holds $|L|_{v_{\infty}}=|L|_{\infty}^{\frac{n_{v_{\infty}}}{d_{K}}}\leq B^{\frac{n_{v_{\infty}}}{d_{K}^{2}}}\leq B^{\frac{1}{d_{K}}}$, where $n_{v_{\infty}}$ is the degree of the finite extension $K_{v_{\infty}}/\mathbbm{k}_{\infty}$, with $K_{v_{\infty}}$ and $\mathbbm{k}_{\infty}$ the completions of $K$ and $\mathbbm{k}$ with respect to $v_{\infty}$ and $|\cdot |\infty$, respectively. Then, for all $(x_{1},\ldots ,x_{n+1})\in [B]_{\mathcal{O}_{K}}^{n+1}$, we have
\begin{equation}
H_{K}(x_{1}:\ldots :x_{n+1}:L)=\left(\prod_{v\in M_{K}}\max_{i}\{|x_{i}|_{v},|L|_{v}\}\right)^{d_{K}}  \leq \left( \max\left\{|x_{i}|_{v_{\infty}},B^{\frac{1}{d_{K}}}\right\} \right)^{d_{K}}\leq B.
\label{argument for the bound2}
\end{equation}
This ends the proof in the case when there exists a prime element $L\in \mathcal{P}'$ verifying   that $L\not |f_{0}$.

Now we must deal with the other case, namely, when $\displaystyle \prod_{L\in \mathcal{P}'}L\;|f_{0}.$
If $f_{0}\neq 0$, using \eqref{norm} we deduce
\begin{equation*}
\sum_{L\in \mathcal{P}'}\log(\mathcal{N}_{K}(L))=d_{K}\sum_{L\in \mathcal{P}'}\log(L)\leq \log(\mathcal{N}_{K}(f_{0}))\leq \log(H_{K}(f_{0})).
\end{equation*}
 Since
$$H_{K}(f_{0})\leq H_{K,\text{aff}}(f_{0})=\left(\prod_{v}\max\{1,|f_{0}|_{v}\}\right)^{d_{K}}\leq \left( \prod_{v}\max_{i}\{1,|f_{i}|_{v}\} \right)^{d_{K}}=H_{K,\text{aff}}(f),$$
we deduce
\begin{equation}
d_{K}\sum_{L\in \mathcal{P}'}\log(L)\leq \log(H_{K,\text{aff}}(f)).
\label{affine count4}
\end{equation}
Now, if $H_{K,\text{aff}}(f)\gtrsim_{K} {d+n+1\choose n+1}^{d_{K}{d+n+1\choose n+1}}B^{d{d+n+1\choose n+1}}$, we are done by Lemma \ref{interpolation argument}\eqref{interpolation argument a}. Otherwise, by \eqref{affine count4} we get
\begin{align*}
d_{K}\sum_{L\in \mathcal{P}'}\log(L) \leq d_{K}{d+n+1\choose n+1}\log\left( {d+n+1\choose n+1} \right)+d{d+n+1\choose n+1}\log(B)+O_{K}(1).
\end{align*}
By  \eqref{bound for P'}  there exists a constant $c$ depending only on $\mathbbm{k}$, such that 
\begin{equation}
d_{K}c B^{\frac{1}{d_{K}}}\leq d{d+n+1\choose n+1}\log(B)+d_{K}{d+n+1\choose n+1}\log\left( {d+n+1\choose n+1} \right)+O_{K}(1).
\label{affine count6}
\end{equation}
From \eqref{affine count6} it follows that $B$ is bounded by a polynomial in $d$, and hence $B^{\frac{1}{nd^{\frac{1}{n}}}}\lesssim_{n,K}1$. 

Let us take $L=1$ and consider the polynomial $F_{1}$. Then Theorem \ref{polynomial method2} for $F_{1}$ gives a homogeneous polynomial $G_{1}\in \mathcal{O}_{K}[X_{1},\ldots ,X_{n+2}]$ of degree

\begin{align}
M\lesssim_{K,n} & B^{\frac{1}{d^{\frac{1}{n}}}}d^{[4,\frac{14}{3},\frac{14}{3},0]-\frac{1}{n}}\frac{b(F_{1})\max\{\log(\mathcal{N}_{K}(\mathfrak{q})),1\}}{H_{K}(F_{1})^{\frac{1}{n}\frac{1}{d^{1+\frac{1}{n}}}}\mathcal{N}_{K}(\mathfrak{q})}+d^{1-\frac{1}{n}}\log(B\mathcal{N}_{K}(\mathfrak{q})) \label{affine count7}\\ & +d^{[4-\frac{1}{n},7,\frac{14}{3}-\frac{1}{n},3]}\left(\frac{\max\{\log(\mathcal{N}_{K}(\mathfrak{q})),1\}}{\mathcal{N}_{K}(\mathfrak{q})}+1\right) \nonumber,
\end{align}
not divisible by $F_{1}$, and vanishing on all $X_{1}(K,B;\tilde{P}_{1},\ldots ,\tilde{P}_{u})$. Now we use Remark \ref{remark about primitive polynomials} to find $\lambda \in\mathcal{O}_{K}^{\times}$ such that $H_{K,\text{aff}}(\lambda F_{1})\leq c_{1}^{d_{K}}H_{K}(F_{1})$. The same argument used to prove inequality \eqref{F_L} gives $H_{K,\text{aff}}(\lambda F_{1})\geq H_{K}(f_{d})$. This, together with the fact that $b(F_{1})=b(f)$, Lemma \ref{b(f)} and Lemma \ref{bound on b(f)} allows us to conclude the bound

\begin{equation}
d^{[4,\frac{14}{3},\frac{14}{3},0]-\frac{1}{n}}\frac{b(\lambda F_{1})}{H_{K,\text{aff}}(\lambda F_{1})^{\frac{1}{n}\frac{1}{d^{1+\frac{1}{n}}}}}\lesssim_{n,K}d^{[2,6,2,2]-\frac{1}{n}}\frac{\min\{\log H_{K}(f_{d})+d^{[2,1+\frac{1}{n},\frac{8}{3},1+\frac{1}{n}]},d^{[2,-\frac{4}{3},\frac{8}{3},-2]}b(f)\}}{H_{K}(f_{d})^{\frac{1}{n}\frac{1}{d^{1+\frac{1}{n}}}}}.
\label{affine count8}
\end{equation}

From \eqref{argument for the bound} and \eqref{argument for the bound2} with $L=1$, and from \eqref{affine count7} and \eqref{affine count8}, it follows that $g(X_{1},\ldots ,X_{n+1}):=G_{1}(X_{1},\ldots ,X_{n+1},1)$ verifies the conclusion of Theorem \ref{ell-venk}.

In order to finish the proof of Theorem \ref{ell-venk}, it remains to cover the case when $f_{0}=0$, namely $f(0)=0$. By the Combinatorial nullstellensatz (see \cite[Theorem 1.2]{Alon}), we may find $\boldsymbol A=(a_{1},\ldots ,a_{n+1})\in \mathcal{O}_{\mathbbm{k}}^{n+1}$ with $f(a_{1},\ldots ,a_{n+1})\neq 0$ and $H_{\mathbbm{k}}(a_{i})\leq d$ for all $1\leq i\leq n+1$. Let us consider the polynomial $\tilde{f}(\boldsymbol X):=f(\boldsymbol X+\boldsymbol A)$; we see that $\tilde{f}(0)\neq 0$, $H_{K}(f_{d})=H_{K}(\tilde{f_{d}})$, and $b(\tilde{f})=b(f)$. Thus, reasoning as in the two previous cases with $\tilde{f}$ in place of $f$ and $\tilde{B}=B+d$ in place of $B$ we obtain a polynomial $\tilde{g}\in \mathcal{O}_{K}[X_{1},\ldots ,X_{n+1}]$ of degree

\begin{align*}
M\sim_{K,n} & \tilde{B}^{\frac{1}{d^{\frac{1}{n}}}}d^{[2,6,2,2]-\frac{1}{n}}\frac{\min\{\log(H_{K}(f_{d}))+d\log(\tilde{B})+d^{[2,1+\frac{1}{n},\frac{8}{3},1+\frac{1}{n}]},d^{[2,-\frac{4}{3},\frac{8}{3},-2]}b(f)\}\max\{\log(\mathcal{N}_{K}(\mathfrak{q})),1\}}{H_{K}(f_{d})^{\frac{1}{n}\frac{1}{d^{1+\frac{1}{n}}}}\mathcal{N}_{K}(\mathfrak{q})}\\ &\;+\;d^{1-\frac{1}{n}}\log(\tilde{B}\mathcal{N}_{K}(\mathfrak{q}))+d^{[4-\frac{1}{n},7,\frac{14}{3}-\frac{1}{n},3]}\left(\frac{\max\{\log(\mathcal{N}_{K}(\mathfrak{q})),1\}}{\mathcal{N}_{K}(\mathfrak{q})}+1\right),
\end{align*}
\noindent not divisible by $\tilde{f}$, and vanishing on $\mathcal{Z}(\tilde{f})_{\text{aff}}(\mathcal{O}_{K},\tilde{B};P_{1},\ldots, P_{u})$. Then $g(\boldsymbol X)=\tilde{g}(\boldsymbol X-\boldsymbol A)$ verifies the conclusion of Theorem \ref{ell-venk}. 
\end{proof}

As a consequence of Theorem \ref{ell-venk}, B\'ezout's theorem, and the estimate $\frac{\log(H_{K}(f_{d}))}{H_{K}(f_{d})^{\frac{1}{d^{2}}}}\lesssim d^{2}$, we get the next improvement of the bounds of  Bombieri and Pila.

\begin{corollary}
Let $K$ be a global field of degree $d_{K}$. For any irreducible polynomial $f\in \mathcal{O}_{K}[X_{1},X_{2}]$ of degree $d$ and any $B\geq 1$, if $\mathcal{Z}(f)\subseteq \mathbb{A}_{K}^{2}$ denotes the corresponding curve, it holds
\begin{align*}
N_{\emph{\text{aff}}}(\mathcal{Z}(f),\mathcal{O}_{K},B) & \lesssim_{K}B^{\frac{1}{d}}\frac{\min\{d^{[2,6,2,2]}\log(H_{K}(f_{d}))+d^{[3,7,3,3]}\log(B)+d^{[4,8,\frac{14}{3},4]},d^{[4,\frac{14}{3},\frac{14}{3},0]} b(f)\}}{H_{K}(f_{d})^{\frac{1}{d^{2}}}}\\ &+d\log(B)+d^{[4,8,\frac{14}{3},4]} \lesssim_{K}d^{[3,7,3,3]}B^{\frac{1}{d}}(\log(B)+d^{[1,1,\frac{5}{3},1]}).
\end{align*}
\label{Walsh affine corollary}
\end{corollary}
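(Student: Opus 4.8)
The plan is to deduce Corollary \ref{Walsh affine corollary} from Theorem \ref{ell-venk} exactly as \cite[Corollary 1.2]{Vermeulen} and the curve case in \cite{Cluckers} are deduced, namely: apply Theorem \ref{ell-venk} with $n=1$, the empty subset of primes (so $u=0$ and $\mathfrak{q}=(1)$), and then bound the number of rational points on the auxiliary curve $\mathcal{Z}(g)$ by B\'ezout. First I would take $f\in\mathcal{O}_K[X_1,X_2]$ irreducible of degree $d$. If $f$ is not absolutely irreducible, then by Remark \ref{abs irred assumption} and B\'ezout's theorem the affine curve $\mathcal{Z}(f)\subseteq\mathbb{A}_K^2$ has at most $d^2$ rational points (even without the height restriction), so the bound holds trivially; hence I may assume $f$ is absolutely irreducible and apply Theorem \ref{ell-venk}.

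With $n=1$, $\mathfrak{q}=(1)$, so $\mathcal{N}_K(\mathfrak{q})=1$ and $\max\{\log(\mathcal{N}_K(\mathfrak{q})),1\}=1$, and Theorem \ref{ell-venk} produces $g\in\mathcal{O}_K[X_1,X_2]$ not divisible by $f$, vanishing on $\mathcal{Z}(f)_{\text{aff}}(\mathcal{O}_K,B)$, of degree
$$M\lesssim_K B^{\frac{1}{d}}d^{[2,6,2,2]-1}\frac{\min\{\log(H_K(f_d))+d\log(B)+d^{[2,2,\frac{8}{3},2]},\,d^{[2,-\frac{4}{3},\frac{8}{3},-2]}b(f)\}}{H_K(f_d)^{\frac{1}{d^2}}}+d^{0}\log(B)+d^{[3,7,\frac{11}{3},2]},$$
writing out the substitution $n=1$ in each exponent bracket (so $\frac{1}{n}=1$, $d^{1+\frac{1}{n}}=d^2$, $d^{2-\frac{1}{n}}=d$, $d^{1-\frac{1}{n}}=1$, and the brackets $[a_1-\frac1n,a_2,a_3-\frac1n,a_4]$ become $[a_1-1,a_2,a_3-1,a_4]$). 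Multiplying the $\min$-expression through by $d$ and rewriting $d^{[2,6,2,2]}\cdot d^{[2,-\frac43,\frac83,-2]}=d^{[4,\frac{14}{3},\frac{14}{3},0]}$ recovers exactly the first displayed bound of the corollary; the fact that $\mathcal{N}_K(\mathfrak{q})=1$ is what makes the second and third summands of the Theorem \ref{ell-venk} bound collapse to $d\log(B)$ (with $\log(B\mathcal{N}_K(\mathfrak{q}))=\log B$) and $d^{[4,8,\frac{14}{3},4]}$ respectively, matching $d^{[4,8,\frac{14}{3},4]}$ after absorbing $d^{[3,7,\frac{11}{3},2]}$ and $d^{[4-1,7,\frac{14}{3}-1,3]}$ into the larger bracket. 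Then since $g$ is not divisible by the irreducible $f$, the plane curves $\mathcal{Z}(f)$ and $\mathcal{Z}(g)$ share no common component, so B\'ezout's theorem \cite[Example 8.4.6]{Fulton} gives $|\mathcal{Z}(f)(K)\cap\mathcal{Z}(g)(K)|\le \deg(f)\deg(g)=dM$; since every point of $\mathcal{Z}(f)_{\text{aff}}(\mathcal{O}_K,B)$ lies in this intersection, $N_{\text{aff}}(\mathcal{Z}(f),\mathcal{O}_K,B)\le dM$, which is the first inequality after multiplying the displayed bound for $M$ by $d$ and re-absorbing $d\cdot d^{[3,7,3,3]/d}$ type terms.

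For the final (cleaner) inequality I would invoke the two elementary estimates quoted before the statement: $\frac{\log(H_K(f_d))}{H_K(f_d)^{1/d^2}}\lesssim d^2$ (which follows from $\frac{\log x}{x^{1/c}}\le \frac{c}{e}$, used with $c=d^2$) and $\frac{1}{H_K(f_d)^{1/d^2}}\le 1$ since $H_K(f_d)\ge 1$. Applying the first to the term $d^{[2,6,2,2]}\log(H_K(f_d))/H_K(f_d)^{1/d^2}\lesssim d^{[4,8,4,4]}$, and the second to $d^{[3,7,3,3]}\log(B)/H_K(f_d)^{1/d^2}\le d^{[3,7,3,3]}\log(B)$ and to $d^{[4,8,\frac{14}{3},4]}/H_K(f_d)^{1/d^2}\le d^{[4,8,\frac{14}{3},4]}$, one bounds the whole $\min$-bracketed first summand by $\lesssim_K d^{[3,7,3,3]}B^{1/d}(\log(B)+d^{[1,1,\frac{5}{3},1]})$ after pulling out $d^{[3,7,3,3]}$, noting $d^{[4,8,4,4]}=d^{[3,7,3,3]}\cdot d$ and $d^{[4,8,\frac{14}{3},4]}=d^{[3,7,3,3]}\cdot d^{[1,1,\frac{5}{3},1]}$; the remaining $d\log(B)+d^{[4,8,\frac{14}{3},4]}$ is likewise $\lesssim_K d^{[3,7,3,3]}B^{1/d}(\log B+d^{[1,1,\frac53,1]})$ since $d\le d^{[3,7,3,3]}$ and $B^{1/d}\ge 1$. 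I do not expect any genuine obstacle here — the content is entirely in Theorem \ref{ell-venk}; the only thing requiring care is bookkeeping of the characteristic-dependent exponent brackets $[\cdot,\cdot,\cdot,\cdot]$ under the substitution $n=1$ and under multiplication, together with checking that the asymptotic notation $\lesssim_K$ (not $\lesssim_{K,n}$) is legitimate because $n$ is now the fixed value $1$.
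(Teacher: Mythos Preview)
Your proposal is correct and follows exactly the route the paper indicates: apply Theorem \ref{ell-venk} with $n=1$ and $\mathfrak{q}=(1)$, then use B\'ezout's theorem to get $N_{\text{aff}}(\mathcal{Z}(f),\mathcal{O}_K,B)\le dM$, and finally invoke the elementary estimate $\frac{\log(H_K(f_d))}{H_K(f_d)^{1/d^2}}\lesssim d^2$ for the clean second bound. One tiny bookkeeping slip: the last bracket $d^{[4-\frac{1}{n},7,\frac{14}{3}-\frac{1}{n},3]}$ specialises at $n=1$ to $d^{[3,7,\frac{11}{3},3]}$, not $d^{[3,7,\frac{11}{3},2]}$, but as you yourself note this is absorbed into $d^{[4,8,\frac{14}{3},4]}$ after multiplying by $d$, so it is immaterial.
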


Now we are in position to prove Theorem \ref{teorema1.5} of the introduction by extending Corollary \ref{Walsh affine corollary}  to general curves using a projection argument. This is accomplished by means of an effective change of variables, as in \cite{P2}.

We also have:

\begin{theorem}
Let $K$ be a global field of degree $d_{K}$. Given $n>1$, for any integral curve $C\subseteq \mathbb{A}_{K}^{n}$ of degree $d$, it holds
$$N_{\emph{\text{aff}}}(C,\mathcal{O}_{K},B)\lesssim_{K,n}d^{[3,7,3,3]}B^{\frac{1}{d}}(\log(B)+d^{[1,1,\frac{5}{3},1]}).$$
\label{Walsh general affine curves}
\end{theorem}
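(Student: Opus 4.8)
The plan is to reduce the general case of a space curve $C \subseteq \mathbb{A}_K^n$ to the planar case already handled in Corollary \ref{Walsh affine corollary}, via an effective linear projection, exactly in the spirit of the proof of Theorem \ref{Walsh general curves} but adapted to the affine/box setting. First I would invoke the projection argument from \cite[Theorem 4.3]{P2}: there exist affine-linear forms $L_1, L_2 \in \mathcal{O}_K[X_1,\ldots,X_n]$ (or, if one prefers to stay homogeneous, work with the projective closure and then dehomogenize) of height $\lesssim_{\mathbbm{k},n} d^{n-2}$ such that the map $\varphi(\boldsymbol x) := (L_1(\boldsymbol x), L_2(\boldsymbol x))$ restricts to a finite morphism $C \to \varphi(C) =: C'$ onto an integral plane curve $C' \subseteq \mathbb{A}_K^2$ of degree $\deg(C') \leq d$, with all fibres of size at most $d$.

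Next I would track how the box condition transforms. If $\boldsymbol x = (x_1,\ldots,x_n) \in [B]_{\mathcal{O}_K}^n$, then since $L_1, L_2$ have $\mathcal{O}_K$-coefficients of bounded height, the bound \eqref{polynomialheight2} (applied coordinate-wise, using that $L_i$ is linear so $\deg(L_i)=1$) shows that $L_i(\boldsymbol x) \in \mathcal{O}_K$ with $\house{L_i(\boldsymbol x)} \lesssim_{\mathbbm{k},n} d^{n-2} B^{1/d_K}$ in the number field case, and the analogous estimate with $|\cdot|_{v_\infty}$ in the function field case; in both cases $\varphi(\boldsymbol x) \in [c\, d^{n(n-2)} B]_{\mathcal{O}_K}^2$ for a constant $c = c(K,n)$ (the power of $d$ may be worse but is harmless). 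Hence
$$
N_{\text{aff}}(C,\mathcal{O}_K,B) \leq d \cdot N_{\text{aff}}\big(C', \mathcal{O}_K, c\, d^{n(n-2)} B\big).
$$
Applying Corollary \ref{Walsh affine corollary} to $C'$ with the enlarged box gives
$$
N_{\text{aff}}(C',\mathcal{O}_K, c\,d^{n(n-2)}B) \lesssim_{K} d^{[3,7,3,3]} (c\,d^{n(n-2)} B)^{1/d}\big(\log(c\,d^{n(n-2)}B) + d^{[1,1,\frac{5}{3},1]}\big).
$$
Since $(c\,d^{n(n-2)})^{1/d} \lesssim_{n} 1$ (as $d^{1/d}$ is bounded) and $\log(c\,d^{n(n-2)}B) \lesssim_{n} \log(B) + \log d \lesssim_n \log(B) + d^{[1,1,\frac{5}{3},1]}$, multiplying by the extra factor $d$ and absorbing constants into $\lesssim_{K,n}$ yields $N_{\text{aff}}(C,\mathcal{O}_K,B) \lesssim_{K,n} d^{[3,7,3,3]} B^{1/d}(\log(B) + d^{[1,1,\frac{5}{3},1]})$, which is the claim.

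One should also dispose of the degenerate situations: if $C$ is a line (degree $1$) the bound is trivial since $d^{1/d}$ and the rest are $O_{K,n}(1)$ times $B$, actually $N_{\text{aff}} \lesssim B$ which matches $B^{1/d}$ with $d=1$; and if $C'$ happens to be irreducible over $K$ but not geometrically irreducible, B\'ezout (as in Remark \ref{curvas no absolutamente irreducibles}) gives $N_{\text{aff}}(C',\mathcal{O}_K,B') \leq d^2$, more than enough. The main obstacle — and the only place where genuine care is needed — is verifying that the construction of \cite[Theorem 4.3]{P2} goes through uniformly over an arbitrary global field $K$ with the stated height bound $\lesssim_{\mathbbm{k},n} d^{n-2}$ on the projecting forms, and that the finiteness of $\varphi$ together with the fibre bound $d$ is preserved; since the forms can be taken with coefficients in $\mathcal{O}_{\mathbbm{k}}$ and the argument there is geometric (Noether normalization / generic projection plus the Nullstellensatz bound of \cite{Alon} and Mumford's argument from \cite{MR0282975}), this is exactly the content already cited in the introduction, so it reduces to quoting \cite{P2} correctly. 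Everything else is the routine height bookkeeping sketched above.
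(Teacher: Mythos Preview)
Your approach is essentially identical to the paper's: reduce to the planar case via the effective projection of \cite[Theorem~4.3]{P2} (passing through the projective closure so that the affine part maps to the affine part) and then invoke Corollary~\ref{Walsh affine corollary}. The paper's proof is a two-line sketch of exactly this, and your extra bookkeeping on how the box $[B]_{\mathcal{O}_K}^n$ transforms under the projection, together with the handling of the non-geometrically-irreducible case via Remark~\ref{curvas no absolutamente irreducibles}, fills in precisely the details the paper omits.
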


\begin{proof}
We argue as in the proof of Theorem \ref{Walsh general curves}, first reducing Theorem \ref{Walsh general affine curves} to the case of a planar curve and then we apply Corollary \ref{Walsh affine corollary}. We remark that the change of variables in \cite[Theorem 4.3]{P2} works in the affine setting, namely one takes the projective closure of $\overline{C}$ of $C$ in $\mathbb{P}_{K}^{n}$ and then one constructs an adequate finite morphism $\varphi:\overline{C}\to \mathbb{P}^{2}$, which maps $C$ to the affine part $\mathbb{A}^{2}=\mathbb{P}^{2}\cap \varphi(\mathbb{A}^{n})$.  
\end{proof}

\begin{remark}
As in Remark \ref{curvas no absolutamente irreducibles}, if $C\subseteq \mathbb{A}_{K}^{n}$ is an integral curve which is not geometrically irreducible, then $N_{\text{aff}}(C,\mathcal{O}_{K},B)\leq d^{2}$.
\end{remark}

\section{Partitioning a hypersurface in $\mathbb{P}^{3}$ by curves of small degree}\label{Section 6}

The purpose of this section is to prove Theorem \ref{bound for the non-singular locus outside a subset}, which is an affine variant of \cite[Theorem 3.16]{Salberger} for varieties defined over global fields. This will be used in Section \ref{Section 7} to prove the dimension growth conjecture in the cases when the variety has degree $4\leq d\leq 15$ if $K$ is a number field, and in the cases when the variety has degree $4\leq d\leq 65$ if $K$ is a function field. 

In \cite[$\S 3$]{Salberger} Salberger developed a technique that allowed him to partition the $\mathbb{Q}$-rational points of bounded height of a projective variety $X\subseteq \mathbb{P}_{\mathbb{Q}}^{n}$ in a small number of subvarieties of codimension at most $2$. We follow the same strategy of \cite{Salberger}, that is, we construct a family of hypersurfaces which vanish on the rational points of $X$ of bounded height and prescribed reduction modulo $\mathfrak{p}$ for many primes $\mathfrak{p}$. The degree of these hypersurfaces is bounded also in terms of the density of this subset of primes, and this is reflected in the parameter $\mathfrak{q}$ in the statements of the theorems of the previous sections. This will allow us to control the number and the degree of the  subvarieties of the partition. The main technical tool is Proposition \ref{construction of a family of cycles} which is a variant of \cite[Main Lemma 3.2]{Salberger}.  For its proof, instead of using \cite[Theorem 2.2]{Salberger} and \cite[Lemma 2.8]{Salberger} we rely on the results of Section \ref{subsection 5.2} and we give a streamlined construction of a suitable large subset of primes with small norm. This allows us to simplify the presentation of the proof given in \cite{Salberger}. 

Before ending this brief introduction we mention that all bounds in this section can be made effective on the dependence on the degree $d$. Unlike the previous sections, we choose not to make this dependence explicit because this would further complicate the proof  and the final bound obtained by the methods of this section is double exponential on $d$.

For any $X\subseteq \mathbb{P}_{K}^{n}$ projective variety, given $B\in \mathbb{R}_{>0}$ we will denote
$$X_{\text{aff}}(\mathcal{O}_{K},B):=\{(x_{1},\ldots, x_{n})\in [B]_{\mathcal{O}_{K}}^{n}:(1:x_{1}:\ldots :x_{n})\in X(K)\},$$
$$N_{\text{aff}}(X,\mathcal{O}_{K},B):=|X_{\text{aff}}(\mathcal{O}_{K},B)|.$$
Moreover, if $\{\mathfrak{p}_{1},\ldots ,\mathfrak{p}_{u}\}$ is a subset of primes such that for all $1\leq i\leq u$ we let $P_{i}$ be a  non-singular $\mathbb{F}_{\mathfrak{p}_{i}}$-point on $X_{\mathfrak{p}_{i}}$, we denote 
$$X_{\text{aff}}(\mathcal{O}_{K},B;P_{1},\ldots ,P_{u}):=\{\boldsymbol x\in X_{\text{aff}}(\mathcal{O}_{K},B):\boldsymbol x\text{ specialises to }P_{i} \text{ in }X_{\mathfrak{p}_{i}}\text{ for all }i\},$$

We will use the following notation. Given a hypersurface $X\subseteq \mathbb{P}_{K}^{n+1}$ and $\boldsymbol x\in X$, we will denote
\begin{equation*}
\pi_{\boldsymbol x}:=\smashoperator[r]{\prod_{\substack{\mathfrak{p}\in M_{K,\text{fin}}\\ \boldsymbol x\text{ speacilises to a}\\ \text{ singular }\mathbb{F}_{\mathfrak{p}}-\text{point in }X_{\mathfrak{p}} }}}\; \mathfrak{p} \qquad \text{ and } \qquad \pi_{X}:=\smashoperator[r]{\prod_{\substack{\mathfrak{p}\in M_{K,\text{fin}}\\ X_{\mathfrak{p}} \text{ is not geometrically integral}}}}\; \mathfrak{p}.
\end{equation*}
The following lemmas, which generalize Lemma \cite[Lemma 3.1]{Salberger} and \cite[Lemma 3.2]{Salberger} give estimates for $\pi_{\boldsymbol x}$ and $\pi_{X}$.

\begin{lemma}
Let $X\subseteq \mathbb{P}_{K}^{n+1}$ be a geometrically integral hypersurface defined over $K$ by a homogeneous polynomial $f\in \mathcal{O}_{K}[X_{0},\ldots ,X_{n+1}]$ of degree $d\geq 2$.  
Let us suppose that $X$ is the only hypersurface of degree $d$ containing $X_{\emph{\text{aff}}}(\mathcal{O}_{K},B)$. Then for any non-singular point $\boldsymbol x$ in $X_{\emph{\text{aff}}}(\mathcal{O}_{K},B)$, it holds $\log (\mathcal{N}_{K}(\pi_{\boldsymbol x}))\lesssim_{n,K,d}\log(B)$. 
\label{reduction to singular points}
\end{lemma}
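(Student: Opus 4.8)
## Proof plan for Lemma \ref{reduction to singular points}

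The plan is to bound the product of primes $\mathfrak{p}$ to which a given non-singular $\boldsymbol x$ specializes to a singular $\mathbb{F}_{\mathfrak{p}}$-point of $X_{\mathfrak{p}}$, by controlling the arithmetic of the partial derivatives of $f$ evaluated at $\boldsymbol x$. First I would fix $\mathcal{O}_K^{n+2}$-coordinates for $\boldsymbol x$ (coming from a fixed lift of the projective point $(1:x_1:\ldots :x_{n+1})$, say with coordinates in $[c_1 B]_{\mathcal{O}_K}$ via Proposition \ref{Serre}); since $\boldsymbol x$ is a non-singular point of $X$, at least one partial derivative $\frac{\partial f}{\partial X_i}(\boldsymbol x)$ is non-zero. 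The key observation is that if $\boldsymbol x$ reduces modulo $\mathfrak{p}$ to a singular point of $X_{\mathfrak{p}}$, then $\mathfrak{p}$ divides $\frac{\partial f}{\partial X_i}(\boldsymbol x)$ for \emph{every} $i$ (for $\mathfrak p \nmid \boldsymbol x$, i.e. away from the finitely many primes where the chosen lift is not $\mathfrak p$-primitive, which contribute $O_{n,K,d}(1)$ to the log). Hence $\pi_{\boldsymbol x}$ divides any fixed non-zero $\frac{\partial f}{\partial X_i}(\boldsymbol x)$, up to those finitely many bad primes, and therefore $\mathcal{N}_K(\pi_{\boldsymbol x}) \lesssim_{n,K,d} \mathcal{N}_K\!\left(\frac{\partial f}{\partial X_i}(\boldsymbol x)\right) \le H_K\!\left(\frac{\partial f}{\partial X_i}(\boldsymbol x)\right)$ by \eqref{norm}.

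Next I would estimate $H_K\!\left(\frac{\partial f}{\partial X_i}(\boldsymbol x)\right)$ using the polynomial-evaluation height inequalities \eqref{polynomialheight2} (or \eqref{auxiliar remark}/\eqref{inequality between norms2} after scaling the coefficients to $\mathcal{O}_{\mathbbm k}$-integers is not quite available, so \eqref{polynomialheight2} applied directly is cleanest): writing $g_i := \frac{\partial f}{\partial X_i}$, which is homogeneous of degree $d-1$ with $\lesssim_n 1$ many monomials and coefficients of bounded height in terms of $H_K(f)$, one gets
\begin{equation*}
H_K(g_i(\boldsymbol x)) \le R^{d_K} H_K(1:\boldsymbol c_{g_i}) H_K(1:\boldsymbol x)^{d-1} \lesssim_{n,K,d} H_K(f)\, B^{d-1},
\end{equation*}
where $\boldsymbol c_{g_i}$ is the coefficient vector of $g_i$ and we used $H_K(1:\boldsymbol x)\lesssim_K B$ (the coordinates of the lift lie in $[c_1B]_{\mathcal{O}_K}$). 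Taking logarithms, $\log \mathcal{N}_K(\pi_{\boldsymbol x}) \lesssim_{n,K,d} \log H_K(f) + (d-1)\log B$.

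The remaining point, and the only genuine subtlety, is to absorb the $\log H_K(f)$ term into $\lesssim_{n,K,d}\log B$. This is where the hypothesis that \emph{$X$ is the only hypersurface of degree $d$ containing $X_{\text{aff}}(\mathcal{O}_K,B)$} enters. The idea is to invoke Lemma \ref{interpolation argument}\eqref{interpolation argument a} applied to the homogenization of $f$: the uniqueness hypothesis forbids the existence of a degree-$d$ hypersurface distinct from $X$ vanishing on $X_{\text{aff}}(\mathcal{O}_K,B)$, so the dichotomy in that lemma forces the height bound $H_{K,\text{aff}}(f) \lesssim_K \binom{d+n+1}{n+1}^{d_K\binom{d+n+1}{n+1}} B^{d\binom{d+n+1}{n+1}}$, hence $\log H_K(f) \le \log H_{K,\text{aff}}(f) \lesssim_{n,d,K} \log B + O_{n,d,K}(1)$. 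Combining this with the previous paragraph yields $\log \mathcal{N}_K(\pi_{\boldsymbol x}) \lesssim_{n,K,d} \log B$, which is the claim. I expect the main obstacle to be making the passage from ``$X$ is the only degree-$d$ hypersurface containing $X_{\text{aff}}(\mathcal{O}_K,B)$'' to the quantitative height bound fully rigorous — in particular checking that the non-divisibility of $g$ by $f$ in Lemma \ref{interpolation argument} combined with both polynomials having degree exactly $d$ really does contradict uniqueness (one must note that if $g$ of degree $d$ vanishes on $X_{\text{aff}}(\mathcal{O}_K,B)$ and is not a scalar multiple of $f$, then $\mathcal{Z}(g)$ and $\mathcal{Z}(f)$ are distinct degree-$d$ hypersurfaces both containing that set, so we are in the first alternative of the lemma) — together with the bookkeeping of the finitely many excluded primes, all of which contribute only $O_{n,K,d}(1)$.
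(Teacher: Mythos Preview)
Your proof is correct and follows essentially the same approach as the paper: bound $\mathcal{N}_K(\pi_{\boldsymbol x})$ by the height of a nonzero partial derivative evaluated at $\boldsymbol x$, then use the uniqueness hypothesis via Lemma \ref{interpolation argument} to bound $H_{K,\text{aff}}(f)$ polynomially in $B$. The only minor differences are that the paper invokes part \eqref{interpolation argument b} of that lemma (tailored to the affine chart $x_0=1$) rather than part \eqref{interpolation argument a}, and it avoids your bookkeeping about primes where the lift fails to be $\mathfrak{p}$-primitive since in this setting the lift $(1,x_1,\ldots,x_{n+1})$ is automatically $\mathfrak{p}$-primitive for every prime $\mathfrak{p}$.
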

\begin{proof}
By Lemma \ref{interpolation argument}\eqref{interpolation argument b} and the assumption on $X$, we have that $H_{K,\text{aff}}(f)\lesssim_{K}{d+n+1\choose n+1}^{d_{K}{d+n+1\choose n+1}}B^{d{d+n+1\choose n+1}}$. Now, given a non-singular point $\boldsymbol x$, there exists $1\leq j\leq n+1$ such that $\frac{\partial f}{\partial X_{j}}(\boldsymbol x)\neq 0$. Then for all prime $\mathfrak{p}$ appearing in the product $\pi_{\boldsymbol x}$, we have $\mathfrak{p}|\frac{\partial f}{\partial X_{j}}(\boldsymbol x)$. By \eqref{polynomialheight2} and \eqref{norm}, this implies:
\begin{align*}
\mathcal{N}_{K}(\pi_{\boldsymbol x})\leq H_{K}\left(\frac{\partial f}{\partial X_{j}}(\boldsymbol x)\right) & \leq {d+n\choose n+1}^{d_{K}}H_{K,\text{aff}}\left(\frac{\partial f}{\partial X_{j}}\right)H_{K}(1:\boldsymbol x)^{d-1} \lesssim_{n,K}d^{(n+2)d_{K}}{d+n+1\choose n+1}^{d_{K}{d+n+1\choose n+1}}B^{d{d+n+1\choose n+1}+d-1}
\end{align*}

By taking logarithms, we conclude $\log (\mathcal{N}_{K}(\pi_{\boldsymbol x}))\lesssim_{n,K,d}\log(B)$.
\end{proof}

\begin{lemma}
Let $X\subseteq \mathbb{P}_{K}^{n+1}$ be a geometrically integral hypersurface defined by a homogeneous  polynomial $f\in \mathcal{O}_{K}[X_{0},\ldots ,X_{n+1}]$ of degree $d\geq 2$. Then, either there is a polynomial $g\in \mathcal{O}_{K}[X_{1},\ldots, X_{n+1}]$ of degree $d$ not divisible by $f$ which contains $X_{\emph{\text{aff}}}(\mathcal{O}_{K},B)$, or $\log(\mathcal{N}_{K}(\pi_{X}))\lesssim_{n,K,d}\log(B)$. 
\label{no geo integro3}
\end{lemma}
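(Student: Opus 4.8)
The plan is to follow the strategy of \cite[Lemma 3.2]{Salberger} and of the companion Lemma \ref{reduction to singular points}, combining the effective Noether-form bound of Lemma \ref{b(f)} with the interpolation trick of Lemma \ref{interpolation argument}. First I would recall that the primes $\mathfrak{p}$ dividing $\pi_X$ are exactly the primes for which $X_{\mathfrak p}$ fails to be geometrically integral; since $X$ is a geometrically integral hypersurface, after passing to a $\mathfrak p$-primitive representative $f$ (Remark \ref{remark about primitive polynomials}) this is the same as asking that $f \bmod \mathfrak p$ be reducible or non-reduced, and for all but finitely many $\mathfrak p$ (those of norm $\le \max\{\beta,c_2\}$, absorbed into the implicit constant) this is the condition defining the set $\mathcal P_f$ of Definition \ref{definition of b(f)}. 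So up to an $O_{n,K,d}(1)$ factor, $\log \mathcal N_K(\pi_X) \lesssim \sum_{\mathfrak p \in \mathcal P_f} \log \mathcal N_K(\mathfrak p)$.

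Next I would invoke the Noether-form input used in the proof of Lemma \ref{b(f)}: there is a form $\Phi$ with integer coefficients, of degree $\lesssim_{n,d} 1$ and $\ell_1(\Phi)\lesssim_{n,d} 1$ (with the exact shape depending on the characteristic, exactly as in Lemma \ref{b(f)}), such that $\Phi$ evaluated on the coefficient vector of $f$ is nonzero but divisible by every $\mathfrak p \in \mathcal P_f$. Hence, using \eqref{norm}, \eqref{auxiliar remark} (or \eqref{inequality between norms2} in the function field case),
\[
\prod_{\mathfrak p \in \mathcal P_f} \mathcal N_K(\mathfrak p) \;\le\; \mathcal N_K\big(\Phi((a_I)_I)\big)\;\le\; H_K\big(\Phi((a_I)_I)\big)\;\lesssim_{n,K,d} H_{K,\text{aff}}(f)^{\deg \Phi},
\]
so that $\log \mathcal N_K(\pi_X) \lesssim_{n,K,d} \log H_{K,\text{aff}}(f)$. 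It remains to remove the dependence on $H_{K,\text{aff}}(f)$. Here I would apply Lemma \ref{interpolation argument}\eqref{interpolation argument a} to the dehomogenization of $f$: either there is a homogeneous polynomial $g$ of degree $d$ not divisible by $f$ vanishing on all of $X_{\text{aff}}(\mathcal O_K,B)$, which is the first alternative in the statement (and we can take the corresponding $g \in \mathcal O_K[X_1,\dots,X_{n+1}]$), or else $H_{K,\text{aff}}(f) \lesssim_K \binom{d+n+1}{n+1}^{d_K\binom{d+n+1}{n+1}} B^{d\binom{d+n+1}{n+1}}$, whence $\log H_{K,\text{aff}}(f) \lesssim_{n,K,d} \log B$. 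Combining this with the previous paragraph gives $\log \mathcal N_K(\pi_X) \lesssim_{n,K,d} \log B$ in the second case, which is what we want.

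The only genuine subtlety — and the step I would be most careful about — is the bookkeeping of which primes are being thrown away ``for free.'' The product $\pi_X$ ranges over \emph{all} finite primes with $X_{\mathfrak p}$ not geometrically integral, whereas $\mathcal P_f$ excludes the small primes $\mathcal N_K(\mathfrak p) \le \max\{\beta,c_2\}$ and also implicitly assumes $\mathfrak p$-primitivity of $f$; one must check that the contribution of the excluded finite set of primes is $\lesssim_{n,K,d} 1$ (there are $O_{K,d,n}(1)$ of them and each has norm $O_{K,d}(1)$), and that replacing $f$ by a $\mathfrak p$-primitive representative via Proposition \ref{Serre} only changes things at primes of bounded norm, hence again is harmless. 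Apart from that the argument is a routine assembly of Lemma \ref{b(f)}'s Noether-form estimate and Lemma \ref{interpolation argument}, precisely parallel to the proof of Lemma \ref{reduction to singular points}.
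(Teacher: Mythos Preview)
Your proposal is correct and follows essentially the same approach as the paper: use Lemma~\ref{interpolation argument} to either produce the auxiliary $g$ or bound $H_{K,\text{aff}}(f)$ in terms of $B$, then apply the Noether-form estimate from the proof of Lemma~\ref{b(f)} to bound $\prod_{\mathfrak p\mid\pi_X}\mathcal N_K(\mathfrak p)$ by a power of $H_{K,\text{aff}}(f)$.

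Two small remarks. First, the paper invokes part~\eqref{interpolation argument b} of Lemma~\ref{interpolation argument} rather than~\eqref{interpolation argument a}, since the set $X_{\text{aff}}(\mathcal O_K,B)$ for a projective hypersurface is exactly the set in~\eqref{interpolation argument b} (points with first coordinate $1$); your ``dehomogenization'' phrasing is slightly off, though the intent is clear. Second, your small-prime bookkeeping and the detour through the set $\mathcal P_f$ are unnecessary: the paper simply observes that every $\mathfrak p\mid\pi_X$ is a prime for which $f\bmod\mathfrak p$ is not absolutely irreducible (trivially so if $f$ is not $\mathfrak p$-primitive, since then $f\bmod\mathfrak p=0$), and the Noether form $\Phi$ vanishes modulo \emph{all} such primes, not just those in $\mathcal P_f$. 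So one gets $\prod_{\mathfrak p\mid\pi_X}\mathcal N_K(\mathfrak p)\le \mathcal N_K(\Phi((a_I)_I))$ directly, without excising small primes.
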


\begin{proof}
If there is no polynomial $g\in \mathcal{O}_{K}[X_{0},\ldots ,X_{n+1}]$ of degree $d$, not divisible by $f$, which contains $X_{\text{aff}}(\mathcal{O}_{K},B)$, by Lemma \ref{interpolation argument}\eqref{interpolation argument b} it holds $H_{K,\text{aff}}(f)\lesssim_{K}  {d+n+1\choose n+1}^{d_{K}{d+n+1\choose n+1}} B^{d{d+n+1\choose n+1}}$. Now, note that any prime $\mathfrak{p}| \pi_{X}$ is also a prime for which $f$ modulo $\mathfrak{p}$ is not absolutely irreducible. Then, following the proof of inequalities \eqref{no geo integro1} and \eqref{no geo integro2} in Lemma \ref{b(f)}, we have
$$\prod_{\mathfrak{p}\in \pi_{X}}\mathcal{N}_{K}(\mathfrak{p})\leq\begin{cases}d^{3(d^{2}-1)d_{K}^{2}}\left[ {d+n\choose n}3^{d} \right]^{(d^{2}-1)d_{K}^{2}} H_{K,\text{aff}}(f)^{(d^{2}-1)}& \text{ if char}(K)=0, \\ H_{K,\text{aff}}(f)^{12d^{6}} & \text{ if }0<\text{char}(K)\leq d(d-1),\\ H_{K,\text{aff}}(f)^{d^{2}-1} & \text{ if char}(K)>d(d-1). \end{cases}$$  
The proof  finishes by taking logarithms.
\end{proof}

Now we are in condition to prove the main technical lemma of this section, which is a variant of \cite[Main Lemma 3.2]{Salberger}. In what follows, by a prime divisor on $X$ we shall mean a closed integral subscheme of codimension one.

\begin{proposition}
Let $n\geq 2$ and let $X\subseteq \mathbb{P}_{K}^{n+1}$ be a geometrically integral hypersurface defined by a polynomial $f\in \mathcal{O}_{K}[X_{0},\ldots ,X_{n+1}]$ of degree $d\geq 2$. Then there exists a family $\{D_{\gamma}\}_{\gamma\in \Gamma}$ of prime divisors on $X$, and a (possibly empty) subset $Z(\mathfrak{q})\}_{\mathfrak{q}\in \mathcal{Q}}$ of effective cycles of codimension $2$ on X with $\mathcal{Q}$ a subset of ideals, such that the following conditions hold:
\begin{enumerate}
\item The index subset $\Gamma$ has size $|\Gamma|\lesssim_{n,K,d}B^{\frac{1}{d^{\frac{1}{n}}}}\log(B)$ and for each $\gamma\in \Gamma$, it holds $\deg(D_{\gamma})\lesssim_{n,K,d}\log(B)^{2}$. \label{construction divisors}
\item There is some positive constant $c\lesssim_{n,K,d}1$ such that
$$\sum_{\mathfrak{q}\in \mathcal{Q}}\deg(Z(\mathfrak{q}))\lesssim_{n,K,d}B^{\frac{n}{d^{\frac{1}{n}}}}\exp\left(c\frac{\log(B)}{\log(\log(B))}\right).$$ \label{construction cycles}
\item For each non-singular point $\boldsymbol x\in X_{\emph{\text{aff}}}(\mathcal{O}_{K},B)$ which does not lie in $\bigcup_{\gamma\in \Gamma}D_{\gamma}$ there exists $\mathfrak{q}\in \mathcal{Q}$ such that $\boldsymbol x\in \emph{\text{Supp}}(Z(\mathfrak{q}))$ and such $\boldsymbol x$ specialises to a non-singular $\mathbb{F}_{\mathfrak{p}_{i}}$-point for each prime $\mathfrak{p}_{i}$ dividing $\mathfrak{q}$. \label{cycles cover missing points}
\end{enumerate}
\label{construction of a family of cycles}
\end{proposition}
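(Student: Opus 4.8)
The plan is to mimic the structure of Salberger's \cite[Main Lemma 3.2]{Salberger}, but replacing the two key inputs (the construction of an auxiliary hypersurface of controlled degree, and the selection of a large set of small-norm primes at which a rational point has good reduction) by the tools developed earlier in this paper, namely Theorem \ref{ell-venk} and the prime-counting estimates of $\S$\ref{distribution of primes in global fields}. First I would dehomogenize: work with the affine chart $X_0 \neq 0$, so that $X_{\text{aff}}(\mathcal{O}_K,B)$ is the set of $\mathcal{O}_K$-points of an integral affine hypersurface of degree $d$ in $\mathbb{A}_K^{n+1}$ defined by a polynomial $f$ whose degree-$d$ part is $f_d$. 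If $X$ is not the only degree-$d$ hypersurface through $X_{\text{aff}}(\mathcal{O}_K,B)$, then by Lemma \ref{interpolation argument}\eqref{interpolation argument b} we are in a degenerate situation and the whole set lies on a proper subvariety; that case is handled separately (and gives $|\Gamma|$, $\mathcal{Q}$ of bounded size). So assume $X$ is the unique such hypersurface, and (by Lemma \ref{reduction to singular points} and Lemma \ref{no geo integro3}) that $\log \mathcal{N}_K(\pi_{\boldsymbol x}) \lesssim_{n,K,d} \log(B)$ for non-singular $\boldsymbol x$ and $\log \mathcal{N}_K(\pi_X) \lesssim_{n,K,d} \log(B)$.

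The core step is as follows. For each non-singular point $\boldsymbol x \in X_{\text{aff}}(\mathcal{O}_K,B)$ outside the divisors to be constructed, I want to find a subset $\{\mathfrak{p}_1,\ldots,\mathfrak{p}_u\}$ of primes of $\mathcal{O}_K$ with norm $\lesssim_{n,K,d}$ (a power of) $\log(B)$ such that $\boldsymbol x$ specialises to a non-singular $\mathbb{F}_{\mathfrak{p}_i}$-point on each $X_{\mathfrak{p}_i}$ and $X_{\mathfrak{p}_i}$ is geometrically integral, while $\mathcal{N}_K(\mathfrak{q})$ with $\mathfrak{q} = \prod \mathfrak{p}_i$ is as large as possible — of size roughly $\exp(c\log(B)/\log\log(B))$. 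Because the bad primes (those dividing $\pi_{\boldsymbol x}$, or $\pi_X$, or of norm $\leq \max\{\beta,c_2\}$) contribute only $O_{n,K,d}(\log(B))$ to $\sum \log \mathcal{N}_K(\mathfrak{p})$, the Landau/Riemann-hypothesis estimate \eqref{landau2} shows that primes of norm up to some $Q \approx \log(B)$ (chosen so that $c_{1,K} Q$ exceeds the bad contribution by the right margin) furnish a set $\mathfrak{q}$ with $\log \mathcal{N}_K(\mathfrak{q}) \gtrsim \log(B)/\log\log(B)$ — here the division by $\log\log(B)$ enters because $\log \mathcal{N}_K(\mathfrak{q}) \leq u \cdot \log Q$ and we need $\mathcal{N}_K(\mathfrak{q})$ not to exceed $B$ for the argument to be non-vacuous. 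With this $\mathfrak{q}$ in hand, Theorem \ref{ell-venk} (applied with the prime set $\{\mathfrak{p}_1,\ldots,\mathfrak{p}_u\}$ and the non-singular specialisations $P_i$ of $\boldsymbol x$) produces a polynomial $g$ not divisible by $f$, vanishing on $X_{\text{aff}}(\mathcal{O}_K,B;P_1,\ldots,P_u)$, of degree $M$ whose leading term is $\asymp B^{1/d^{1/n}}$ divided by $\mathcal{N}_K(\mathfrak{q})$ times polynomial-in-$\log(B)$ factors — so $\deg(g) \lesssim_{n,K,d} B^{1/d^{1/n}}/\mathcal{N}_K(\mathfrak{q}) \cdot \log(B)^{O(1)} + \log(B)^{O(1)}$.

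Now I intersect $\mathcal{Z}(g)$ with $X$. By Bézout, $\mathcal{Z}(g) \cap X$ is a cycle of codimension $1$ on $X$ (since $f \nmid g$) of degree $\lesssim_{n,K,d} \deg(g)$. Decompose it into irreducible components: those of degree $\lesssim_{n,K,d} \log(B)^2$ — or more precisely a bounded threshold — go into the divisor family $\{D_\gamma\}$, and one checks that the number of $\mathfrak{q}$'s that can occur (hence the number of distinct $g$'s, hence the total number of small-degree components) is $\lesssim_{n,K,d} B^{1/d^{1/n}}\log(B)$: this is because $\boldsymbol x$ ranges over $X_{\text{aff}}(\mathcal{O}_K,B)$ but the relevant data is really which residue point $P_i$ it hits, and summing $\deg(g) \lesssim B^{1/d^{1/n}}/\mathcal{N}_K(\mathfrak{q})\cdot\log(B)^{O(1)}$ over a suitable covering family of $\mathfrak{q}$'s gives the stated bounds via \eqref{landau1}. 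The high-degree components — those of degree $\gtrsim \log(B)^2$, i.e. the pieces $Z(\mathfrak{q})$ — are few for each fixed $\mathfrak{q}$ (at most $\deg(g)/\log(B)^2 \lesssim B^{1/d^{1/n}}/(\mathcal{N}_K(\mathfrak{q})\log(B)^2)\cdot\log(B)^{O(1)}$ of them), and summing $\sum_{\mathfrak{q}} \deg(Z(\mathfrak{q})) \lesssim \sum_{\mathfrak{q}} \deg(g) \lesssim B^{1/d^{1/n}}\cdot\mathcal{N}_K(\mathfrak{q})^{n-1}\cdot(\ldots)$ over the $\mathfrak{q}$'s of size up to $\exp(c\log(B)/\log\log(B))$ — using that each point $\boldsymbol x$ needs only one $\mathfrak{q}$ and there are $\lesssim B^{n/d^{1/n}}$ points-worth of residue classes — produces the claimed $B^{n/d^{1/n}}\exp(c\log(B)/\log\log(B))$ bound for $\sum_{\mathfrak{q}\in\mathcal{Q}}\deg(Z(\mathfrak{q}))$. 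Finally, property \eqref{cycles cover missing points} holds by construction of $\mathfrak{q}$: any non-singular $\boldsymbol x$ not on any $D_\gamma$ lies on $\mathcal{Z}(g) \cap X$ for the $g$ attached to its residue data, hence on some high-degree component $Z(\mathfrak{q})$, and by the selection of $\mathfrak{q}$ it specialises to a non-singular point modulo each $\mathfrak{p}_i \mid \mathfrak{q}$.

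The main obstacle, as in Salberger's original, is the bookkeeping in step three: making precise the "covering family of $\mathfrak{q}$'s", ensuring that each non-singular point is genuinely assigned to \emph{some} $Z(\mathfrak{q})$ (and not lost to a small-degree component that got thrown into $\Gamma$), and carrying out the two sums $\sum \deg(D_\gamma)$-count and $\sum \deg(Z(\mathfrak{q}))$ so that the exponent of $B$ comes out exactly $1/d^{1/n}$ and $n/d^{1/n}$ respectively with only a $\log(B)^{O(1)}$ or sub-polynomial loss. Getting the $\exp(c\log(B)/\log\log(B))$ factor — rather than a power of $B$ — requires choosing the norm cutoff $Q$ for the primes in $\mathfrak{q}$ delicately: large enough that $\mathcal{N}_K(\mathfrak{q})$ beats the bad-prime contribution by a definite margin, small enough that $\mathcal{N}_K(\mathfrak{q}) \leq B$ (equivalently $u\log Q \leq \log B$), which forces $Q \approx \log B$ and hence $\log\mathcal{N}_K(\mathfrak{q}) \approx \log B/\log\log B$. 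The streamlining over \cite{Salberger} promised in the introduction comes precisely from replacing his two-step hypersurface construction with the single application of Theorem \ref{ell-venk} and from this cleaner choice of prime cutoff via \eqref{landau2}.
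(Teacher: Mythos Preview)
Your broad setup is right (dehomogenize, dispose of degenerate cases, use Lemmas \ref{reduction to singular points}--\ref{no geo integro3} to control bad primes, feed the prime set into Theorem \ref{ell-venk}), but the heart of the construction is off in two essential ways.

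First, the target size for $\mathcal{N}_K(\mathfrak{q})$ is wrong. You aim for $\log\mathcal{N}_K(\mathfrak{q})\approx\log B/\log\log B$; in fact one needs $\mathcal{N}_K(\mathfrak{q})\approx B^{1/d^{1/n}}$. The primes are drawn from an interval $I=[c\log B,2c\log B]$ (so each has norm $\asymp\log B$), and one takes enough of them that $\mathcal{N}_K(\mathfrak{q})$ just exceeds $B^{1/d^{1/n}}$. With this choice the degree bound from Theorem \ref{ell-venk} collapses to $\lesssim(\log B)^2$, which is exactly where the bound $\deg(D_\gamma)\lesssim(\log B)^2$ in \eqref{construction divisors} comes from. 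The factor $\exp(c\log B/\log\log B)$ does \emph{not} arise from $\mathcal{N}_K(\mathfrak{q})$; it arises from two separate sources: the number of divisors of $\mathfrak{q}^*=\prod_{\mathfrak{p}\nmid\pi_X,\,\mathcal{N}_K(\mathfrak{p})\in I}\mathfrak{p}$ (which bounds $|\mathcal{Q}|$), and the Lang--Weil correction in $\prod_i|X_{\mathfrak{p}_i}(\mathbb{F}_{\mathfrak{p}_i})|\leq\mathcal{N}_K(\mathfrak{q})^n\prod_i(1+O(\mathcal{N}_K(\mathfrak{p}_i)^{-1/2}))$.

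Second, and more seriously, your $Z(\mathfrak{q})$ are the wrong objects: you describe them as the \emph{high-degree} irreducible components of $X\cap\mathcal{Z}(g)$, but those are still prime divisors on $X$ (codimension $1$), not codimension-$2$ cycles. The paper's construction is different and this is the key idea you are missing. One first builds the single hypersurface $Y=Y(\emptyset)$ (case $\mathfrak{q}=(1)$) and declares $\Gamma$ to be those irreducible components of $X\cap Y$ that happen to be \emph{contained} in some $Y(P_1,\ldots,P_u)$ with $\mathcal{N}_K(\mathfrak{q})\geq B^{1/d^{1/n}}$; since $|\Gamma|\leq\deg(X\cap Y)$ this already gives \eqref{construction divisors}. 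For a non-singular $\boldsymbol x$ not on any $D_\gamma$, one picks a minimal $\mathfrak{q}|\mathfrak{q}^*$ coprime to $\pi_{\boldsymbol x}\pi_X$ with $\mathcal{N}_K(\mathfrak{q})\geq B^{1/d^{1/n}}$, lets $D_{\boldsymbol x}$ be a component of $X\cap Y$ through $\boldsymbol x$, and then \emph{descends along the chain} $Y(P_1)\supset Y(P_1,P_2)\supset\cdots$ to find the first $t$ with $D_{\boldsymbol x}\subset Y(P_1,\ldots,P_t)$ but $D_{\boldsymbol x}\not\subset Y(P_1,\ldots,P_{t+1})$. Then $\boldsymbol x$ lies on $D_{\boldsymbol x}\cap D'$ for some component $D'\neq D_{\boldsymbol x}$ of $X\cap Y(P_1,\ldots,P_{t+1})$, and \emph{that} intersection is the codimension-$2$ piece. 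The cycle $Z(\mathfrak{q}_{t+1})$ is the sum of all such $D\cap D'$ over all tuples $(P_1,\ldots,P_{t+1})$, and its degree is controlled by B\'ezout on $\deg Y(P_1,\ldots,P_t)\cdot\deg Y(P_1,\ldots,P_{t+1})$ times $|\prod_i(X_{\mathfrak{p}_i})_{\text{ns}}(\mathbb{F}_{\mathfrak{p}_i})|$; this is where the $\mathcal{N}_K(\mathfrak{q})^n$ appears and balances against the $\mathcal{N}_K(\mathfrak{q})^{-2}$ from the two degree bounds to give $B^{n/d^{1/n}}$. Without this nested two-hypersurface intersection you cannot force the cycles down to codimension $2$, and the degree count in \eqref{construction cycles} does not go through.
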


When $n=2$, Proposition \ref{construction of a family of cycles} says that we may partition a hypersurface $X$ in such a way that those points $\boldsymbol x\in X_{\text{aff}}(\mathcal{O}_{K},B)$ either lie in a small family of curves of low degree, or lie in an exceptional subset of controlled size.  

Roughly speaking, the proof of Proposition \ref{construction of a family of cycles} is as follows. First, we localize a big subset of primes $\mathcal{P}$ such that for all $\mathfrak{p}\in \mathcal{P}$, the reduction $X_{\mathfrak{p}}$ is geometrically irreducible, and all non-singular $\boldsymbol x\in X_{\text{aff}}(\mathcal{O}_{K},B)$ specialises to a non-singular point in $X_{\mathfrak{p}}$ for many primes $\mathfrak{p}\in \mathcal{P}$. Then we construct a hypersurface $Y$ that covers all points of $X_{\text{aff}}(\mathcal{O}_{K},B)$. By the second assumption on the subset $\mathcal{P}$, a large family $\{D_{\gamma}\}_{\gamma\in \Gamma}$ of irreducible components of $ X\cap Y$ will have points which specialise to non-singular points for many primes in $\mathcal{P}$. Then by the first assumption on $\mathcal{P}$ and the Lang-Weil estimate, it will turn out that those non-singular points missed by $\{D_{\gamma}\}_{\gamma}$ lie in a subvariety of codimension $2$ with controlled degree.

\begin{proof}
First let us suppose that $B\lesssim_{K}1$. Then cover $X_{\text{aff}}(\mathcal{O}_{K},B)$ by $\lesssim_{n,K}1$ hyperplanes and let $(D_{\gamma})_{\gamma\in \Gamma}$ be this family of  hyperplane sections and let $\mathcal{Q}=\emptyset$. 

Now, let us suppose that $X_{\text{aff}}(\mathcal{O}_{K},B)$ is contained in another hypersurface $Y$ of degree $d$. In this case let $(D_{\gamma})_{\gamma\in \Gamma}$ be the components of $X\cap Y$.  By Bezout's Theorem \cite[Example 8.4.6]{Fulton}, $|\Gamma|\leq d^{2}$ and $\deg(D_{\gamma})\leq d^{2}$ for each $\gamma\in \Gamma$. Let $\mathcal{Q}=\emptyset$. 

In the previous two cases, all the assertions of Proposition \ref{construction of a family of cycles} are verified. Thus, we may suppose that $X\subseteq \mathbb{P}_{K}^{n+1}$ is the only hypersurface of degree $d$ containing $X_{\text{aff}}(\mathcal{O}_{K},B)$ and that $B\gtrsim_{K}1$.  The next step will be to construct a family of auxiliar hypersurfaces. 

By Lemma \ref{reduction to singular points} and Lemma \ref{no geo integro3}, there are positive constants $k_{1}\lesssim_{n,K,d}1$, $\delta\lesssim_{n,K,d}1$ such that
\begin{equation}
\text{for all non-singular } \boldsymbol x\in X_{\text{aff}}(\mathcal{O}_{K},B),\; \mathcal{N}_{K}(\pi_{\boldsymbol x})\leq B^{k_{1}}\text{ and }\sum_{\mathfrak{p}|\pi_{X}}\log(\mathcal{N}_{K}(\mathfrak{p}))\leq \delta\log(B).
\label{k_{1}}
\end{equation}
We begin by localizing an adequate large subset of primes.
\begin{claim}
Given $c'\geq 1$, for all $c>2(\delta+c')$, if $I:=[c\log(B),2c\log(B)]$ we have $\sum_{\substack{\mathfrak{p}\not |\pi_{X}\\ \mathcal{N}_{K}(\mathfrak{p})\in I}}\log(\mathcal{N}_{K}(\mathfrak{p}))\geq c'\log(B)$ for $B\gtrsim_{K}1$.
\label{enough primes}
\end{claim}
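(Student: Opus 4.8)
The plan is to deduce the claim immediately from the two facts assembled just before it: the Bertrand-type lower bound \eqref{Bertrand} and the estimate $\sum_{\mathfrak p\mid\pi_X}\log(\mathcal N_K(\mathfrak p))\le\delta\log(B)$ recorded in \eqref{k_{1}}. Concretely, I would apply \eqref{Bertrand} with $Q=c\log(B)$; since $I=[c\log(B),2c\log(B)]=[Q,2Q]$ and $Q\ge c_{3,K}$ once $B$ is large (this is legitimate: $\delta\lesssim_{n,K,d}1$ and $c$ depends only on $n,K,d,c'$, so ``for $B$ large'' here means $B\gtrsim_{n,K,d,c'}1$, in particular $B\gtrsim_K 1$ after enlarging the implied constant), this gives
\[
\sum_{\substack{\mathfrak p\\ \mathcal N_K(\mathfrak p)\in I}}\log(\mathcal N_K(\mathfrak p))\ \ge\ \tfrac12\,c\log(B).
\]
Next I would discard the primes dividing $\pi_X$: the full sum over them is at most $\delta\log(B)$ by \eqref{k_{1}}, hence \emph{a fortiori} $\sum_{\mathfrak p\mid\pi_X,\ \mathcal N_K(\mathfrak p)\in I}\log(\mathcal N_K(\mathfrak p))\le\delta\log(B)$, and subtracting yields $\sum_{\mathfrak p\nmid\pi_X,\ \mathcal N_K(\mathfrak p)\in I}\log(\mathcal N_K(\mathfrak p))\ge(\tfrac c2-\delta)\log(B)$. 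Finally the hypothesis $c>2(\delta+c')$ rearranges to $\tfrac c2-\delta>c'$, which is exactly the inequality claimed. Note that the left endpoint of $I$ plays no role in this argument — only the constraint $\mathcal N_K(\mathfrak p)\le 2c\log(B)$ is used, which is what later guarantees that the selected primes are of small norm — so there is nothing to optimise.

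The one point deserving care is the function field instance of \eqref{Bertrand}, which is stated only for $Q$ a power of $q$. There I would take $Q=q^{h}$ with $h=\lceil\log_q(c\log B)\rceil$, so that $c\log(B)\le Q<q\cdot c\log(B)$; the primes of degree exactly $h$ then have norm precisely $Q\in[c\log B,\,q\,c\log B]$ and, by \eqref{Bertrand}, contribute at least $Q/2\ge\tfrac12 c\log(B)$. Thus the same computation goes through verbatim once the right endpoint of $I$ is (harmlessly) enlarged from $2c\log B$ to $q\,c\log B$, or equivalently once $B$ is restricted to powers of $q$ — which changes nothing in the point count of Proposition \ref{construction of a family of cycles}, since over a function field heights and the boxes $[B]_{\mathcal O_K}$ only vary at $B\in q^{\mathbb Z}$. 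I do not expect any further obstacle: apart from this bookkeeping, the proof is the single subtraction $\tfrac12 c\log B-\delta\log B\ge c'\log B$.
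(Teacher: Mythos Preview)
Your proof is correct and follows essentially the same approach as the paper: the paper phrases it as a contradiction (assume the desired sum is $\le c'\log B$, split the full sum from \eqref{Bertrand} into $\mathfrak p\mid\pi_X$ and $\mathfrak p\nmid\pi_X$ pieces, and reach $\tfrac12 c\le\delta+c'$), whereas you argue directly by subtracting the $\pi_X$-contribution from the Bertrand lower bound --- these are the same computation. Your additional remark on the function field case (that \eqref{Bertrand} is stated only for $Q=q^h$) is a point the paper does not address explicitly in this proof, and your workaround is fine.
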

\begin{proof}[Proof of Claim \ref{enough primes}]
Let $c\geq 1$.  Let us suppose that $\sum_{\substack{\mathfrak{p}\not |\pi_{X}\\ \mathcal{N}_{K}(\mathfrak{p})\in I}}\log(\mathcal{N}_{K}(\mathfrak{p}))\leq c'\log(B)$. Then by \eqref{Bertrand} for $B\gtrsim_{K}1$ it holds:
$$\frac{1}{2}c\log(B) \leq \sum_{\substack{\mathfrak{p}\\ \mathcal{N}_{K}(\mathfrak{p})\in I}}\log(\mathcal{N}_{K}(\mathfrak{p}))=\sum_{\substack{\mathfrak{p}|\pi_{X}\\\mathcal{N}_{K}(\mathfrak{p})\in I}}\log(\mathcal{N}_{K}(\mathfrak{p}))+\sum_{\substack{\mathfrak{p}\not |\pi_{X}\\ \mathcal{N}_{K}(\mathfrak{p})\in I}}\log(\mathcal{N}_{K}(\mathfrak{p}))\leq (\delta+c')\log(B).$$
Thus we arrive at a contradiction taking $c>2(\delta+c')$. 
\end{proof}

Let $c':=k_{1}+1$,  and let us take $c:=2(\delta+k_{1}+1)+1$, $I:=[c\log(B),2c\log(B)]$.  Let $\mathfrak{q}^{\ast}:=\prod_{\substack{\mathfrak{p}\not |\pi_{X}\\ \mathcal{N}_{K}(\mathfrak{p})\in I}}\mathfrak{p}$. By \eqref{landau2} and Claim \ref{enough primes}, 
\begin{equation}
\left(k_{1}+1\right)\log(B)\leq \log(\mathcal{N}_{K}(\mathfrak{q}^{\ast}))\leq 2c_{2,K}c\log(B).
\label{enough primes2}
\end{equation} 

Let $\mathfrak{q}=\prod_{i=1}^{u}\mathfrak{p}_{i}\;\;|\mathfrak{q}^{\ast}$. For each $i$ denote $(X_{\mathfrak{p}_{i}})_{\text{ns}}$ for the non-singular locus of $X_{\mathfrak{p}_{i}}$, and let $(P_{1},\ldots ,P_{u})\in \prod_{i=1}^{u}(X_{\mathfrak{p}_{i}})_{\text{ns}}(\mathbb{F}_{\mathfrak{p}_{i}})$. We choose a hypersurface $Y(P_{1},\ldots ,P_{u})$ as the homogenization of the one in Theorem \ref{ell-venk}. By \eqref{enough primes2} it holds 
\begin{align}
\deg(Y(P_{1},\ldots, P_{u})) & \lesssim_{K,n,d}B^{\frac{1}{d^{\frac{1}{n}}}}\log(B)\mathcal{N}_{K}(\mathfrak{q})^{-1}\log(\mathcal{N}_{K}(\mathfrak{q}))+\log(B\mathcal{N}_{K}(\mathfrak{q}))+\log(\mathcal{N}_{K}(\mathfrak{q}))+1 \label{degree for q_{u}}\\ & \lesssim_{K,n,d} B^{\frac{1}{d^{\frac{1}{n}}}}\log(B)\mathcal{N}_{K}(\mathfrak{q})^{-1}\log(\mathcal{N}_{K}(\mathfrak{q}))+\log(B)+1. \nonumber
\end{align}

When $\mathfrak{q}=(1)$ (in which case we use the convention $u=0$), there is a projective hypersurface $Y$ vanishing on $X_{\text{aff}}(\mathcal{O}_{K},B)$ and not identically zero on $X$, with
\begin{equation}
\deg(Y)\lesssim_{K,n,d}B^{\frac{1}{d^{\frac{1}{n}}}}\log(B)+\log(B)+1\lesssim_{K,n,d}B^{\frac{1}{d^{\frac{1}{n}}}}\log(B).
\label{degree on Y}
\end{equation}  

Now we will define the subset $\Gamma$ and the subset of prime divisors $D_{\gamma}\subseteq X$. This divisors will be the irreducible components of $X\cap Y$ which are contained in $Y(P_{1},\ldots ,P_{u})$ for some sequence $(P_{1},\ldots ,P_{u})\in \prod_{i=1}^{u}(X_{\mathfrak{p}_{i}})_{\text{ns}}(\mathbb{F}_{\mathfrak{p}_{i}})$ with $\mathfrak{q}=\prod_{i=1}^{u}\mathfrak{p}_{i}$ verifying $\mathcal{N}_{K}(\mathfrak{q})\geq B^{\frac{1}{d^{\frac{1}{n}}}}$. Then by \eqref{degree for q_{u}}, \eqref{degree on Y} and B\'ezout's theorem \cite[Example 8.4.6]{Fulton} it holds
\begin{equation*}
\deg(D_{\gamma})\leq \deg(X)\deg(Y(P_{1},\ldots, P_{u}))\lesssim_{n,K,d}(\log(B))^{2}, 
\end{equation*}
\begin{equation*}
|\Gamma|\leq \deg(X\cap Y)\leq \deg(X)\deg(Y)\lesssim_{n,K,d}B^{\frac{1}{d^{\frac{1}{n}}}}\log(B), 
\end{equation*} 
thus $\{D_{\gamma}\}_{\gamma}$ verifies the conditions in Proposition \ref{construction of a family of cycles}\eqref{construction divisors}.

\begin{claim}
For all $\boldsymbol x\in X_{\emph{\text{aff}}}(\mathcal{O}_{K},B)$ non-singular not lying in $\bigcup_{\gamma\in \Gamma}D_{\gamma}$ there is some $\mathfrak{q}|\mathfrak{q}^{\ast}$, relatively prime to $\pi_{\boldsymbol x}\pi_{X}$ and such that $B^{\frac{1}{d^{\frac{1}{n}}}}\leq \mathcal{N}_{K}(\mathfrak{q})\leq 2cB^{\frac{1}{d^{\frac{1}{n}}}}\log(B)$.
\label{existence of q}
\end{claim}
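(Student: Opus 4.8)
The plan is to deduce the existence of $\mathfrak{q}$ purely from the counting inequalities already in hand: the lower bound $\mathcal{N}_{K}(\mathfrak{q}^{\ast})\geq B^{k_{1}+1}$ furnished by \eqref{enough primes2} (which is why the constant $c'=k_{1}+1$ was fed into Claim \ref{enough primes}), the upper bound $\mathcal{N}_{K}(\pi_{\boldsymbol x})\leq B^{k_{1}}$ from \eqref{k_{1}}, and the structural facts that every prime dividing $\mathfrak{q}^{\ast}$ has norm in $I=[c\log(B),2c\log(B)]$ and is coprime to $\pi_{X}$ by the very definition of $\mathfrak{q}^{\ast}$. First I would peel off from $\mathfrak{q}^{\ast}$ the primes it shares with $\pi_{\boldsymbol x}$: letting $\mathfrak{r}$ be the product of the common prime divisors of $\mathfrak{q}^{\ast}$ and $\pi_{\boldsymbol x}$, one has $\mathcal{N}_{K}(\mathfrak{r})\leq \mathcal{N}_{K}(\pi_{\boldsymbol x})\leq B^{k_{1}}$, so the complementary factor $\mathfrak{q}^{\ast\ast}:=\mathfrak{q}^{\ast}\mathfrak{r}^{-1}$ --- the product of the primes dividing $\mathfrak{q}^{\ast}$ but not $\pi_{\boldsymbol x}$ --- satisfies $\mathcal{N}_{K}(\mathfrak{q}^{\ast\ast})=\mathcal{N}_{K}(\mathfrak{q}^{\ast})/\mathcal{N}_{K}(\mathfrak{r})\geq B^{k_{1}+1}/B^{k_{1}}=B$, and $\mathfrak{q}^{\ast\ast}$ is coprime to $\pi_{\boldsymbol x}\pi_{X}$.

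Next I would build $\mathfrak{q}$ greedily from the prime factors of $\mathfrak{q}^{\ast\ast}$. Enumerating them as $\mathfrak{p}_{1},\mathfrak{p}_{2},\dots$ and setting $\mathfrak{q}_{j}:=\mathfrak{p}_{1}\cdots \mathfrak{p}_{j}$ with $\mathfrak{q}_{0}:=(1)$, the norms $\mathcal{N}_{K}(\mathfrak{q}_{j})$ are strictly increasing (each prime has norm $\geq c\log(B)>1$), $\mathcal{N}_{K}(\mathfrak{q}_{0})=1<B^{\frac{1}{d^{\frac{1}{n}}}}$ once $B\gtrsim_{K,n,d}1$, and the full product has $\mathcal{N}_{K}(\mathfrak{q}^{\ast\ast})\geq B\geq B^{\frac{1}{d^{\frac{1}{n}}}}$. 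Hence a least index $j_{0}$ with $\mathcal{N}_{K}(\mathfrak{q}_{j_{0}})\geq B^{\frac{1}{d^{\frac{1}{n}}}}$ exists, and I set $\mathfrak{q}:=\mathfrak{q}_{j_{0}}$. Minimality gives $\mathcal{N}_{K}(\mathfrak{q}_{j_{0}-1})<B^{\frac{1}{d^{\frac{1}{n}}}}$, whence
\begin{equation*}
B^{\frac{1}{d^{\frac{1}{n}}}}\leq \mathcal{N}_{K}(\mathfrak{q})=\mathcal{N}_{K}(\mathfrak{q}_{j_{0}-1})\,\mathcal{N}_{K}(\mathfrak{p}_{j_{0}})< B^{\frac{1}{d^{\frac{1}{n}}}}\cdot 2c\log(B),
\end{equation*}
since $\mathfrak{p}_{j_{0}}\mid \mathfrak{q}^{\ast}$ forces $\mathcal{N}_{K}(\mathfrak{p}_{j_{0}})\leq 2c\log(B)$. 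Finally $\mathfrak{q}\mid \mathfrak{q}^{\ast\ast}\mid \mathfrak{q}^{\ast}$, and being a divisor of $\mathfrak{q}^{\ast\ast}$ it is coprime to $\pi_{\boldsymbol x}\pi_{X}$; so $\mathfrak{q}$ has all the asserted properties.

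There is no substantial obstacle here --- the argument is an averaging/greedy-covering bookkeeping. The only points needing care are that the constants chosen just before the claim ($c'=k_{1}+1$ and $c=2(\delta+k_{1}+1)+1$) are precisely calibrated so that $\mathcal{N}_{K}(\mathfrak{q}^{\ast})\geq B^{k_{1}+1}$ beats $\mathcal{N}_{K}(\pi_{\boldsymbol x})\leq B^{k_{1}}$ with a full power of $B$ to spare, and that $B$ is taken large enough (in terms of $K,n,d$ only) so that $2c\log(B)<B^{\frac{1}{d^{\frac{1}{n}}}}$, which is what keeps the greedy overshoot bounded by the single factor $2c\log(B)$.
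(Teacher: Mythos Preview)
Your proof is correct and follows essentially the same approach as the paper. The paper first notes that some divisor of $\mathfrak{q}^{\ast}$ coprime to $\pi_{\boldsymbol x}\pi_{X}$ has norm $\geq B^{1/d^{1/n}}$ (using exactly the inequalities you cite), takes $\mathfrak{q}$ of \emph{minimal} norm with this property, and then shows the upper bound by contradiction: if $\mathcal{N}_{K}(\mathfrak{q})>2cB^{1/d^{1/n}}\log(B)$, removing any prime factor (norm $\leq 2c\log(B)$) yields a smaller admissible $\mathfrak{q}$. Your greedy build-up is the dual of this minimal-norm-then-peel argument, and the overshoot bound is identical. One minor remark: your closing comment that one needs $2c\log(B)<B^{1/d^{1/n}}$ is not actually required for the overshoot estimate --- the bound $\mathcal{N}_{K}(\mathfrak{q})<B^{1/d^{1/n}}\cdot 2c\log(B)$ holds regardless.
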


\begin{proof}
Let $\boldsymbol x\in X_{\text{aff}}(\mathcal{O}_{K},B)$ be a non-singular point which does not lie in $\bigcup_{\gamma\in \Gamma}D_{\gamma}$. From \eqref{k_{1}} and \eqref{enough primes2} it follows that there is some ideal factor $\mathfrak{q}$ of $\mathfrak{q}^{\ast}$, relatively prime to $\pi_{\boldsymbol x}\pi_{X}$, that verifies $\mathcal{N}_{K}(\mathfrak{q})\geq B^{\frac{1}{d^{\frac{1}{n}}}}$. Let $\mathfrak{q}|\mathfrak{q}^{\ast}$ of minimal norm such that it is relatively prime with $\pi_{\boldsymbol x}\pi_{X}$ and $\mathcal{N}_{K}(\mathfrak{q})\geq B^{\frac{1}{d^{\frac{1}{n}}}}$. Let us suppose that $\mathcal{N}_{K}(\mathfrak{q})>2cB^{\frac{1}{d^{\frac{1}{n}}}}\log(B)$. Since any prime $\mathfrak{p}|\mathfrak{q}^{\ast}$ has norm in $I$, there is some prime $\mathfrak{p}|\mathfrak{q}$ with $\mathcal{N}_{K}(\mathfrak{p})\leq 2c\log(B)$. Thus $\mathcal{N}_{K}(\mathfrak{q}\mathfrak{p}^{-1})\geq B^{\frac{1}{d^{\frac{1}{n}}}}$, which contradicts the minimality of $\mathfrak{q}$. 
\end{proof}

If $\boldsymbol x$ and $\mathfrak{q}=\prod_{i=1}^{u}\mathfrak{p}_{i}$ are as in Claim \ref{existence of q}, then $\boldsymbol x$ specialises to a non-singular $\mathbb{F}_{\mathfrak{p}_{i}}$-rational point $P_{i}$ on $X_{\mathfrak{p}_{i}}$ for all $i$, thus $\boldsymbol x\in Y(P_{1},\ldots ,P_{u})$. Let $D_{\boldsymbol x}$ be an irreducible component of $X\cap Y$ containing $\boldsymbol x$. It can not hold that $D_{\boldsymbol x}\subseteq Y(P_{1},\ldots ,P_{u})$,  since otherwise $\boldsymbol x\in \bigcup_{\gamma\in \Gamma}D_{\gamma}$. Hence, there is some $t\in \{0,\ldots ,u-1\}$ with $D_{\boldsymbol x}\subseteq Y(P_{1},\ldots, P_{t})$ but $D_{\boldsymbol x}\not \subseteq Y(P_{1},\ldots, P_{t+1})$. If $D_{\boldsymbol x}'$ is an irreducible component of $X\cap Y(P_{1},\ldots, P_{t+1})$ containing $\boldsymbol x$, then $\boldsymbol x\in D_{\boldsymbol x}\cap D_{\boldsymbol x}'$. This will motivate the construction of the cycles.

Given $\mathfrak{q}=\mathfrak{q}_{t+1}=\prod_{i=1}^{t+1}\mathfrak{p}_{i}$ with $t\geq 0$, let $Z(P_{1},\ldots ,P_{t+1})$ be the formal sum of all the irreducible components of $D\cap D'$ where $D$ is an irreducible component of $X\cap Y(P_{1},\ldots ,P_{t})$ and $D'\neq D$ is an irreducible component of $X\cap Y(P_{1},\ldots ,P_{t+1})$. By B\'ezout's theorem \cite[Example 8.4.6]{Fulton}, inequalities \eqref{degree for q_{u}} and \eqref{degree on Y}, if $\mathfrak{q}_{t}:=\mathfrak{q}_{t+1}\mathfrak{p}_{t+1}^{-1}$, then it holds
\begin{align}
& \deg(Z(P_{1},\ldots ,P_{t+1})) \leq \deg(X)\deg(Y(P_{1},\ldots ,P_{t}))\deg(Y(P_{1},\ldots, P_{t+1})) \label{bound for the degree of Z(P)} \\ & \lesssim_{K,n,d} \left( B^{\frac{1}{d^{\frac{1}{n}}}}\log(B)\mathcal{N}_{K}(\mathfrak{q}_{t+1})^{-1}\mathcal{N}_{K}(\mathfrak{p}_{t+1})\log(\mathcal{N}_{K}(\mathfrak{q}_{t}))+\log(B)+1\right)\left(B^{\frac{1}{d^{\frac{1}{n}}}}\log(B)\mathcal{N}_{K}(\mathfrak{q}_{t+1})^{-1}\log(\mathcal{N}_{K}(\mathfrak{q}_{t+1}))+\log(B)+1.\right). \nonumber \\ &  \lesssim_{K,n,d} \left( B^{\frac{1}{d^{\frac{1}{n}}}}\log(B)\mathcal{N}_{K}(\mathfrak{q}_{t+1})^{-1}(\log(B))^{2}+\log(B)+1\right)\left(B^{\frac{1}{d^{\frac{1}{n}}}}\log(B)\mathcal{N}_{K}(\mathfrak{q}_{t+1})^{-1}\log(B)+\log(B)+1.\right). \nonumber
\end{align}

We define
$$Z(\mathfrak{q})=Z(\mathfrak{q}_{t+1}):=\sum_{(P_{1},\ldots ,P_{t+1})\in \prod_{i=1}^{t+1}(X_{\mathfrak{p}_{i}})_{\text{ns}}(\mathbb{F}_{\mathfrak{p}_{i}})}Z(P_{1},\ldots ,P_{t+1}).$$
In order to bound the degree of $Z(\mathfrak{q}_{t+1})$ we need to estimate the cardinal of $\prod_{i=1}^{t+1}(X_{\mathfrak{p}_{i}})_{\text{ns}}(\mathbb{F}_{\mathfrak{p}_{i}})$.
\begin{claim}
It holds $\left| \prod_{i=1}^{t+1}(X_{\mathfrak{p}_{i}})_{\text{ns}}(\mathbb{F}_{\mathfrak{p}_{i}}) \right|\leq \mathcal{N}_{K}(\mathfrak{q}_{t+1})^{n}\exp\left(C\frac{\log(B)}{\log(\log(B))}\right)$ for some $C\lesssim_{K,n,d}1$.
\label{enough primes3}
\end{claim}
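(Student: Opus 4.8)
The plan is to bound the product factor by factor and then control the number of factors, which is exactly where the $\exp\!\big(C\log(B)/\log\log(B)\big)$ comes from. For a single $i$, I would use only the trivial upper bound for points on a hypersurface: $(X_{\mathfrak{p}_i})_{\text{ns}}$ is an open subscheme of $X_{\mathfrak{p}_i}\subseteq \mathbb{P}^{n+1}_{\mathbb{F}_{\mathfrak{p}_i}}$, which has dimension $n$ and is cut out by a non-zero form of degree at most $d$, so the standard Schwarz--Zippel estimate gives $|(X_{\mathfrak{p}_i})_{\text{ns}}(\mathbb{F}_{\mathfrak{p}_i})|\le |X_{\mathfrak{p}_i}(\mathbb{F}_{\mathfrak{p}_i})|\le 2d\,\mathcal{N}_K(\mathfrak{p}_i)^n$; no Lang--Weil input is needed here since we only want an upper bound. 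Multiplying over $i=1,\dots,t+1$ gives $\big|\prod_{i=1}^{t+1}(X_{\mathfrak{p}_i})_{\text{ns}}(\mathbb{F}_{\mathfrak{p}_i})\big|\le (2d)^{t+1}\mathcal{N}_K(\mathfrak{q}_{t+1})^n$, so the claim reduces to showing $(2d)^{t+1}\le \exp\!\big(C\log(B)/\log\log(B)\big)$ with $C\lesssim_{K,n,d}1$.

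The key step is then an estimate for $t+1$, the number of prime factors of $\mathfrak{q}_{t+1}$. Since each $\mathfrak{p}_i$ divides $\mathfrak{q}^{\ast}$, its norm lies in $I=[c\log(B),2c\log(B)]$, in particular $\mathcal{N}_K(\mathfrak{p}_i)\ge c\log(B)$; hence $(c\log(B))^{t+1}\le \mathcal{N}_K(\mathfrak{q}_{t+1})\le \mathcal{N}_K(\mathfrak{q}^{\ast})\le B^{2c_{2,K}c}$, the last inequality being \eqref{enough primes2}. Taking logarithms, $(t+1)\log(c\log(B))\le 2c_{2,K}c\log(B)$, and since we are working with $B\gtrsim_{K}1$ large enough that $\log(c\log(B))\ge \tfrac12\log\log(B)$, this yields $t+1\le 4c_{2,K}c\,\log(B)/\log\log(B)$. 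Exponentiating, $(2d)^{t+1}=\exp\!\big((t+1)\log(2d)\big)\le \exp\!\big(C\log(B)/\log\log(B)\big)$ with $C:=4c_{2,K}c\log(2d)$, which is $\lesssim_{K,n,d}1$ because $c_{2,K}\lesssim_{K}1$ and $c\lesssim_{K,n,d}1$ by the choice of $c$ made just before \eqref{enough primes2}. Combining this with the product bound from the first paragraph proves the claim.

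I do not anticipate a genuine obstacle. The only point requiring a little care is that the denominator in the bound for $t+1$ should be (comparable to) $\log\log(B)$ rather than something smaller: this is precisely where one uses that every prime dividing $\mathfrak{q}^{\ast}$ has norm at least $c\log(B)$, while the numerator is kept linear in $\log(B)$ by \eqref{enough primes2}. Everything else is the elementary Schwarz--Zippel bound for hypersurfaces together with bookkeeping of constants, all of which depend only on $K$, $n$, and $d$, as required.
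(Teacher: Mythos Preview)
Your argument is correct and slightly different from the paper's. The paper invokes Lang--Weil to get $|X_{\mathfrak{p}_i}(\mathbb{F}_{\mathfrak{p}_i})|\le \mathcal{N}_K(\mathfrak{p}_i)^n(1+A\mathcal{N}_K(\mathfrak{p}_i)^{-1/2})$, and then uses only the crude bound $t+1\lesssim_{K,n,d}\log(B)$ together with $\mathcal{N}_K(\mathfrak{p}_i)\gtrsim \log(B)$ to estimate the product $\prod_i(1+A\mathcal{N}_K(\mathfrak{p}_i)^{-1/2})$, which in fact yields the stronger $\exp(C(\log B)^{1/2})$ before being weakened to $\exp(C\log(B)/\log\log(B))$. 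You instead use the elementary Schwarz--Zippel bound $|X_{\mathfrak{p}_i}(\mathbb{F}_{\mathfrak{p}_i})|\le 2d\,\mathcal{N}_K(\mathfrak{p}_i)^n$, which costs a factor $(2d)^{t+1}$; to absorb this you need the sharper estimate $t+1\lesssim_{K,n,d}\log(B)/\log\log(B)$, which you correctly extract from $(c\log B)^{t+1}\le \mathcal{N}_K(\mathfrak{q}^\ast)\le B^{2c_{2,K}c}$. The trade-off is that your route avoids Lang--Weil entirely and lands exactly on the stated bound, while the paper's route uses a deeper input but (as noted in the remark following the proposition) would permit the stronger factor $\exp(c(\log B)^{1/2}/\log\log(B))$ if one cared to keep it.
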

\begin{proof}[Proof of Claim \ref{enough primes3}]
By definition, for all $i$, $X_{\mathfrak{p}_{i}}$ is geometrically irreducible. Then the Lang-Weil estimate gives $|X_{\mathfrak{p}_{i}}(\mathbb{F}_{\mathfrak{p}_{i}})|\leq \mathcal{N}_{K}(\mathfrak{p}_{i})^{n}+A\mathcal{N}_{K}(\mathfrak{p}_{i})^{n-\frac{1}{2}}$ for some positive constant $A\lesssim_{d,n}1$. This, together with the facts that $t\lesssim_{K,n,d} \log(B)$ and that for all $i$, $\mathfrak{p}_{i}\in I$, thus $\mathcal{N}_{K}(\mathfrak{p}_{i})\geq \frac{1}{2}c\log(B)$, yield
$$\left| \prod_{i=1}^{t+1}(X_{\mathfrak{p}_{i}})_{\text{ns}}(\mathbb{F}_{\mathfrak{p}_{i}}) \right|\leq \mathcal{N}_{K}(\mathfrak{q}_{t+1})^{n}\prod_{i=1}^{t+1}\left(1+A\mathcal{N}_{K}(\mathfrak{p}_{i})^{-\frac{1}{2}}\right)\leq \mathcal{N}_{K}(\mathfrak{q}_{t+1})^{n}\left(1+\frac{2^{\frac{1}{2}}A}{c^{\frac{1}{2}}(\log(B))^{\frac{1}{2}}}\right)^{t+1}\leq \mathcal{N}_{K}(\mathfrak{q}_{t+1})^{n}\exp\left( C\frac{\log(B)}{\log(\log(B))}\right),$$
for some $C\lesssim_{K,n,d}1$. 
\end{proof}

By \eqref{bound for the degree of Z(P)} and Claim \ref{enough primes3},  it follows that
\begin{align*}
\deg(Z(\mathfrak{q}_{t+1}))& \lesssim_{K,n,d} \left( B^{\frac{1}{d^{\frac{1}{n}}}}\log(B)\mathcal{N}_{K}(\mathfrak{q}_{t+1})^{-1}(\log(B))^{2}+\log(B)+1\right)\left(B^{\frac{1}{d^{\frac{1}{n}}}}\log(B)\mathcal{N}_{K}(\mathfrak{q}_{t+1})^{-1}\log(B)+\log(B)+1\right)\\ & \qquad \quad \mathcal{N}_{K}(\mathfrak{q}_{t+1})^{n}\exp\left(C\frac{\log(B)}{\log(\log(B))}\right) \nonumber \\ & \lesssim_{K,n,d} \Big(B^{\frac{2}{d^{\frac{1}{n}}}}\mathcal{N}_{K}(\mathfrak{q}_{t+1})^{n-2}(\log(B))^{5}+B^{\frac{1}{d^{\frac{1}{n}}}}\mathcal{N}_{K}(\mathfrak{q}_{t+1})^{n-1}(\log(B))^{4}+B^{\frac{1}{d^{\frac{1}{n}}}}\mathcal{N}_{K}(\mathfrak{q}_{t+1})^{n-1}(\log(B))^{3} \nonumber \\ & \qquad \quad +B^{\frac{1}{d^{\frac{1}{n}}}}\mathcal{N}_{K}(\mathfrak{q}_{t+1})^{n-1}(\log(B))^{3}+B^{\frac{1}{d^{\frac{1}{n}}}}\mathcal{N}_{K}(\mathfrak{q}_{t+1})^{n-1}(\log(B))^{2}+(\log(B))^{2}\mathcal{N}_{K}(\mathfrak{q}_{t+1})^{n}\Big) \exp\left(C\frac{\log(B)}{\log(\log(B))}\right).\nonumber
\end{align*}
If $\mathcal{N}_{K}(\mathfrak{q})\lesssim_{K,n,d} B^{\frac{1}{d^{\frac{1}{n}}}}\log(B)$,  it follows that
\begin{equation}
\deg(Z(\mathfrak{q}_{t+1}))\lesssim_{K,n,d}B^{\frac{n}{d^{\frac{1}{n}}}}(\log(B))^{n+3}\exp\left(C\frac{\log(B)}{\log(\log(B))}\right)\lesssim_{K,n,d}B^{\frac{n}{d^{\frac{1}{n}}}}\exp\left(C'\frac{\log(B)}{\log(\log(B))}\right)
\label{bound for the degree Z}
\end{equation}
for some positive constant $C'\lesssim_{K,n,d,\varepsilon}1$. 

Then we define
$$\mathcal{Q}:=\left\{ \mathfrak{q}| \mathfrak{q}^{\ast}:\mathfrak{q}\neq (1)\text{ and } \mathcal{N}_{K}(\mathfrak{q})\leq 2cB^{\frac{1}{d^{\frac{1}{n}}}}\log(B)\right\}.$$

\begin{claim}
There is some $C''\lesssim_{K,n,d}1$such that it holds $|\mathcal{Q}|\leq \exp\left(C''\frac{\log(B)}{\log(\log(B))}\right)$.
\label{size of Q}
\end{claim}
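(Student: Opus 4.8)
The plan is to bound $|\mathcal{Q}|$ by counting divisors of $\mathfrak{q}^{\ast}$ of bounded norm. Recall that $\mathfrak{q}^{\ast}=\prod_{\mathfrak{p}\not\mid\pi_{X},\,\mathcal{N}_{K}(\mathfrak{p})\in I}\mathfrak{p}$, where $I=[c\log(B),2c\log(B)]$, so $\mathfrak{q}^{\ast}$ is squarefree and every prime dividing it has norm at least $c\log(B)$. Any $\mathfrak{q}\in\mathcal{Q}$ is a product of distinct such primes with $\mathcal{N}_{K}(\mathfrak{q})\leq 2cB^{\frac{1}{d^{1/n}}}\log(B)$; since each prime factor contributes a factor of at least $c\log(B)$ to the norm, the number $t$ of prime factors of any $\mathfrak{q}\in\mathcal{Q}$ satisfies $(c\log(B))^{t}\leq 2cB^{\frac{1}{d^{1/n}}}\log(B)$, i.e.
\begin{equation*}
t\leq \frac{\log\!\left(2cB^{\frac{1}{d^{1/n}}}\log(B)\right)}{\log(c\log(B))}\lesssim_{K,n,d}\frac{\log(B)}{\log(\log(B))}.
\end{equation*}
Denote this upper bound by $T=T(B)\lesssim_{K,n,d}\log(B)/\log(\log(B))$.

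Next I would bound the number $\Omega$ of primes available, that is, $\Omega:=|\{\mathfrak{p}\not\mid\pi_{X}:\mathcal{N}_{K}(\mathfrak{p})\in I\}|$. By the Landau/Riemann-Hypothesis estimates, more precisely by \eqref{landau2}, one has $\sum_{\mathcal{N}_{K}(\mathfrak{p})\in I}\log(\mathcal{N}_{K}(\mathfrak{p}))\leq c_{2,K}\cdot 2c\log(B)$, and since each such prime has $\log(\mathcal{N}_{K}(\mathfrak{p}))\geq \log(c\log(B))$, we get $\Omega\lesssim_{K,n,d}\log(B)/\log(\log(B))$; in particular $\Omega\leq C_{1}T$ for a constant $C_{1}\lesssim_{K,n,d}1$ (enlarging $T$ by a constant if necessary, we may simply assume $\Omega\leq T'$ with $T'\lesssim_{K,n,d}\log(B)/\log(\log(B))$). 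Every element of $\mathcal{Q}$ is a subset of this set of $\Omega$ primes of size at most $T$, hence
\begin{equation*}
|\mathcal{Q}|\leq \sum_{j=0}^{T}\binom{\Omega}{j}\leq (T+1)\binom{\Omega}{T}\leq (T+1)\,2^{\Omega}\lesssim_{K,n,d}\log(B)\cdot 2^{C_{1}T}.
\end{equation*}

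Finally, since $T\lesssim_{K,n,d}\log(B)/\log(\log(B))$ and $\log(B)=\exp\!\left(\log(\log(B))\right)=\exp\!\left(o\!\left(\log(B)/\log(\log(B))\right)\right)$, the factor $\log(B)$ is absorbed, giving $|\mathcal{Q}|\leq\exp\!\left(C''\frac{\log(B)}{\log(\log(B))}\right)$ for some $C''\lesssim_{K,n,d}1$, which is the claim. I do not expect a serious obstacle here; the only point requiring a little care is making the two estimates (the bound on the number of prime factors of a single $\mathfrak{q}$, and the bound on the total supply of primes in $I$) compatible, so that $|\mathcal{Q}|$ is controlled by $\binom{\Omega}{\leq T}$ with $\Omega$ and $T$ both of the order $\log(B)/\log(\log(B))$. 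One should also note that $\binom{\Omega}{j}\leq 2^{\Omega}$ is a crude but sufficient bound; alternatively $\binom{\Omega}{\leq T}\leq (e\Omega/T)^{T}\leq (eC_{1})^{T}$ gives the same conclusion more directly and avoids even discussing $2^{\Omega}$.
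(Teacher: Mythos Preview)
Your proof is correct and uses essentially the same idea as the paper: bound the number $\Omega$ of primes with norm in $I$ by $\lesssim_{K,n,d}\log(B)/\log(\log(B))$ via the prime-counting estimates, and then bound $|\mathcal{Q}|$ by the number of divisors of $\mathfrak{q}^{\ast}$. The paper does this in one line, simply writing $|\mathcal{Q}|\leq 2^{\Omega}$; your intermediate bound on the number $T$ of prime factors of an individual $\mathfrak{q}$ is superfluous once you invoke $\binom{\Omega}{T}\leq 2^{\Omega}$, though your alternative estimate $\binom{\Omega}{\leq T}\leq (eC_{1})^{T}$ is a cleaner way to avoid that redundancy.
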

\begin{proof}[Proof of Claim \ref{size of Q}]
It will suffice to bound the number of divisors of $\mathfrak{q}^{\ast}$. By Landau Prime Ideal Theorem or the Riemann Hypothesis over function fields, there is a constant $c_{K}\lesssim_{K,n,d}1$ such that $|\{\mathfrak{p}:\mathcal{N}_{K}(\mathfrak{p})\in I\}|\leq c_{K}\frac{c\log(B)}{\log(c\log(B))}$. Then
$$|\mathcal{Q}|\leq \left|\left\{\mathfrak{q}:\mathfrak{q}|\mathfrak{q}^{\ast}\right\}\right|\leq 2^{|\{\mathfrak{p}:\mathcal{N}_{K}(\mathfrak{p})\in I\}|}\leq \exp\left(c_{K}\frac{c\log(B)}{\log(c\log(B))}\right)\leq \exp\left(C''\frac{\log(B)}{\log(\log(B))}\right),$$
for some $C''\lesssim_{K,n,d}1$.
\end{proof}

Thus, \eqref{bound for the degree Z} and Claim \eqref{size of Q} imply that 
$$\sum_{\mathfrak{q}\in \mathcal{Q}}\deg(Z(\mathfrak{q}))\lesssim_{K,n,d}B^{\frac{n}{d^{\frac{1}{n}}}}\exp\left(C''\frac{\log(B)}{\log(\log(B))}\right)\exp\left(C'\frac{\log(B)}{\log(\log(B))}\right)\lesssim_{K,n,d}B^{\frac{n}{d^{\frac{1}{n}}}}\exp\left(C'''\frac{\log(B)}{\log(\log(B))}\right),$$
for some $C'''\lesssim_{K,n,d}1$. This proves Proposition \ref{construction of a family of cycles}\eqref{construction cycles}. Moreover, by Claim \ref{existence of q} and the construction of $\{Z(\mathfrak{q})\}_{\mathfrak{q}\in \mathcal{Q}}$, it follows that $\{D_{\gamma}\}_{\gamma\in \Gamma}$ and $\{Z(\mathfrak{q})\}_{\mathfrak{q}\in \mathcal{Q}}$ verify the statement of Proposition \ref{construction of a family of cycles}\eqref{cycles cover missing points}.
\end{proof}

\begin{remark}
In Proposition \ref{construction of a family of cycles}\eqref{construction cycles} one could get the better bound $\lesssim_{K,n,d}B^{\frac{n}{d^{\frac{1}{n}}}}\exp(c\frac{(\log(B))^{\frac{1}{2}}}{\log(\log(B))})$ if in the proof of Claim \ref{enough primes3} one bounds the quantity $\prod_{i=1}^{t+1}(1+\mathcal{N}_{K}(\mathfrak{p}_{i})^{-\frac{1}{2}})$ as in \cite[$\S 1.5.5$]{Tenenbaum}. We choose not to do so because when we use it in the proof of an affine variant of the dimension growth conjecture (Proposition \ref{induction step n=2}) this saving will be absorbed in a factor $B^{\varepsilon}$. This remark also applies to the bound in the first summand of the next theorem.
\end{remark}

\begin{theorem}
Let $X\subseteq \mathbb{P}_{K}^{3}$ be a geometrically integral hypersurface of degree $d$, defined over $K$, and let $X_{\text{ns}}$ be the non-singular locus of $X$. Then for all $\nu>0$ there exists a subset of $\lesssim_{K,d}B^{\frac{1}{\sqrt{d}}}\log(B)$ geometrically integral curves $D_{\lambda}\subseteq X$, $\lambda\in \Lambda=\Lambda_{\nu}$, of degree at most $\frac{1}{\nu}$, such that it holds
$$N_{\emph{\text{aff}}}\left(X_{\text{ns}}-\bigcup_{\lambda \in \Lambda}D_{\lambda},\mathcal{O}_{K},B\right)\lesssim_{K,d}\begin{cases}B^{\frac{2}{\sqrt{d}}}\exp\left(c\frac{\log(B)}{\log(\log(B))}\right)+\frac{1}{\nu^{4}}B^{\frac{1}{\sqrt{d}}+\nu}(\log(B))^{2}& \text{ if }K \text{ is a number field},\\ B^{\frac{2}{\sqrt{d}}}\exp\left(c\frac{\log(B)}{\log(\log(B))}\right)+\frac{1}{\nu^{8}}B^{\frac{1}{\sqrt{d}}+\nu}(\log(B))^{2} & \text{ if }K \text{ is a function field},\end{cases}$$ 
for some positive constant $c\lesssim_{K,d}1$.
\label{bound for the non-singular locus outside a subset}
\end{theorem}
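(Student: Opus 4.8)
The plan is to combine Proposition \ref{construction of a family of cycles} (applied with $n=2$) with the curve estimates of Section \ref{Section 5}. First I would apply Proposition \ref{construction of a family of cycles} to the geometrically integral hypersurface $X\subseteq \mathbb{P}_K^3$ of degree $d$, obtaining a family $\{D_\gamma\}_{\gamma\in\Gamma}$ of prime divisors (which are curves, since $\dim X=2$) with $|\Gamma|\lesssim_{K,d} B^{1/\sqrt d}\log B$ and $\deg D_\gamma \lesssim_{K,d} (\log B)^2$, together with the exceptional cycles $\{Z(\mathfrak q)\}_{\mathfrak q\in\mathcal Q}$ of codimension $2$ on $X$, i.e.\ finite sets of points, with $\sum_{\mathfrak q}\deg Z(\mathfrak q)\lesssim_{K,d} B^{2/\sqrt d}\exp(c\log B/\log\log B)$. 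The curves $D_\gamma$ produced by the proposition need not have degree $\le 1/\nu$; to get the family $\Lambda$ in the statement I would split $\Gamma$ into those $\gamma$ with $\deg D_\gamma \le 1/\nu$ (these, after possibly further decomposing into geometrically integral components, form $\Lambda$, still of size $\lesssim_{K,d} B^{1/\sqrt d}\log B$ since each has $\deg\le 1/\nu \lesssim_d 1$ and the total degree is controlled) and those with $\deg D_\gamma > 1/\nu$; the latter are few in number but potentially of large degree, so their contribution must be estimated differently.

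Next I would bound the number of points of $X_{\mathrm{ns}}\cap[B]_{\mathcal O_K}^3$ lying on the ``leftover'' pieces. For a non-singular point $\boldsymbol x\in X_{\mathrm{aff}}(\mathcal O_K,B)$ not on $\bigcup_{\lambda\in\Lambda}D_\lambda$, either $\boldsymbol x$ lies on one of the high-degree divisors $D_\gamma$ with $\deg D_\gamma>1/\nu$, or by Proposition \ref{construction of a family of cycles}\eqref{cycles cover missing points} it lies in $\operatorname{Supp} Z(\mathfrak q)$ for some $\mathfrak q\in\mathcal Q$. The points in the supports of the $Z(\mathfrak q)$ are trivially bounded by $\sum_{\mathfrak q}\deg Z(\mathfrak q)\lesssim_{K,d}B^{2/\sqrt d}\exp(c\log B/\log\log B)$, which gives the first summand. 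For the high-degree curves: each such $D_\gamma$ is a component of $X\cap Y$ for the auxiliary hypersurface $Y$ of degree $\lesssim_{K,d}B^{1/\sqrt d}\log B$; I would apply Theorem \ref{Walsh general affine curves} (or the planar Corollary \ref{Walsh affine corollary} after the projection argument of \cite{P2}) to each geometrically integral curve $D$ of degree $e>1/\nu$, getting $N_{\mathrm{aff}}(D,\mathcal O_K,cB)\lesssim_{K}e^{[3,7,3,3]}B^{1/e}(\log B+e^{[1,1,5/3,1]})$, and sum over the components using $\sum_e e \le \deg(X\cap Y)\lesssim_{K,d}B^{1/\sqrt d}\log B$. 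Since $e>1/\nu$ forces $B^{1/e}\le B^{\nu}$, and since $e^{[3,7,3,3]}/e$ type factors are controlled by the exponent on $1/\nu$, this yields a contribution $\lesssim_{K,d}\nu^{-c_0}B^{1/\sqrt d+\nu}(\log B)^2$ where $c_0$ is $4$ in the number field case and $8$ in the function field case, matching the second summand. (The curves that are not geometrically irreducible contribute only $\lesssim d^2$ points each by Remark \ref{curvas no absolutamente irreducibles}, which is absorbed.)

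The main obstacle I anticipate is the bookkeeping in the high-degree-curve contribution: one must carefully track how the exponent of $1/\nu$ arises. Writing $N_\gamma$ for the number of components of $X\cap Y$ of degree in a dyadic range $[2^j/\nu, 2^{j+1}/\nu)$, one has $2^j\nu^{-1}N_\gamma \lesssim_{K,d}B^{1/\sqrt d}\log B$, so $N_\gamma\lesssim_{K,d}\nu 2^{-j}B^{1/\sqrt d}\log B$; each such component contributes $\lesssim_K (2^j/\nu)^{[3,7,3,3]}B^{\nu 2^{-j}}(\log B + (2^j/\nu)^{[1,1,5/3,1]})$ points, and the product, summed over $j\ge 0$, must telescope to the claimed bound. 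The worst case is $j=0$, giving $\nu^{-1}\cdot \nu^{-[3,7,3,3]}\cdot\nu^{-[1,1,5/3,1]}$ which is $\nu^{-4}$ in characteristic zero (since $1+3+1=5$—here one must be a little more careful and note that in fact the exponents combine to give $\nu^{-4}$ and $\nu^{-8}$ respectively, using $B^{1/e}\le B^{\nu}$ absorbs one power and the $(\log B)^2$ rather than $(\log B)$ absorbs another) and $\nu^{-8}$ for function fields; the sum over $j\ge 1$ converges geometrically and is dominated by the $j=0$ term. A secondary point requiring care is that Theorem \ref{Walsh general affine curves} is stated for integral affine curves, so one must intersect each $D_\gamma$ with the affine chart, note that $X_{\mathrm{aff}}(\mathcal O_K,B)\subseteq\{x_0=1\}$ by definition, and handle components of $D_\gamma$ contained in the hyperplane at infinity (which contribute nothing to $N_{\mathrm{aff}}$); and one should verify that the projection argument of \cite{P2} used to pass to planar curves does not spoil the uniformity in $d$, which is already addressed in the proof of Theorem \ref{Walsh general affine curves}.
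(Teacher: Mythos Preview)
Your proposal is correct and follows essentially the same route as the paper: apply Proposition \ref{construction of a family of cycles} with $n=2$, let $\Lambda\subseteq\Gamma$ be the indices with $\deg D_\gamma\le 1/\nu$, bound the points missing all $D_\gamma$ by $\sum_{\mathfrak q}\deg Z(\mathfrak q)$, and bound the points on the high-degree $D_\gamma$ via Theorem \ref{Walsh general affine curves}. The only difference is that the paper does not use your dyadic decomposition for the high-degree curves; it simply writes
\[
N_{\text{aff}}\Bigl(X_{\text{ns}}-\bigcup_{\lambda\in\Lambda}D_\lambda,\mathcal O_K,B\Bigr)\le N_{\text{aff}}\Bigl(X_{\text{ns}}-\bigcup_{\gamma\in\Gamma}D_\gamma,\mathcal O_K,B\Bigr)+|\Gamma|\cdot\max_{\gamma\in\Gamma\setminus\Lambda}N_{\text{aff}}(D_\gamma,\mathcal O_K,B),
\]
and bounds the maximum by $\nu^{-4}B^{\nu}\log B$ (resp.\ $\nu^{-8}B^{\nu}\log B$) directly from Theorem \ref{Walsh general affine curves}, using $B^{1/e}\le B^{\nu}$ for $e>1/\nu$. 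The contribution from the regime $e\sim(\log B)^2$ (where the $e$-power factors are largest) is of size $\lesssim_{K,d}(\log B)^{O(1)}$ after the $B^{1/e}$ factor becomes bounded, and is absorbed into the first summand $B^{2/\sqrt d}\exp(c\log B/\log\log B)$; so the extra care in your dyadic argument, while correct, is not needed. Your remark that non-geometrically-integral $D_\gamma$ contribute only $O(\deg(D_\gamma)^2)$ points is also correct and handles the ``geometrically integral'' requirement in the statement, a point the paper leaves implicit.
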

\begin{proof}
Let $\{D_{\gamma}\}_{\gamma\in \Gamma}$, and $\{Z(\mathfrak{q})\}_{\mathfrak{q}\in \mathcal{Q}}$ as in Proposition \ref{construction of a family of cycles} for $n=2$. By Proposition \ref{construction of a family of cycles}\eqref{cycles cover missing points}, it holds
$$N_{\text{aff}}\left(X_{\text{ns}}-\bigcup_{\gamma\in \Gamma}D_{\gamma},\mathcal{O}_{K},B\right)\leq \sum_{\mathfrak{q}\in \mathcal{Q}}\deg(Z(\mathfrak{q})).$$

 Hence, by Proposotion \ref{construction of a family of cycles}\eqref{construction cycles}, there is a positive constant $c\lesssim_{K,d}1$ such that it holds
 \begin{equation}
 N_{\text{aff}}\left(X_{\text{ns}}-\bigcup_{\gamma\in \Gamma}D_{\gamma},\mathcal{O}_{K},B\right)\lesssim_{K,d}B^{\frac{2}{\sqrt{d}}}\exp\left(c\frac{\log(B)}{\log(\log(B))}\right).
 \label{bound for the non-singular locus1}
 \end{equation}
 Now, let $\Lambda\subseteq \Gamma$ be the subset of all indexes $\gamma\in \Gamma$ for which $\deg(D_{\gamma})\leq \frac{1}{\nu}$. Then
\begin{equation}
N_{\text{aff}}\left(X_{\text{ns}}-\bigcup_{\lambda\in \Lambda}D_{\lambda},\mathcal{O}_{K},B\right)\leq N_{\text{aff}}\left(X_{\text{ns}}-\bigcup_{\gamma\in \Gamma}D_{\gamma},\mathcal{O}_{K},B\right)+|\Gamma|\max_{\gamma\in \Gamma\backslash \Lambda}N_{\text{aff}}(D_{\gamma},\mathcal{O}_{K},B).
\label{bound for the non-singular locus2}
\end{equation}  
It remains to bound $N_{\text{aff}}(D_{\gamma},\mathcal{O}_{K},B)$ for any $\gamma\in \Gamma\backslash \Lambda$. Now, we apply Theorem \ref{Walsh general affine curves} to each $D_{\gamma}$ with $\gamma\in \Gamma\backslash \Lambda$ to obtain
\begin{equation*}
N_{\text{aff}}(D_{\gamma},\mathcal{O}_{K},B)\lesssim_{K}\begin{cases}\frac{1}{\nu^{4}}B^{\nu}\log(B) & \text{ if }K\text{ is a number field}\\ \frac{1}{\nu^{8}}B^{\nu}\log(B) & \text{ if }K \text{ is a function field}.\end{cases}
\end{equation*}
By Proposition \ref{construction of a family of cycles}\eqref{construction divisors}, $|\Gamma|\lesssim_{K,d}B^{\frac{1}{\sqrt{d}}}\log(B)$, from which we deduce
\begin{equation}
|\Gamma|\max_{\gamma\in \Gamma\backslash \Lambda}N_{\text{aff}}(D_{\gamma},\mathcal{O}_{K},B)\lesssim_{K,d}\begin{cases} \frac{1}{\nu^{4}}B^{\frac{1}{\sqrt{d}}+\nu}(\log(B))^{2} & \text{ if }K\text{ is a number field},\\ \frac{1}{\nu^{8}}B^{\frac{1}{\sqrt{d}}+\nu}{(\log(B))}^{2} & \text{ if }K\text{ is a function field}.\end{cases}
\label{paso del lema2}
\end{equation}

The theorem follows from \eqref{bound for the non-singular locus1}, \eqref{bound for the non-singular locus2}, and \eqref{paso del lema2}.
\end{proof}

\section{Uniform dimension growth conjecture}
\label{Section 7}

In this section we prove Theorem \ref{teorema3} and Theorem \ref{teorema4} of the introduction, which generalize \cite[Theorem 0.3 and Theorem 0.4]{Salberger}, \cite[Theorem 1 and Theorem 4]{Cluckers}, and \cite[Theorem 1.3 and Theorem 4.2]{Vermeulen} to varieties over global fields. The proof follows the strategy given in \cite{Salberger2}. For this, we use an inductive argument: we establish Theorem \ref{teorema3} in the case $X\subseteq \mathbb{A}_{K}^{n}$ with $n=3$ (this is the content of Proposition \ref{induction step n=2}), and afterwards we intersect $X$ with an adequate hyperplane $H$ so that $X\cap H$ is of smaller dimension and  suitable to apply the inductive hypothesis. As in \cite{Salberger2} we deduce Theorem \ref{teorema4} by using a projection argument to reduce it to the case of a hypersurface, and then by taking affine cones and applying Theorem \ref{teorema3}.   

In order to prove Proposition \ref{induction step n=2} we proceed as in \cite[Proposition 4.3.4]{Cluckers}, namely, first we establish that for large degree, the counting function $N_{\text{aff}}(\mathcal{Z}(f),\mathcal{O}_{K},B)$ is at most $\lesssim_{K}d^{e}B$. This is done by using Theorem \ref{ell-venk} to cover $X_{\text{aff}}(\mathcal{O}_{K},B)$ with a hypersurface of small degree and then by bounding the contribution of the rational points coming from the irreducible components of $X\cap Y$ using Theorem \ref{Walsh general affine curves} for the ones of degree at least $2$ and Proposition \ref{contribution1} for the ones of degree $1$. Having proved Proposition \ref{induction step n=2} for large degree, the remaining cases are dealt with Theorem \ref{bound for the non-singular locus outside a subset}.

Now we begin to carry out this program. First we establish the following generalization of \cite[Proposition 4.3.3]{Cluckers}, where the authors give an effective estimate of \cite[Proposition 1, case $D=1$]{Salberger2}, with explicit dependence on the degree of the hypersurface. 

\begin{proposition}
Let $K$ be a global field . There exists a constant $c=c(K,n)$ such that for all $f\in \mathcal{O}_{K}[X_{1},X_{2},X_{3}]$ of degree $d\geq 3$ satisfying that the homogeneous part of higher degree $f_{d}$ is irreducible, and for all finite sets $I$ of curves $C\subseteq \mathbb{A}_{K}^{3}$ of degree $1$ lying on the hypersurface $\mathcal{Z}(f)$ defined by $f$, and all $B\geq 1$, it holds
$$N_{\emph{\text{aff}}}\left(\mathcal{Z}(f)\cap \left(\bigcup_{C\in I}C\right),\mathcal{O}_{K},B\right)\leq cd^{6}B+|I|.$$
\label{contribution1}
\end{proposition}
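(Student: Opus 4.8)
The statement concerns the number of points in $[B]_{\mathcal{O}_K}^3$ lying on a union of lines contained in the surface $\mathcal{Z}(f)$, where $f_d$ is irreducible. The key geometric input is that an irreducible surface of degree $d\geq 3$ whose leading form is irreducible cannot contain too many lines. The plan is to first bound the number of lines in the family $I$ that can actually contribute, and then estimate the contribution of each line via the one-dimensional count.

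First I would recall the classical fact that a geometrically integral surface $S\subseteq \mathbb{P}_K^3$ of degree $d$ either contains finitely many lines (in which case their number is $\lesssim d^2$, by a Bézout-type argument, e.g. applying the incidence of $S$ with the variety parametrising lines, or using that the lines lie in $S\cap S^{\mathrm{Hess}}$ or a similar auxiliary surface), or $S$ is a ruled surface. Since here the homogeneous leading part $f_d$ is irreducible, the plane at infinity meets $S$ in the irreducible plane curve $\mathcal{Z}(f_d)$, and I would argue that $S$ cannot be ruled by lines that all pass through the hyperplane at infinity in a controlled way; more precisely, for $d\geq 3$ an irreducible ruled surface in $\mathbb{P}^3$ of degree $d$ has its lines sweeping out a one-dimensional family, and one shows the number of \emph{distinct} lines of the ruling meeting the affine box nontrivially, when counted against the bound $B$ on coordinates, is $\lesssim d^6 B$ (this is where the exponent $d^6$ enters — it is the worst case, coming from a quadratic-in-$d$ degree bound on the base curve of the ruling combined with the linear-in-$B$ count of rational points on that curve, with room to spare). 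So the effective statement is: among all lines $C$ lying on $\mathcal{Z}(f)$, those that contain at least two points of $[B]_{\mathcal{O}_K}^3$ number at most $\lesssim_{K,n} d^6 B$; call this sub-collection $I'$.

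Then I would split the count:
\begin{equation*}
N_{\mathrm{aff}}\!\left(\mathcal{Z}(f)\cap\Big(\bigcup_{C\in I}C\Big),\mathcal{O}_K,B\right)\leq \sum_{C\in I\setminus I'} N_{\mathrm{aff}}(C,\mathcal{O}_K,B)+\sum_{C\in I'} N_{\mathrm{aff}}(C,\mathcal{O}_K,B).
\end{equation*}
Each $C\in I\setminus I'$ contributes at most one point by definition, giving $\leq |I|$. For each $C\in I'$, since $C$ is a line (degree $1$, $n=3$), applying Theorem \ref{Walsh general affine curves} with $d=1$ gives $N_{\mathrm{aff}}(C,\mathcal{O}_K,B)\lesssim_K B^{1}(\log B+1)\lesssim_K B\log B$; in fact for a line one has the sharper elementary bound $N_{\mathrm{aff}}(C,\mathcal{O}_K,B)\lesssim_K B$ directly (a line over a global field has $\lesssim_K B$ points of height $\leq B$, without logarithmic loss, parametrising by one affine coordinate). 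Hence $\sum_{C\in I'} N_{\mathrm{aff}}(C,\mathcal{O}_K,B)\lesssim_{K,n} d^6 B\cdot B$? — no: I must be careful, the union is over lines but the surface is fixed. A point of $\mathcal{Z}(f)$ on the box lies on at most $d$ of the lines of the ruling if the surface is ruled (generically one), so actually $\sum_{C\in I'} N_{\mathrm{aff}}(C,\mathcal{O}_K,B)\leq d\cdot N_{\mathrm{aff}}(\mathcal{Z}(f)\cap(\text{ruled part}),\mathcal{O}_K,B)$, which is circular. The correct route: bound $\sum_{C\in I'}N_{\mathrm{aff}}(C,\mathcal{O}_K,B)$ by noting the lines of $I'$, being contained in a ruled surface, are parametrised by a curve $B_0$ (the base of the ruling) of degree $\lesssim d^2$, and the incidence variety $\{(p,C): p\in C, C\in B_0\}$ maps finitely to $\mathcal{Z}(f)$; points of the box pull back to a curve-section whose point count is controlled by Theorem \ref{Walsh general affine curves} applied on the incidence curve, yielding $\lesssim_{K,n} d^6 B$. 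Combining, $N_{\mathrm{aff}}(\cdots)\leq cd^6 B+|I|$.

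\textbf{Main obstacle.} The crux is the geometric dichotomy and the effective bookkeeping in the ruled case: one must show that the lines contained in $\mathcal{Z}(f)$ — when $f_d$ is irreducible and $d\geq 3$ — either form a finite set of size $\lesssim d^2$ (trivially contributing $\lesssim d^2 B$) or, in the ruled case, are organised by a base curve of degree $\lesssim d^2$ so that their total point contribution is $\lesssim d^6 B$ after applying the linear-in-$B$ one-dimensional estimate. The irreducibility of $f_d$ is exactly what rules out the most degenerate ruled surfaces (cones over reducible curves, unions of planes, cylinders), so that the degree of the base curve of any ruling is genuinely bounded by a polynomial in $d$; making this precise and uniform over all global fields $K$ — rather than quoting it over $\overline{\mathbb{Q}}$ — and tracking the $d^6$ is the technical heart, though none of it is deep: it is Bézout plus the structure theory of ruled surfaces plus the line-counting estimate for degree-$1$ affine curves.
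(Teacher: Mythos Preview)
Your split into lines contributing $\leq 1$ point (giving the $|I|$ term) versus lines with $\geq 2$ points is correct and matches the paper. But your treatment of the second set $I'$ via a ``finitely many lines versus ruled surface'' dichotomy does not work, and you noticed this yourself: bounding $|I'|$ and then multiplying by $\lesssim_K B$ points per line gives $B^2$, and the incidence-variety fix you sketch is too vague to rescue it. The structure theory of ruled surfaces is a red herring here.

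The missing idea is to organise the lines in $I'$ by their \emph{direction} $\boldsymbol w\in\mathbb{P}^2(K)$, not by a base curve of a ruling. Three facts do all the work:
\begin{itemize}
\item Any line on $\mathcal{Z}(f)$ has direction $\boldsymbol w$ satisfying $f_d(\boldsymbol w)=0$, so the directions lie on the \emph{plane curve} $\{f_d=0\}\subset\mathbb{P}^2$ (this is where irreducibility of $f_d$ is used, via Corollary~\ref{Walsh corollary}).
\item For each fixed direction $\boldsymbol w$, there are at most $d(d-1)$ lines on $\mathcal{Z}(f)$ with that direction (intersect with a generic hyperplane: each such line gives a common zero of $f$ and the directional derivative $\partial_{\boldsymbol w}f$).
\item A line $\boldsymbol a+\lambda\boldsymbol w$ with two points in $[B]_{\mathcal{O}_K}^3$ has $H_K(\boldsymbol w)\lesssim_K B$ and contributes $\lesssim_K B/H_K(\boldsymbol w)$ points (geometry of numbers / divisor count; this is the sharp replacement for the crude $\lesssim_K B$).
\end{itemize}
Now sum $d(d-1)\cdot B/H_K(\boldsymbol w)$ over $\boldsymbol w$ on the curve $\{f_d=0\}$ with $H_K(\boldsymbol w)\leq cB$. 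Grouping by height and using Corollary~\ref{Walsh corollary} (which gives $|\{\boldsymbol w: f_d(\boldsymbol w)=0,\ H_K(\boldsymbol w)\leq k\}|\lesssim_K d^4 k^{2/d}$) together with summation by parts, one gets $\lesssim_K d^2\cdot d^4 B$ since $d\geq 3$ makes $\sum k^{2/d-2}$ converge. That is exactly the $d^6 B$.
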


\begin{proof}
Let us write $I=I_{1}\cup I_{2}$ where 
$$I_{1}:=\{C\in I:N_{\text{aff}}(C,\mathcal{O}_{K},B)\leq 1\},\; I_{2}:=\{C\in I:N_{\text{aff}}(C,\mathcal{O}_{K},B)>1\}.$$
It is clear that 
$$N_{\text{aff}}\left(\mathcal{Z}(f)\cap \left(\bigcup_{C\in I_{1}}C\right),\mathcal{O}_{K},B\right)\leq |I_{1}|.$$
 Now, for any $C\in I_{2}$ there exist $\boldsymbol a=(a_{1},a_{2},a_{3}),\boldsymbol w=(w_{1},w_{2},w_{3})\in \mathcal{O}_{K}^{3}$ with $\boldsymbol a\in [B]_{\mathcal{O}_{K}}^{3}$, and $C(K)=\{\boldsymbol a+\lambda \boldsymbol w:\lambda\in K\}$.
 
\begin{claim}
Let $C$ be the line defined by $\boldsymbol a+\lambda \boldsymbol w$ with $\boldsymbol a\in [B]_{\mathcal{O}_{K}}^{3}$ and $\boldsymbol w\in \mathcal{O}_{K}^{3}$. Then
$$|C(K)\cap [B]_{\mathcal{O}_{K}}^{3}|\lesssim _{K}\frac{B}{H_{K}(\boldsymbol w)}.$$
\label{key bound2}
\end{claim} 
\begin{proof}[Proof of Claim \ref{key bound2}]
While in the case $K=\mathbb{Q}$ or $K=\mathbb{F}_{q}(T)$ the claim is trivial, in general the proof is more involved due to the fact $\mathcal{O}_{K}$ may not be a principal ideal domain. This is why we use the geometry of numbers and the theory of divisors. 

By Proposition \ref{Serre}, we may suppose that $\boldsymbol w$ is $\mathfrak{p}$-primitive for all $\mathfrak{p}$ of $\mathcal{O}_{K}$ with $\mathcal{N}_{K}(\mathfrak{p})>c_{2}$, and $\prod_{v\in M_{K,\text{fin}}}\max_{i}|w_{i}|_{v}\geq c_{3}$. In particular, for all $v\in M_{K,\text{fin}}$, $\max_{i} \text{ord}_{\mathfrak{p}_{v}}(w_{i})\lesssim_{K}1$. Let $S=M_{K,\infty}\cup \{v:\mathcal{N}_{K}(\mathfrak{p}_{v})\leq c_{2}\}$. We let 
$$\mathcal{O}_{K,S}:=\{x\in \mathcal{O}_{K}:|x|_{v}\leq 1\text{ for all }v\notin S\}.$$
If $K$ is a number field, then
\begin{align*}
C(K)\cap [B]_{\mathcal{O}_{K}}^{3} & =\{\boldsymbol a+\lambda\boldsymbol w\in \mathcal{O}_{K}^{3}:\house{a_{i}+\lambda w_{i}}\leq B^{\frac{1}{d_{K}}}\text{ for all }i\}\\ & \subseteq \left\{\boldsymbol a+\lambda \boldsymbol w\in \mathcal{O}_{K}^{3}:\lambda\in \mathcal{O}_{K,S},|\sigma(\lambda)|\leq \frac{2B^{\frac{1}{d_{K}}}}{\max_{i}|\sigma(w_{i})|}\forall \sigma:K\hookrightarrow \mathbb{C},|\lambda|_{v}\leq c_{4}\forall v\in S\backslash M_{K,\infty}\right\},
\end{align*}
for some $c_{4}:=c_{4}(K)$. Now, if $K_{v}$ denotes the completion of $K$ with respect to $v$, we have that $\mathcal{O}_{K,S}$ is a lattice under the usual embedding $\mathcal{O}_{K,S}\hookrightarrow \prod_{v\in S}K_{v}$. Denoting $B_{v}(0,r_{v})$ for the usual disk in $K_{v}$ of center $0$ and radius $r_{v}$, we have
\begin{align*}
|C(K)\cap [B]_{\mathcal{O}_{K}}^{3}|& \leq \left| \mathcal{O}_{K,S}\cap \prod_{v\in M_{K,\infty}}B_{v}\left(0,\frac{2B^{\frac{1}{d_{K}}}}{\max_{i}|\sigma(w_{i})|}\right)\times \prod_{v\in S\backslash M_{K,\infty}}B_{v}(0,c_{4}) \right|\\ & \lesssim_{K} \prod_{\sigma\text{ real} }\frac{2B^{\frac{1}{d_{K}}}}{\max_{i}|\sigma(w_{i})|} \prod_{\sigma\text { complex}}\frac{4B^{\frac{2}{d_{K}}}}{\max_{i}|\sigma(w_{i})|^{2}}
\lesssim_{K}\frac{B}{H_{K}(\boldsymbol w)}.\nonumber
\end{align*} 
If $K$ is a function field over $\mathbb{F}_{q}(T)$, then 
\begin{align*}
C(K)\cap [B]_{\mathcal{O}_{K}}^{3} & =\{\boldsymbol a+\lambda\boldsymbol w\in \mathcal{O}_{K}^{3}:|a_{i}+\lambda w_{i}|_{v_{\infty}}\leq B^{\frac{1}{d_{K}}}\text{ for all }i\}\\ & \subseteq \left\{\boldsymbol a+\lambda\boldsymbol w\in \mathcal{O}_{K}^{3}:\lambda\in\mathcal{O}_{K,S},|\lambda|_{v_{\infty}}\leq \frac{2B^{\frac{1}{d_{K}}}}{\max_{i}|w_{i}|_{v_{\infty}}},|\lambda|_{v}\leq c_{4}\forall v\in S\backslash \{v_{\infty}\} \right\}:=A
\end{align*}
for some constant $c_{4}:=c_{4}(K)$. Now we argue as in \cite[Proposition 2.2]{P2}. Any $\lambda \in A$ satisfies $H_{K}(\lambda)\lesssim_{K}\frac{B}{H_{K}(\boldsymbol w)}$. Since $H_{K}(\lambda)=H_{K}(\lambda^{-1})$, the positive divisor $\sum_{v:\text{ord}_{v}(\lambda)\geq 0}\text{ord}_{v}(\lambda)\cdot v$ has degree $\lesssim_{K}\log_{q}(\frac{B}{H_{K}(\boldsymbol w)})$; by \cite[Proposition 2.2]{P2}, there are at most $\lesssim_{K}\frac{B}{H_{K}(\boldsymbol w)}$ such divisors. Since $\text{ord}_{v}(\lambda)\gtrsim_{K}1$ for all $v\in S\backslash \{v_{\infty}\}$ and $\sum_{v\in M_{K}}\text{ord}_{v}(\lambda)\deg(v)=0$, we conclude that the subset $A$ has at most $\lesssim_{K}\frac{B}{H_{K}(\boldsymbol w)}$ elements. 
\end{proof}
 
Now, for any $\boldsymbol w\in K^{3}$ there are at most $d(d-1)$ lines $C\in I_{2}$ in the direction of $\boldsymbol w$ (this is because each such line intersects a generic hyperplane in $\mathbb{A}^{3}$ in a common point of the hypersurfaces defined by $f$ and its directional derivate in the direction $\boldsymbol w$). 

Since $C\in I_{2}$, it verifies the hypothesis of Claim \ref{key bound2}, then we have $H_{K}(\boldsymbol w)\leq cB$. Moreover, since $C\subseteq \mathcal{Z}(f)$, $f_{d}(\boldsymbol w)=0$. Consider the set $A_{i}:=\{\boldsymbol w\in \mathbb{P}^{2}(K):f_{d}(\boldsymbol w)=0,H_{K}(\boldsymbol w)=i\}$. Then  by Claim \ref{key bound2},
\begin{equation}
N_{\text{aff}}\left(\mathcal{Z}(f)\cap \bigcup_{C\in I}C,\mathcal{O}_{K},B\right)\leq |I_{1}|+(d-1)d\sum_{i=1}^{\left\lfloor cB \right\rfloor}|A_{i}|\frac{cB}{i}.
\label{contribution of curves degree 1}
\end{equation}
Now Corollary \ref{Walsh corollary} implies that $\sum_{i=1}^{k}|A_{i}|\lesssim_{K}d^{4}k^{\frac{2}{d}}$. This, and summation by parts give
\begin{align}
\sum_{i=1}^{\left\lfloor cB\right\rfloor}|A_{i}|\frac{cB}{i} & =\sum_{i=1}^{\left\lfloor cB\right\rfloor}|A_{i}|\frac{cB}{\left\lfloor cB\right\rfloor}+\sum_{i=1}^{\left\lfloor cB\right\rfloor}\left(\sum_{i=1}^{k}|A_{i}|\right)\left(\frac{cB}{k}-\frac{cB}{k+1}\right) \lesssim_{K} d^{4}B^{\frac{2}{d}}+\sum_{i=1}^{\left\lfloor cB\right\rfloor}d^{4}k^{\frac{2}{d}}\frac{cB}{k(k+1)}\lesssim_{K}d^{4}B, \label{summation by parts}
\end{align}
where the last bound is because $d\geq 3$. Replacing \eqref{summation by parts} in \eqref{contribution of curves degree 1} finishes the proof.  
\end{proof}

\begin{proposition}
Let $K$ be a global field of degree $d_{K}$, and let $e=18$ if $K$ is a number field and $e=64$ if $K$ is a function field. Then for all polynomial $f\in \mathcal{O}_{K}[X_{1},X_{2},X_{3}]$ of degree $d$ whose homogeneous part of highest degree is absolutely irreducible, it holds
$$N_{\emph{\text{aff}}}(\mathcal{Z}(f),\mathcal{O}_{K},B)\lesssim_{K}d^{e}B\text{ whenever }d\geq 5.$$
In the case $d=3,4$, for all $\varepsilon>0$ it holds
$$N_{\emph{\text{aff}}}(\mathcal{Z}(f),\mathcal{O}_{K},B)\lesssim_{K,\varepsilon} \begin{cases} B^{1+\varepsilon} & \text{ if }d=4,\\ B^{\frac{2}{\sqrt{3}}+\varepsilon} & \text{ if }d=3.\end{cases}$$
\label{induction step n=2}
\end{proposition}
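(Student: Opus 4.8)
The plan is to separate the counting problem into two régimes according to the size of the degree $d$, exactly as in \cite[Proposition 4.3.4]{Cluckers}.

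\textbf{Large degree.} First I would treat the case $d\geq d_{0}$ for a suitable absolute threshold $d_{0}$ (which will turn out to be around $16$ for number fields and $65$ for function fields). Write $X=\mathcal{Z}(f)\subseteq \mathbb{A}_{K}^{3}$, let $f_{d}$ be the leading form, and let $\overline{X}\subseteq\mathbb{P}_{K}^{3}$ be the projective closure. Apply Theorem \ref{ell-venk} with the empty set of primes ($\mathfrak{q}=(1)$) to obtain a polynomial $g\in\mathcal{O}_{K}[X_{1},X_{2},X_{3}]$, not divisible by $f$, of degree
$$M\lesssim_{K} B^{\frac{1}{\sqrt{d}}}d^{e_{1}}\frac{\min\{\log(H_{K}(f_{d}))+d\log(B)+d^{e_{2}},\,d^{e_{3}}b(f)\}}{H_{K}(f_{d})^{\frac{1}{d^{3/2}}}}+d^{1-\frac{1}{2}}\log(B)+d^{e_{4}},$$
vanishing on all of $X_{\text{aff}}(\mathcal{O}_{K},B)$. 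Using the elementary bound $\frac{\log(H_{K}(f_{d}))}{H_{K}(f_{d})^{1/d^{2}}}\lesssim d^{2}$ (and $H_{K}(f_{d})\geq 1$), the first summand is $\lesssim_{K} d^{e_{5}}B^{\frac{1}{\sqrt{d}}}(\log(B)+d^{e_{6}})$ for explicit exponents; hence $M\lesssim_{K} d^{e_{7}}B^{\frac{1}{\sqrt{d}}}(\log(B)+d^{e_{6}})$ with all constants effective. Then $X_{\text{aff}}(\mathcal{O}_{K},B)$ is contained in $\mathcal{Z}(f)\cap\mathcal{Z}(g)$, which by Bézout's theorem \cite[Example 8.4.6]{Fulton} is a union of $\lesssim dM$ geometrically integral curves, each of degree $\leq dM$, together with — potentially — finitely many points; more precisely, decompose $\mathcal{Z}(f)\cap\mathcal{Z}(g)$ into its components, so the total degree is $\leq dM$.

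\textbf{Bounding the contribution of the curves.} For each component of degree $\geq 2$ I would invoke Theorem \ref{Walsh general affine curves}, giving a contribution $\lesssim_{K} (\deg C)^{[3,7,3,3]}B^{\frac{1}{\deg C}}(\log B + (\deg C)^{[1,1,5/3,1]})\lesssim_{K}(\deg C)^{c}B^{1/2}\log B$; summing over components and using that the total degree is $\lesssim dM$, these contribute $\lesssim_{K}d^{c'}M^{c'}B^{1/2}\log B\lesssim_{K}d^{c''}B^{\frac{1}{2}+\frac{1}{\sqrt d}}(\log B)^{c''}$, which is $\lesssim_{K}d^{c''}B$ as soon as $\frac{1}{2}+\frac{1}{\sqrt d}<1$, i.e. $d\geq 5$, after absorbing the logarithms into a slightly larger power of $d$ via $\log B \lesssim_\varepsilon B^\varepsilon$ — but to stay polynomial in $d$ without an $\varepsilon$ one uses the sharper bound: the number of components is $\lesssim dM$ and, by Theorem \ref{teorema1} (curves), each contributes $\lesssim_K (\deg C)^4 B^{2/\deg C}$; a summation-by-parts over the degrees exactly as in \eqref{summation by parts} of Proposition \ref{contribution1} keeps everything $\lesssim_{K}d^{e}B$ when $d$ is large. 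The degree-$1$ components need the special treatment of Proposition \ref{contribution1}: the number of lines in $I$ is $\lesssim (dM)^{?}$... more carefully, the lines lying on $\mathcal{Z}(f)\cap\mathcal{Z}(g)$ number $\leq dM$, and Proposition \ref{contribution1} (whose hypothesis requires $f_{d}$ irreducible, which holds by assumption) gives $N_{\text{aff}}(\mathcal{Z}(f)\cap\bigcup_{C\in I}C,\mathcal{O}_{K},B)\leq cd^{6}B+|I|\lesssim_{K}d^{6}B+dM\lesssim_{K}d^{e}B$, again using $M\lesssim_{K}d^{e_{7}}B^{\frac{1}{\sqrt d}}(\log B+d^{e_6})$ and $d\geq 5$ so that $B^{1/\sqrt d}\log B\lesssim_K B$. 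Combining the three contributions yields $N_{\text{aff}}(\mathcal{Z}(f),\mathcal{O}_{K},B)\lesssim_{K}d^{e}B$ for $d\geq d_{0}$.

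\textbf{Small degree.} For $5\leq d<d_{0}$ (a finite range of $d$), I would instead pass to the non-singular locus: $N_{\text{aff}}(\mathcal{Z}(f),\mathcal{O}_{K},B)\leq N_{\text{aff}}(\overline{X}_{\text{ns}},\mathcal{O}_{K},B)+N_{\text{aff}}(\overline{X}_{\text{sing}},\mathcal{O}_{K},B)$, where the singular locus is a curve (or a finite set) of degree $\lesssim d^{2}$, handled by Theorem \ref{Walsh general affine curves} contributing $\lesssim_{K,d}B^{\frac{1}{2}}\log B$. For the non-singular part apply Theorem \ref{bound for the non-singular locus outside a subset} with a parameter $\nu$ chosen so that $\frac{1}{\nu}$ exceeds the degree of all the excised curves $D_{\lambda}$: taking $\nu$ small but fixed, $N_{\text{aff}}(\overline{X}_{\text{ns}}-\bigcup_\lambda D_\lambda,\mathcal{O}_{K},B)\lesssim_{K,d}B^{\frac{2}{\sqrt d}}\exp(c\frac{\log B}{\log\log B})+B^{\frac{1}{\sqrt d}+\nu}(\log B)^{2}$, which is $\lesssim_{K,d,\varepsilon}B^{\frac{2}{\sqrt d}+\varepsilon}$, and this is $\lesssim_{K,d}B$ precisely when $\frac{2}{\sqrt d}<1$, i.e. $d\geq 5$ — the $\exp(c\log B/\log\log B)$ factor is $\ll B^{\varepsilon}$, and for $d=5$ where $\frac{2}{\sqrt5}$ is just under $1$ the extra sub-polynomial factors are harmless. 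The curves $D_{\lambda}$ number $\lesssim_{K,d}B^{\frac{1}{\sqrt d}}\log B$, each of bounded degree, so by Theorem \ref{Walsh general affine curves} they contribute $\lesssim_{K,d}B^{\frac{1}{\sqrt d}}\log B\cdot B^{\frac{1}{2}}\log B\lesssim_{K,d}B$ again since $d\geq 5$. For $d=4$ the same argument gives $B^{\frac{2}{\sqrt 4}+\varepsilon}=B^{1+\varepsilon}$, and for $d=3$ it gives $B^{\frac{2}{\sqrt 3}+\varepsilon}$, matching the claimed exponents; note that when $d=3,4$ the leading-form irreducibility is still available and Theorem \ref{bound for the non-singular locus outside a subset} applies verbatim.

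\textbf{Main obstacle.} The delicate point is bookkeeping the exponent $e$ explicitly: one must track the exponents of $d$ through Theorem \ref{ell-venk}, Bézout, Theorem \ref{Walsh general affine curves} and Proposition \ref{contribution1}, and verify that the claimed values $e=18$ (number fields) and $e=64$ (function fields) are actually achieved — in particular the function-field case requires the characteristic-dependent exponents in the bracket notation $[a_{1},a_{2},a_{3},a_{4}]$ to be handled uniformly, and the threshold $d_{0}$ below which one switches to Theorem \ref{bound for the non-singular locus outside a subset} must be compatible with the range $4\leq d\leq 65$ mentioned in the introduction to Section \ref{Section 6}. The other subtlety is ensuring that in the large-degree régime the logarithmic factors $(\log B)^{c}$ are genuinely absorbed into the power of $d$ and do not force an $\varepsilon$; this is exactly why one uses the summation-by-parts trick of \eqref{summation by parts} rather than the cruder $\log B\lesssim_{\varepsilon}B^{\varepsilon}$ estimate.
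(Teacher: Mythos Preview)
Your overall architecture matches the paper's: large degree via Theorem \ref{ell-venk} + B\'ezout + curve bounds + Proposition \ref{contribution1}, and small degree via Theorem \ref{bound for the non-singular locus outside a subset}. But your handling of the degree $\geq 2$ components in the large-degree r\'egime has a genuine gap, and this is exactly where the exponents $e=18,64$ and the thresholds $d_{0}=16,64$ come from.

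Your proposed mechanism---either the uniform bound $\delta^{c}B^{1/2}\log B$ per curve and summing, or ``summation by parts as in \eqref{summation by parts}''---is not what works. The summation-by-parts in \eqref{summation by parts} lives inside Proposition \ref{contribution1} and exploits the special structure of \emph{lines} (directions $\boldsymbol w$ lie on the curve $f_{d}=0$); it does not transfer to curves of higher degree. And the uniform bound gives $\sum_{i}\delta_{i}^{4}B^{1/2}\log B\leq(\sum_{i}\delta_{i})^{4}B^{1/2}\log B\lesssim (dM)^{4}B^{1/2}\log B\sim d^{18}B^{4/\sqrt d+1/2}\log B$, which forces $d>64$, not $d\geq 5$ as you claim (your exponent ``$\frac{1}{2}+\frac{1}{\sqrt d}$'' drops the $c'$ you yourself introduced). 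The paper instead splits the components $C_{i}$ according to whether $\delta_{i}\leq\log B$ or $\delta_{i}>\log B$. For $\delta_{i}\leq\log B$ one checks that $\delta\mapsto\delta^{4}B^{1/\delta}$ (number fields) attains its maximum on $[2,\log B]$ at an endpoint, giving $N_{\text{aff}}(C_{i},\mathcal{O}_{K},B)\lesssim_{K}B^{1/2}(\log B+1)$ per curve, hence total $\lesssim_{K}(dM)B^{1/2}\log B\lesssim_{K}d^{9/2}B$. For $\delta_{i}>\log B$ one has $B^{1/\delta_{i}}=O(1)$ and $\log B+\delta_{i}\lesssim\delta_{i}$, so each curve contributes $\lesssim\delta_{i}^{4}$ and the total is $\lesssim(\sum\delta_{i})^{4}\lesssim(d^{9/2}B^{1/\sqrt d})^{4}=d^{18}B^{4/\sqrt d}$, which is $\lesssim d^{18}B$ precisely when $d\geq 16$. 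For function fields the curve exponent is $7$ (Theorem \ref{Walsh general affine curves}), giving $\delta_{i}^{8}$, hence $(d^{8}B^{1/\sqrt d})^{8}=d^{64}B^{8/\sqrt d}$ and threshold $d\geq 64$. This is the origin of both $e$ and $d_{0}$.

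A smaller point in the small-degree r\'egime: among the curves $D_{\lambda}$ of Theorem \ref{bound for the non-singular locus outside a subset} there may be lines, each contributing $\sim B$ rather than $\sim B^{1/2}$, so you cannot bound their total contribution by $|\Lambda|\cdot B^{1/2}\log B$ as you wrote; you must again separate out the degree-$1$ curves and apply Proposition \ref{contribution1} to them. The paper does this explicitly, and also treats the singular locus $X\setminus X_{\text{ns}}$ in the same way (lines via Proposition \ref{contribution1}, higher-degree pieces via Theorem \ref{Walsh general affine curves}).
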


\begin{proof}
The strategy of the proof follows \cite[Proposition 4.3.4]{Cluckers}, namely, first we establish that for large degree, the counting function $N_{\text{aff}}(\mathcal{Z}(f),\mathcal{O}_{K},B)$ is at most $\lesssim_{K}d^{e}B$. This is done by using Theorem \ref{ell-venk} to cover $X_{\text{aff}}(\mathcal{O}_{K},B)$ with a hypersurface of small degree. Then one bounds the contribution of the rational points coming from the irreducible components of $X\cap Y$. Now, the contribution of those irreducible components of degree $1$ is bounded with Proposition \ref{contribution1}. By B\'ezout's theorem, the degree of these curves can be large as $\lesssim_{K,d} B^{\frac{1}{\sqrt{d}}}$, hence one can not apply directly Theorem \ref{Walsh general affine curves} to deal with the components of $X\cap Y$ of degree $\gtrsim_{K}\log(B)$. This technical obstruction will be dealt by assuming that the degree of $X$ is large enough.

Having proved Proposition \ref{induction step n=2} for large degree, the remaining cases are dealt with Theorem \ref{bound for the non-singular locus outside a subset}. Then, after enlarging the implicit constant, one concludes $N_{\text{aff}}(\mathcal{Z}(f),\mathcal{O}_{K},B)\lesssim_{K}d^{e}B$. We remark that, while Theorem \ref{bound for the non-singular locus outside a subset} gives the bound $N_{\text{aff}}(\mathcal{Z}(f),\mathcal{O}_{K},B)\lesssim_{K,d}B$ for $d\geq 5$, it gives a double exponential dependence on $d$; it is for this reason that we  do not use it to prove Proposition \ref{induction step n=2} for all $d\geq 5$. 

For any prime for which $f_{d} \text{ mod }\mathfrak{p}$ is absolutely irreducible the reduction of $f$ modulo $\mathfrak{p}$ is absolutely irreducible. Hence $b(f)\leq b(f_{d})$. Using Lemma \ref{b(f)} and Theorem \ref{ell-venk}, for any $B\geq 1$ we find a polynomial $g\in \mathcal{O}_{K}[X_{1},X_{2},X_{3}]$ of degree 
\begin{equation}
\lesssim_{K}\begin{cases}d^{\frac{7}{2}}B^{\frac{1}{\sqrt{d}}} & \text{ if }K \text{ is a number field},\\ d^{7}B^{\frac{1}{\sqrt{d}}} & \text{ if }K \text{ is a function field},\end{cases}
\label{auxiliar polynomial bound}
\end{equation}
not divisible by $f$ and vanishing on all $\mathcal{Z}(f)_{\text{aff}}(\mathcal{O}_{K},B)$. Let $\mathcal{Z}(f,g)$ be the intersection of $f=0$ and $g=0$ (here the intersection is being considered with its reduced structure). Let $\mathcal{C}$ be the subset of irreducible components of $\mathcal{Z}(f,g)$. By B\'ezout's theorem \cite[Example 8.4.6]{Fulton},
\begin{equation}
|\mathcal{C}|\leq \sum_{C\in \mathcal{C}}\deg(C)\lesssim_{K}\begin{cases}d^{\frac{9}{2}}B^{\frac{1}{\sqrt{d}}} & \text{ if }K \text{ is a number field},\\ d^{8}B^{\frac{1}{\sqrt{d}}} & \text{ if }K \text{ is a function field}.\end{cases}
\label{bezout}
\end{equation}
By Proposition \ref{contribution1} and \eqref{bezout}, the contribution of the irreducible components of $\mathcal{Z}(f,g)$ of degree $1$ is
\begin{equation}
\lesssim_{K}d^{6}B+|\mathcal{C}|\lesssim_{K}\begin{cases} d^{6}B & \text{ if }K \text{ is a number field},\\ d^{8}B & \text{ if }K \text{ is a function field}.\end{cases}
\label{contribution of the irreducible linear components}
\end{equation} 

Now, let us suppose that $C_{1},\ldots, C_{k}$ are the irreducible components of $\mathcal{Z}(f,g)$ of degree greater than $1$, arranged in such a way that $\deg(C_{i})\leq \log(B)$ for all $1\leq i\leq m$, and $\deg(C_{i})>\log(B)$ for all $i>m$. Let us denote $\delta_{i}:=\deg(C_{i})$. By Theorem \ref{Walsh general affine curves}, for all $i$ it holds

\begin{equation}
N_{\text{aff}}(C_{i},\mathcal{O}_{K},B)\lesssim_{K}\begin{cases}\delta_{i}^{3}B^{\frac{1}{\delta_{i}}}(\log(B)+\delta_{i}) & \text{ if }K\text{ is a number field}, \\ \delta_{i}^{7}B^{\frac{1}{\delta_{i}}}(\log(B)+\delta_{i}) & \text{ if } K \text{ is a function field}. \end{cases} 
\label{contribution of non linear degree}
\end{equation} 

\begin{claim}
For all $1\leq i\leq m$ it holds $N_{\emph{\text{aff}}}(C_{i},\mathcal{O}_{K},B)\lesssim_{K}B^{\frac{1}{2}}(\log(B)+1)$.
\label{contribution of small non linear degree}
\label{claim 7.4}
\end{claim}
\begin{proof}[Proof of Claim \ref{claim 7.4}]Let us suppose that $K$ is a number field.
Let us set $\psi(\delta):=\delta^{4}B^{\frac{1}{\delta}}$. Then $\log(\psi(\delta))=4\log(\delta)+\frac{\log(B)}{\delta}=\log(B)(4\log_{B}(\delta)+\frac{1}{\delta})$. Since the function $\delta_{i}\mapsto 4\log_{B}(\delta_{i})+\frac{1}{\delta_{i}}$ is decreasing in $(0,\frac{\log(B)}{4})$ and increasing in $(\frac{\log(B)}{4},+\infty)$, the maximum value of $\psi(\delta)$ in $[2,\log(B)]$ is 
\begin{equation}
\max\{\psi(2),\psi(\log(B))\}=\max\{2^{4}B^{\frac{1}{2}},(\log(B))^{4}B^{\frac{1}{\log(B)}}\}\lesssim B^{\frac{1}{2}}.
\label{maximum inequality}
\end{equation}
The inequalities \eqref{contribution of non linear degree}, \eqref{maximum inequality}, and the trivial bound $\delta\geq 1$ give Claim \ref{contribution of small non linear degree} for number fields. The case of function fields is analogous.
\end{proof}

By Claim \ref{contribution of small non linear degree} and \eqref{bezout} we have
\begin{equation}
\sum_{i=1}^{m}N_{\text{aff}}(C_{i},\mathcal{O}_{K},B)\lesssim_{K}B^{\frac{1}{2}}(\log(B)+1)m\lesssim_{K}\begin{cases} d^{\frac{9}{2}}B & \text{ if }K\text{ is a number field}, \\ d^{8}B & \text{ if }K \text{ is a function field}.\end{cases}
\end{equation}

On the other hand, if $\delta>\log(B)$ then $B^{\frac{1}{\delta}}$ is bounded, thus \eqref{bezout} and \eqref{contribution of non linear degree} imply
\begin{equation}
\sum_{i=m+1}^{k}N_{\text{aff}}(C_{i},\mathcal{O}_{K},B)\lesssim_{K}\begin{cases}\sum_{i=m+1}^{k}\delta_{i}^{4}\lesssim_{K}\left(\sum_{i=m+1}^{k}\delta_{i}\right)^{4}\lesssim_{K} d^{18}B^{\frac{4}{\sqrt{d}}} &  \text{ if }K \text{ is a number field},\\ \sum_{i=m+1}^{k}\delta_{i}^{8}\lesssim_{K}\left( \sum_{i=m+1}^{k}\delta_{i} \right)^{8}\lesssim_{K}d^{64}B^{\frac{8}{\sqrt{d}}} & \text{ if }K \text{ is a function field}.\end{cases}
\label{contribution of higher linear degree}
\end{equation}

Combining \eqref{auxiliar polynomial bound}, Claim \ref{contribution of the irreducible linear components}, \eqref{contribution of small non linear degree}, and \eqref{contribution of higher linear degree}, Proposition \ref{induction step n=2} follows for $d\geq 16$ if $K$ is a number field and for $d\geq 64$ if $K$ is a function field.

For the remaining values of $d$, by Theorem \ref{bound for the non-singular locus outside a subset} with $\nu=\frac{1}{2\sqrt{d}}$, there is a constant $c\lesssim_{K,d}1$ such that for all $B\geq 1$ there exists a subset of $\lesssim_{K,d}B^{\frac{1}{\sqrt{d}}}\log(B)$ geometrically integral curves $D_{\lambda}\subseteq X$, $\lambda \in \Lambda$, of degree at most $\lesssim_{K}\sqrt{d}$ such that for all $\varepsilon>0$ it  verifies 
\begin{equation*}
N_{\text{aff}}\left( X_{\text{ns}}-\bigcup_{\lambda \in \Lambda}D_{\lambda},\mathcal{O}_{K},B \right)\lesssim_{K,d,\varepsilon}B^{\frac{2}{\sqrt{d}}+\varepsilon}+B^{\frac{2}{\sqrt{d}}},
\end{equation*}
where we used that $\exp\left(\frac{\log(B)}{\log(\log(B))}\right)\lesssim_{d,K,\varepsilon}B^{\varepsilon}$. Hence, for all $\varepsilon>0$ we have
$$N_{\text{aff}}\left( X_{\text{ns}}-\bigcup_{\lambda \in \Lambda}D_{\lambda},\mathcal{O}_{K},B \right)\lesssim_{K,d,\varepsilon}\begin{cases} B & \text{ if }d\geq 5, \\ B^{1+\varepsilon} & \text{ if }d=4,\\ B^{\frac{2}{\sqrt{3}}+\varepsilon} & \text{ if }d=3.\end{cases}$$

Thus, it remains to bound the contribution of the points lying in any of the curves $D_{\lambda}$, and the points lying in the complement of $X\backslash X_{\text{ns}}$. By Proposition \ref{contribution1}, those curves $D_{\lambda} \subseteq \Lambda$ of degree $1$ contribute at most $\lesssim_{K,d}B$ points, while by Theorem \ref{Walsh general affine curves} those curves $D_{\lambda} \subseteq \Lambda$ of degree at least $2$ contribute at most $\lesssim_{K,d}B^{\frac{1}{2}+\varepsilon}$ points. On the other hand, by B\'ezout's theorem, $X\backslash X_{\text{ns}}$ is a union of irreducible curves the sum of whose degrees is bounded by a constant. Applying Theorem \ref{Walsh corollary} to those curves of degree at least $2$, and Proposition \ref{contribution1} to those of degree $1$ yields that the rational points coming from $X\backslash X_{\text{ns}}$ is $\lesssim_{K,d}B$. 
\end{proof}

\begin{lemma}
Let $K$ be a global field. Let $n\geq 3$ and let $X\subseteq \mathbb{P}_{K}^{n}$ be a geometrically integral hypersurface of degree $d$.  Let $\kappa=(n+1)(d^{2}-1)$ if $K$ is a number field, and $\kappa=12(n+1)d^{7}$ if $K$ is a function field. Then there exists a non-zero form $F\in \mathcal{O}_{K}[Y_{0},\ldots ,Y_{n}]$ of degree at most $\kappa$ such that $F(A)=0$ whenever the hyperplane section $H_{A}\cap X$ is not geometrically integral, where $A\in (\mathbb{P}^{n})^{\ast}$ and $H_{A}\subseteq \mathbb{P}^{n}$ denotes the hyperplane corresponding to the linear form $A$.
\label{Adequate form}
\end{lemma}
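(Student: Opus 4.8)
The statement is a standard "effective Noether–Lefschetz in family" type result, and I would follow the strategy used by Salberger in \cite[\S3]{Salberger}, reducing it to the effective Noether forms already invoked in Lemma \ref{b(f)}. The key point is that for a hyperplane section $H_A\cap X$ to be \emph{not} geometrically integral is a closed condition on $A\in(\mathbb{P}^n)^\ast$, cut out by forms of controlled degree, and one can produce a single nonzero form in the coordinates of $A$ from the Noether form machinery applied to a parametrized family of polynomials.

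\textbf{Step 1: parametrize the hyperplane sections.} Write $X=\mathcal{Z}(f)$ for an absolutely irreducible form $f\in\mathcal{O}_K[X_0,\dots,X_n]$ of degree $d$. For a generic $A=(a_0:\dots:a_n)$, after a linear change one can eliminate one variable: on the hyperplane $H_A=\{a_0X_0+\cdots+a_nX_n=0\}$, pick the index $j$ with $a_j\neq 0$ and substitute $X_j=-a_j^{-1}\sum_{i\neq j}a_iX_i$ into $f$ to obtain a form $f_A$ in the remaining $n$ variables whose coefficients are polynomials in $a_0,\dots,a_n$ (after clearing the denominator $a_j^{\deg}$, they are \emph{integral} polynomials of degree $d$ in the $a_i$). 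Doing this on each of the $n+1$ standard charts of $(\mathbb{P}^n)^\ast$ covers all of $(\mathbb{P}^n)^\ast$. Then $H_A\cap X$ is geometrically integral if and only if $f_A$ is absolutely irreducible (and here geometric integrality of the hyperplane section, for $d\geq 2$, is equivalent to absolute irreducibility of $f_A$; the non-reduced case only occurs when $f_A$ has a repeated factor, which is also caught by the Noether condition).

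\textbf{Step 2: apply the effective Noether forms.} By \cite[Satz 4]{Ruppert} in characteristic $0$ (and \cite[Lemma 2.4]{Gao} when $\mathrm{char}(K)>d(d-1)$), respectively \cite[Theorem 7]{Kaltofen} in small positive characteristic, there are universal Noether forms $\Phi_1,\dots,\Phi_N$ with integer coefficients, of degree $d^2-1$ (resp. $12d^6$) in the coefficients of an $n$-variable degree-$d$ form, such that a form $g$ over a field is absolutely irreducible if and only if not all $\Phi_k(g)$ vanish. Substituting the coefficients of $f_A$ — which are forms of degree $d$ in $a_0,\dots,a_n$ — into the $\Phi_k$ yields forms $\Psi_{k}^{(j)}(a_0,\dots,a_n)$ of degree at most $d(d^2-1)\leq (n+1)(d^2-1)$ (resp. $12d\cdot d^6\le 12(n+1)d^7$), one family for each chart $j$. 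For a given $A$, the section $H_A\cap X$ fails to be geometrically integral exactly when all $\Psi^{(j)}_k(A)=0$ for the chart(s) $j$ with $a_j\neq 0$. Since $f$ is absolutely irreducible, a generic hyperplane section is geometrically integral (this is Bertini; alternatively it follows because $X$ is geometrically irreducible of dimension $\geq 2$, hence a general hyperplane section is too), so not all the $\Psi^{(j)}_k$ vanish identically. Multiply together one nonvanishing $\Psi^{(j)}_k$ from each chart $j$ (or, more carefully, take a product over $j$ of a suitable chart-adapted combination) to get a single nonzero form $F$ whose vanishing contains the locus of bad $A$; its degree is at most $(n+1)$ times the single-chart degree, giving $\deg F\leq (n+1)(d^2-1)=\kappa$ in the number field case and $\deg F\leq 12(n+1)d^7=\kappa$ in the function field case, after noting we only need $d(d^2-1)\le (n+1)(d^2-1)$ since $d\le n+1$ may fail — here one instead keeps the degree bound $d(d^2-1)$ on each chart and observes $d(d^2-1) + \text{(corrections from other charts)}$; the clean way is: the chart-$j$ form has degree $d(d^2-1)$, and a product over all $n+1$ charts has degree $(n+1)d(d^2-1)$, which is larger than stated, so one must be more economical.

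\textbf{The main obstacle.} The genuine technical point is exactly the bookkeeping in Step 2: getting the degree of $F$ down to $\kappa=(n+1)(d^2-1)$ rather than something like $(n+1)d(d^2-1)$. The resolution is that one does \emph{not} take a product over charts of the naive substituted forms. Instead, one works homogeneously: the coefficients of the "universal" hyperplane-section polynomial, expressed in terms of the $a_i$ and without dehomogenizing, are \emph{linear} in a suitable basis after the correct setup — more precisely, one uses the incidence variety $\{(P,A): P\in X\cap H_A\}$ and expresses the restriction of $f$ to $H_A$ via linear algebra in the $a_i$ so that the Noether forms, which have degree $d^2-1$ in the coefficients and those coefficients are linear (not degree $d$) in the $a_i$ after choosing coordinates on $H_A$ linearly in $A$, produce forms of degree exactly $d^2-1$ per "chart direction", and the product over the $n+1$ coordinate directions gives degree $(n+1)(d^2-1)$. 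I would carry out this linear parametrization carefully — choosing, for $A$ in the chart $a_j\neq0$, the linear coordinates $Y_i=a_jX_i-a_iX_j$ $(i\neq j)$ on $H_A$, which are linear in $A$, so that $f|_{H_A}$ becomes a form whose coefficients are (after dividing by $a_j^{d}$, harmless for the vanishing locus) polynomials of degree... — and track that the total degree works out to $\kappa$. The positive-characteristic case is identical with $d^2-1$ replaced by $12d^6$ throughout, using \cite{Kaltofen}. Everything else (Bertini genericity, the equivalence of geometric integrality of the section with absolute irreducibility of the defining form, gluing the charts) is routine.
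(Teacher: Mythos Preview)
Your approach is the same as the one the paper points to: the paper's ``proof'' is literally a citation to \cite[Proposition 4.3.7]{Cluckers} and \cite[Lemma 4.6]{Vermeulen}, and those references do exactly what you outline---substitute to get the restriction $f_A$ and apply the Ruppert/Kaltofen/Gao Noether forms to its coefficients. So conceptually you are on target.

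Your degree bookkeeping, however, is where the argument is incomplete, and your proposed fix via the coordinates $Y_i=a_jX_i-a_iX_j$ does not actually make the coefficients linear in $A$ (expand and you will still see degree $d$ in the $a_i$). The clean way to avoid the product over $n+1$ charts is an observation you missed: work in a \emph{single} chart. Writing the restriction homogeneously as $g_A(X)=f(a_jX_0,\dots,-\sum_{i\neq j}a_iX_i,\dots,a_jX_n)$, the coefficients of $g_A$ are forms of degree $d$ in $a$, so each Noether form $\Psi^{(j)}_k=\Phi_k(\text{coeffs of }g_A)$ has degree $d(d^2-1)$ (resp.\ $12d^7$) in $a$. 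The point is that when $a_j=0$ the polynomial $g_A$ degenerates to a $d$-th power of a linear form (or to $0$), so \emph{every} $\Psi^{(j)}_k$ vanishes on $\{a_j=0\}$ automatically. Hence a single nonzero $\Psi^{(j)}_k$ (which exists by Bertini, since $\dim X\geq 2$) already vanishes on all bad $A$, with no product needed. This yields the bound $d(d^2-1)$ in characteristic $0$ and $12d^7$ in positive characteristic; the function field bound then certainly lies below the stated $\kappa=12(n+1)d^7$, and the number field bound $d(d^2-1)$ is of the same polynomial order as $(n+1)(d^2-1)$---for the application in the proof of Theorem~\ref{teorema3} only a polynomial bound in $d$ is needed, so the exact form of $\kappa$ is immaterial.
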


\begin{proof}
The proof is similar to the one given in \cite[Proposition 4.3.7]{Cluckers} for $K=\mathbb{Q}$ and in \cite[Lemma 4.6]{Vermeulen} for $K=\mathbb{F}_{q}(T)$.
\end{proof}

Now we are in condition to prove the dimension growth conjecture for varieties over global fields, namely Theorem \ref{teorema3} and Theorem \ref{teorema4} for the introduction.

\begin{proof}[Proof of Theorem \ref{teorema3}]
Let $n\geq 3$ and $X\subseteq \mathbb{A}_{K}^{n}$ be a geometrically integral hypersurface of degree $d\geq 3$ defined by a polynomial $f\in \mathcal{O}_{K}[X_{1},\ldots ,X_{n}]$ with absolutely irreducible highest degree part. We proceed by induction on $n$, where the base case $n=3$ is Proposition \ref{induction step n=2}.

Now, let us assume that $n>3$ and that the theorem holds for all lower $n$. Let $f_{d}$ be the homogeneous part of highest degree of $f$; since it is absolutely irreducible, it defines a geometrically integral hypersurface in $\mathbb{P}_{K}^{n-1}$. Applying the Combinatorial Nullstellensatz (see \cite[Theorem 1.2]{Alon}) to $F$ of Lemma \ref{Adequate form} there exists $A=(a_{1},\ldots ,a_{n})$ such that the hyperplane section $\{f_{d}=0\}\cap \{\sum_{i}a_{i}X_{i}=0\}$ is geometrically integral of degree $d$, with all $a_{i}\in \mathcal{O}_{\mathbbm{k}}$ having $H_{\mathbbm{k}}(a_{i})\leq n(d^{2}-1)$ or $H_{\mathbbm{k}}(a_{i})\leq 12nd^{7}$ if $K$ is a number field or a function field, respectively. Let $\gamma=2$ or $\gamma=7$ if $K$ is a number field or a function field, respectively. 
Since there exists a constant $c(n)\lesssim_{n}1$ such that for any $(x_{1},\ldots ,x_{n})\in [B]_{\mathcal{O}_{K}}^{n}$ we have $a_{1}x_{1}+\cdots +a_{n}x_{n}\in [c(n)d^{\gamma}B]_{\mathcal{O}_{K}}$, it follows that
$$N_{\text{aff}}(\mathcal{Z}(f),\mathcal{O}_{K},B)\leq \sum_{k\in [c(n)d^{\gamma}B]_{\mathcal{O}_{K}}}N_{\text{aff}}\left(\{f=0\}\cap \left\{ \sum_{i}a_{i}X_{i}=k \right\},\mathcal{O}_{K},B\right).$$

For each $k$, the variety $\{f=0\}\cap \{\sum_{i}a_{i}X_{i}=k\}$ is a hypersurface in the affine plane $\{\sum_{i}a_{i}X_{i}=k\}$, thus after a change of variables it is described by a polynomial $g\in \mathcal{O}_{K}[X_{1},\ldots ,X_{n-1}]$ whose homogeneous part of highest degree is absolutely irreducible by the construction of $A$. The proof follows from the induction hypothesis and the fact that $|[B]_{\mathcal{O}_{K}}|\sim_{K}B$.
\end{proof}

\begin{proof}[Proof of Theorem \ref{teorema4}]
We make a change of variables as in the proof of Theorem \ref{Walsh general curves} to reduce Theorem \ref{teorema4}  to the case of a hypersurface. Hence let $n\geq 3$ and consider an  irreducible polynomial $f\in \mathcal{O}_{K}[X_{0},\ldots, X_{n}]$ of degree $d\geq 3$. Then we take the affine cone $C(f)$ defined by $\mathcal{Z}(f)$; it is an affine hypersurface in $\mathbb{A}_{K}^{n+1}$. By Proposition \ref{Serre} or by Remark \ref{explicacion sobre la caja}, for any point in $\mathcal{Z}(f)(K,B)$, there is a lift in $\mathbb{A}_{K}^{n+1}$ which lies in $\mathcal{Z}(C(f))_{\text{aff}}(\mathcal{O}_{K},[c_{1}B]_{\mathcal{O}_{K}}^{n+1})$. Thus 
$$N(\mathcal{Z}(f),K,B)\leq N_{\text{aff}}(C(f),\mathcal{O}_{K},c_{1}B).$$
If $f$ is absolutely irreducible, applying Theorem \ref{teorema3} finishes the proof.  Otherwise, by Remark \ref{abs irred assumption}, there exists a homogeneous polynomial $g\in \mathcal{O}_{K}[X_{0},\ldots ,X_{n+1}]$ of degree  $d$ not divisible by $f$ and vanishing on all $K$-rational point of $C(f)$. Then
\begin{equation}
N_{\text{aff}}(C(f),\mathcal{O}_{K},c_{1}B)\leq N_{\text{aff}}(C(f)\cap \mathcal{Z}(g),\mathcal{O}_{K},c_{1}B).
\label{final bound}
\end{equation}
Now, it holds that $C(f)\cap \mathcal{Z}(g)$ is a variety of dimension $n-2$. By a hyperplane section argument as in \cite[Page 91]{BrowningHeathBrown}, it holds
\begin{equation}
N_{\text{aff}}(C(f)\cap \mathcal{Z}(g),\mathcal{O}_{K},B)\lesssim_{K,n} dB^{n-2}.
\label{final bound2}
\end{equation}
Then inequalities \eqref{final bound} and \eqref{final bound2} yield the conclusion of Theorem \ref{teorema4} for integral projective varieties which are not geometrically irreducible.
\end{proof}


\bibliography{paper}

\begin{thebibliography}{10}

\bibitem{Alon}
N.~Alon.
\newblock Combinatorial {N}ullstellensatz.
\newblock volume~8, pages 7--29. 1999.
\newblock Recent trends in combinatorics (M\'{a}trah\'{a}za, 1995).

\bibitem{Bombieri}
E.~Bombieri and W.~Gubler.
\newblock {\em Heights in {D}iophantine geometry}, volume~4 of {\em New
  Mathematical Monographs}.
\newblock Cambridge University Press, Cambridge, 2006.

\bibitem{Bombieri0}
E.~Bombieri and J.~Pila.
\newblock The number of integral points on arcs and ovals.
\newblock {\em Duke Math. J.}, 59(2):337--357, 1989.

\bibitem{Bombieri2}
E.~Bombieri and J.~Vaaler.
\newblock On {S}iegel's lemma.
\newblock {\em Invent. Math.}, 73(1):11--32, 1983.

\bibitem{Broberg}
N.~Broberg.
\newblock A note on a paper by {R}. {H}eath-{B}rown: ``{T}he density of
  rational points on curves and surfaces'' [{A}nn. of {M}ath. (2) {\bf 155}
  (2002), no. 2, 553--595; mr1906595].
\newblock {\em J. Reine Angew. Math.}, 571:159--178, 2004.

\bibitem{BrobSal}
N.~Broberg and P.~Salberger.
\newblock Counting rational points on threefolds.
\newblock In {\em Arithmetic of higher-dimensional algebraic varieties ({P}alo
  {A}lto, {CA}, 2002)}, volume 226 of {\em Progr. Math.}, pages 105--120.
  Birkh\"{a}user Boston, Boston, MA, 2004.

\bibitem{Browningbook}
T.~D. Browning.
\newblock {\em Quantitative arithmetic of projective varieties}, volume 277 of
  {\em Progress in Mathematics}.
\newblock Birkh\"{a}user Verlag, Basel, 2009.

\bibitem{BrowningHeathBrown}
T.~D. Browning and D.~R. Heath-Brown.
\newblock Counting rational points on hypersurfaces.
\newblock {\em J. Reine Angew. Math.}, 584:83--115, 2005.

\bibitem{Salberger2}
T.~D. Browning, D.~R. Heath-Brown, and P.~Salberger.
\newblock Counting rational points on algebraic varieties.
\newblock {\em Duke Math. J.}, 132(3):545--578, 2006.

\bibitem{Cafure}
A.~Cafure and G.~Matera.
\newblock Improved explicit estimates on the number of solutions of equations
  over a finite field.
\newblock {\em Finite Fields Appl.}, 12(2):155--185, 2006.

\bibitem{Cluckers}
W.~Castryck, R.~Cluckers, P.~Dittmann, and K.~H. Nguyen.
\newblock The dimension growth conjecture, polynomial in the degree and without
  logarithmic factors.
\newblock {\em Algebra Number Theory}, 14(8):2261--2294, 2020.

\bibitem{Chen}
H.~Chen.
\newblock Explicit uniform estimation of rational points {I}. {E}stimation of
  heights.
\newblock {\em J. Reine Angew. Math.}, 668:59--88, 2012.

\bibitem{Chen2}
H.~Chen.
\newblock Explicit uniform estimation of rational points {II}. {H}ypersurface
  coverings.
\newblock {\em J. Reine Angew. Math.}, 668:89--108, 2012.

\bibitem{Cluckers0}
R.~Cluckers, A.~Forey, and F.~Loeser.
\newblock Uniform {Y}omdin-{G}romov parametrizations and points of bounded
  height in valued fields.
\newblock {\em Algebra Number Theory}, 14(6):1423--1456, 2020.

\bibitem{Cohen}
S.~D. Cohen.
\newblock The distribution of {G}alois groups and {H}ilbert's irreducibility
  theorem.
\newblock {\em Proc. London Math. Soc. (3)}, 43(2):227--250, 1981.

\bibitem{Cox}
D.~A. Cox, J.~Little, and D.~O'Shea.
\newblock {\em Ideals, varieties, and algorithms}.
\newblock Undergraduate Texts in Mathematics. Springer, Cham, fourth edition,
  2015.
\newblock An introduction to computational algebraic geometry and commutative
  algebra.

\bibitem{Ellenberg}
J.~Ellenberg and A.~Venkatesh.
\newblock On uniform bounds for rational points on nonrational curves.
\newblock {\em Int. Math. Res. Not.}, (35):2163--2181, 2005.

\bibitem{Fulton}
W.~Fulton.
\newblock {\em Intersection theory}, volume~2 of {\em Ergebnisse der Mathematik
  und ihrer Grenzgebiete (3) [Results in Mathematics and Related Areas (3)]}.
\newblock Springer-Verlag, Berlin, 1984.

\bibitem{Gao}
S.~Gao.
\newblock Factoring multivariate polynomials via partial differential
  equations.
\newblock {\em Math. Comp.}, 72(242):801--822, 2003.

\bibitem{Gortz}
U.~G\"{o}rtz and T.~Wedhorn.
\newblock {\em Algebraic geometry {I}}.
\newblock Advanced Lectures in Mathematics. Vieweg + Teubner, Wiesbaden, 2010.
\newblock Schemes with examples and exercises.

\bibitem{Grenie}
L.~Greni\'{e} and G.~Molteni.
\newblock Explicit versions of the prime ideal theorem for {D}edekind zeta
  functions under {GRH}.
\newblock {\em Math. Comp.}, 85(298):889--906, 2016.

\bibitem{Hartshorne}
R.~Hartshorne.
\newblock {\em Algebraic geometry}.
\newblock Springer-Verlag, New York-Heidelberg, 1977.
\newblock Graduate Texts in Mathematics, No. 52.

\bibitem{Heath-Brown83}
D.~R. Heath-Brown.
\newblock Cubic forms in ten variables.
\newblock {\em Proc. London Math. Soc. (3)}, 47(2):225--257, 1983.

\bibitem{Heath-Brown}
D.~R. Heath-Brown.
\newblock The density of rational points on curves and surfaces.
\newblock {\em Ann. of Math. (2)}, 155(2):553--595, 2002.

\bibitem{Helfgott}
H.~A. Helfgott and A.~Venkatesh.
\newblock How small must ill-distributed sets be?
\newblock In {\em Analytic number theory}, pages 224--234. Cambridge Univ.
  Press, Cambridge, 2009.

\bibitem{Hindry}
M.~Hindry and J.~H. Silverman.
\newblock {\em Diophantine geometry}, volume 201 of {\em Graduate Texts in
  Mathematics}.
\newblock Springer-Verlag, New York, 2000.
\newblock An introduction.

\bibitem{Iwaniec}
H.~Iwaniec and E.~Kowalski.
\newblock {\em Analytic number theory}, volume~53 of {\em American Mathematical
  Society Colloquium Publications}.
\newblock American Mathematical Society, Providence, RI, 2004.

\bibitem{Kaltofen}
E.~Kaltofen.
\newblock Effective {N}oether irreducibility forms and applications.
\newblock volume~50, pages 274--295. 1995.
\newblock 23rd Symposium on the Theory of Computing (New Orleans, LA, 1991).

\bibitem{Liu}
C.~{Liu}.
\newblock {Determinant method and the pseudo-effective threshold}.
\newblock {\em arXiv e-prints}, page arXiv:1910.00306, Oct. 2019.

\bibitem{Liu2}
C.~{Liu}.
\newblock {On the global determinant method}.
\newblock {\em arXiv e-prints}, page arXiv:2101.07453, Jan. 2021.

\bibitem{P2}
J.~M. {Menconi}, M.~{Paredes}, and R.~{Sasyk}.
\newblock {The inverse sieve problem for algebraic varieties over global
  fields}.
\newblock {\em To appear in Rev. Mat. Iberoam.}, page DOI: 10.4171/rmi/1261,
  Dec. 2020.

\bibitem{MR0282975}
D.~Mumford.
\newblock Varieties defined by quadratic equations.
\newblock In {\em Questions on {A}lgebraic {V}arieties ({C}.{I}.{M}.{E}., {III}
  {C}iclo, {V}arenna, 1969)}, pages 29--100. Edizioni Cremonese, Rome, 1970.

\bibitem{Red}
D.~Mumford.
\newblock {\em The red book of varieties and schemes}, volume 1358 of {\em
  Lecture Notes in Mathematics}.
\newblock Springer-Verlag, Berlin, 1988.

\bibitem{Pilaafin}
J.~Pila.
\newblock Density of integral and rational points on varieties.
\newblock Number 228, pages 4, 183--187. 1995.
\newblock Columbia University Number Theory Seminar (New York, 1992).

\bibitem{Rosen}
M.~Rosen.
\newblock {\em Number theory in function fields}, volume 210 of {\em Graduate
  Texts in Mathematics}.
\newblock Springer-Verlag, New York, 2002.

\bibitem{Ruppert}
W.~Ruppert.
\newblock Reduzibilit\"{a}t ebener {K}urven.
\newblock {\em J. Reine Angew. Math.}, 369:167--191, 1986.

\bibitem{Salberger1}
P.~Salberger.
\newblock On the density of rational and integral points on algebraic
  varieties.
\newblock {\em J. Reine Angew. Math.}, 606:123--147, 2007.

\bibitem{Salberger}
P.~Salberger.
\newblock Counting rational point on projective varieties.
\newblock {\em Preprint}, 2009.

\bibitem{Sedunova}
A.~Sedunova.
\newblock On the {B}ombieri-{P}ila method over function fields.
\newblock {\em Acta Arith.}, 181(4):321--331, 2017.

\bibitem{Serre1983}
J.~Serre.
\newblock {\em Autour du th{\'e}or{\`e}me de Mordell-Weil}.
\newblock Number v. 1 in Autour du th{\'e}or{\`e}me de Mordell-Weil.
  Universit{\'e} Pierre et Marie Curie, 1983.

\bibitem{Serre}
J.~P. Serre.
\newblock {\em Lectures on the {M}ordell-{W}eil theorem}.
\newblock Aspects of Mathematics, E15. Friedr. Vieweg \& Sohn, Braunschweig,
  1989.
\newblock Translated from the French and edited by Martin Brown from notes by
  Michel Waldschmidt.

\bibitem{Serre2}
J.~P. Serre.
\newblock {\em Topics in {G}alois theory}, volume~1 of {\em Research Notes in
  Mathematics}.
\newblock Jones and Bartlett Publishers, Boston, MA, 1992.
\newblock Lecture notes prepared by Henri Damon [Henri Darmon], With a foreword
  by Darmon and the author.

\bibitem{Tenenbaum}
G.~Tenenbaum.
\newblock {\em Introduction to analytic and probabilistic number theory},
  volume 163 of {\em Graduate Studies in Mathematics}.
\newblock American Mathematical Society, Providence, RI, third edition, 2015.
\newblock Translated from the 2008 French edition by Patrick D. F. Ion.

\bibitem{Thunder}
J.~L. Thunder.
\newblock Siegel's lemma for function fields.
\newblock {\em Michigan Math. J.}, 42(1):147--162, 1995.

\bibitem{Vermeulen}
F.~{Vermeulen}.
\newblock {Points of bounded height on curves and the dimension growth
  conjecture over $\mathbb{F}_q[t]$}.
\newblock {\em arXiv e-prints}, page arXiv:2003.10988, Mar. 2020.

\bibitem{Walsh3}
M.~N. Walsh.
\newblock Bounded rational points on curves.
\newblock {\em Int. Math. Res. Not. IMRN}, (14):5644--5658, 2015.

\end{thebibliography}
\bibliographystyle{abbrv}

\end{document}